\documentclass{amsart}
\usepackage[dvipsnames]{xcolor}
\usepackage{hyperref}
\hypersetup{
     colorlinks=true,
     linkcolor=blue!60!black,
     filecolor=blue!75!black,
     citecolor = green!50!black,      
     urlcolor=magenta,
     }

\usepackage{amssymb}
\usepackage{ bbold }
\usepackage[all]{xy}
\xyoption{all}
\usepackage{url}
\usepackage{mathrsfs}
\usepackage{tikz-cd}
\usepackage{eucal}
\usepackage{comment}
\usepackage{mathtools}

\counterwithin{equation}{section}
\setcounter{tocdepth}{1}

%------------------------------------------------------------------------------------------------%
%------------------------------------------------------------------------------------------------%
%------------------------------------  PERSONAL INFORMATION ------------------------------------ %
%------------------------------------------------------------------------------------------------%
%------------------------------------------------------------------------------------------------%

%--- TITLE

\title[The Balmer spectrum of integral permutation modules]{The Balmer spectrum of integral permutation modules}

%--- AUTHORS

\author[U. V Dubey]{Umesh V Dubey}
\address{
Harish-Chandra Research Institute, A CI of Homi Bhabha National Institute, Chhatnag Road, Jhunsi, Prayagraj 211019, India}
\email{umeshdubey@hri.res.in}

\author[J. O. G\'omez]{Juan Omar G\'omez}
\thanks{}
\address{Fakult\"at f\"ur Mathematik,
Universit\"at Bielefeld, D-33501 Bielefeld, Germany.}
\email{jgomez@math.uni-bielefeld.de}

%--- NEW COMMANDS

\newcommand{\comments}[1]{}

\newcommand{\colim}{\operatorname{colim}\nolimits}
\newcommand{\End}{\operatorname{End}\nolimits}

\newcommand{\Hom}{\operatorname{Hom}\nolimits}

\newcommand{\Ind}{\operatorname{Ind}\nolimits}
\newcommand{\Infl}{\operatorname{Infl}\nolimits}

\newcommand{\Ker}{\operatorname{Ker}\nolimits}
\newcommand{\kos}{\operatorname{kos}\nolimits}

\newcommand{\Mod}{\operatorname{Mod}\nolimits}
\newcommand{\perm}{\operatorname{perm}\nolimits}
\newcommand{\Perm}{\operatorname{Perm}\nolimits}

\newcommand{\Res}{\operatorname{Res}\nolimits}
\newcommand{\Spc}{\operatorname{Spc}\nolimits}
\newcommand{\Spec}{\operatorname{Spec}\nolimits}

\newcommand{\restr}{\mathord\downarrow}

\def \A{{\mathcal A}}
\def \aa{\mathfrak a }

\def \bb{\mathfrak b }
\def \pp{\mathbf p }

\def \K{{\mathcal K}}
\def \N{{\mathbb N}} 
\def \mm{\mathfrak m }

\def \P{{\mathcal P}}
\def \pp{\mathfrak p }

\def \qq{\mathfrak q }

\def \T{{\mathcal T}}

\makeatletter
\newcommand*{\doublerightarrow}[2]{\mathrel{
  \settowidth{\@tempdima}{$\scriptstyle#1$}
  \settowidth{\@tempdimb}{$\scriptstyle#2$}
  \ifdim\@tempdimb>\@tempdima \@tempdima=\@tempdimb\fi
  \mathop{\vcenter{
    \offinterlineskip\ialign{\hbox to\dimexpr\@tempdima+1em{##}\cr
    \rightarrowfill\cr\noalign{\kern.5ex}
    \rightarrowfill\cr}}}\limits^{\!#1}_{\!#2}}}
\newcommand*{\triplerightarrow}[1]{\mathrel{
  \settowidth{\@tempdima}{$\scriptstyle#1$}
  \mathop{\vcenter{
    \offinterlineskip\ialign{\hbox to\dimexpr\@tempdima+1em{##}\cr
    \rightarrowfill\cr\noalign{\kern.5ex}
    \rightarrowfill\cr\noalign{\kern.5ex}
    \rightarrowfill\cr}}}\limits^{\!#1}}}
\makeatother

\newcommand{\nc}{\newcommand}
\nc{\dmo}{\DeclareMathOperator}
\nc{\Weyl}[2]{{#1}/\!\!/{#2}}%{\overline{#1}_{#2}}%{W_{\!#1}{#2}}% sometimes Weyl is N_G(H)/C_G(H) not mod H.
\nc{\WGH}{\Weyl{G}{H}}
\nc{\WGK}{\Weyl{G}{K}}
\nc{\WGL}{\Weyl{G}{L}}
\nc{\WGN}{\Weyl{G}{N}}

%Declarations
\theoremstyle{plain}

\newtheorem{theorem}{Theorem}[section]
\newtheorem{proposition}[theorem]{Proposition}
\newtheorem{corollary}[theorem]{Corollary}
\newtheorem{lemma}[theorem]{Lemma}

\theoremstyle{definition}
\newtheorem{definition}[theorem]{Definition}
\newtheorem{remark}[theorem]{Remark}
\newtheorem{recollection}[theorem]{Recollection}
\newtheorem{example}[theorem]{Example}

\newtheorem{notation}[theorem]{Notation}
\newtheorem{construction}[theorem]{Construction}
\newtheorem{convention}[theorem]{Convention}

\keywords{Finite group, integral representation, permutation modules, tensor-triangular geometry, twisted cohomology}
\subjclass[2020]{20C10; 18F99, 20J06, 18G90, 18G35}
\thanks{}
\date{\today}

\begin{document}

\begin{abstract}
We extend the analysis of Balmer and Gallauer on the tt-geometry of the small derived category of permutation modules for a finite group over a field to the setting of a commutative Noetherian base. In this general context, we provide a description of the tt-spectrum as a set and reduce the study of its topology to the elementary abelian case. Under certain mild additional assumptions on the ground ring, we further develop their theory of twisted cohomology, which enables us to realize the tt-spectrum as a Dirac scheme when restricted to elementary abelian $p$-groups.
\end{abstract}

\maketitle

\tableofcontents

\section{Introduction}

Let $G$ be a finite group and $R$ a commutative Noetherian ring. In a series of papers, Balmer and Gallauer have investigated various aspects of the so-called \emph{derived category of permutation $RG$--modules}, culminating in a description of its tensor-triangular (tt) geometry when the ground ring is a field. Their work also encompasses permutation modules for profinite groups; see \cite{BG23b}, \cite{BG25}. In this paper, our main goal is to extend their analysis to finite groups over more general base rings.

The derived category of permutation $RG$--modules, denoted $\T(G,R)$, is a rigidly-compactly generated tensor triangulated category. Its subcategory of compact objects, $\K(G,R)$, coincides with the idempotent completion of the bounded homotopy category of finitely generated permutation $RG$--modules.

Why should one be interested in $\T(G,R)$? A compelling reason is its deep connections across various areas of equivariant mathematics. In fact, $\T(G,R)$ is one of several equivalent manifestations of the same structure. More precisely, $\T(G,R)$ is equivalent to:
\begin{itemize}
    \item[-] the derived category of cohomological $R$-linear Mackey functors for $G$,

  \item[-] the homotopy category of equivariant modules over the constant Green functor $H\underline{R}_G$, and

  \item[-] the derived category of $R$--linear Artin motives generated by the motives of intermediate fields in any Galois extension with Galois group $G$.
\end{itemize}
Among these, $\T(G,R)$ is arguably the most accessible. Permutation $RG$--modules are simply $R$--linearizations of finite $G$--sets, making the category $\T(G,R)$ relatively easy to define. However, this apparent simplicity is deceptive: the tt-geometry of $\K(G,R)$ remains difficult to describe, even when $R$ is a field, as detailed in \cite{BG23b}. This complexity may be anticipated, given the richness of the equivalent formulations listed above. Nevertheless, there is a concrete representation-theoretic reason underlying this difficulty. In \cite{BG22}, the authors show that wild categories, such as the bounded derived category of finitely generated $RG$--lattices and the stable module category, arise as localizations of $\K(G,R)$. This fact already reflects the intricate nature of $\K(G,R)$ within the context of group representation theory.

As previously mentioned, our goal is to understand the tt-geometry of permutation modules over an arbitrary commutative Noetherian ring. This paper should be regarded as a companion to the second-named author’s work \cite{Gom25}, which  investigates the tt-geometry of the category $\T(G,R)$. That work focuses particularly on stratification and develops several tools that we employ here in the analysis of $\K(G,R)$. Accordingly, the present paper is devoted to the study of the Balmer spectrum of $\K(G,R)$.

We now outline the content of the paper in more detail.

\subsection*{The spectrum as a set} We begin by recalling that in \cite{Gom25}, the author proved that the Balmer spectrum of $\K(G,R)$ is a Noetherian topological space. This was achieved via a comparison map with a finite coproduct of homogeneous spectra of certain cohomology rings. In this paper, we refine that result by providing a complete description of the prime ideals in the spectrum of $\K(G,R)$.

To state this more precisely, we first recall the following result, which corresponds to Theorem 6.11 and Corollary 6.12 of \cite{Gom25}.

\begin{theorem}
      Let $G$ be a finite group.  There is a continuous surjection 
      \begin{equation}\label{Eq:comparison map}
          \Theta\colon \coprod_{H\in\mathcal{E}} \mathrm{Spc}(\mathbf{D}_b(R\WGH)) \to \mathrm{Spc}(\K(G,R))
      \end{equation}
      where $\WGH$ denotes the Weyl group of $H$ in $G$, $\mathbf{D}_b(R\WGH)$ denotes the bounded derived category of finitely generated $R\WGH$--lattices,  $\mathcal{E}$ denotes the set of $p$-power order subgroups of $G$, and $p$ runs over all primes dividing the order of $G$ (or just a set of representatives of conjugacy classes). In particular, the Balmer spectrum of $\K(G,R)$ is a Noetherian space.
\end{theorem}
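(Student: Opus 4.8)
\textit{Strategy.} The plan is threefold: realise $\Theta$ through the modular fixed-point functors together with a comparison to derived categories of lattices; prove surjectivity by reducing to the case of a field; and then extract the Noetherian property. For $H\in\mathcal{E}$ put $W=\WGH$ and let $\Psi^{H}\colon\K(G,R)\to\K(W,R)$ be the modular $H$-fixed-points functor; on permutation modules it is given by $R[X]\mapsto R[X^{H}]$ with the residual $W$-action, extended to bounded complexes degreewise, and it is tensor triangulated. Because a permutation $RW$-module is free over $R$, hence an $RW$-lattice, a bounded complex of permutation $RW$-modules represents an object of $\mathbf{D}_{b}(RW)$; this produces a tt-functor $q_{W}\colon\K(W,R)\to\mathbf{D}_{b}(RW)$. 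Set $\Upsilon_{H}:=q_{W}\circ\Psi^{H}$ and define $\Theta:=\coprod_{H\in\mathcal{E}}\Spc(\Upsilon_{H})$; being a coproduct of the continuous maps $\Spc(\Upsilon_{H})$, it is continuous, so the content is surjectivity (and then the Noetherian assertion).

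\textit{Surjectivity.} This is the principal difficulty, and I would handle it fibrewise over $\Spec(R)$. On permutation modules $\Psi^{H}$ is computed by passing to honest fixed subsets, so both $\Psi^{H}$ and $q_{W}$ commute with base change along any ring map $R\to S$; in particular, for $\mathfrak{p}\in\Spec(R)$ with residue field $\kappa(\mathfrak{p})$, the maps $\Spc(\Upsilon_{H})$ over $R$ and over $\kappa(\mathfrak{p})$ fit into a commutative square with the base-change maps. Using the Balmer comparison map $\rho\colon\Spc(\K(G,R))\to\Spec(R)$ together with the standard fact that the fibre $\rho^{-1}(\mathfrak{p})$ is the image of $\Spc(\K(G,\kappa(\mathfrak{p})))$ under base change, surjectivity of $\Theta$ is reduced to surjectivity of $\coprod_{H}\Spc(\Upsilon_{H})$ over the field $\kappa(\mathfrak{p})$ --- which is precisely the content of Balmer and Gallauer's structural description of $\Spc(\K(G,k))$ in \cite{BG23b}, combined with the classical homeomorphism $\Spc(\mathbf{D}_{b}(kW))\cong\operatorname{Spec}^{h}H^{\bullet}(W;k)$. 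The real work is in making the fibrewise reduction precise: pinning down the behaviour of $\Spc(\K(G,-))$ under base change, verifying the compatibilities above, and importing the (hard) field case.

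\textit{Noetherianity.} The set $\mathcal{E}$ is finite, so $\coprod_{H\in\mathcal{E}}\Spc(\mathbf{D}_{b}(R\WGH))$ is a finite coproduct, and it suffices to know each summand is Noetherian. The cohomology ring $H^{\bullet}(W;R)=\Ext^{\bullet}_{RW}(R,R)$ is graded-commutative, acts centrally on $\mathbf{D}_{b}(RW)$, and is a finitely generated $R$-algebra by finite generation of group cohomology over a Noetherian base, hence is Noetherian; Balmer's comparison map $\Spc(\mathbf{D}_{b}(RW))\to\operatorname{Spec}^{h}H^{\bullet}(W;R)$ is surjective, once again by reduction to residue fields and the field case $\Spc(\mathbf{D}_{b}(kW))\cong\operatorname{Spec}^{h}H^{\bullet}(W;k)$, so $\Spc(\mathbf{D}_{b}(RW))$ carries a Noetherian topology. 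Therefore the source of $\Theta$ is Noetherian, and since a continuous surjection sends a Noetherian space to a Noetherian space, $\Spc(\K(G,R))$ is Noetherian as well. Alternatively one can argue this last point directly, bypassing $\Theta$, via a comparison map $\Spc(\K(G,R))\to\coprod_{H}\operatorname{Spec}^{h}H^{\bullet}(\WGH;R)$ onto a finite coproduct of homogeneous spectra of Noetherian cohomology rings.
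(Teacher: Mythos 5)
Your construction of $\Theta$ rests on a functor that the paper explicitly says is not known to exist. You define $\Psi^{H}\colon\K(G,R)\to\K(\WGH,R)$ as the modular $H$-fixed-points functor, given on permutation modules by $R[X]\mapsto R[X^{H}]$ and "tensor triangulated," and then set $\Theta=\coprod\Spc(q_{W}\circ\Psi^{H})$. But the modular fixed-points functor is a characteristic-$p$ construction: over a field $k$ of characteristic $p$ the Brauer quotient identifies $k[X]^{H}/\sum_{H'<H}\mathrm{Tr}^{H}_{H'}k[X]^{H'}$ with $k[X^{H}]$ and yields a monoidal functor, but over a general Noetherian ring $R$ the trace terms do not vanish and this identification fails. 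Indeed, the introduction of the paper states outright that "it is not clear to us if there is a tt-functor that realizes $\psi^{G,H}$," and the Remark after Lemma \ref{composite is homeo on spectra} notes that the relevant additive equivalence (which Balmer–Gallauer use to build the modular fixed-points functor) already fails for $R=\mathbb{Z}$. So the map $\Theta$ cannot be obtained as $\Spc$ of a single composite tt-functor in the way you propose.

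What the paper does instead (Section \ref{sec: triangular fixed points}, imported from \cite{Gom25}) is build $\psi^{G,H}$ purely at the level of Balmer spectra: one shows that the composite $\K(G/N,R)\xrightarrow{\Infl}\K(G,R)\xrightarrow{\mathrm{Quot}}\tilde{\K}(G/N,R)$ induces a \emph{homeomorphism} on spectra (Lemma \ref{composite is homeo on spectra}, which is weaker than being an equivalence of categories), and then defines $\psi^{G,N}$ as $\Spc(\mathrm{Quot})$ precomposed with the \emph{inverse} of that homeomorphism. This is a genuine map of topological spaces with no underlying tt-functor. The map $\Theta$ is then assembled from the composites $\check\psi^{G,H}=\psi^{G,H}\circ\Spc(\Upsilon^{\WGH})$. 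Your fibrewise reduction to residue fields for surjectivity, and your Noetherianity argument (finite coproduct of spectra homeomorphic to $\Spec^{h}H^{\ast}(\WGH;R)$, continuous image of a Noetherian space), are both in line with the paper's strategy; the gap is entirely in how $\Theta$ is built. To repair the proof you would need to replace your functorial construction of $\Theta$ with the triangular fixed-points maps, which in particular requires first proving Lemma \ref{composite is homeo on spectra}.
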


The main tool in the construction of the previous surjection is a spectral map $\psi^{G,H}$ from the spectrum of $\K(\WGH,R)$ to the spectrum of $\K(G,R)$ that generalizes the map induced by the modular $H$-fixed points  functor introduced in \cite{BG23b} in the modular setting. The  map $\psi^{G,H}$ has been baptized as the \textit{triangular $H$-fixed points map}. Unfortunately, it is not clear to us if there is a  tt-functor that realizes $\psi^{G,H}$. Despite this possible drawback, triangular fixed points maps carry relevant tt-geometric information just as its modular counterpart and it allows to extend several of the arguments from the field case. 

We note that standard techniques from tensor-triangular geometry allow us to reduce our analysis to the case where $G$ is a $p$-group; see \cite[Lemma 6.1]{Gom25}. Accordingly, unless stated otherwise, we will henceforth restrict our attention to $p$-groups.

We are now prepared to state one of our main results: a set-theoretic description of the spectrum of $\K(G,R)$. It corresponds to Theorem \ref{primes in spc up to conj} and Proposition \ref{Spc as a set}.

\begin{theorem}
Let $G$ be a finite $p$-group  and $R$ be a commutative Noetherian ring, and consider the map $\Theta$ from Equation \ref{Eq:comparison map}. Let $H,K\leq G$ be subgroups, and consider primes $\P$ and $\mathcal{Q}$ in $\mathrm{Spc}(\mathbf{D}_b(R\WGH))$ and $\mathrm{Spc}(\mathbf{D}_b(R\WGK))$, respectively. Then there is an explicit relation between $\Theta(\P)$ and $\Theta(\mathcal{Q})$ in $\Spc(\K(G,R))$ that only depends on $H$ and $K$, and on the image of the primes $\P$ and $\mathcal{Q}$ along the map induced by the base change functor along  the residue fields of $R$. See Theorem \ref{primes in spc up to conj} for further details. In particular, we obtain that there is a set bijection induced by base change along all residue fields of $R$: 
   \[
   \coprod_{\pp\in\mathrm{Spec}(R)} \mathrm{Spc}(\K(G,k(\pp))) \xrightarrow[]{} \mathrm{Spc}(\K(G,R)).
   \]
\end{theorem}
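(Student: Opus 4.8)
The plan is to build the map from the extension-of-scalars functors and then to prove bijectivity fibrewise over $\Spec(R)$. For each $\pp\in\Spec(R)$, extension of scalars along $R\to k(\pp)$ is a tt-functor $k(\pp)\otimes_R-\colon\K(G,R)\to\K(G,k(\pp))$, since it carries $R[G/H]$ to $k(\pp)[G/H]$; I take $\beta$ to be the map on coproducts assembled from the induced spectral maps $\beta_\pp\colon\Spc(\K(G,k(\pp)))\to\Spc(\K(G,R))$. To separate the residue fields I would use that the tensor unit $\mathbb{1}=R$ is a complex concentrated in a single degree, so $\End^\bullet_{\K(G,R)}(\mathbb{1})\cong R$ lives in degree $0$ and Balmer's comparison homomorphism takes the form $\rho_{G,R}\colon\Spc(\K(G,R))\to\Spec(R)$. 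Naturality of $\rho$ in tt-functors, applied to $k(\pp)\otimes_R-$, shows that $\rho_{G,R}\circ\beta_\pp$ is the constant map at $\pp$ (as $\Spec(k(\pp))$ is a single point lying over $\pp$); hence the images of the $\beta_\pp$ are pairwise disjoint, and since the fibres $\rho_{G,R}^{-1}(\pp)$ partition $\Spc(\K(G,R))$ it remains only to prove that each $\beta_\pp$ is a bijection onto $\rho_{G,R}^{-1}(\pp)$.

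The structural input I would rely on is that the comparison surjection $\Theta_R$ of Equation \ref{Eq:comparison map} — available with $\mathcal{E}$ the set of all subgroups of $G$, since $G$ is a $p$-group — is natural in the base ring, being built from the inclusions $\K(\WGH,R)\hookrightarrow\mathbf{D}_b(R\WGH)$ and the triangular fixed-point maps $\psi^{G,H}$, each natural in $R$. Thus for every $\pp$ the square
\[
\begin{array}{ccc}
\coprod_{H\leq G}\Spc(\mathbf{D}_b(k(\pp)\WGH)) & \longrightarrow & \Spc(\K(G,k(\pp)))\\
\downarrow & & \downarrow\\
\coprod_{H\leq G}\Spc(\mathbf{D}_b(R\WGH)) & \longrightarrow & \Spc(\K(G,R))
\end{array}
\]
commutes, with top map $\Theta_{k(\pp)}$, bottom map $\Theta_R$, and verticals induced by extension of scalars (the right-hand one being $\beta_\pp$). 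I would combine this with the one-dimension-lower statement that base change induces a bijection
\[
\coprod_{\qq\in\Spec(R)}\Spc(\mathbf{D}_b(k(\qq)\WGH))\ \xrightarrow{\ \sim\ }\ \Spc(\mathbf{D}_b(R\WGH)),
\]
the $\qq$-component landing isomorphically on the fibre over $\qq$ of the map $\Spc(\mathbf{D}_b(R\WGH))\to\Spec(R)$ induced by the degree-zero part $R$ of the graded ring $\End^\bullet_{\mathbf{D}_b(R\WGH)}(\mathbb{1})\cong H^\bullet(\WGH;R)$. Its surjectivity is immediate once one knows that $\{k(\qq)\otimes_R^{\mathbf{L}}-\}_{\qq}$ is jointly conservative on $\mathbf{D}_b(R\WGH)$ — a nonzero bounded complex of lattices has a nonzero top cohomology module, which has a prime of $R$ in its support, over which the base change is nonzero — since a jointly conservative family of tt-functors yields a surjective coproduct of spectral maps. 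Disjointness of the images is the $\mathbf{D}_b$-analogue of the residue-field detection above, and I would treat injectivity of each $\qq$-component as the expected base-change behaviour of $\Spc(\mathbf{D}_b(-\,\WGH))$ relative to $\Spec(R)$.

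With these in hand, surjectivity of $\beta_\pp$ onto $\rho_{G,R}^{-1}(\pp)$ is quick: I lift $\mathfrak{P}\in\rho_{G,R}^{-1}(\pp)$ along the surjection $\Theta_R$ to some $x\in\Spc(\mathbf{D}_b(R\WGH))$, lift $x$ along the $\mathbf{D}_b$-bijection to a point $(y,\qq)$, and read $\mathfrak{P}=\beta_\qq(\Theta_{k(\qq)}(y))$ off the commuting square, with $\qq=\pp$ forced by the $\mathbf{D}_b$-residue-field detection. For injectivity, suppose $\mathfrak{Q}_1,\mathfrak{Q}_2\in\Spc(\K(G,k(\pp)))$ satisfy $\beta_\pp(\mathfrak{Q}_1)=\beta_\pp(\mathfrak{Q}_2)$; I write $\mathfrak{Q}_i=\Theta_{k(\pp)}(y_i)$ with $y_i\in\Spc(\mathbf{D}_b(k(\pp)\Weyl{G}{H_i}))$, push the $y_i$ into $\Spc(\mathbf{D}_b(R\Weyl{G}{H_i}))$ by base change, and note via the commuting square that $\Theta_R$ identifies the two images, while the $\mathbf{D}_b$-bijection identifies the residue-field image of each of these with $y_i$ (lying over $\pp$). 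Theorem \ref{primes in spc up to conj} then forces the explicit relation to hold between the pairs $(\Weyl{G}{H_1},y_1)$ and $(\Weyl{G}{H_2},y_2)$; as that relation is phrased purely in terms of the subgroups and of primes of the $\mathbf{D}_b$-categories over a residue field, it transports verbatim to the base field $k(\pp)$ — over which the residue-field map is the identity — and a second application of Theorem \ref{primes in spc up to conj} gives $\Theta_{k(\pp)}(y_1)=\Theta_{k(\pp)}(y_2)$, that is $\mathfrak{Q}_1=\mathfrak{Q}_2$.

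The hardest part, I expect, will be the base-change bijection for $\Spc(\mathbf{D}_b(-\,\WGH))$ invoked above — chiefly the injectivity of $\Spc(\mathbf{D}_b(k(\pp)\WGH))\to\Spc(\mathbf{D}_b(R\WGH))$, which after localising at $\pp$ reduces to injectivity of the spectral map along $\mathbf{D}_b(R_\pp\WGH)\to\mathbf{D}_b(k(\pp)\WGH)$ — together with making the statement of Theorem \ref{primes in spc up to conj} precise enough that its independence of the base ring is transparent. Everything else comes down to the naturality of $\Theta$ and of Balmer's comparison homomorphism, which is formal.
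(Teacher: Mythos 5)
Your proposal is correct and follows essentially the same route as the paper: reduce to fibres of the comparison map $\Spc(\K(G,R))\to\Spec(R)$, use the commutative square induced by $\iota_\pp^\ast$ together with the base-change bijection for $\Spc(\mathbf{D}_b(R\WGH))$, and invoke Theorem~\ref{primes in spc up to conj} to control the fibre over a residue field. The ``hardest part'' you flag — the base-change bijection $\coprod_\pp\Spc(\mathbf{D}_b(k(\pp)\WGH))\xrightarrow{\sim}\Spc(\mathbf{D}_b(R\WGH))$ — is not something you need to prove: it is Lau's theorem, quoted in the paper as Remark~\ref{cohomological primes from residue fields} (via Recollection~\ref{Spc of Dperm}, which transfers Lau's result for lattices to the permutation version), so that step is a citation rather than an obstacle. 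The one place your write-up is slightly more roundabout than the paper is the injectivity step: the paper simply observes that the composite $\Spc(\iota_\pp^\ast)\circ\coprod\check\psi^{G,H}_\pp$ is injective by Theorem~\ref{primes in spc up to conj}, and that $\coprod\check\psi^{G,H}_\pp$ is surjective (indeed bijective) over the field $k(\pp)$ by \cite[Proposition~7.32]{BG23b}, from which injectivity of $\Spc(\iota_\pp^\ast)$ follows formally, whereas you transport the conjugacy relation between $(H_1,y_1)$ and $(H_2,y_2)$ back down to $k(\pp)$ and apply the theorem a second time over the field. The two readings carry the same content — your second application of the theorem over $k(\pp)$ is precisely the Balmer--Gallauer field-case input — so this is a difference of bookkeeping, not of substance.
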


The preceding theorem relies not only on the triangular fixed point maps, but also on the construction of the so-called \emph{Koszul objects}, which play a role analogous to that of Koszul objects in the work of Balmer and Gallauer. Remarkably, their original construction remains valid over an arbitrary commutative Noetherian base ring, provided that $p > 2$. In the case $p = 2$, a refinement is necessary; here, the construction is based on a finite permutation resolution of $R$, as developed in \cite{BG22}. This construction is detailed in Section~\ref{sec: Kosz}.

\subsection*{Topology of the spectrum} Since the space $\Spc(\K(G,R))$ is Noetherian, its topology is completely determined by the inclusion relations among prime ideals. In particular, the preceding result also provides some topological information. However, this is generally insufficient for a complete understanding of the topology. This challenge is already present in the work of Balmer and Gallauer, and addressing it requires the use of more sophisticated tools.

Our general approach is to understand how the topology on \textit{fibers} assembles into the global structure. More precisely, we investigate the topology of $\Spc(\K(G,R))$ via the family of maps induced by the base-change functors 
\[
\iota_\pp^\ast\colon \K(G,R)\to \K(G,k(\pp))
\]
where $k(\pp)$ is the residue field at the prime $\pp \in \Spec(R)$. We leverage the results of Balmer and Gallauer in the case of fields to gain insight. Naturally, the topology of $\Spc(\K(G,k(\pp)))$ is only interesting when the characteristic of the field $k(\pp)$ is equal to $p$, as otherwise it reduces to a single point. This observation already yields useful information. To proceed, we first introduce some notation.

\begin{definition}
Let $G$ be a $p$-group, and let $\eta\colon \Spec(R) \to \Spec(\mathbb{Z})$ denote the structural morphism of $R$. The \textit{modular fiber of $\Spc(\K(G,R))$} is defined as the preimage of the point $(p)$ under the composition
\[
\Spc(\K(G,R))\xrightarrow[]{\rho} \Spec(R)\xrightarrow[]{\eta} \Spec(\mathbb Z)
\]
where $\rho$ denotes Balmer’s comparison map between the triangular spectrum and the Zariski spectrum. The \textit{ordinary fiber of $\Spc(\K(G,R))$} is the set-theoretic complement of the modular fiber.
\end{definition}

The upshot is that we have a good understanding of the ordinary fiber. Specifically, Proposition \ref{ordinary and modular fiber} shows that the ordinary fiber of $\Spc(\K(G,R))$, endowed with the subspace topology, is homeomorphic to $\Spec(R[1/p])$. Consequently, the focus shifts to understanding the modular fiber, as well as any potential specialization relations from the ordinary fiber to the modular fiber. Note that such specialization relations can only occur in this direction, since the modular fiber is a closed subset of $\Spc(\K(G,R))$.

Following the approach of Balmer and Gallauer, a natural next step is to reduce the study further to elementary abelian $p$-sections of $G$. This reduction can indeed be carried out. To make this precise, recall that $\mathcal{E}_p(G)$ denotes the category of $p$-sections of $G$, whose objects are pairs $(H,K)$ of subgroups of $G$ such that $H/K$ is an elementary abelian $p$-group. The relevant result is presented later in this document as Theorem \ref{colimit theorem}. 

\begin{theorem}
  Let $G$ be a finite $p$-group  and $R$ be a commutative Noetherian ring. Then the comparison map 
  \[\varphi\colon \underset{(H,K)\in \mathcal{E}(G)^\textrm{op}}{\colim}  \mathrm{Spc}(\K(H/K,R))\to \mathrm{Spc}(\K(G,R))  \]  is a homeomorphism. 
\end{theorem}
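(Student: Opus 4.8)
The plan is to prove that $\varphi$ is a continuous bijection of Noetherian spaces and then that it is a closed map; a continuous closed bijection is automatically a homeomorphism. Recall first the construction of the cocone: for a $p$-section $(H,K)$ the map $\varphi_{(H,K)}\colon\Spc(\K(H/K,R))\to\Spc(\K(G,R))$ is the composite $\Spc(\res^G_H)\circ\psi^{H,K}$ of the triangular $K$-fixed points map of $H$ followed by the map induced by restriction along $H\leq G$, and by \cite{Gom25} these maps are compatible with the morphisms of $\mathcal{E}(G)$, so they induce the continuous map $\varphi$ out of the colimit. Since $G$ is finite, $\mathcal{E}(G)$ is a finite category; the underlying set of the colimit is therefore a quotient of a finite disjoint union of the Noetherian spaces $\Spc(\K(H/K,R))$, and as a closed subset of such a colimit is exactly a compatible family of closed subsets of the pieces, the colimit is again a Noetherian space.

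Bijectivity I would deduce from the field case by a fibrewise argument. By Theorem~\ref{primes in spc up to conj} and Proposition~\ref{Spc as a set}, applied to each subquotient and naturally in $(H,K)$, base change along the residue fields of $R$ yields a set bijection $\Spc(\K(H/K,R))\cong\coprod_{\pp\in\Spec R}\Spc(\K(H/K,k(\pp)))$ which is natural with respect to the triangular fixed-point and restriction maps, hence compatible with the whole diagram defining the colimit and with the cocone to $\Spc(\K(G,R))$. Since colimits commute with coproducts in $\mathrm{Set}$, this gives a bijection $\colim_{(H,K)}\Spc(\K(H/K,R))\cong\coprod_{\pp}\colim_{(H,K)}\Spc(\K(H/K,k(\pp)))$. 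For $\mathrm{char}\,k(\pp)=p$ the Balmer--Gallauer colimit theorem over a field \cite{BG23b} says that the comparison map $\colim_{(H,K)}\Spc(\K(H/K,k(\pp)))\to\Spc(\K(G,k(\pp)))$ is a homeomorphism, and for $\mathrm{char}\,k(\pp)\neq p$ both sides are a single point (for a $p$-group over a field whose characteristic is different from $p$ the derived category of permutation modules has a one-point spectrum, and the indexing category $\mathcal{E}(G)$ is connected). Combining with the fibrewise description of $\Spc(\K(G,R))$ from Theorem~\ref{primes in spc up to conj} identifies $\varphi$ with the bijection $\coprod_{\pp}\Spc(\K(G,k(\pp)))\to\Spc(\K(G,R))$; in particular $\varphi$ is bijective.

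It remains to show $\varphi$ is closed. Because $\mathcal{E}(G)$ has finitely many objects, a closed subset $Z$ of the colimit satisfies $\varphi(Z)=\bigcup_{(H,K)}\varphi_{(H,K)}(Z_{(H,K)})$, where $Z_{(H,K)}$ is the (closed) preimage of $Z$ in the $(H,K)$-piece, so it suffices to show that each $\varphi_{(H,K)}=\Spc(\res^G_H)\circ\psi^{H,K}$ is a closed map. The map $\Spc(\res^G_H)$ is closed by the theory of separable extensions in tensor-triangular geometry used in \cite{BG23b}: the rigid permutation module $R[G/H]$ is a commutative separable algebra in $\K(G,R)$, the category $\K(H,R)$ is its category of modules, and extension of scalars along it induces a closed map on spectra. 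Hence the whole statement reduces to showing that the triangular fixed points map $\psi^{H,K}$ is closed.

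This last point is, I expect, the main obstacle, and it is exactly where the absence of a tt-functor realising $\psi^{H,K}$ makes itself felt: one cannot invoke any functoriality principle (and ``$\Spc$ of a tt-functor is closed'' is false in general anyway). Instead I would extract closedness from the explicit construction of $\psi^{H,K}$ in \cite{Gom25} --- through the Koszul objects and the commutative squares relating $\psi^{H,K}$ to the comparison map $\Theta$ and to the (manifestly closed) maps of homogeneous prime spectra of the relevant cohomology rings --- together with a description of its image, which, as for the modular fixed-point maps of \cite{BG23b}, should be closed and should control the specialisations in $\Spc(\K(G,R))$, in particular those linking the ordinary and modular fibres. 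Granting this, each $\varphi_{(H,K)}$ is closed, so $\varphi$ is closed and therefore a homeomorphism. The remaining work --- verifying that the residue-field bijections are genuinely compatible with $\varphi$ and with the Balmer--Gallauer identifications --- is routine but somewhat lengthy bookkeeping.
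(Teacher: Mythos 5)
Your bijectivity argument is essentially identical to the paper's (Proposition~\ref{varphi is injective}): reduce fibrewise along residue fields via Proposition~\ref{Spc as a set}, invoke Balmer--Gallauer over residue fields of characteristic $p$, and observe both sides are a point otherwise using connectedness of $\mathcal{E}(G)$. That part is fine.

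The gap is in your closedness strategy. You reduce to showing that each $\varphi_{(H,K)}=\Spc(\Res^G_H)\circ\psi^{H,K}$ is a closed map, and you correctly flag that closedness of $\psi^{H,K}$ is ``the main obstacle.'' But this obstacle is not merely a matter of extraction from \cite{Gom25}: the paper explicitly remarks that, in contrast with the field case, \emph{the maps $\varphi_{(H,K)}$ are not closed in general over a Noetherian base}. The reason is the one you would have to confront: the image of $\psi^{G,N}$ can no longer be identified with the support of $\bigcap_{\mathcal{F}_H}\Ker(\Res^G_K)$, because that tt-ideal is supported only on the modular fiber, while $\psi^{G,N}$ also hits the ordinary fiber. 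Concretely, the image of the generic point of the ordinary fiber under $\psi^{G,N}$ is a non-closed point whose closure contains modular primes that $\psi^{G,N}$ need not reach. So the intermediate claim you aim for is false, even though the union $\varphi(Z)=\bigcup_{(H,K)}\varphi_{(H,K)}(Z_{(H,K)})$ is in fact closed.

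What the paper does instead (after Proposition~\ref{varphi is injective}) is prove that the colimit is a Noetherian \emph{spectral} space (Corollary~\ref{colim is noetherian spectral}, via the soberification/spectral reflection criterion of Lemma~\ref{spectral criteria}) and then apply Lemma~\ref{criteria for open maps}: a continuous spectral bijection of Noetherian spectral spaces is a homeomorphism iff it lifts specializations. Specialization lifting is then handled case-by-case: for primes in the modular fiber, the paper proves (Proposition~\ref{psi is closed normal case}, Corollary~\ref{psi is closed}, Corollary~\ref{transition maps are closed}) that the triangular fixed-point and transition maps \emph{restricted to the modular fiber} are closed, yielding a homeomorphism onto the modular fiber (Lemma~\ref{lemma: rest of varphi is homeo}); for primes in the ordinary fiber, a separate argument uses $\Spc(\K(H,R[1/p]))\cong\Spec(R[1/p])$, connectedness of $\mathcal{E}(G)$, closedness of $\rho^G_H$, the coequalizer description from Recollection~\ref{image of restriction}, and the split injectivity of $\psi^{N,H}$ from Proposition~\ref{split injection normal case} to move specializations across the colimit. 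Your argument as written does not supply any of this; it would need to be replaced by the ordinary/modular dichotomy rather than completed.
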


With the preceding reduction in place, we now focus our attention on elementary abelian $p$-groups. A key tool in the work of Balmer and Gallauer is the so-called \emph{twisted cohomology ring}, which yields a precise description of the spectrum of permutation modules over elementary abelian $p$-groups via the Zariski spectra of certain localizations of this ring. We extend their construction of the twisted cohomology ring to a more general Noetherian base (see Convention \ref{convention on R}) and show that it retains many of the desirable properties known in the field case. In particular, we prove that it remains finitely generated, among other features. These results are developed in Sections \ref{section:invertible objects}, \ref{sec: twisted}, and \ref{sec: comparing}.

It is worth highlighting that the restriction we impose on the ring $R$ when constructing the twisted cohomology ring is purely technical, not conceptual.

We are now prepared to state our main result regarding the topology of the spectrum of permutation modules over elementary abelian $p$-groups and its connection with twisted cohomology. This generalizes \cite[Theorems 10.5 and 10.6]{BG23b}. Further details are given in Corollary \ref{cor: spc is a dirac scheme} and Corollary \ref{cor: comp is open immersion}.

\begin{theorem}
Let $E$ be an elementary abelian $p$-group and $R$ be a commutative Noetherian ring such $\mathrm{ann}_R(p)=R$ or $\mathrm{ann}_R(p)=0$ (e.g., $R=\mathbb{Z}$). Then the comparison map 
\[
\mathrm{Comp}\colon \Spc(\K(G,R))\to \Spec^h(H^{\bullet,\bullet}(E,R))
\]
is an open immersion. Here, $H^{\bullet,\bullet}(E,R)$ denotes the twisted cohomology ring of $E$.  Moreover,  there is an open cover $\{U(H)\}_{H\leq E}$ of $\mathrm{Spc}(\K(E,R))$ such that the sheaf of  $\mathbb Z$-graded rings obtained from the graded endomorphisms rings of the monoidal unit in $\K(E,R)|_{U(H)}$, together with $\Spc(\K(E,R))$, forms a Dirac scheme.  
\end{theorem}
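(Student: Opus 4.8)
The plan is to transpose the argument of Balmer and Gallauer \cite[\S 10]{BG23b} to the base ring $R$, feeding in the structural facts about the twisted cohomology ring $H^{\bullet,\bullet}(E,R)$ proved in Sections~\ref{section:invertible objects}, \ref{sec: twisted}, and~\ref{sec: comparing} (finite generation as an $R$-algebra and compatibility with base change along the residue fields $k(\pp)$), together with the set-theoretic description of $\Spc(\K(E,R))$ and the identification of its ordinary fiber with $\Spec(R[1/p])$ from Proposition~\ref{ordinary and modular fiber}. Here $\mathrm{Comp}$ is Balmer's canonical comparison map to $\Spec^h$ of the bigraded endomorphism ring of the unit, which is automatically continuous; so the content is injectivity, openness onto the image, and a local affine (graded) description. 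Since the theorem only uses the $\mathbb{Z}$-grading for the structure sheaf, I would track both gradings on $H^{\bullet,\bullet}(E,R)$ but remember only the cohomological one when forming $\mathcal{O}$.

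The first step is to fix the open cover. For each $H\leq E$, set $U(H):=\Spc(\K(E,R))\setminus\mathrm{supp}(\kos_H)$, where $\kos_H$ is the Koszul object attached to $H$ from Section~\ref{sec: Kosz} — the classical construction when $p>2$, and the variant built from a finite permutation resolution of $R$ when $p=2$. As in the field case, one identifies $U(H)$ with the cohomological open $\{\zeta_H\neq 0\}$ for a suitable homogeneous class $\zeta_H\in H^{\bullet,\bullet}(E,R)$, and one checks that $U(H)$ contains the image of the triangular $H$-fixed-point map $\psi^{E,H}$. That $\{U(H)\}_{H\leq E}$ covers $\Spc(\K(E,R))$ then follows from the set-theoretic description (Theorem~\ref{primes in spc up to conj}, Proposition~\ref{Spc as a set}): every prime of $\K(E,R)$ lies in the image of some $\psi^{E,H}$, and this may be verified fiberwise over $\Spec(R)$.

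The crux is the local analysis on each $U(H)$. Here I expect $\zeta_H$ to become invertible, so that the idempotent completion of the $\mathbb{Z}$-graded endomorphism ring of $\mathbf{1}$ in $\K(E,R)|_{U(H)}$ is the graded localization $H^{\bullet,\bullet}(E,R)[\zeta_H^{-1}]$ (with its induced $\mathbb{Z}$-grading), and that $\mathrm{Comp}$ restricts to a homeomorphism $U(H)\xrightarrow{\ \sim\ }\Spec^h\bigl(H^{\bullet,\bullet}(E,R)[\zeta_H^{-1}]\bigr)$; that is, each $(U(H),\mathcal{O}|_{U(H)})$ is an affine Dirac scheme. To prove this I would argue fiberwise over $\Spec(R)$: for $\pp$ with $\mathrm{char}\,k(\pp)=p$ the assertion on the fiber $\Spc(\K(E,k(\pp)))$ is \cite[Theorems 10.5 and 10.6]{BG23b}, and for the remaining primes it is the triviality of the ordinary fiber; finite generation of $H^{\bullet,\bullet}(E,R)$ over the Noetherian ring $R$ and the base-change compatibility of the twisted cohomology ring then upgrade these fiberwise statements to the statement over $R$. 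The genuinely delicate point — and the main obstacle — is the interface between the ordinary fiber ($\cong\Spec(R[1/p])$) and the (closed) modular fiber: one must show that the only specialization relations crossing from the ordinary to the modular part are precisely those dictated by $H^{\bullet,\bullet}(E,R)$, and it is exactly here that the hypothesis $\mathrm{ann}_R(p)\in\{0,R\}$ is used, ensuring the two pieces glue along their interface as the graded ring predicts. The $p=2$ case adds the burden of re-running the whole argument with the modified Koszul objects and checking that the classes $\zeta_H$ and the relevant twists survive with the required non-vanishing.

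Finally one glues. The local identifications are all restrictions of the single global map $\mathrm{Comp}$ and of the single global bigraded ring $H^{\bullet,\bullet}(E,R)$, so they agree on overlaps $U(H)\cap U(K)$; hence the presheaf $U(H)\mapsto\mathrm{End}^{\bullet}_{\K(E,R)|_{U(H)}}(\mathbf{1})$ of $\mathbb{Z}$-graded rings glues to a structure sheaf $\mathcal{O}$ making $(\Spc(\K(E,R)),\mathcal{O})$ a Dirac scheme, locally $\Spec^h$ of the graded rings above. And $\mathrm{Comp}$ is an open immersion: it is injective, since it is injective on each fiber over $\Spec(R)$ by the field case and the fibers are disjoint on source and target; and by the previous step it restricts to an open immersion on each $U(H)$, so it is an open immersion onto its image $\bigcup_{H\leq E}\{\zeta_H\neq 0\}\subseteq\Spec^h(H^{\bullet,\bullet}(E,R))$.
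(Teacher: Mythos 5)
Your proposal follows the same general scaffolding as the paper---an affine cover $\{U(H)\}_{H\le E}$, a local identification of each piece with $\Spec^h$ of a localization of $H^{\bullet,\bullet}(E,R)$, injectivity of $\mathrm{Comp}$ checked fiberwise, and glueing---but two points need correcting.

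First, your definition of $U(H)$ as $\Spc(\K(E,R))\setminus\operatorname{supp}(\kos_H)$ is not the one the paper uses, and I don't think it is what you want. The paper (Definition \ref{def:u(H)}) sets $U_E(H)$ to be the finite intersection of principal opens $\operatorname{open}(a_{N,R})$ for $N\in\mathcal{N}$ with $H\not\le N$ and $\operatorname{open}(b_{N,R})$ for $N$ with $H\le N$; equivalently, the locus where the multiplicative set $S_H$ of Definition \ref{def:LH} becomes invertible. The Koszul objects of Section \ref{sec: Kosz} enter the set-theoretic description of the spectrum (Lemma \ref{lem:koszul_conjugate_subgroups_ker}) and the closedness of the triangular fixed-point maps, but they play no role in defining the Dirac cover. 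The complement of $\operatorname{supp}(\kos_H)$ is the image of $\Spc(\Res^E_H)$, which is a different open from $U_E(H)$.

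Second, and more seriously: you flag the interface between the ordinary and modular fibers as ``the main obstacle'' and propose to resolve it by ``upgrading fiberwise statements'' using finite generation of $H^{\bullet,\bullet}(E,R)$ and base-change compatibility, but you give no concrete mechanism for this upgrade. Knowing that $\mathrm{Comp}$ restricts to a homeomorphism on each fiber $\Spc(\K(E,k(\pp)))$, and that both source and target are the disjoint union of their fibers as sets, yields bijectivity (this is exactly how the paper proves Lemma \ref{comaprison is injective}), but bijectivity of a continuous map does not give a homeomorphism. What bridges the gap in the paper is Proposition \ref{prop:L(H) is end-finite}: the Verdier quotient $\mathcal{L}(H,R)$ is shown to be End-finite by an explicit computation (in case $(C4)$ one cannot assume $\mathcal{L}(H,R)$ is generated by the unit; one must show each $\operatorname{Hom}^\ast_{\mathcal{L}(H,R)}(R(E/K),\mathbb{1})$ is finitely generated over $\mathcal{O}^\bullet_E(H)$ via the adjunction and Lemma \ref{aN, bN and CN are generators}). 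End-finiteness then feeds into \cite[Lemma 2.8]{Lau23} to upgrade the continuous bijection $\rho_{\mathcal{L}(H,R)}$ to a homeomorphism (Theorem \ref{thm:comp LH is an homeomorphism}). This is the genuinely new technical content, and your proposal does not supply it. Also note that the hypothesis $\operatorname{ann}_R(p)\in\{0,R\}$ enters already in Convention \ref{convention on R} to fix which of the cases $(C1)$--$(C4)$ applies when constructing the generators $a_N,b_N,c_N$, rather than specifically at the modular/ordinary interface.
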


\subsection*{Outline of our methods}  Let us emphasize that the methods employed throughout this work are heavily inspired by those developed in \cite{Lau23} and \cite{BBIKP}. In particular, we frequently reduce problems to the case of fields via base change along the residue fields of the base ring. More precisely, a key tool is the family of functors
\[
\{\iota_\pp^\ast\colon \T(G,R)\to \T(G,k(\pp))\}_{\pp\in\Spec(R)}
\]
induced by base change along the canonical maps $R \to k(\pp)$. These functors are especially useful because they form a family of jointly-conservative geometric tt-functors; see \cite{Gom25} for further details. 

\subsection*{Examples} We apply the techniques developed in this work, together with results from \cite{BG23b}, to study several explicit examples over the integers, presented in Section \ref{sec: examples}.

We highlight the case of a cyclic $p$-group. In this setting, the modular fiber is well understood thanks to \cite[Section 8]{BG23b}, and the ordinary fiber is also accessible, as it corresponds to $\Spec(\mathbb{Z}[1/p])$. The main challenge lies in analyzing the specialization relations from the ordinary fiber to the modular fiber. We begin with the case of a cyclic group of order $p$, which we handle using the tools previously introduced. We then extend the analysis to arbitrary cyclic groups via the colimit theorem. Let us now present the result.

\begin{equation*}
\kern2em\vcenter{\xymatrix@C=.0em@R=.4em{
{\color{Brown}\overset{(2)}{\bullet}}    \ar@{-}@[Brown][rrrrrrrrrrrrrrrrdddd]
&& {\color{Brown}\overset{(3)}{\bullet}}  \ar@{-}@[Brown][rrrrrrrrrrrrrrdddd]
&& \cdots
&& {\color{OliveGreen}\overset{\mm_0}{\bullet}} \ar@{-}@[OliveGreen][rdd] \ar@{~}@[Gray][rrrrrrrrrrdddd]
&& {\color{OliveGreen}\overset{\mm_1}{\bullet}} \ar@{-}@[OliveGreen][ldd] \ar@{-}@[OliveGreen][rdd] \ar@{~}@[Gray][rrrrrrrrdddd]
&& 
&& {\color{OliveGreen}\overset{\mm_{n-1}}{\bullet}} \ar@{-}@[OliveGreen][ldd] \ar@{-}@[OliveGreen][rdd] \ar@{~}@[Gray][rrrrdddd] 
&& {\color{OliveGreen}\overset{\mm_n}{\bullet}} \ar@{-}@[OliveGreen][ldd] \ar@{~}@[Gray][rrdddd] 
&& {\color{OliveGreen}\cdots}
&& {\color{Brown}\overset{(q)}{\bullet}}  \ar@{-}@[Brown][lldddd]
&& \cdots 
\\ \\ 
&&  
&& 
&&
&  {\color{OliveGreen}\underset{\pp_1}{\bullet}} & 
&& \cdots
&& 
& {\color{OliveGreen}\underset{\pp_n}{\bullet}}&  
&&  
&&  
\\ \\ 
&&
&& 
&& 
&& 
&& 
&& 
&& 
&& {\color{Brown}{\bullet_{(0)}}}   
&& 
}}\kern-1.11em
\end{equation*}
the green region corresponds to the modular fiber, while the brown region represents the ordinary fiber. The points $\mm_i$ and $\pp_j$ are those identified by Balmer and Gallauer under the map induced by the base change functor $\iota_{(p)}^\ast$.

\subsection*{Acknowledgments}
 This work was supported by the Deutsche Forschungsgemeinschaft (Project-ID 491392403 – TRR 358).
UVD would like to thank Henning Krause for inviting him to Bielefeld University in November 2023 and March–April 2025, during which part of this work was carried out. He also acknowledges the support of HRI, Prayagraj.

\subsection*{Notation and Conventions} We follow the standard convention of using `tt’ as an abbreviation for either
`tensor-triangulated’ or `tensor-triangular’, depending on the context. Throughout this work, we assume familiarity with the basics of tt-geometry and freely use notation and terminology from \cite{Bal05} and \cite{Bal10}. Furthermore, we use the term `geometric functor' to refer specifically to a coproduct-preserving tt-functor between rigidly-compactly generated tt-categories.

Let us now highlight some of the most frequently used notation in this text:

\begin{enumerate}
\item We write $\Spc(-)$ for the Balmer spectrum functor, which is defined on essentially small tt-categories and tt-functors.
\item For a graded-commutative ring $R$, we denote its homogeneous spectrum by $\Spec^h(R)$.
\item Calligraphic symbols such as $\mathcal{P}, \mathcal{Q}, \ldots$ are used to denote prime ideals in arbitrary tt-categories.
\item Fraktur symbols such as $\pp, \qq, \ldots$ denote homogeneous prime ideals in a graded ring $R$, while $\aa, \bb, \ldots$ are reserved for \textit{cohomological primes}, i.e., prime ideals in the Balmer spectrum $\Spc(\mathbf{D}_b(\mathrm{mod}(G,R)))$.
\item For a prime ideal $\pp \in \Spec(R)$, we write $k(\pp)$ to denote the residue field of $R$ at $\pp$.
\item The notation $\iota_\pp^\ast$ is used for the functor induced by base change along the residue field $R \to k(\pp)$.
\end{enumerate}

In our treatment of complexes, we adopt homological grading. We also follow standard group-theoretic conventions: for groups $K, H, G$, we write $H^g$ to mean $g^{-1}Hg$, ${}^gH$ to mean $gHg^{-1}$, and $K \leq_G H$ to denote $G$-subconjugation, i.e., $K^g \leq H$ for some $g \in G$.

\section{Preliminaries}

Throughout this text we will work exclusively with finite groups. 

\begin{recollection}
Let $G$ be a group and $R$ be a commutative Noetherian ring.  A permutation $RG$--module $M$ is a $RG$--module isomorphic to the $R$--linearization of a $G$--set $X$. In other words, a permutation $RG$--module is a module with a $R$--basis that is also a $G$--set. In fact, any permutation $RG$--module is a coproduct of modules of the form $R(G/H)$ where $H$ is a subgroup of $G$.

We write $\Perm(G,R)$ to denote the  additive category of permutation $RG$--modules and morphisms $RG$--homomorphisms. We let   $\perm(G,R)$  denote the full subcategory of $\Perm(G,R)$ consisting of finitely generated permutation modules.  In particular, note that $\perm(G,R)$ has countably  many isomorphism classes of objects.  In general, the category $\perm(G,R)$ is not idempotent complete, however, if $k$ is a field of positive characteristic $p$ and $G$ is a  $p$-group, then  $\perm(G,k)$ is idempotent complete.      
\end{recollection}

\begin{definition}\label{def:big derived category}
    The \textit{small derived category of permutation $RG$--modules} $\K(G,R)$ is the idempotent completion of bounded homotopy category $\mathbf{K}_b(\perm(G,R))$. In symbols, 
    \[
    \K(G,R)\coloneqq\mathbf{K}_b(\perm(G,R))^\natural
    \]
    The  \textit{big derived category of permutation $RG$--modules} $\T(G,R)$ is defined as the  localizing subcategory of $\mathbf{K}(\Perm(G,R))$ generated by $\K(G,R)$. 
\end{definition}

\begin{recollection}
  The category $\Perm(G,R)$ is a tensor category with the tensor product given by $\otimes_R$ endowed with the diagonal action of $G$. This tensor product descents to $\mathbf{K}(\Perm(G,R))$, and hence equips $\T(G,R)$ with a tensor structure on which the monoidal unit is given by $R$ viewed as a complex concentrated in degree 0. In fact, 
the big derived category of permutation $RG$--modules $\T(G,R)$ is a rigidly-compactly generated tensor triangulated category, and $\K(G,R)$ sits inside $\T(G,R)$ as the rigid-compact part. We refer to \cite[Section 3]{BG21} for further details.  
\end{recollection}

\begin{remark}
    When $G$ is the trivial group, we have that $\T(1,R)=\mathbf{D}(R)$ and $\K(1,R)=\mathbf{D}_\textrm{perf}(R)$.
\end{remark}

\begin{recollection} \label{Db is a localization of KG}
  In \cite[Section 4]{BG22b}, Balmer and Gallauer showed that there is a localization functor  
  \[\bar{\Upsilon}^G\colon\T(G,R)\to \mathbf{K}_\textrm{perm}(\mathrm{Inj}(RG))\] 
  where the right-hand side category is the localizing subcategory of $\mathbf{K}( \mathrm{Inj}(RG))$ generated by Tate resolutions of permutation $RG$--modules. This functor,  on compacts, gives a localization functor 
  \[
  \Upsilon^G\colon\K(G,R)\to \mathbf{D}_\mathrm{perm}(RG)
  \]
  where the right-hand side category denotes the thick subcategory of $\mathbf{D}_b(RG)$ generated by the finitely generated permutation $RG$-modules. In particular, when $R$ is regular, there are equivalences $\mathbf{K}_\textrm{perm}(\mathrm{Inj}(RG))\simeq \mathbf{K}( \mathrm{Inj}(RG))$ and $\mathbf{D}_\mathrm{perm}(RG)\simeq \mathbf{D}_b(RG)$, see Remark \ref{Db is a localization of KG}.
\end{recollection}

\begin{remark}\label{Def of coho primes}
    Since $\Upsilon^G$ is a localization, we obtain that it induces a continuous injection on Balmer spectra 
    \[
    \Spc(\mathbf{D}_\textrm{perm}(G,R))\to \Spc(\K(G,R))
    \]
    and one often refers to the points in $\Spc(\mathbf{D}_\textrm{perm}(G,R))$ as \textit{cohomological primes}. In fact, when $R$ is a field, the above map has open image by \cite[Proposition 3.22]{BG23b}. We will extend this result to more general ground rings later in this text. 
\end{remark}

\begin{remark}\label{functoriality of K}
    Both categories $\K(G,R)$ and $\T(G,R)$ depend functorially on $G$ and $R$. It is contravariant in the group, and covariant in the ground ring. That is, for a group homomorphisms $\alpha\colon G\to G'$ we obtain a geometric tt-functor 
    \[ \alpha^\ast \colon \T(G',R)\to \T(G,R)\] 
    and we follow the usual convention regarding notation and terminology when $\alpha$ is an inclusion, surjection or conjugation. That is, depending on such properties, we may refer to the functor $\alpha^\ast$ as restriction, inflation or conjugation. On the other hand, for a map of commutative rings $f\colon R\to S$,  base change along $f$ gives a coproduct preserving tensor functor $\Mod(RG)\to \Mod(SG)$. This functor sends (finitely generated) permutation modules to (finitely generated) permutation modules. In particular, we obtain a geometric tt-functor
    \[f^\ast \colon \T(G,R)\to \T(G,S).\]
    Finally, for a prime $\pp$ in $R$, we let $\iota_\pp$ denote the residue field $R\to k(\pp)$. And hence, write $\iota_\pp^\ast\colon \T(G,R)\to \T(G,k(\pp))$ to denote the corresponding base change functor.
\end{remark}

We conclude this section by recalling the following result, which corresponds to \cite[Proposition 3.8]{BG23b}. 

\begin{proposition}\label{Spc is a point in ordinary char}
    Let $k$ be a field and $G$ be a group such that $|G|$ is invertible in $k$. Then $\Spc(\K(G,k))$ is a point. 
\end{proposition}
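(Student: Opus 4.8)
The plan is to show that $\K(G,k)$ is, in this situation, generated as a thick tensor-ideal by the unit, so that its spectrum is a single point by Balmer's theorem that $\Spc$ of a tt-category whose unit generates is local — more precisely, one shows every object is a retract of a finite sum of shifts of $\mathbbm{1}=k$. The key observation is that when $|G|$ is invertible in $k$, the group algebra $kG$ is semisimple, and moreover every finitely generated permutation $kG$-module $k[X]$ is a direct summand of a free module: indeed the averaging idempotent $e=\frac{1}{|G|}\sum_{g\in G} g$ splits off, and more to the point, for any subgroup $H$ the module $k[G/H]$ is a summand of $kG$ since the transfer/relative-trace map $k \to k[G/H] \to k$ (or rather $\frac{1}{[G:H]}$ times the canonical maps $k[G/H]\to k \to k[G/H]$ composed appropriately) is invertible. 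Hence $\perm(G,k)$ consists of projective $kG$-modules, all of which are summands of free modules, i.e. of finite sums of copies of $kG = k[G/1]$.

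So first I would record that every object of $\perm(G,k)$ is a retract of $k[G/1]^{\oplus n}$ for some $n$, using that $[G:H]$ is invertible. Next, I would observe that $k[G/1]=kG$ itself is a retract of the unit: the multiplication map $kG\otimes_k kG \to kG$ (diagonal action) and unit map exhibit... — more directly, since $kG$ is semisimple and $k$ is a field, $kG \cong k^{\oplus \dim kG}$ as a $k$-vector space but we need this $G$-equivariantly; instead use that $k[G/1]\otimes_k V \cong k[G/1]^{\oplus \dim V}$ naturally for any $kG$-module $V$ (the standard fact that induced-from-trivial modules absorb tensor products), applied with $V$ running over the simple summands, or simply note $kG$ lies in the thick subcategory generated by $\mathbbm{1}$ because $kG$ is a summand of $\mathbbm{1}\otimes kG$ and... . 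The cleanest route: $k[G/1] \otimes_k k[G/1] \cong k[G/1]^{\oplus|G|}$, so $k[G/1]$ is $\otimes$-faithful, but for the spectrum we want that $k[G/1]$ is in $\operatorname{thick}^{\otimes}(\mathbbm 1) = \K(G,k)$, which is automatic. What we actually need is that $\Spc(\K(G,k))$ has a single point, equivalently that $\K(G,k)$ has no proper nonzero thick tensor-ideals other than... no: it suffices that the unit generates $\K(G,k)$ as a \emph{thick subcategory} (not just as a tt-ideal). Since every permutation module is a retract of $kG^{\oplus n}$ and $kG \cong \bigoplus_i S_i^{\oplus n_i}$ with each simple $S_i$ a retract of $kG$, it is enough to see each bounded complex of permutation modules is, up to the thick closure, built from $k$. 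As $kG$ is semisimple, $\mathbf{K}_b(\perm(G,k))^\natural$ is equivalent to $\mathbf{D}_b(kG) = \mathbf{D}_b(\mathrm{mod}(kG))$, which since $kG$ is semisimple is just $\bigoplus_i \mathbf{D}_b(k)$ (finitely many copies, one per simple), and on this the unit $k$ (the trivial module) generates the whole thing as a thick tensor ideal provided every simple is $\otimes$-invertible or at least dominates the unit — and it does, because $S_i \otimes_k S_i^{\vee}$ contains $k$ as a summand. Hence $\Spc$ is a point.

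Concretely, the key steps in order: (1) invertibility of $|G|$ in $k$ gives semisimplicity of $kG$ and that each $k[G/H]$ is a summand of a free module, so $\K(G,k)\simeq \mathbf{D}_b(kG)$; (2) since $kG$ is semisimple, this category is a finite product of copies of $\mathbf{D}_b(k)$ indexed by the simple $kG$-modules; (3) each simple module $S$ is rigid with dual $S^\vee$ and $k$ is a retract of $S\otimes_k S^\vee$, so $k$ generates $\K(G,k)$ as a thick $\otimes$-ideal, whence by \cite{Bal05} the spectrum is local, i.e. has a unique closed point; (4) combined with the fact (also from semisimplicity, all objects being "projective and injective" so that the category is "degreewise split") that there are no nontrivial Bousfield localizations, conclude $\Spc(\K(G,k))$ is a single point, $= \Spec^h(k) = \{\ast\}$.

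I expect the main obstacle to be step (3)–(4): one must rule out that the finitely many simple summands contribute \emph{distinct} points of the spectrum. The resolution is that in a tt-category the unit is always \emph{not} a zero-divisor's worth of obstruction — rather, every thick tensor-ideal contains the unit iff it is everything, and here the tensor unit $k$ is a $\otimes$-summand of $S\otimes S^\vee$ for \emph{every} simple $S$, so no proper thick $\otimes$-ideal can contain any nonzero object; hence $\Spc(\K(G,k)) = \{0\}$ is a point. (Equivalently, one can quote directly that for a field $k$ with $|G|$ invertible, $\mathbf{D}_b(kG) \simeq \mathbf{D}_\mathrm{perf}(kG)$ and $kG$ is a finite product of matrix algebras over division rings, each Morita-equivalent to a field, on which the derived category has a one-point spectrum, and the tensor structure forces these points to coincide.)
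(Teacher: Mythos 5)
The paper does not supply a proof here; it simply cites \cite[Proposition~3.8]{BG23b}, so your argument is an independent, self-contained one. Its skeleton is correct: by Maschke's theorem $kG$ is semisimple, every permutation module is projective so the idempotent completion of $\perm(G,k)$ is all of $\mathrm{mod}(kG)$ and $\K(G,k)\simeq\mathbf{D}_b(kG)$; every nonzero object is then a sum of shifted simples, and if every simple $S$ $\otimes$-generates the unit $k$, no nonzero proper thick $\otimes$-ideal can exist, so $\Spc(\K(G,k))=\{0\}$ is a single point. That is the right shape of argument and does prove the proposition.

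Two things need cleaning up, one of them a genuine gap. First, the repeated claim that ``$k$ generates $\K(G,k)$ as a thick $\otimes$-ideal'' proves nothing: the unit \emph{always} generates the whole category as a thick $\otimes$-ideal. What you actually need, and do eventually use, is the converse containment: every nonzero object generates a $\otimes$-ideal containing $k$. (The aside about ``no nontrivial Bousfield localizations'' is likewise a red herring for this question.) Second — and this is the real gap — your justification that $k$ is a $\otimes$-summand of $S\otimes_k S^\vee$ via the trace requires $\dim_k S$ to be invertible in $k$, and this can fail under your hypotheses: take $G=C_3$ and $k=\mathbb{F}_2$, so $|G|=3$ is invertible but $kG\cong\mathbb{F}_2\times\mathbb{F}_4$ has a simple $S=\mathbb{F}_4$ of $k$-dimension $2=0$ in $k$, and the composite $k\to S\otimes S^\vee\to k$ is multiplication by $\mathrm{tr}(\mathrm{id}_S)=\dim_k S=0$. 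The fix is easy and does not use the trace: $S\otimes_k S^\vee\cong\End_k(S)$, and since $|G|$ is invertible the averaging idempotent splits off the $G$-invariants, $\End_k(S)^G=\End_{kG}(S)$, which is nonzero (it contains $\mathrm{id}_S$) and carries the trivial action; hence $k$ is a direct summand of $S\otimes S^\vee$ after all. With that substitution your proof is complete.
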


\section{Triangular fixed points map}\label{sec: triangular fixed points}
%%%%%%%%%%%%%%%%%%%%%%%%%%%%%%%%%%%%%%%%%%%

In this section, we let $G$ be a finite $p$-group and $R$ be an arbitrary commutative Noetherian ring. 

We briefly recall the definition of the triangular fixed points map used in \cite{Gom25} to prove the Noetherianity of the Balmer spectrum of $\K(G,R)$. These maps maps will play an analogous role as the map on triangular spectra induced by modular fixed points functor. We will also recollect some basic properties of these maps.

\begin{recollection}\label{def of K tilde}
Let $N$ be a normal subgroup of $G$. Let $\mathcal{F}_N=\{H\leq G\mid N\not\leq H\}$, and consider the full subcategory of $\perm(G,R)$ given by 
\[ \mathrm{proj}\mathcal{F}_N=\mathrm{add}^\natural\langle R(G/H)\mid H\in \mathcal{F}_N\rangle.
\]
By the Mackey formula, we obtain that this subcategory is stable under the tensor product. In particular, one has an additive monoidal quotient \[\mathrm{Quot}^G_N\colon \perm(G,R)\to \perm(G,R)/\mathrm{proj}\mathcal{F}_N.\]
That is, the objects in the right hand side are permutation $RG$--modules, and  we identify maps such that their difference factor through an element in $\mathrm{proj}\mathcal{F}_N$.
Let $\tilde{\K}(G/N,R)$ denote the category $\mathbf{K}_b(\perm(G,R)/\mathrm{proj}\mathcal{F}_N)^\natural$. In particular, we get a tt-functor \[\mathrm{Quot}^G_N\colon \K(G,R)\to \tilde{\K}(G/N,R).\]
This category will play a relevant role in the rest of this section. 
\end{recollection}

\begin{remark}
    Let $N$ be a normal subgroup of $G$. Recall that there is a tt-functor 
    \[\mathrm{Infl}_{G}^{G/N}\colon \K(G/N)\to \K(G,R)\] given by restriction along the group homomorphism $G\to G/N$. This functor is referred as inflation functor. 
\end{remark}

 We are almost ready to define triangular $N$-fixed points. The following result is \cite[Lemma 6.6]{Gom25}.

\begin{lemma}\label{composite is homeo on spectra}
    The composite functor \[\K(G/N)\xrightarrow{\mathrm{Infl}} \K(G,R)\xrightarrow{\mathrm{Quot}} \tilde{\K}(G/N,R)\]
    induces a homeomorphism on Balmer spectra.
\end{lemma}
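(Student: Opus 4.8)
The plan is to show that the composite $\Phi = \mathrm{Quot}^G_N\circ\mathrm{Infl}_G^{G/N}\colon \K(G/N,R)\to\tilde\K(G/N,R)$ is a tt-equivalence after idempotent completion, from which the homeomorphism on spectra follows formally (by \cite{Bal05}, $\Spc$ of an idempotent-complete tt-category is insensitive to passing to the idempotent completion, and an equivalence induces a homeomorphism). Both sides are, by construction, idempotent completions of bounded homotopy categories of additive monoidal categories, namely $\perm(G/N,R)$ on the source and $\perm(G,R)/\mathrm{proj}\mathcal{F}_N$ on the target. Since $\mathbf{K}_b(-)^\natural$ is a $2$-functor on additive categories that takes additive equivalences to triangulated equivalences, and takes monoidal equivalences to tt-equivalences, it suffices to check that the induced additive monoidal functor
\[
\bar\Phi\colon \perm(G/N,R)\longrightarrow \perm(G,R)/\mathrm{proj}\mathcal{F}_N
\]
is an equivalence of additive categories. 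This is the heart of the matter and reduces to an essentially combinatorial statement about $G$-sets.

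First I would describe $\bar\Phi$ on objects: inflation sends $R((G/N)/(H/N)) = R(G/H)$ (for $N\leq H\leq G$) to itself, and then one applies the quotient functor. So the essential image of $\bar\Phi$ consists of the summands of sums of $R(G/H)$ with $N\leq H$. On the other side, $\perm(G,R)/\mathrm{proj}\mathcal{F}_N$ has as objects all permutation modules $R(G/H)$ for arbitrary $H\leq G$, but the objects with $N\not\leq H$ become zero in the quotient (they lie in $\mathrm{proj}\mathcal{F}_N$), so \emph{every} object is isomorphic in the quotient to one coming from a subgroup containing $N$: essential surjectivity of $\bar\Phi$ follows. For fully faithfulness I would use the standard Mackey-theoretic computation of $\Hom_{RG}(R(G/H), R(G/K))$ as a free $R$-module on the double cosets $H\backslash G/K$, identifying the basis maps with the spans $G/H \leftarrow G/(H\cap{}^gK) \rightarrow G/K$. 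The quotient by $\mathrm{proj}\mathcal{F}_N$ kills precisely those basis maps that factor through some $R(G/L)$ with $N\not\leq L$, i.e.\ (after a short argument) those double cosets $HgK$ for which $H\cap{}^gK$ does not contain $N$; dually, inflation from $G/N$ computes $\Hom$ over double cosets $(H/N)\backslash(G/N)/(K/N) \cong H\backslash G/K$ with the constraint that both $H,K \supseteq N$, and then $H\cap {}^gK \supseteq N$ automatically. Matching these two bases gives the required isomorphism on $\Hom$-groups, compatibly with composition, and compatibility with the tensor products is the Mackey formula again (which is exactly the input already invoked in Recollection~\ref{def of K tilde} to see $\mathrm{proj}\mathcal{F}_N$ is a tensor ideal). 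This shows $\bar\Phi$ is a monoidal equivalence, hence $\Phi$ is a tt-equivalence, hence a homeomorphism on $\Spc$.

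The main obstacle I anticipate is the bookkeeping in the fully-faithfulness step: one must verify that the span/double-coset basis of $\Hom_{RG}(R(G/H),R(G/K))$ behaves correctly under the quotient, i.e.\ that a map factors through $\mathrm{proj}\mathcal{F}_N$ \emph{if and only if} it lies in the $R$-span of the basis elements indexed by double cosets with $N\not\leq H\cap{}^gK$. The ``if'' direction is clear (such a basis map literally factors through $R(G/(H\cap{}^gK))$), but the ``only if'' direction requires knowing that the remaining basis maps are $R$-linearly independent modulo maps factoring through $\mathrm{proj}\mathcal{F}_N$; this is where one uses that the relevant $\Hom$-modules are \emph{free} over $R$ with a basis respecting the stratification by stabilizers, so no collapsing can occur. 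Since this is precisely the content of \cite[Lemma 6.6]{Gom25} as cited, I would either reproduce that argument or simply appeal to it; in a self-contained write-up I would isolate the combinatorial claim as a sublemma about the idempotent-truncation of the permutation-module $\Hom$-algebra and prove it by the double-coset decomposition above.
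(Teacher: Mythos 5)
The central step of your argument — that the additive monoidal functor
\[
\bar\Phi\colon \perm(G/N,R)\longrightarrow \perm(G,R)/\mathrm{proj}\mathcal{F}_N
\]
is an equivalence — is false over a general commutative Noetherian base, and the paper explicitly says so in the remark immediately following the lemma: ``It is easy to find examples where the latter composite is not an equivalence for $R=\mathbb{Z}$. However, we do not know whether the composite $\K(G/N)\to \K(G,R)\to \tilde{\K}(G/N,R)$ is an equivalence.'' So neither the additive-level equivalence nor the triangulated-level equivalence that you appeal to is available.

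The gap is precisely in the spot you flag and then dismiss. You claim that the ``good'' double-coset basis elements of $\Hom_{RG}(R(G/H),R(G/K))$ (those indexed by $HgK$ with $N\leq H\cap {}^gK$) remain $R$-linearly independent modulo maps factoring through $\mathrm{proj}\mathcal{F}_N$, ``so no collapsing can occur.'' Collapsing does occur. Take $R=\mathbb{Z}$, $G=N=C_2$, and $H=K=G$. Then $\Hom_{\mathbb{Z}G}(\mathbb{Z},\mathbb{Z})=\mathbb{Z}$ with basis the identity map, and this is the unique (``good'') double-coset basis element. But the composite $\mathbb{Z}\xrightarrow{\,\mathbf{N}\,}\mathbb{Z}G\xrightarrow{\,\epsilon\,}\mathbb{Z}$ (norm followed by augmentation) is multiplication by $2$, and $\mathbb{Z}G\in\mathrm{proj}\mathcal{F}_N$. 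Hence $2\cdot\mathrm{id}$ factors through $\mathrm{proj}\mathcal{F}_N$, so the induced map on $\Hom$-groups is $\mathbb{Z}\to\mathbb{Z}/2\mathbb{Z}$, not an isomorphism, and $\bar\Phi$ is not fully faithful. Idempotent completion does not repair this, since it does not change $\Hom$-groups. (In the modular case this does not happen because the trace $[G:H]$ vanishes in $k$, which is why the Balmer--Gallauer argument applies there but not over $\mathbb{Z}$.) The paper's proof, in \cite[Lemma~6.6]{Gom25}, must therefore establish the homeomorphism on spectra by some other route — for instance by reducing along residue fields $R\to k(\pp)$, where the modular case is handled by Balmer--Gallauer and the ordinary case is trivial because both spectra are a point — rather than by producing an equivalence of categories.
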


\begin{remark}
    In the modular case, Balmer and Gallauer  showed that the composite from above is indeed an equivalence (see \cite[Proposition 5.4]{BG23b}), so the previous result follows trivially. In fact, they show that such equivalence holds even at the level of additive categories; that this, the composite 
    \[
    \perm(G/N,R)\xrightarrow[]{\mathrm{Infl}} 
\perm(G,R) \xrightarrow[]{\mathrm{Quot}} \perm(G,R)/\mathrm{proj}\mathcal{F}_N
    \]
    is an equivalence of symmetric monoidal additive categories. 
     It is easy to find examples where the latter composite is not an equivalence for $R=\mathbb{Z}$. However, we do not know whether the composite $\K(G/N)\to \K(G,R)\to  \tilde{\K}(G/N,R)$ is an equivalence. 
\end{remark}

\begin{definition}\label{triangular fixed points}
    Let $N$ be a normal subgroup of $G$. The \textit{triangular $N$-fixed points map} is the spectral map given as the composite \[\psi^{G,N}\colon \mathrm{Spc}(\K(G/N,R)) \xrightarrow{\mathrm{Spc}(\mathrm{Quot}\circ\mathrm{Infl})^{-1}} 
\mathrm{Spc}(\tilde{\K}(G/N,R)) \xrightarrow{\mathrm{Spc}(\mathrm{Quot})} \mathrm{Spc}(\K(G,R)). \]
Note that the left hand side map is precisely the inverse of homeomorphisms from Lemma \ref{composite is homeo on spectra}. 

Now, for an arbitrary subgroup $H\leq G$, we define the \textit{triangular $H$-fixed points map} as the composite    
    \[\psi^{G,H}\colon \mathrm{Spc}(\K(\WGH,R)) \xrightarrow{\psi^{N_G(H),H}} \mathrm{Spc}(\K(N_G(H),R))\xrightarrow{\rho^G_{N_G(H)}}\mathrm{Spc}(\K(G,R))\] where $\rho^G_{N_G(H)}$ denotes the map induced by the restriction functor \[\Res^G_{N_G(H)}\colon\K(G,R)\to \K(N_G(H),R).\]
\end{definition}

\begin{remark}
    Let us stress that the triangular fixed points map agrees with the map on Balmer spectra induced by the modular fixed points functor when restricted to the modular case. For further details, see \cite[Corollary 6.8]{Gom25}.
\end{remark}

The following result shows that triangular fixed points maps extend some of the good behavior of modular fixed points on Balmer spectra. 

\begin{proposition}\label{split injection normal case}
    Let $N$ be a normal subgroup of $G$. Then triangular $N$-fixed points is a continuous split injection. 
\end{proposition}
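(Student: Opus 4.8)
The plan is to produce an explicit retraction of the spectral map $\psi^{G,N}$. Recall that by construction $\psi^{G,N} = \Spc(\mathrm{Quot}^G_N) \circ \Spc(\mathrm{Quot}\circ\mathrm{Infl})^{-1}$, and that continuity is automatic since both constituents are spectral maps (the second being the inverse of the homeomorphism of Lemma \ref{composite is homeo on spectra}). So the only real content is the \emph{injectivity}, and the natural way to get that is to exhibit a continuous map $r\colon \Spc(\K(G,R))\to \Spc(\K(G/N,R))$ with $r\circ\psi^{G,N}=\id$. The obvious candidate is $r=\Spc(\mathrm{Infl}_G^{G/N})$, the map induced by the inflation tt-functor $\mathrm{Infl}\colon \K(G/N,R)\to\K(G,R)$. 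Then $r\circ\psi^{G,N}$ is, by functoriality of $\Spc(-)$, the map induced by the composite $\K(G/N,R)\xrightarrow{\mathrm{Infl}}\K(G,R)\xrightarrow{\mathrm{Quot}}\tilde\K(G/N,R)$ followed by $\Spc$ of its inverse — that is, precisely the identity on $\Spc(\K(G/N,R))$. Here I am using that $\Spc$ is a contravariant functor, so $\Spc(\mathrm{Infl})\circ\Spc(\mathrm{Quot}) = \Spc(\mathrm{Quot}\circ\mathrm{Infl})$, which by Lemma \ref{composite is homeo on spectra} is a homeomorphism whose inverse is exactly the first factor appearing in the definition of $\psi^{G,N}$; the two cancel.

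Written out: $r\circ\psi^{G,N} = \Spc(\mathrm{Infl})\circ\Spc(\mathrm{Quot})\circ\Spc(\mathrm{Quot}\circ\mathrm{Infl})^{-1} = \Spc(\mathrm{Quot}\circ\mathrm{Infl})\circ\Spc(\mathrm{Quot}\circ\mathrm{Infl})^{-1} = \id$. This shows $\psi^{G,N}$ is a split monomorphism in the category of spectral spaces and spectral maps, hence in particular a continuous injection, which is the assertion. The argument is essentially formal once Lemma \ref{composite is homeo on spectra} is in hand.

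I expect the only subtlety — and the step worth a sentence of care in the writeup — is making sure the bookkeeping of variances matches: $\mathrm{Quot}$ and $\mathrm{Infl}$ are tt-functors going one way, $\Spc$ reverses arrows, and the "homeomorphism" of Lemma \ref{composite is homeo on spectra} is $\Spc$ of the composite $\mathrm{Quot}\circ\mathrm{Infl}$, so one must check that the factor $\Spc(\mathrm{Quot}\circ\mathrm{Infl})^{-1}$ built into $\psi^{G,N}$ really is left-inverted by $\Spc(\mathrm{Infl})\circ\Spc(\mathrm{Quot})$ and not by the composite in the other order. Since both $\Spc(\mathrm{Quot}\circ\mathrm{Infl})$ and its inverse are homeomorphisms, left and right inverses coincide and this causes no trouble, but it is the one place a careless reader could get confused. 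No representation-theoretic input beyond what is already recalled is needed.
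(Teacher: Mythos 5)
Your proof is correct and coincides with the paper's: both exhibit $\Spc(\mathrm{Infl})$ as the retraction and conclude by functoriality of $\Spc(-)$ applied to the defining composite of $\psi^{G,N}$. The extra remarks on variance bookkeeping are a fair reading of where the only possible confusion lies, but the argument is the same.
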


\begin{proof}
    We claim that the retraction is given by $\mathrm{Spc}(\mathrm{Infl})$. Indeed, this is an easy consequence of the definition of $\psi^{G,N}$. Let us make it explicit with the following diagram:
    \begin{center}
        \begin{tikzcd}[column sep = 5.5em, row sep = 3em]
          \mathrm{Spc}(\K(G/N,R)) \arrow[rr, bend left=20,"\psi^{G,N}"] \arrow[r,"{\mathrm{Spc}(\mathrm{Quot}\circ\mathrm{Infl})^{-1}}"] \arrow[rrd,"\mathrm{Id}"'] &
\mathrm{Spc}(\tilde{\K}(G/N,R)) \arrow[r,"\mathrm{Spc}(\mathrm{Quot})"] & \mathrm{Spc}(\K(G,R)) \arrow[d,"\mathrm{Spc}(\mathrm{Infl})"] \\ 
& & \mathrm{Spc}(\K(G/N,R))
        \end{tikzcd}
    \end{center}
   The result  follows by the functoriality of the Balmer spectrum. 
\end{proof}

\section{Koszul objects and their support}\label{sec: Kosz}

In this section, we will introduce Koszul objects which are analogous to those defined in \cite{BG23b}, and we will describe their support.  These Koszul objects play a crucial role in the set-theoretic description of the Balmer spectrum of $\K(G,R)$. More concretely, for a subgroup $H$ of $G$ we describe a bounded complex of finitely generated permutation $RG$--modules $\widetilde \kos_G(H,R)$ which in particular generates the kernel of the restriction functor $\mathrm{Res}^G_H$ as a tt-ideal. Moreover, this construction is well behaved under base change.

For odd primes, it turns out that tensor-induction is still good enough to produce such objects. However,  for $p=2$, additional work is required. In this case, we produce Koszul objects  by refining the resolutions constructed in the proof of \cite[Theorem 3.1]{BG22}. Let us begin with some generalities on tensor-induction.

Throughout this section, we let $G$ be a finite group and let $R$ be a commutative Noetherian ring. Let us briefly recall tensor-induction for complexes. 

\begin{recollection}\label{tensor-induction}
Let $H$ be a normal subgroup of $G$ of index $n$.  Let $g_1,\ldots, g_n\in G$ be a set of representatives of the cosets $G/H$. Using  the bijection $\{1,\ldots,n\}\to G/H, \, i\mapsto [g_i]$, we get that the $G$-action on $G/H$, by multiplication on the left, induces a group homomorphism $\sigma \colon G\to S_n$. Now, consider the action of $S_n$ on $H^n$ by permuting the factors, and the associated semi-direct product $S_n\ltimes H^n$.  This allows us to define an injective group homomorphism by
\[ i\colon G\to S_n \ltimes H^n, \, g\mapsto (\sigma g, h_1, \ldots, h_n)\]
where $g\cdot g_j =g_{(\sigma g)(j)} \cdot h_j$, for all $j=1,\ldots, n$. 
Now, for a bounded complex $x$ of $RH$--modules we write  
\[{}^\otimes \Ind_H^G(x)\coloneqq i^\ast (x^{\otimes n})\] where $i^\ast$ denotes the restriction functor along $i$. Here the $S_n$-action on $x^{\otimes n}$ involves signs according to the Koszul rule. See \cite[Recollection 3.5]{BG22} for further details. 
\end{recollection}

\begin{remark}
    In the modular case, Balmer and Gallauer defined the so-called Koszul objects $\kos_G(H)$ in the derived category of permutation modules for any subgroup $H$ of $G$ using tensor-induction. Namely, 
    \[
    \kos_G(H)={}^\otimes\Ind_H^G(0\to R\to R\to 0)
    \]
    However, when $R$ is not a field, some signs appear. In particular, ${}^\otimes\Ind_H^G(0\to R\to R\to 0)$ may no longer be a complex of permutation modules, at least for $p=2$. Let us explain how to circumvent this problem. 
\end{remark}

\subsection*{Case $p=2$.} Let $G$ be a 2-group and let $R$ denote a commutative Noetherian ring.

\begin{remark}
   Recall that a $RG$--module $M$ is a sign-permutation module if it has an $R$--basis $X$, such that $G\cdot X\subseteq X\cup (-X)$. 
\end{remark}

\begin{recollection}\label{sign representation}
    Let $H$ be a subgroup of $G$ of index 2. Let $R_{\mathrm{sgn}}$ denote the one-dimensional sign representation of $G/H$, and write $L$ to denote  $\Infl^{G}_{G/H}( R_{\mathrm{sgn}})$. Note that there is a quasi-isomorphism of $RG$--modules which is obtained by inflation: 
\begin{center}
\begin{tikzcd}
\tilde{L}  \arrow[d, " s_H"']  \coloneqq & & \cdots 0 \arrow[r] & R \arrow[r] \arrow[d]  & R(G/H) \arrow[r] \arrow[d] & 0 \\
L \coloneqq & & \cdots 0 \arrow[r] & 0 \arrow[r] & L \arrow[r] & 0
\end{tikzcd}   
\end{center}
    where $L$ is concentrated in degree 0. Note that for any $K\leq H$, the restricted module $\Res^G_K(L)$ is simply the trivial representation, and $\Res^G_K(R(G/H))$ is simply two copies of the trivial module $R$. Hence $\Res^G_K(s_H)$ becomes a map of complexes of permutation modules. In fact, it becomes a homotopy equivalence. We will record this observation in the following lemma. 
\end{recollection}

\begin{lemma}\label{sH is iso}
    With the same notation as in Recollection \ref{sign representation}, we obtain that the map $\Res^G_K(s_H)$ is an isomorphism in $\K(K,R)$ for any $K\leq H$. 
\end{lemma}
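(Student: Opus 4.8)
The plan is to reduce, after restriction to $K$, to a statement about complexes of \emph{trivial} $RK$-modules, where $R$-linearity and $RK$-linearity coincide.

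First I would identify the restricted complexes explicitly. Since $K\le H$ and $H\trianglelefteq G$ has index $2$, the Mackey formula gives $\Res^G_K R(G/H)\cong R\oplus R$ with trivial $K$-action: every intersection $K\cap{}^gH$ equals $K$ (as $H$ is normal and $K\le H$), and the double cosets $K\backslash G/H$ are just the two cosets in $G/H$. Similarly $\Res^G_K(L)=\Res^G_K\Infl^G_{G/H}(R_{\mathrm{sgn}})$ is the trivial module $R$, since $K$ is sent to the trivial subgroup of $G/H$. So every term of $\Res^G_K\tilde L$ and of $\Res^G_K L$ is a finitely generated free $R$-module with trivial $K$-action, and $\Res^G_K(s_H)$ is concretely the map $\bigl(0\to R\xrightarrow{\binom11}R^{2}\to0\bigr)\to\bigl(0\to0\to R\to0\bigr)$, the degree-zero component being $(a,b)\mapsto a-b$.

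Next I would invoke that $s_H$ is a quasi-isomorphism (Recollection~\ref{sign representation}); since restriction is exact, $\Res^G_K(s_H)$ is a quasi-isomorphism as well. Forgetting the $K$-action, both complexes are bounded complexes of finitely generated projective $R$-modules, so this quasi-isomorphism is a homotopy equivalence in $\mathbf K_b(R)$; pick an $R$-linear homotopy inverse $t$ together with $R$-linear homotopies realizing $t\circ\Res^G_K(s_H)\simeq\id$ and $\Res^G_K(s_H)\circ t\simeq\id$. The one point to notice is that an $R$-linear map between modules carrying the trivial $K$-action is automatically $RK$-linear; hence $t$ and both homotopies are morphisms of complexes of permutation $RK$-modules. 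Therefore $\Res^G_K(s_H)$ is a homotopy equivalence already in $\mathbf K_b(\perm(K,R))$, hence an isomorphism there, and so an isomorphism in its idempotent completion $\K(K,R)$.

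I do not expect a genuine obstacle here: the only thing that could go wrong is the passage from ``acyclic cone'' to ``contractible cone in the homotopy category of permutation modules'', and this is exactly what the triviality observation above takes care of. If one prefers to be fully explicit, one can instead write down by hand a contracting homotopy for the acyclic complex $\Res^G_K(\cofib(s_H))\cong\bigl(0\to R\xrightarrow{\binom11}R^{2}\xrightarrow{(1,-1)}R\to0\bigr)$ of trivial $RK$-modules and conclude in the same way.
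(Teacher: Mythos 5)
Your proof is correct and takes essentially the same route as the paper's: both arguments rest on the observations that after restriction to $K\le H$ the complexes have trivial $K$-action with $R$-free terms, that the map is a quasi-isomorphism, and that (because $R$-linear maps between trivially $K$-acted modules are automatically $RK$-linear) the resulting acyclic cone is split exact over $RK$, hence contractible. You simply spell out the implicit "$R$-linear = $RK$-linear" step that the paper compresses into "the cone is split exact."
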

\begin{proof}
    We already observed that the complexes in question are complexes of permutation modules and have trivial $K$--action. Moreover, the terms of both complexes are free as $R$--modules. Hence the cone of $\mathrm{Res}^G_K(s_H)$ is split exact and the result follows. 
\end{proof}

\begin{lemma}\label{lem:Res_vanishing}
    Let $H$ be a subgroup of $G$ of index 2.  Let $x$ be a bounded complex of permutation $RH$--modules. Then the following properties hold. 
    \begin{enumerate}
        \item Let $y$ denote the tensor-induction ${}^\otimes\Ind_H^G(x)$, which is a bounded complex of sign-permutation $RG$--modules but not necessary of permutation $RG$--modules. If $\Res^H_{H'}(x)$ is contractible for $H'\leq H$, then the complex  $\Res^{G}_K(y)$ is a contractible bounded complex, for any $K\leq H'$. 
        \item Assume that $x$ is concentrated in non-negative degrees, and  $x_0=R$ and $x_1=\Ind_K^H(z)$ for some $RK$--module $z$, and some $K\leq H$.   Then ${}^\otimes\Ind_H^G(x)_0=R$ and ${}^\otimes\Ind_H^G(x)_1=\Ind_K^G(z)$. 
    \end{enumerate}
\end{lemma}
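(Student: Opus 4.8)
The two parts are essentially bookkeeping with the explicit formula $^\otimes\Ind_H^G(x) = i^\ast(x^{\otimes 2})$ coming from Recollection \ref{tensor-induction}, specialized to the index-$2$ situation where $S_n = S_2$. The plan is to unwind the embedding $i\colon G\to S_2\ltimes H^2$ and track exactly what happens on restriction to subgroups of $H$, and separately what happens in the bottom two degrees of the complex.

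For part (1): choose coset representatives $g_1 = 1$, $g_2 = g$ for $G/H$, so that $x^{\otimes 2}$ carries the $S_2\ltimes H^2$-action and $y = i^\ast(x^{\otimes 2})$. First I would record the key structural fact: for a subgroup $H' \le H$, the restriction $\Res^G_{H'}(y)$ is, as a complex, built from $\Res^H_{H'}(x) \otimes_R \Res^H_{{}^gH'\cap H}({}^g x)$ up to the swap — more precisely, restricting $i$ along $H' \hookrightarrow G$ lands in $H^2$ (no $S_2$-component, since $H'$ fixes both cosets once $H'\le H$... actually one must be slightly careful: an element $h'\in H'$ has $h'g_1 = g_1 h'$ and $h' g_2 = g_2 (g^{-1}h'g)$, so indeed $\sigma(h')$ is trivial and $i(h') = (1, h', {}^{g^{-1}}\!h')$). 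Hence $\Res^G_{H'}(y) \cong \Res^H_{H'}(x) \otimes_R \operatorname{(conj)}^\ast\Res^H_{H'}(x)$ as complexes of $R$-modules with diagonal $H'$-action, with the Koszul signs from the $S_2$-grading now irrelevant because we are restricting below $H$. Now take $K \le H'$. We are given $\Res^H_{H'}(x)$ contractible; then $\Res^G_K(y)$ is a tensor product (over $R$) of a contractible bounded complex of $R$-modules with a bounded complex of $R$-modules, hence contractible — here I would invoke that the terms are $R$-flat (permutation modules are free as $R$-modules) so the tensor product of a contractible complex with anything is contractible. That gives the claim. The mild subtlety to get right is the conjugation twist and confirming the $S_2$-part of $i$ dies on restriction to $H'$; once that is pinned down the rest is formal.

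For part (2): here we do not restrict, we just compute $^\otimes\Ind_H^G(x)$ in degrees $0$ and $1$ directly from $x^{\otimes 2}$ with its total (Koszul) grading. In degree $0$: $(x^{\otimes 2})_0 = x_0 \otimes_R x_0 = R\otimes_R R = R$, and the $S_2\ltimes H^2$-action is trivial on it (swap of $R\otimes R$ is the identity, no sign in degree $0$), so $i^\ast$ of it is $R$ as claimed. In degree $1$: $(x^{\otimes 2})_1 = (x_1\otimes_R x_0)\oplus(x_0\otimes_R x_1) = x_1 \oplus x_1$, on which $S_2$ swaps the two summands (again no Koszul sign, since one factor sits in degree $0$). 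So as an $S_2\ltimes H^2$-module this degree-$1$ piece is $\Ind$ from the $H^2$-copy $x_1 \otimes x_0$ of the two summands; pulling back along $i$ and using $x_1 = \Ind_K^H(z)$ together with the double-coset / Mackey description of $i^\ast\Ind$ in the wreath product, one identifies $i^\ast$ of the swapped pair $x_1\oplus x_1$ with $\Ind_K^G(z)$. Concretely $\Ind_K^H(z)$ pulled up the left coset $g_1$ and the conjugate copy along $g_2$ assemble, via $K \le H \le G$ and transitivity of induction, into $\Ind_K^G(z)$.

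The main obstacle I anticipate is purely notational: keeping the wreath-product embedding $i$, the coset representatives, the conjugation twists, and the Koszul signs all consistent, and in particular verifying cleanly in part (2) that the $S_2$-swap of the two degree-$1$ summands, combined with $x_1 = \Ind_K^H(z)$, really reassembles to $\Ind_K^G(z)$ — this is a small double-coset computation in $S_2 \ltimes H^2$ that has to be done carefully but presents no conceptual difficulty. Everything else (flatness over $R$, contractibility being preserved under $\otimes_R$ with a bounded flat complex, triviality of signs in degrees $0$ and $1$) is routine.
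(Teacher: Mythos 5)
Your proposal is correct and follows essentially the same route as the paper: for part (1) the paper simply quotes the Mackey formula for tensor‑induction and notes that one tensor factor is a restriction of $x$ below $H'$, hence contractible, while you derive exactly that factorization by hand from the wreath‑product embedding $i\colon G\to S_2\ltimes H^2$; for part (2) both proofs observe that in degree $1$ one gets two swapped copies of $x_1=\Ind^H_K(z)$, i.e. $\Ind^G_H(x_1)=\Ind^G_K(z)$. One small imprecision: the $R$‑flatness you invoke is not what makes the tensor product with a contractible complex contractible (flatness is relevant for preserving acyclicity, not contractibility); the correct and simpler point is that an $H'$‑equivariant contracting homotopy $s$ on one factor yields the $K$‑equivariant homotopy $s\otimes\mathrm{id}$ on the tensor product, so no flatness hypothesis is needed.
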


\begin{proof}
    The first part follows from the Mackey formula for tensor-induction:
    \begin{eqnarray*}
     \Res^{G}_K({}^\otimes\Ind^{G}_{H}(x)) \simeq & \displaystyle\bigotimes_{h \in H \backslash G/K} {}^\otimes \Ind^H_{H \cap {}^hK} ({}^h \Res^{H}_{H \cap {}^hK} (x)). 
    \end{eqnarray*}
    In particular, one of the factors on the right hand side factors is a restriction of our complex of $RH$--modules $x$ which is already contractible by hypothesis. 

    The second assertion follows from the definition of the tensor-induction. In particular, ${}^\otimes\Ind_H^G(x)_1$ consists of two copies of $\Ind^H_K(z)$ and the action of $G$ will permute those copies. Hence it can be identified with $\Ind_K^G(z)$ as we wanted. 
\end{proof}

\begin{lemma}\label{lem:sgn_modification}
    Let $H$ be a subgroup of $G$ of index 2.  Let $x$ be a non-negative bounded acyclic complex of  sign-permutation $RG$--modules satisfying: 
    \begin{enumerate}
        \item $\Res^G_H(x)$ is a complex of permutation $RG$--modules.  
       \item $x_0$ is the trivial module $R$. 
       \item $x_1$ is of the form $\Ind^G_K(z)$ for some $RK$--module $z$, and $K\leq H$. 
       \item $\Res^G_{H'}(x)$ is contractible for some $H'\leq H$.
    \end{enumerate}
    The we can inductively modify $x$ in finitely many steps to get a non-negative bounded acyclic complex $\tilde{x}$ of  permutation 
$RG$--modules satisfying:
\begin{enumerate}
    \item $\tilde{x}_0$ is the trivial module $R$. 
       \item $\tilde{x}_1$ is of the form $\Ind^G_K(z')$ for some $RK$--module $z'$.
       \item $\Res^G_{H'}(\tilde{x})$ is contractible.
\end{enumerate}
\end{lemma}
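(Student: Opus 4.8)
# Proof Plan for Lemma \ref{lem:sgn_modification}

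The plan is to proceed by induction on the length of the complex $x$, using the quasi-isomorphism $s_H \colon \tilde{L} \to L$ from Recollection \ref{sign representation} as the basic modification tool to "correct" sign-permutation summands into genuine permutation summands, one homological degree at a time. The key observation is that a sign-permutation $RG$--module which is not a permutation module, but whose restriction to $H$ is a permutation module, must involve the inflated sign representation $L = \Infl^G_{G/H}(R_{\mathrm{sgn}})$; more precisely, decomposing such a module over the index-$2$ subgroup $H$ and reassembling, any indecomposable sign-permutation summand is either of the form $R(G/J)$ for some $J \le G$ (already a permutation module) or of the form $L \otimes_R R(G/J)$ for some $J \le H$ with $L\otimes_R R(G/J) \cong \Infl(R_{\mathrm{sgn}}) \otimes \Ind_J^G(R)$. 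The first step is to isolate and prove this structural dichotomy for sign-permutation modules over a $2$-group relative to an index-$2$ subgroup.

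Next I would set up the induction. Since $x_0 = R$ and $x_1 = \Ind^G_K(z)$ are already permutation modules by hypotheses (2)–(3), the first place where a genuine sign can appear is in degree $2$ or higher. Suppose $\tilde{x}_0, \ldots, \tilde{x}_{i-1}$ have been arranged to be permutation modules and $\tilde{x}_i$ is a sign-permutation module; write $\tilde{x}_i = P_i \oplus (L \otimes Q_i)$ with $P_i, Q_i$ permutation modules, using the dichotomy above. The modification step replaces the $L \otimes Q_i$ part: one forms the mapping cone (or rather an elementary "expansion") along the map $s_H \otimes Q_i \colon \tilde{L} \otimes Q_i \to L \otimes Q_i$, which is a quasi-isomorphism by Recollection \ref{sign representation} and hence leaves the homotopy type unchanged, and which is a homotopy equivalence after $\Res^G_{H'}$ by Lemma \ref{sH is iso} (applied to each permutation summand of $Q_i$, whose restriction to $H' \le H$ is trivial — so we need $Q_i$ restricted to $H$ to be a sum of modules induced from subgroups of $H$; this should follow from the fact that $L \otimes Q_i$ restricted to $H$ is a permutation module). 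Since $\tilde{L}$ is concentrated in degrees $0$ and $1$ and its entries $R$ and $R(G/H)$ are permutation modules, after this replacement the new degree-$i$ term becomes $P_i \oplus (R(G/H) \otimes Q_i)$, a genuine permutation module, while the degree $i+1$ term picks up a permutation summand $R \otimes Q_i$; no new signs are introduced below degree $i$, and the degree where signs first appear strictly increases. Hypotheses (1)–(3) on the output are checked degree by degree: $\tilde{x}_0 = R$ is untouched, $\tilde{x}_1$ only possibly gains the permutation summand coming from $Q_1$ (but signs start in degree $\ge 2$, so in fact $\tilde{x}_1$ is unchanged), and acyclicity plus boundedness are preserved because we only ever tensored with the acyclic-complex-map $s_H$ and took cones.

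Since $x$ is bounded, the induction terminates after finitely many steps, yielding $\tilde{x}$ with all terms permutation $RG$--modules, still bounded, still acyclic, with $\tilde{x}_0 = R$, $\tilde{x}_1 = \Ind^G_K(z')$ for the appropriate $z'$ (equal to $z$, in fact), and $\Res^G_{H'}(\tilde{x})$ contractible since each modification was a homotopy equivalence after restriction to $H'$ by Lemma \ref{sH is iso}. The main obstacle I anticipate is the bookkeeping in the structural dichotomy for sign-permutation modules and, relatedly, ensuring that the permutation summand $Q_i$ to which we apply $s_H \otimes Q_i$ really does restrict to $H$ as a sum of modules on which $H'$ acts appropriately — i.e., making precise the claim that "the $L$-twisted part of a sign-permutation module whose $H$-restriction is a permutation module is itself $L$ tensored with a permutation module induced up from $H$." Getting this normal form exactly right, and tracking how it interacts with the differentials (so that the modified complex is still a complex and still acyclic), is where the real work lies; the homotopy-theoretic part is essentially formal given Lemmas \ref{sH is iso} and \ref{lem:Res_vanishing}.
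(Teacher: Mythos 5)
The ingredients you identify are exactly the ones the paper uses (the decomposition $M \cong M^+ \oplus (L \otimes M^-)$ with $M^\pm$ permutation, which is \cite[Lemma~3.9]{BG22}, the quasi-isomorphism $s_H$, and a mapping-cone modification), but the direction of your induction is not a cosmetic choice and it does not work as stated. Fixing the \emph{lowest} degree $i$ carrying a sign while keeping degrees $<i$ untouched requires you to produce a new differential $x_{i+1}\oplus Q_i \to P_i\oplus (R(G/H)\otimes Q_i)$, and in particular to lift the component $d_{i+1}^{Q}\colon x_{i+1}\to L\otimes Q_i$ of the old differential through the surjection $\epsilon\otimes Q_i\colon R(G/H)\otimes Q_i \twoheadrightarrow L\otimes Q_i$ (whose kernel is $Q_i$). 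The obstruction to such a $G$-equivariant lift lies in $\operatorname{Ext}^1_{RG}(x_{i+1},Q_i)$, which does not vanish in general: the sequence $0\to R\to R(G/H)\to L\to 0$ is not $RG$-split when $2$ is not invertible in $R$, and sign-permutation modules are not $RG$-projective, so there is no averaging trick. (The lift does exist after $\Res^G_H$, since $L$ trivializes, but it does not descend.) If you instead form $\mathrm{cone}(s_H\otimes t)$ as in the paper but at the bottom of the sign range, the construction tensors the degrees $<i$ by $L$ and thereby destroys the permutation structure you already arranged there. Your claim that ``$\tilde x_0=R$ is untouched'' is the tell: that is exactly the property the actual modification does not have.

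The paper (following \cite[Theorem~3.1]{BG22}) descends precisely to sidestep this. The cone $\mathrm{cone}(s_H\otimes t^m)$ tensors the already-repaired upper part $x^{m+1'}$ with the two-term permutation complex $\tilde L$ and the not-yet-repaired lower part $x^{m+1''}$ with $L$; in degree $m$ the factor $L\otimes L\cong R$ produces a permutation module, while in degrees $<m$ the extra $L$-twist is harmless because those degrees are only required to be sign-permutation until the descent reaches them. The cost is precisely the bookkeeping ``$x^m_0$ is either $R$ or $L$'' and the projection-formula computation $L\otimes\Ind_K^G(z)\cong\Ind_K^G(z)$ for degree $1$; at the final step $m=0$ the requirement that $x^0_0$ be a permutation module forces $x^0_0=R$. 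No lifting problem ever arises. To repair your plan you must reverse the induction and accept the $L$-twist of the lower degrees, i.e.\ run the argument as in the paper.
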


\begin{proof}
    This is essentially contained in the proof of Theorem 3.1 of \cite{BG22}. Hence the task is to verify that such modification of the complex satisfies the required properties. Let $L$ be the sign representation and consider the map $s_H$ from Recollection \ref{sign representation}.  The complex $x$ can be modified inductively; say $x^{m+1}$ is such a way that $x^{ m+1}_j$ is  a permutation $RG$--module for any $j \geq m+1$. Additionally $x^{ m+1}_0$ is either $L$ or $R$ and $x^{ m+1}_1$ is a module of the form $\Ind_K^G(z)$ for some $RK$--module $z$. The modification is done using descending induction. Since the complex is bounded, the conditions on $x$ will hold for some $m+1$ large enough. Hence fix $m+1$ satisfying the previous conditions. 
    Define $x^{ m} \coloneqq \mathrm{cone}(s_H \otimes t^m)$ where $t^m$ is a morphism of complex defined as:
\begin{center}
\begin{tikzcd}
x^{m+1'}  \arrow[d, " t^m"']  \coloneqq &  \cdots  x^{m+1}_{m+2} \arrow[r] \arrow[d] & x^{ m+1}_{m+1} \arrow[r] \arrow[d]  & (x^{ m+1}_m)^{+} \arrow[r] \arrow[d] & 0 \arrow[d]  \cdots \\
x^{m+1 ''} \coloneqq & \cdots   0 \arrow[r] & L \otimes (x^{m+1}_m)^{-} \arrow[r] &  x^{ m+1}_{m - 1} \arrow[r] &  x^{ m+1}_{m - 2} \cdots
\end{tikzcd}   
\end{center}
here we are using the decomposition of $RG$--modules
\[ 
x^{ m+1}_m=(x^{ m+1}_m)^{+} \oplus (L \otimes (x^{m+1}_m)^{-})
\]
given in Lemma 3.9 of \cite{BG22}; in particular both $(x^{ m+1}_m)^{+}$ and $(x^{ m+1}_m)^{-}$ are permutation $RG$--modules. We can check that  $x^{m}_j$ is a permutation $RG$--module for any $j \geq m$ and $x^{m}$ has the required properties which completes the induction step. Indeed, since $L\otimes L\simeq R$ and $\Res^G_K(L)\simeq R$, we obtain that  $x^m_m$ is a permutation module, $x^m_0$ is either $R$ or $L$ and $x^m_1$ is still a module of the form $\Ind_{K}^G(z)$ for some $z$ by the projection formula. Finally, we need to verify this induction step preserves the condition $(3)$. But this follows by Lemma \ref{sH is iso} and our assumption about the contractibility of  $x^{m+1}$ when restricted to $H'\leq H$. That is, $\Res^G_H(s_H\otimes t^m)$ is a homotopy equivalence. Thus the result follows. 
\end{proof}

\begin{definition}\label{def sign mod}
The modification in the above lemma will be referred to as \emph{sign-modifications}.
\end{definition}

\begin{construction}\label{kos in char 2}
    Let $H$ be a subgroup of $G$. We will construct a complex $\widetilde{\kos}_G(H,R)$ of permutation $RG$--modules with the following properties: 
    \begin{enumerate}
        \item $\widetilde{\kos}_G(H,R)_0=R$.
        \item $\widetilde{\kos}_G(H,R)_1$ is of the form $\Ind_H^G(x)$ for some $RH$--module $x$. 
        \item $\mathrm{Res}^G_H(\widetilde{\kos}_G(H,R))\simeq 0$ in $\K(H,R)$.
    \end{enumerate}
The construction is inductive on the index of $H$ in $G$. Consider a normal filtration
\[
H_0 = H \trianglelefteq H_1 \trianglelefteq H_2 \trianglelefteq \ldots \trianglelefteq H_l=G
\]
where $H_i/H_{i-1}$ has order $2$, for any $i=1,\ldots,l$. Now, let $y^0$ denote the complex of $kH_0$--modules $(0\to R\xrightarrow[]{1} R\to 0)$. For $i>0$, we define $y^i$ as the sign modification (see Definition \ref{def sign mod}) of the complex 
${}^\otimes\Ind_{H_{i-1}}^{H_i}(y^{i-1})$. We let $\widetilde{\kos}_G(H)\coloneqq y^l$. Since the base of the inductive definition satisfies the desired properties, so does $\widetilde{\kos}_G(H)$ by Lemma \ref{lem:sgn_modification}. In particular, let us stress that $y^1$ does not need any sign modification. 
\end{construction}

\subsection*{General case.} Let $G$ be a $p$-group. We already mentioned that tensor-induction for odd primes is good enough to produce Koszul objects. Let us record this in a uniform fashion.

\begin{definition}\label{notation for kos}
    Let $H$ be a subgroup of $G$. We write $\widetilde{\kos}_G(H,R)$ to denote either ${}^\otimes\Ind_H^G(0\to R\to R\to 0)$ if $p>2$; or  the complex $\widetilde{\kos}_G(H,R)$ from Construction \ref{kos in char 2} if $p=2$. We will refer to these objects as \textit{Koszul objects}.
\end{definition}

\begin{remark}\label{three conditions satisfied by kos}
    Note that $\widetilde{\kos}_G(H,R)$ is an acyclic complex of permutation $RG$--modules satisfying that $\widetilde{\kos}_G(H,R)_0=R$ and $\widetilde{\kos}_G(H,R)_1$ is module of the form $\Ind_H^G(z)$ for some $RH$--module $z$. Indeed,  for $p>2$, by \cite[Lemma 3.8]{BG22} we know that any sign-permutation module is a permutation module. Moreover, since the complex $0\to R\to R\to 0$ is acyclic, its tensor-induction remains acyclic. Finally, the description of the module in degree 0 and degree follows from the definition of tensor-induction, and it is similar to \cite[Proposition 3.15]{BG23b}. For $p=2$, this follows from construction (see Construction \ref{kos in char 2}).
\end{remark}

We are ready to describe one of the most relevant properties of Koszul objects. In fact, it would be a consequence of the construction and a very general result in \cite[Corollary 3.20]{BG23b}. Let us include the statement here for convenience of the reader. 

\begin{lemma}\label{conditions to generated a given ideal}
    Let $\A$ be a rigid tensor category and $\mathcal{I}$ be a thick tensor ideal of $\mathbf{K}_b(\A)$. Let $x$ be a bounded complex in $\mathcal{I}$ satisfying the following properties. 
    \begin{enumerate}
        \item $x$ is concentrated in non-negative degrees. 
        \item The  thick tensor ideal generated by  $x_0$ contains $\mathcal I$.
        \item $x_1\otimes y\simeq 0$ in $\mathbf{K}_b(\A)$ for any $y\in \mathcal I$.
    \end{enumerate}
    Then $x$ generates $\mathcal{I}$ as a thick tensor ideal. 
\end{lemma}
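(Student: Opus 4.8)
The plan is to deduce the statement from \cite[Corollary 3.20]{BG23b}, which in the reference is phrased for a specific situation; here we want a clean, self-contained restatement with hypotheses (1)--(3), and the argument is essentially the one in that reference. First I would unwind what needs to be shown: we must prove both inclusions of thick tensor ideals, $\langle x\rangle_{\otimes}=\mathcal{I}$. The inclusion $\langle x\rangle_\otimes\subseteq\mathcal I$ is immediate since $x\in\mathcal I$ and $\mathcal I$ is a thick tensor ideal. So the content is the reverse inclusion $\mathcal I\subseteq\langle x\rangle_\otimes$.

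\medskip

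For the reverse inclusion, the key is to analyze the ``stupid'' (brutal) truncations of $x$. Write $x$ as a bounded complex $0\to x_n\to\cdots\to x_1\to x_0\to 0$ concentrated in non-negative degrees, by (1). The plan is to run an induction on the length $n$ of $x$. For $n=0$, $x=x_0$ and hypothesis (2) gives that $\langle x_0\rangle_\otimes\supseteq\mathcal I$ directly. For the inductive step, consider the stupid truncation triangle relating $x$, its top term (a shift of $x_1\to x_0$, or more precisely the subcomplex $\sigma_{\geq 1}x$ which is $x_1\to\cdots\to x_n$ in degrees $\geq 1$), and the quotient $x_0$ sitting in degree $0$. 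Concretely, there is a distinguished triangle in $\mathbf K_b(\A)$ of the form $(\sigma_{\geq1}x)\to x\to x_0\to \Sigma(\sigma_{\geq1}x)$. Tensoring this triangle with an arbitrary $y\in\mathcal I$ and using (3), I would show that $x_1\otimes y\simeq 0$ forces (after a further truncation/dévissage of $\sigma_{\geq1}x$, whose terms from degree $2$ onward are among the $x_i$) that $x_0\otimes y$ lies in $\langle x\rangle_\otimes$ — the point being that $x\otimes y$ and $(\sigma_{\geq 1}x)\otimes y$ become ``interchangeable'' modulo $\langle x\rangle_\otimes$ up to the vanishing of $x_1\otimes y$, and iterating across the finitely many degrees lets one peel off all higher terms so that $x_0\otimes y\in\langle x\rangle_\otimes$. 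Then by (2), since $\langle x_0\rangle_\otimes\supseteq\mathcal I\ni y$, we in fact get that tensoring $y$ with enough copies/cones built from $x_0$ recovers $y$ itself; combined with $x_0\otimes y\in\langle x\rangle_\otimes$ and the ideal property, this yields $y\in\langle x\rangle_\otimes$. Since $y\in\mathcal I$ was arbitrary, $\mathcal I\subseteq\langle x\rangle_\otimes$.

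\medskip

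The step I expect to be the main obstacle is the dévissage that transfers the hypothesis ``$x_1\otimes y\simeq 0$'' (only about degree $1$) into control over the whole truncated complex $\sigma_{\geq 1}x$ tensored with $y$: a priori the higher terms $x_2,\dots,x_n$ need not annihilate $y$. The resolution is to observe that $\sigma_{\geq1}x$ is, up to shift, again a bounded complex concentrated in degrees $\geq 1$ whose degree-$1$ term is $x_1$; after tensoring with $y$ and using $x_1\otimes y\simeq0$, the stupid truncation triangle for $\sigma_{\geq 1}x$ shows $(\sigma_{\geq1}x)\otimes y$ is isomorphic to a shift of $(\sigma_{\geq2}x)\otimes y$, and one repeats. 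Each stage only costs a shift, and after $n$ steps the complex is exhausted; throughout, everything stays inside $\langle x\rangle_\otimes$ because the connecting maps realize each successive object as a cone of a map between objects already known to lie in $\langle x\rangle_\otimes$ (using that $x$ and hence each brutal truncation built from it, tensored with $y$, is in $\langle x\rangle_\otimes$). Once this bookkeeping is set up carefully, the rest is formal manipulation with triangles and the thick-tensor-ideal axioms, exactly as in \cite[Corollary 3.20]{BG23b}.
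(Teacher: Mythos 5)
The paper does not actually prove this lemma; it is quoted verbatim from \cite[Corollary 3.20]{BG23b}, and the sentence before it says ``Let us include the statement here for convenience of the reader.'' So there is no in-paper proof to compare against. Judged on its own merits, your proposed argument has two genuine gaps, the first of which is fatal.

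The central problem is the dévissage step. You write that after using $x_1\otimes y\simeq 0$ to identify $(\sigma_{\geq 1}x)\otimes y$ with $(\sigma_{\geq 2}x)\otimes y$, ``one repeats'' and eventually exhausts the complex. But the repetition would require $x_2\otimes y\simeq 0$, $x_3\otimes y\simeq 0$, and so on, and hypothesis (3) is \emph{only} about $x_1$. For the motivating application (Definition \ref{notation for kos}, Corollary \ref{supp of kos}) the higher terms do \emph{not} annihilate $\mathcal I$: already for $G=C_p$, $H=1$, $\mathcal I=\ker(\Res^G_1)$, the Koszul object $\widetilde{\kos}_G(1,R)$ has top term equal to the monoidal unit $R$, so the top term tensored with a nonzero $y\in\mathcal I$ is $y\neq 0$. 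Concretely $(\sigma_{\geq 1}\widetilde{\kos}_G(1,R))\otimes y$ is a nonzero shift of $y$, not $0$, so the claimed exhaustion is false. Relatedly, your parenthetical justification ``using that $x$ and hence each brutal truncation built from it, tensored with $y$, is in $\langle x\rangle_\otimes$'' is circular: $\sigma_{\geq 1}x\otimes y$ sits in the triangle $\sigma_{\geq 1}x\otimes y\to x\otimes y\to x_0\otimes y$, so $\sigma_{\geq 1}x\otimes y\in\langle x\rangle_\otimes$ is \emph{equivalent} (given $x\otimes y\in\langle x\rangle_\otimes$) to $x_0\otimes y\in\langle x\rangle_\otimes$, which is the very thing you are trying to establish; brutal truncations are not preserved by thick subcategories.

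The concluding step also needs care. From ``$y\in\langle x_0\rangle_\otimes$'' and ``$x_0\otimes y'\in\langle x\rangle_\otimes$ for all $y'\in\mathcal I$'' one cannot conclude $y\in\langle x\rangle_\otimes$ by ``tensoring $y$ with enough copies/cones built from $x_0$.'' Unwinding this: writing $y$ as built from objects $x_0\otimes z$ by cones and retracts, and then tensoring that construction with $y$, only shows $y\otimes y\in\langle x\rangle_\otimes$, not $y\in\langle x\rangle_\otimes$. That conclusion is immediate when $x_0=\mathbb{1}$ (as in the Koszul application), but hypothesis (2) as stated is weaker than that, so the step should be spelled out or the argument should make the reduction explicit. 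In short: the proposal correctly identifies the dévissage as the crux, but the proposed resolution does not work with hypotheses (1)--(3) as written, and a different mechanism is needed to control $\sigma_{\geq 1}x\otimes y$.
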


\begin{corollary}\label{supp of kos}
  Let $H \leq G$. Consider the restriction functor $\Res^G_H\colon \K(G,R)\to \K(H,R)$. Then $\widetilde{\kos}_G(H, R)$ generates $\ker(\Res^G_H)$ as a thick tensor ideal. In symbols,  
  \[
  \ker(\Res^G_H) = \langle \widetilde{\kos}_G(H, R)\rangle.
  \]
  Equivalently, $\mathrm{supp}(\widetilde{\kos}_G(H, R))=\mathrm{supp}(\ker(\Res^G_H))$ in $\Spc(\K(G,R))$. In particular, $\K_{ac}(G, R) = \ker(\Res^G_1) = \langle \widetilde{\kos}_G(1, R) \rangle$.
\end{corollary}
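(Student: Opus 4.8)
The plan is to deduce Corollary \ref{supp of kos} directly from Lemma \ref{conditions to generated a given ideal} applied to the rigid tensor category $\A = \perm(G,R)$ and the thick tensor ideal $\I = \ker(\Res^G_H)$ inside $\mathbf{K}_b(\perm(G,R))$ (passing to idempotent completions is harmless since a thick ideal and its idempotent completion have the same support, and $\widetilde\kos_G(H,R)$ already lives in $\mathbf{K}_b$). First I would record that $\widetilde\kos_G(H,R)$ actually lies in $\ker(\Res^G_H)$: by Remark \ref{three conditions satisfied by kos} (for $p>2$, using that tensor-induction of the acyclic complex $0\to R\to R\to 0$ is acyclic and built from permutation modules via \cite[Lemma 3.8]{BG22}) and by Construction \ref{kos in char 2} together with property (3) of that construction (for $p=2$), the complex $\Res^G_H(\widetilde\kos_G(H,R))$ is zero in $\K(H,R)$, i.e.\ it is contractible.

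Next I would verify the three hypotheses of Lemma \ref{conditions to generated a given ideal} one by one. Condition (1) is immediate: by Remark \ref{three conditions satisfied by kos} the complex is concentrated in non-negative degrees with $\widetilde\kos_G(H,R)_0 = R$. Condition (2) then follows because $\widetilde\kos_G(H,R)_0 = R$ is the monoidal unit, so the thick tensor ideal it generates is everything, and in particular contains $\I$. Condition (3) is the substantive point: we need $\widetilde\kos_G(H,R)_1 \otimes y \simeq 0$ for every $y \in \ker(\Res^G_H)$. Since $\widetilde\kos_G(H,R)_1 = \Ind_H^G(z)$ for some $RH$-module $z$ by Remark \ref{three conditions satisfied by kos}, the projection (Frobenius) formula gives $\Ind_H^G(z)\otimes y \simeq \Ind_H^G(z\otimes \Res^G_H(y))$, and the right-hand side vanishes in $\K(G,R)$ because $\Res^G_H(y)\simeq 0$ and $\Ind_H^G$ is exact and preserves contractible complexes. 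Lemma \ref{conditions to generated a given ideal} then yields $\ker(\Res^G_H) = \langle \widetilde\kos_G(H,R)\rangle$, and the support statement is a formal consequence of the bijection between radical thick tensor ideals and Thomason-closed subsets of $\Spc$, together with the fact that $\Spc(\K(G,R))$ is Noetherian (so every closed set is Thomason) as recalled after Equation \ref{Eq:comparison map}; concretely $\mathrm{supp}(\langle x\rangle) = \mathrm{supp}(x)$ and $\mathrm{supp}(\I) = \bigcup_{x\in\I}\mathrm{supp}(x)$, which coincide once $\I = \langle x\rangle$. The final sentence about $\K_{ac}(G,R)$ is just the special case $H = 1$, noting that $\Res^G_1$ sends a complex to its underlying complex of $R$-modules, whose kernel is exactly the acyclic complexes $\K_{ac}(G,R)$.

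I expect the main obstacle to be the careful bookkeeping in condition (3), specifically making sure the projection formula is being applied correctly at the level of \emph{complexes} in $\mathbf{K}_b(\perm(G,R))$ rather than just in a derived category, and that "$y \in \ker(\Res^G_H)$" is being used in the strong form "$\Res^G_H(y)$ is contractible" (which is the meaning of being zero in $\K(H,R)$, at least after idempotent completion — one should check that an object of $\mathbf{K}_b(\perm(H,R))$ that becomes zero in the idempotent completion is already contractible, or otherwise argue directly with the thick ideal generated). A secondary point worth stating explicitly is the reduction between $\mathbf{K}_b(\perm(G,R))$ and its idempotent completion $\K(G,R)$: Lemma \ref{conditions to generated a given ideal} is phrased for $\mathbf{K}_b(\A)$, so I would either invoke that idempotent completion induces a bijection on radical thick tensor ideals preserving supports, or simply note that $\langle \widetilde\kos_G(H,R)\rangle$ and $\ker(\Res^G_H)$ can both be computed in $\mathbf{K}_b$ first and then completed. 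Everything else is formal manipulation with Balmer's support.
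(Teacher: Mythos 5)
Your proof is essentially the paper's proof: both reduce to Lemma \ref{conditions to generated a given ideal} applied to $\I=\ker(\Res^G_H)$, with the hypotheses supplied by Remark \ref{three conditions satisfied by kos} (and, for $p=2$, Construction \ref{kos in char 2}). You supply more detail than the paper does — in particular you correctly flag and verify that $\widetilde\kos_G(H,R)$ actually lies in $\ker(\Res^G_H)$, and you spell out the projection-formula argument for condition (3), both of which the paper leaves implicit — but the approach is identical.
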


\begin{proof}
This follows from Lemma \ref{conditions to generated a given ideal}. Indeed, the required three properties of $\widetilde{\kos}_G(H, R)$ follow by Remark \ref{three conditions satisfied by kos}.
\end{proof}

\begin{corollary}
    The localization functor $\Upsilon^G\colon \K(G,R)\to \mathbf{D}_{\mathrm{perm}}(RG)$ induces an open inclusion on Balmer spectra. The complement of its image is the support of $\K_{ac}(G,R)$. 
\end{corollary}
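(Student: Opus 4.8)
The statement to prove is that the localization functor $\Upsilon^G\colon \K(G,R)\to \mathbf{D}_{\mathrm{perm}}(RG)$ induces an open inclusion on Balmer spectra, with complement the support of $\K_{ac}(G,R)$. The natural strategy is to invoke the standard tt-geometric dictionary relating Verdier localizations to open subsets of the spectrum. Recall (Balmer's localization theorem, e.g.\ \cite[Proposition 3.11]{Bal05} together with the idempotent-completion statement) that if $q\colon \K\to \K/\J$ is the Verdier quotient by a thick tensor ideal $\J$, then $\Spc(q)$ restricts to a homeomorphism from $\Spc(\K/\J)$ onto the subspace $\{\P\in\Spc(\K)\mid \J\subseteq\P\}$, and this subspace is open precisely when $\J$ is generated as a tt-ideal by a single (or, more generally, by a set of) compact object(s) whose supports have quasi-compact complement. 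So the first step is to identify $\mathbf{D}_{\mathrm{perm}}(RG)$ as the Verdier quotient of $\K(G,R)$ by an explicit thick tensor ideal. By Recollection~\ref{Db is a localization of KG}, $\Upsilon^G$ is a localization functor, so $\mathbf{D}_{\mathrm{perm}}(RG) \simeq \K(G,R)/\ker(\Upsilon^G)$; moreover one checks that $\ker(\Upsilon^G)$ is exactly $\K_{ac}(G,R)$, the ideal of acyclic complexes of permutation modules (a complex maps to zero in $\mathbf{D}_b(RG)$ iff it is acyclic), which is consistent with the notation $\K_{ac}(G,R)$ used in Corollary~\ref{supp of kos}.

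\textbf{Key steps.} First I would record that $\ker(\Upsilon^G) = \K_{ac}(G,R)$; this is essentially the definition of $\Upsilon^G$ on compacts, since $\mathbf{D}_{\mathrm{perm}}(RG)$ is by definition the image of $\K_b(\perm(G,R))$ inside $\mathbf{D}_b(RG)$, so the functor kills precisely the complexes that become zero in the derived category, i.e.\ the acyclic ones. Second, apply Corollary~\ref{supp of kos}: it tells us that $\K_{ac}(G,R) = \langle \widetilde{\kos}_G(1,R)\rangle$ is generated as a thick tensor ideal by the \emph{single} compact object $\widetilde{\kos}_G(1,R)$. Third, invoke the general tt-geometric fact that for a thick tensor ideal $\J = \langle c\rangle$ generated by one object $c$ in an essentially small rigid tt-category $\K$, the subset $U(c) = \{\P\mid \J\subseteq \P\} = \Spc(\K)\setminus\mathrm{supp}(c)$ is a (quasi-compact) open subset, and $\Spc(\K/\J)\to \Spc(\K)$ is a homeomorphism onto $U(c)$ (Balmer, \cite[Proposition 3.11, Corollary 3.14]{Bal05}; note the quotient is automatically idempotent-complete here, or one passes to idempotent completions which does not change the spectrum). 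Combining, $\Spc(\Upsilon^G)$ is a homeomorphism onto the open subset $\Spc(\K(G,R))\setminus\mathrm{supp}(\widetilde{\kos}_G(1,R)) = \Spc(\K(G,R))\setminus\mathrm{supp}(\K_{ac}(G,R))$, which is exactly the asserted statement.

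\textbf{Main obstacle.} The only genuinely non-formal input is the identification $\ker(\Upsilon^G)=\K_{ac}(G,R)$ and the fact that $\K_{ac}(G,R)$ is singly generated as a tt-ideal — but the latter is precisely the content of Corollary~\ref{supp of kos}, which we are entitled to use, and the former is a matter of unwinding the definition of $\Upsilon^G$ from Recollection~\ref{Db is a localization of KG} (a bounded complex of finitely generated permutation modules is sent to $0$ in $\mathbf{D}_b(RG)$ if and only if it is acyclic, since $\mathbf{D}_{\mathrm{perm}}(RG)\subseteq\mathbf{D}_b(RG)$). Thus there is no real obstacle beyond citing the correct form of Balmer's localization/quotient theorem; the subtlety to be careful about is the idempotent-completion bookkeeping — $\K(G,R)$ is defined as an idempotent completion and one must ensure the quotient $\K(G,R)/\K_{ac}(G,R)$ is (equivalent to) $\mathbf{D}_{\mathrm{perm}}(RG)$ on the nose and that idempotent completion does not alter the spectrum, which it does not by \cite[Corollary 3.14]{Bal05}.
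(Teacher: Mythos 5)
Your proof is correct and is precisely the argument the paper has in mind: the paper leaves this corollary without an explicit proof, presenting it as an immediate consequence of Corollary~\ref{supp of kos} (that $\K_{ac}(G,R)=\ker(\Res^G_1)=\langle \widetilde{\kos}_G(1,R)\rangle$ is principal) combined with Recollection~\ref{Db is a localization of KG} (that $\Upsilon^G$ is a localization) and Balmer's standard localization theorem for tt-spectra. Your unwinding of the definitions—identifying $\ker(\Upsilon^G)$ with the acyclic complexes, noting that a tt-ideal generated by a single compact object has closed support with open complement, and keeping track of the idempotent-completion bookkeeping—is exactly the right bookkeeping, and no further ideas are needed.
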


Let us conclude this section with an observation on the behavior of the Koszul objects under base change along the reside fields of $H$. We need some preparation. 

\begin{recollection}\label{restriction under base change}
    Let $H$ be a subgroup of $G$, and let $\pp\in \mathrm{Spec}(R)$. Then we have a commutative square 
    \begin{center}
        \begin{tikzcd}
               \K(G,R) \arrow[r,"\iota_\pp^\ast"] \arrow[d,"\mathrm{Res}_{H,R}"'] & \K(G,k(\pp)) \arrow[d,"\mathrm{Res}_{H,k(\pp)}"] \\
               \K(H,R) \arrow[r,"\iota_\pp^\ast"]  & \K(H,k(\pp)).
        \end{tikzcd}
    \end{center}
    In particular, we have that $\iota_\pp^\ast(\mathrm{Ker}(\mathrm{Res}_{H,R}))\subseteq \mathrm{Ker}(\mathrm{Res}_{H,k(\pp)})$. We will drop the base ring from the notation if the context is clear. 
\end{recollection}

\begin{corollary}\label{supp of kos under base ch}
Let $\pp\in \mathrm{Spec}(R)$. Then $\iota_\pp^\ast(\widetilde{\kos}_G(H,R))$ generates the same thick tensor ideal in $\K(G,k(\pp))$ as $\mathrm{kos}_G(H,k(\pp))$.    
\end{corollary}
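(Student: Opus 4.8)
The plan is to apply the base change functor $\iota_\pp^\ast$ to the defining properties of $\widetilde{\kos}_G(H,R)$ recorded in Remark \ref{three conditions satisfied by kos} and Corollary \ref{supp of kos}, and then invoke Lemma \ref{conditions to generated a given ideal} over the field $k(\pp)$. Concretely, set $\A = \perm(G,k(\pp))$ and $\mathcal{I} = \Ker(\Res^G_{H,k(\pp)})$ inside $\K(G,k(\pp))$. By Corollary \ref{supp of kos} (applied with ground ring $k(\pp)$), this ideal is generated by $\kos_G(H,k(\pp))$; it therefore suffices to show that $x \coloneqq \iota_\pp^\ast(\widetilde{\kos}_G(H,R))$ satisfies the three hypotheses of Lemma \ref{conditions to generated a given ideal} relative to $\mathcal{I}$.

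First I would check membership: by Recollection \ref{restriction under base change} we have $\iota_\pp^\ast(\Ker(\Res_{H,R})) \subseteq \Ker(\Res_{H,k(\pp)})$, and since $\widetilde{\kos}_G(H,R) \in \Ker(\Res^G_{H,R})$ by Corollary \ref{supp of kos}, we get $x \in \mathcal{I}$. Next, the three numbered conditions. Condition (1), concentration in non-negative degrees, is immediate since $\iota_\pp^\ast$ is computed termwise on complexes and $\widetilde{\kos}_G(H,R)$ is non-negatively graded. For conditions (2) and (3), the key point is that $\iota_\pp^\ast$ is a symmetric monoidal functor sending $R$ to $k(\pp)$ and commuting with induction, so by Remark \ref{three conditions satisfied by kos} we have $x_0 = \iota_\pp^\ast(R) = k(\pp)$ and $x_1 = \iota_\pp^\ast(\Ind_H^G(z)) = \Ind_H^G(\iota_\pp^\ast z)$ for the relevant module $z$. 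Condition (2) then holds because the thick tensor ideal generated by the unit $k(\pp)$ is everything, hence contains $\mathcal{I}$. Condition (3) — that $x_1 \otimes y \simeq 0$ for all $y \in \mathcal{I}$ — follows because $x_1$ is induced from $H$: for any $y \in \Ker(\Res^G_{H,k(\pp)})$, the projection formula gives $\Ind_H^G(\iota_\pp^\ast z) \otimes y \simeq \Ind_H^G(\iota_\pp^\ast z \otimes \Res^G_H y) \simeq \Ind_H^G(\iota_\pp^\ast z \otimes 0) \simeq 0$. Applying Lemma \ref{conditions to generated a given ideal} then yields that $x$ generates $\mathcal{I}$, which is the same ideal generated by $\kos_G(H,k(\pp))$, completing the argument.

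The main obstacle, such as it is, is bookkeeping rather than conceptual: one must be slightly careful that for $p = 2$ the complex $\widetilde{\kos}_G(H,R)$ is built by the inductive sign-modification procedure of Construction \ref{kos in char 2}, so the identifications $x_0 = k(\pp)$ and $x_1 = \Ind_H^G(\iota_\pp^\ast z)$ should be read off from Remark \ref{three conditions satisfied by kos} (which already packages the $p=2$ case) rather than from a naive termwise description of tensor-induction. Since $\iota_\pp^\ast$ is exact, monoidal, and compatible with restriction and induction, it transports all the structural features listed there; no new base-change subtlety arises because we only use the first two terms of the complex and the fact that it is acyclic with the stated restriction property. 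I would not belabor the $p>2$ versus $p=2$ distinction in the write-up beyond a one-line remark pointing to Remark \ref{three conditions satisfied by kos}.
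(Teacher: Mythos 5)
Your argument is essentially the same as the paper's: both apply Lemma~\ref{conditions to generated a given ideal} to $\iota_\pp^\ast(\widetilde{\kos}_G(H,R))$, using Recollection~\ref{restriction under base change} for membership in $\Ker(\Res^G_{H,k(\pp)})$ and the monoidality of $\iota_\pp^\ast$ (plus Remark~\ref{three conditions satisfied by kos}) for the degree-$0$ and degree-$1$ identifications, to conclude that $\iota_\pp^\ast(\widetilde{\kos}_G(H,R))$ generates $\Ker(\Res^G_{H,k(\pp)})$. One small citation slip: you invoke Corollary~\ref{supp of kos} with ground ring $k(\pp)$ to say $\kos_G(H,k(\pp))$ generates that kernel, but that corollary concerns $\widetilde{\kos}_G(H,k(\pp))$, not the Balmer--Gallauer object $\kos_G(H,k(\pp))$; the paper instead cites \cite[Proposition~3.21]{BG23b} directly for the untilded object. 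Over a field these agree (for $p>2$ they are literally the same complex, and for $p=2$ in characteristic~$2$ the sign-modifications are vacuous since $-1=1$), so the gap is cosmetic, but you should either say this explicitly or replace the reference with the one from \cite{BG23b}.
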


\begin{proof}
Note that the previous statement is trivial if the characteristic of $k(\pp)$ is not $p$. Then assume otherwise.  We claim that $\iota_\pp^\ast(\widetilde{\kos}_G(H,R))$ generates $\ker(\Res^G_H)$, and hence the result will follow by \cite[Proposition 3.21]{BG23b}. But this follows by another application of Lemma \ref{conditions to generated a given ideal}. Indeed,  $\iota_\pp^\ast(\widetilde{\kos}_G(H,R))\in \ker(\Res^G_H)$ by Recollection \ref{restriction under base change} and satisfies the following conditions: $\iota_\pp^\ast(\widetilde{\kos}_G(H,R))_0=R\otimes_R k(\pp)\cong k(\pp)$ and $\iota_\pp^\ast(\widetilde{\kos}_G(H,R))_1=\Ind_H^G(z)\otimes_R k(\pp)\cong \Ind^G_H(z\otimes_R k(\pp))$ (see Definition \ref{notation for kos} and Remark \ref{three conditions satisfied by kos}). Then  $\iota_\pp^\ast(\widetilde{\kos}_G(H,R))$ generates  $\ker(\Res^G_H)$. 
\end{proof}

%%%%%%%%%%%%%%%%%%%%%%%%%%%%%%%%%%%%%
\section{The Balmer spectrum as a set}
%%%%%%%%%%%%%%%%%%%%%%%%%%%%%%%%%%%%%

In this section, we will describe the points of the Balmer spectrum of $\K(G,R)$ for $G$ a $p$-group and $R$ a commutative Noetherian ring. Let us record the following result which is a reformulation of  \cite[Theorem 6.11]{Gom25} and its proof. Let us first set some notation.

\begin{notation}
  For each subgroup $H$ of $G$, consider the composite map
  \[
  \check \psi^{G,H}\colon V_{\WGH,R}\xrightarrow{\Spc(\Upsilon^{\WGH})} \Spc(\K(\WGH,R))\xrightarrow[]{\psi^{G,H}} \Spc(\K(G,R)) 
  \]
  where $V_{\WGH,R}$ is denotes $\Spc(\mathbf{D}_{\mathrm{perm}}(R\WGH))$. Here $\mathbf{D}_{\mathrm{perm}}(R\WGH)$ denotes the thick subcategory of $\mathbf{D}_b(R\WGH)$ generated by permutation $R\WGH$--modules, see Recollection \ref{Db is a localization of KG} if needed.  
\end{notation}

\begin{theorem}\label{cover of spc KG}
    Let $G$ be a $p$-group and $R$ be a commutative Noetherian ring. Then the maps $\check \psi^{G,H}$ determine a continuous surjection 
  \[
  \coprod_{H\leq G}V_{\WGH,R} \xrightarrow[]{\coprod \check \psi^{G,H}} \Spc(\K(G,R)).
  \]
\end{theorem}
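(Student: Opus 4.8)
The plan is to prove Theorem~\ref{cover of spc KG} by assembling the pieces already set up in the excerpt: the triangular fixed points maps $\psi^{G,H}$, the cohomological comparison maps $\Spc(\Upsilon^{\WGH})$, and the Koszul objects $\widetilde{\kos}_G(H,R)$ together with their support computation in Corollary~\ref{supp of kos}. Continuity is free, since each $\check\psi^{G,H}$ is a composite of spectral maps, and a coproduct of continuous maps is continuous. So the entire content is \emph{surjectivity}, and I would reduce this to the statement that the supports of the images $\mathrm{im}(\check\psi^{G,H})$, as $H$ ranges over subgroups of $G$, cover $\Spc(\K(G,R))$.

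First I would invoke the standard tt-geometric criterion: a family of spectral maps $\{f_i\colon Y_i \to X\}$ into $X = \Spc(\K(G,R))$ is jointly surjective provided that the only object of $\K(G,R)$ whose image under every corresponding tt-functor (or whose support pulls back trivially along every $f_i$) is $0$ is the zero object — equivalently, the tt-ideals $\ker$ have trivial intersection, i.e. $\bigcap_i \ker(\text{corresponding functor}) = 0$. Concretely here, the key input is that an object $x \in \K(G,R)$ with $\Res^G_H(x) = 0$ in $\K(H,R)$ for \emph{every} subgroup $H$ — in particular for $H = 1$ — must be the zero object; but $\Res^G_1(x) = 0$ says $x$ is acyclic, and $\K_{ac}(G,R) = \ker(\Res^G_1)$, so one still has to account for the acyclic part. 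This is exactly why the cohomological spectra $V_{\WGH,R} = \Spc(\mathbf{D}_{\mathrm{perm}}(R\WGH))$ enter: the localization $\Upsilon^G\colon \K(G,R) \to \mathbf{D}_{\mathrm{perm}}(RG)$ has image whose complement is $\mathrm{supp}(\K_{ac}(G,R))$ by the corollary following Corollary~\ref{supp of kos}, so $\mathrm{im}(\check\psi^{G,1}) = \mathrm{im}(\Spc(\Upsilon^G))$ covers precisely $\Spc(\K(G,R)) \setminus \mathrm{supp}(\K_{ac}(G,R))$.

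The remaining task is to cover the closed subset $\mathrm{supp}(\K_{ac}(G,R))$ by the images of the $\check\psi^{G,H}$ with $H \neq 1$, and this is where I expect the real work to lie. The strategy mirrors Balmer--Gallauer: one argues by induction on $|G|$. Given a prime $\P \in \mathrm{supp}(\K_{ac}(G,R))$, one wants to produce a proper subgroup $H < G$ (or rather a nontrivial $H$) and a prime in $V_{\WGH,R}$ mapping to $\P$. The point is that $\P \in \mathrm{supp}(\K_{ac}(G,R))$ means $\P$ contains no object with nonzero homology after some localization — more usefully, there must be a nontrivial subgroup $H$ with $\widetilde{\kos}_G(H,R) \notin \P$, equivalently $\P \notin \mathrm{supp}(\widetilde{\kos}_G(H,R)) = \mathrm{supp}(\ker(\Res^G_H))$, so that $\Res^G_H$ sends the complement of $\P$ faithfully enough; then $\P$ lies in the image of $\Spc(\Res^G_H)$, and one composes with the inductive description of $\Spc(\K(H,R))$ — passing through normalizers and Weyl groups via the definition of $\psi^{G,H}$ — to land $\P$ in the image of some $\check\psi^{\WGH', H'}$. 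The combinatorial bookkeeping of which subgroup to choose, and the verification that the Weyl-group/normalizer factorization in Definition~\ref{triangular fixed points} is compatible with the inductive hypothesis, is the main obstacle; it is precisely the argument carried out in \cite{Gom25} for Theorem~6.11, so I would structure the proof as a careful recollection of that argument, emphasizing that every step used there is available verbatim over a general commutative Noetherian $R$ because (i) the Koszul objects and their support behave well by Corollary~\ref{supp of kos}, and (ii) the triangular fixed points maps and Proposition~\ref{split injection normal case} hold in this generality.

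Finally, I would note the shortcut: since the statement is asserted to be ``a reformulation of \cite[Theorem 6.11]{Gom25} and its proof,'' the cleanest write-up simply cites that theorem for the surjectivity of $\coprod_H \mathrm{Spc}(\mathbf{D}_b(R\WGH)) \to \Spc(\K(G,R))$, observes that $\mathbf{D}_{\mathrm{perm}}(R\WGH)$ and $\mathbf{D}_b(R\WGH)$ have homeomorphic spectra when convenient (or works directly with $V_{\WGH,R}$, whose defining localization only shrinks but never misses points of the relevant image), and checks that the map there is literally $\coprod_H \check\psi^{G,H}$ by unwinding the construction of $\Theta$ in Equation~\eqref{Eq:comparison map} through the triangular fixed points maps. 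The one genuinely new verification needed in the present paper — that the Koszul objects constructed in Section~\ref{sec: Kosz} (in particular the $p=2$ construction via sign-modifications) correctly generate $\ker(\Res^G_H)$ — has already been dispatched in Corollary~\ref{supp of kos}, so the theorem follows.
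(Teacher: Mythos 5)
Your final paragraph is exactly the paper's own treatment: the theorem is recorded with no independent argument, only the observation that it reformulates \cite[Theorem 6.11]{Gom25}, where the switch from $\Spc(\mathbf{D}_b(R\WGH))$ (as in \emph{loc.~cit.}) to $V_{\WGH,R}=\Spc(\mathbf{D}_{\mathrm{perm}}(R\WGH))$ is justified by the homeomorphism established in Recollection~\ref{Spc of Dperm}. So your proposed write-up and the paper's agree.

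Your middle paragraphs, which reconstruct what the cited proof should look like, contain one genuine sign error that would derail the sketch if you tried to carry it through rather than defer to \cite{Gom25}. You write that one wants a nontrivial $H$ with $\widetilde{\kos}_G(H,R)\notin\P$, ``equivalently'' $\P\notin\mathrm{supp}(\widetilde{\kos}_G(H,R))$. Since $\mathrm{supp}(x)=\{\mathcal{Q}\,:\,x\notin\mathcal{Q}\}$, these two conditions are each other's \emph{negations}, not equivalents. The condition that actually places $\P$ in the image of $\Spc(\Res^G_H)$ (via the separable algebra $R(G/H)$) is $\ker(\Res^G_H)\subseteq\P$, that is $\widetilde{\kos}_G(H,R)\in\P$, that is $\P\notin\mathrm{supp}(\ker(\Res^G_H))$. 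Your terminal condition $\P\notin\mathrm{supp}(\ker(\Res^G_H))$ is the right one, but the stated equivalence with $\widetilde{\kos}_G(H,R)\notin\P$ is backwards. Since the paper's proof is the citation and not the sketch, this does not affect the correctness of your proposal overall, but it is precisely the kind of slip that the careful bookkeeping in \cite{Gom25} exists to prevent.
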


\begin{recollection}\label{Spc of Dperm}
    Recall that an $RG$--lattice is an $RG$--module that is projective as $R$--module. Consider the exact structure on the additive category of finitely generated $RG$--lattices induced by the split exact structure on the category of finitely generated projective $R$--modules. See Section \ref{section:invertible objects}. Let $\mathbf{D}_b(G,R)$ denote the bounded  derived category of finitely generated $RG$--lattices associated with this exact structure (see \cite[Section 2.1]{BBIKP} for further details).  This is a tt-category with the monoidal structure induced by $\otimes_R$ with the diagonal action of $G$. By work of Lau \cite{Lau23}, we know that the comparison map  
    \[ \Spc(\mathbf{D}_b(G,R))\to \Spec^h(\End_{\mathbf{D}_b(G,R)}^\ast(R))\]
    is an homeomorphism. Recall that  $\End_{\mathbf{D}_b(G,R)}^\ast(R)$ identifies with the cohomology ring $H^\ast(G,R)$. 

    Now, note that $\mathbf{D}_\mathrm{perm}(RG)\subseteq\mathbf{D}_b(G,R)$. In fact, they agree for $R$ regular. In any case, both categories have the same monoidal unit, and their graded endomorphism ring agree.  By the naturality of the comparison map we get a commutative diagram 
    \begin{center}
        \begin{tikzcd}
             \Spc(\mathbf{D}_b(G,R)) \arrow[r] \arrow[d,"\mathrm{Comp}"']  &  \Spc(\mathbf{D}_\mathrm{perm}(G,R)) \arrow[d,"\mathrm{Comp}"] \\
             \Spec^h(H^\ast(G,R)) \arrow[r,"\cong"] &  \Spec^h(H^\ast(G,R))
        \end{tikzcd}
    \end{center}
    which tells us that the right hand side map is a bijection since the top map is a surjection. Hence it is homeomorphism by \cite[Corollary 2.8]{Lau23} since $\mathbf{D}_\mathrm{perm}(RG)$ is End-finite. This homeomorphisms will be used to translated certain properties of the spectrum of $\mathbf{D}_\mathrm{perm}(RG)$ under base change along residue fields of $R$. 
 \end{recollection}

 For convenience, let us record the following result which appears in the proof of \cite[Theorem 6.11]{Gom25}. 

\begin{proposition}\label{base change and triangular fixed points}
    Let $H$ be a subgroup of $G$, and let $\pp\in \Spec(R)$. Then the following diagram is commutative. 
    \begin{center}
        \begin{tikzcd}
        V_{\WGH,k(\pp)}  \arrow[d,"\Spc(\lambda_\pp^\ast))"'] \arrow[r,"\check \psi^{G,H}_\pp"]  & \Spc(\K(G,k(\pp))) \arrow[d,"\Spc(\iota^\ast_\pp)"] \\ V_{\WGH,R} \arrow[r,"\check \psi^{G,H}"]  & \Spc(\K(G,R))
        \end{tikzcd}
    \end{center}
    where $\lambda_\pp^\ast$ is short for the left derived base change functor $-\otimes^L_Rk(\pp)\colon \mathbf{D}_\mathrm{perm}(R\WGH)\to \mathbf{D}_\mathrm{perm}(k(\pp)\WGH)$. 
\end{proposition}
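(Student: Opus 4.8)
The plan is to peel $\check\psi^{G,H}$ apart into the elementary spectral maps from which it is assembled and to verify compatibility with base change one layer at a time. Set $N=N_G(H)$, so $\WGH=N/H$. Unwinding Definition~\ref{triangular fixed points} together with the notation preceding Theorem~\ref{cover of spc KG}, the map $\check\psi^{G,H}$ is the composite
\[
V_{\WGH,R}\xrightarrow{\Spc(\Upsilon^{\WGH})}\Spc(\K(\WGH,R))\xrightarrow{\Spc(\mathrm{Quot}\circ\Infl)^{-1}}\Spc(\tilde{\K}(\WGH,R))\xrightarrow{\Spc(\mathrm{Quot})}\Spc(\K(N,R))\xrightarrow{\Spc(\Res^G_N)}\Spc(\K(G,R)),
\]
and the analogous factorization holds over $k(\pp)$. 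Every arrow other than the middle one is $\Spc(-)$ of a genuine tt-functor, while the middle arrow is the inverse of the homeomorphism of Lemma~\ref{composite is homeo on spectra}. Since $\Spc(-)$ is functorial on tt-categories and naturally isomorphic tt-functors induce the same spectral map, it suffices to produce, for each of the tt-functors $\Upsilon^{\WGH}$, $\Infl$, $\mathrm{Quot}^{N}_{H}$ and $\Res^G_N$, a square relating its $R$- and $k(\pp)$-incarnations through base change that commutes up to natural isomorphism; pasting the resulting commuting squares of spectral maps along their shared base-change edges then gives the assertion.

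I would record the four compatibilities as follows, each visible already at the level of (complexes of) permutation modules. \emph{Restriction}: this is exactly Recollection~\ref{restriction under base change}, applied with $N$ in place of $H$. \emph{Inflation}: inflation is restriction along $N\twoheadrightarrow\WGH$, and base change commutes strictly with restriction along any group homomorphism already on module categories, hence on $\perm(-,-)$ and on $\K(-,-)$; this is part of the bifunctoriality recorded in Remark~\ref{functoriality of K}. \emph{The additive quotient}: base change $\perm(N,R)\to\perm(N,k(\pp))$ sends $R(N/K)$ to $k(\pp)(N/K)$, hence carries $\mathrm{proj}\mathcal{F}_H$ into $\mathrm{proj}\mathcal{F}_H$; being additive and idempotent-preserving it descends to the quotient categories $\perm(N,-)/\mathrm{proj}\mathcal{F}_H$ and then to $\tilde{\K}(\WGH,-)$, so the square involving $\mathrm{Quot}^N_H$ commutes strictly. \emph{The localization}: $\Upsilon^{\WGH}$ is natural in the base ring, which follows from its construction in \cite{BG22b} via Tate resolutions of permutation modules, using that permutation modules are free over the ground ring so that base change preserves acyclicity; restricting to compacts, and observing that on $\mathbf{D}_{\mathrm{perm}}$ the derived base change $\lambda_\pp^\ast=-\otimes^L_Rk(\pp)$ agrees with ordinary base change (again because permutation modules are $R$-free) and maps $\mathbf{D}_{\mathrm{perm}}(R\WGH)$ into $\mathbf{D}_{\mathrm{perm}}(k(\pp)\WGH)$, one obtains a natural isomorphism $\lambda_\pp^\ast\circ\Upsilon^{\WGH}_R\cong\Upsilon^{\WGH}_{k(\pp)}\circ\iota_\pp^\ast$.

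Granting these, applying $\Spc(-)$ turns each square of tt-functors into a commuting square of spectral maps; three of them fit directly into the factorization above, and the fourth — belonging to $\Infl$ — combines with the one for $\mathrm{Quot}^N_H$ into a commuting square whose horizontal edges are $\Spc(\mathrm{Quot}\circ\Infl)$ over $R$ and over $k(\pp)$. Because these edges are homeomorphisms by Lemma~\ref{composite is homeo on spectra}, the square with those two edges reversed still commutes, and that is precisely the commuting square governing the factor $\Spc(\mathrm{Quot}\circ\Infl)^{-1}$. Pasting the four commuting cells along the common vertical base-change maps then yields the commutativity of the outer rectangle.

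The only step with genuine content is the naturality of $\Upsilon^{\WGH}$ in the base ring — that base change commutes with passage to Tate resolutions and that this compatibility restricts correctly to the compact, permutation-generated part, where it must be matched against the derived functor $\lambda_\pp^\ast$. All the remaining squares are strictly commutative because restriction, inflation and the additive quotient are defined directly in terms of permutation modules, where base change acts transparently; so I expect that step to be where the construction of \cite{BG22b} (and its treatment in \cite{Gom25}) is really being used.
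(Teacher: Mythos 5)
Your proof is correct and follows the expected route; the paper records this proposition as extracted from the proof of \cite[Theorem 6.11]{Gom25} without an independent argument, and your piece-by-piece unwinding of $\check\psi^{G,H}$ into the spectral maps for $\Res^G_N$, $\mathrm{Quot}$, $(\mathrm{Quot}\circ\Infl)^{-1}$ and $\Upsilon^{\WGH}$, verifying base-change compatibility for each and inverting the middle homeomorphism square, is the natural argument. The only point worth sharpening is the phrasing for the $\Upsilon$-square: the relevant fact is not that base change preserves acyclicity but that termwise $R$-flatness of permutation complexes makes the underived base change $\iota_\pp^\ast$ coincide with the derived functor $\lambda_\pp^\ast$ after passage to $\mathbf{D}_{\mathrm{perm}}$, which is indeed what you ultimately invoke.
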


\begin{remark}\label{cohomological primes from residue fields}
    By  \cite{Lau23}, we know that the functors $\lambda_\pp^\ast$, from  the previous proposition, determine a bijection on Balmer spectra: 
    \[
    \coprod_{\pp\in \mathrm{Spec}(R)} V_{G,k(\pp)} \to V_{G,R}
    \] 
    for any finite group $G$ and any commutative Noetherian ring $R$.
\end{remark}

\begin{definition}
     Let $H$ be a subgroup of $G$, and $\aa$ be a \textit{cohomological prime} in  $V_{\WGH,k(\pp)}=\Spc(\mathbf{D}_\textrm{perm}(k(\pp)\WGH))$ for some $\pp\in \Spec(R)$. Define a prime of $\K(G,R)$ by  
    \[
    \P_{G, R}(H, \aa,\pp) \coloneqq \check \psi^{G,H} (\Spc(\lambda_\pp^\ast) (\aa)) = \Spc(\iota^\ast_\pp) (\check \psi^{G,H}_\pp (\aa))\in \Spc(\K(G,  R)).
    \] 
   Note that the equality comes from the square in Proposition \ref{base change and triangular fixed points}. If the context is clear, we will forget about the subscript and simply write $\P(H,\aa,\pp)$. 
\end{definition}

\begin{remark}\label{all points in K(G) are come from check psi}
    By Theorem \ref{cover of spc KG} combined with Remark \ref{cohomological primes from residue fields}, we get that any point of $\mathrm{Spc}(\K(G,R))$ is of the form $\P(H,\aa,\pp)$ for some $H\leq G$, $\pp\in \Spec(R)$ and $\aa\in V_{\WGH,R}$. 
\end{remark}

\begin{corollary}\label{comparison for check psi to residue fields}
     Let $H$ be a subgroup of $G$, and let $\pp\in \Spec(R)$. Then the following diagram is commutative. 
      \begin{center}
        \begin{equation}\label{check psi under base change and comparison}
           \begin{tikzcd}
        V_{\WGH,k(\pp)} \arrow[rr, bend left=25] \arrow[d,"\Spc(\lambda_\pp^\ast))"'] \arrow[r,"\check \psi^{G,H}"]  & \Spc(\K(G,k(\pp)))  \arrow[d,"\Spc(\iota^\ast_\pp)"] \arrow[r] & \Spec(k(\pp))\arrow[d,"\Spec(\iota_\pp)"] \\ V_{\WGH,R} \arrow[r,"\check \psi^{G,H}"] \arrow[rr, bend right=25]  & \Spc(\K(G,R)) \arrow[r] & \Spec(R)
        \end{tikzcd}
    \end{equation}
    \end{center}
    where all unlabeled arrows correspond to the comparison map from Balmer spectra to the homogeneous spectrum of the endomorphism ring of  the monoidal unit. 
\end{corollary}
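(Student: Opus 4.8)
The plan is to regard the diagram in \eqref{check psi under base change and comparison} as the pasting of its two inner squares, together with the assertion that the two curved arrows coincide with the displayed comparison maps, and to verify each of these pieces separately; the full statement is then a formal consequence of pasting.

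First I would dispatch the left-hand square: it is precisely the commutative square recorded in Proposition~\ref{base change and triangular fixed points}, so nothing further is needed there. For the right-hand square the only tool required is the naturality of Balmer's comparison map $\rho$ with respect to tt-functors (see \cite{Bal10}): for $F\colon\mathcal T\to\mathcal T'$ the square with horizontal arrows $\rho$, left vertical $\Spc(F)$, and right vertical $\Spec^h$ of the homomorphism $F$ induces on the graded endomorphism ring $\End^\ast(\mathbf{1})$ of the unit, commutes. I would apply this to the geometric functor $\iota_\pp^\ast\colon\K(G,R)\to\K(G,k(\pp))$. The bookkeeping point is that $\End^\ast_{\K(G,R)}(\mathbf{1})$ is concentrated in degree $0$, where it equals $R$ (and likewise $\End^\ast_{\K(G,k(\pp))}(\mathbf{1})=k(\pp)$), so the two homogeneous spectra in question are just $\Spec(R)$ and $\Spec(k(\pp))$, and $\iota_\pp^\ast$ induces there exactly the residue-field homomorphism $\iota_\pp\colon R\to k(\pp)$. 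Thus the right-hand square is literally an instance of naturality of $\rho$.

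It then remains to identify the curved arrows with the claimed comparison maps. Here I would unwind $\check\psi^{G,H}=\psi^{G,H}\circ\Spc(\Upsilon^{\WGH})$ and recall that $\psi^{G,H}$ is assembled from $\Spc$ of the tt-functors $\Res^G_{N_G(H)}$, $\Infl$ and $\mathrm{Quot}$ via the homeomorphism of Lemma~\ref{composite is homeo on spectra}. Applying naturality of $\rho$ to each of these functors, I expect the contributions of $\mathrm{Quot}$ and $\Infl$ on $\End^\ast(\mathbf{1})$ to cancel against each other — so that one never has to pin down $\End^\ast_{\tilde{\K}(\WGH,R)}(\mathbf{1})$ — the functor $\Res^G_{N_G(H)}$ to contribute the identity of $R$, and $\Upsilon^{\WGH}$ to contribute the structural inclusion $R=H^\ast(\WGH,R)_0\hookrightarrow H^\ast(\WGH,R)$ of Recollection~\ref{Spc of Dperm}. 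Consequently $\rho_{\K(G,R)}\circ\check\psi^{G,H}$ equals the comparison map of $V_{\WGH,R}$ followed by the structure morphism $\Spec^h(H^\ast(\WGH,R))\to\Spec(R)$, which is the bottom curved arrow; over a field the analogous statement is automatic because $\Spec(k(\pp))$ is a single point. Pasting the two squares with these identifications yields the commutativity of \eqref{check psi under base change and comparison}.

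I do not anticipate a genuine obstacle: the corollary is essentially a diagram chase powered by the naturality of $\rho$ and by the already-established Proposition~\ref{base change and triangular fixed points}. The only mildly delicate bookkeeping — determining the homomorphisms that the functors building $\psi^{G,H}$ induce on endomorphism rings of units — is defused by the cancellation noted above and by the elementary computation that the unit of $\K(G,R)$ has graded endomorphism ring $R$ concentrated in degree $0$.
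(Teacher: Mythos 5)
Your proposal is correct and follows essentially the same route as the paper: the left square is Proposition~\ref{base change and triangular fixed points}, the right square is naturality of the comparison map, and the curved arrows are handled by unwinding $\check\psi^{G,H}$ and applying naturality of $\rho$ to each constituent tt-functor, with the $\mathrm{Infl}$ and $\Res$ contributions trivial and the two occurrences of $\mathrm{Quot}$ cancelling, exactly as you predicted (and so one indeed never needs to know $\End^\ast_{\tilde{\K}(\WGH,R)}(\mathbb{1})$). The paper phrases this last step as a single diagram with $\Spec(R)$ at the centre and observes that every small triangle commutes by naturality; your version makes the induced maps on endomorphism rings explicit, which is a slightly more verbose but equivalent account. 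One small bookkeeping remark: since the paper's unlabeled vertical arrows land in $\Spec(R)$ and $\Spec(k(\pp))$, the map out of $V_{\WGH,R}$ should be read as the composite of the graded comparison map with the projection $\Spec^h(H^\ast(\WGH,R))\to\Spec(R)$ (or, equivalently, as the comparison map to the central degree-$0$ ring), in which case the contribution of $\Upsilon^{\WGH}$ on degree-$0$ endomorphisms is just the identity of $R$ rather than the structural inclusion you name; this does not affect the argument.
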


\begin{proof}
    The left square is commutative by Proposition  \ref{base change and triangular fixed points}; the right square is commutative from the naturality of the comparison map. For the bottom triangle the strategy is as follows. Unpack the definition of $\check\psi^{G,H}$ and consider the following diagram.  
    \begin{center}
    \begin{tikzcd}
   \Spc(\K(\WGH,R)) \arrow[dr]  & \arrow[l,"\xi"'] \Spc(\tilde{\K}(\WGH,R)) \arrow[r,"\xi'"] \arrow[d] & \Spc(\K(N_G(H),R)) \arrow[ld] \arrow[d,"\rho^G_{N_G(H)}"] \\ V_{\WGH,R} \arrow[u,"\Spc(\Upsilon^{\WGH})"] \arrow[r]   & \Spec(R) & \Spc(\K(G,R)) \arrow[l] 
    \end{tikzcd}
\end{center}
where $\xi$ and $\xi'$ denote $\Spc(\mathrm{Quot}\circ\mathrm{Infl})$ and $\Spc(\mathrm{Quot})$, respectively; and all the unlabeled arrows are the respective comparison map between Balmer and Zariski spectra. Now, the naturality of the comparison map tells us that each of the small triangles is commutative. It follows that the bottom triangle in \ref{check psi under base change and comparison} is commutative. In the same fashion, one verifies the commutativity of the upper triangle in \ref{check psi under base change and comparison}. This completes the result.  
\end{proof}

\begin{corollary}\label{cohomological primes over different fibers}
    Let $\P(H,\aa,\pp)$ and $\P(K,\aa',\pp')$ be points in $\Spc(\K(G,R))$ for some $H,K\leq G$, $\pp,\pp'\in \Spec(R)$ and $\aa\in V_{\WGH,k(\pp)}$ and $\aa'\in V_{\WGK,k(\pp')}$. If $\pp\not=\pp'$. Then $\P(H,\aa,\pp)\not=\P(K,\aa',\pp')$.
\end{corollary}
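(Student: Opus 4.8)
The plan is to distinguish the two primes by pushing them forward to $\Spec(R)$ along Balmer's comparison map $\rho\colon \Spc(\K(G,R))\to\Spec(R)$ (here $\End^\ast_{\K(G,R)}(R)$ is concentrated in degree $0$ and equals $R$, so $\Spec^h$ is just $\Spec(R)$), and to show that $\rho(\P(H,\aa,\pp))=\pp$ for every admissible triple $(H,\aa,\pp)$. Granting this, the hypothesis $\pp\neq\pp'$ gives $\rho(\P(H,\aa,\pp))=\pp\neq\pp'=\rho(\P(K,\aa',\pp'))$, so the two primes cannot coincide.

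To compute $\rho(\P(H,\aa,\pp))$, recall that by definition $\P(H,\aa,\pp)=\check\psi^{G,H}(\Spc(\lambda_\pp^\ast)(\aa))$. Commutativity of the lower triangle in the diagram of Corollary~\ref{comparison for check psi to residue fields} identifies $\rho\circ\check\psi^{G,H}$ with the comparison map $V_{\WGH,R}\to\Spec(R)$ of $\mathbf{D}_{\mathrm{perm}}(R\WGH)$, so $\rho(\P(H,\aa,\pp))$ is the image of $\Spc(\lambda_\pp^\ast)(\aa)$ under that map. Next, the outer square of the same diagram shows this image equals $\Spec(\iota_\pp)$ applied to the image of $\aa$ under the comparison map $V_{\WGH,k(\pp)}\to\Spec(k(\pp))$; since $\Spec(k(\pp))$ is a single point whose image under $\Spec(\iota_\pp)$ is the prime $\pp$, we conclude $\rho(\P(H,\aa,\pp))=\pp$. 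Running the identical argument for $(K,\aa',\pp')$ yields $\rho(\P(K,\aa',\pp'))=\pp'$, which finishes the proof.

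The argument is a pure diagram chase assembled from Proposition~\ref{base change and triangular fixed points} and Corollary~\ref{comparison for check psi to residue fields}, so I do not foresee a genuine obstacle. The one thing to keep straight is the collection of comparison maps — for $\K(G,R)$, for $\mathbf{D}_{\mathrm{perm}}(R\WGH)$, and for the analogous categories over $k(\pp)$ — together with their compatibility over $\Spec(R)$; but that compatibility is precisely the content of the cited diagram, so no new input is required.
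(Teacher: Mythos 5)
Your proof is correct and follows the same strategy as the paper: both push the primes forward to $\Spec(R)$ via Balmer's comparison map and invoke Corollary~\ref{comparison for check psi to residue fields} to conclude that $\P(H,\aa,\pp)$ lies over $\pp$. You merely spell out the diagram chase that the paper leaves implicit.
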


\begin{proof}
    This follows from Corollary \ref{comparison for check psi to residue fields}. Indeed, using the  comparison map from $\Spc(\K(G,R))$ to $\Spec(R)$,  we obtain that $\mathrm{Comp}(\P(H,\aa,\pp))=\pp$ and similarly  $\mathrm{Comp}(\P(K,\aa',\pp'))=\pp'$.
\end{proof}

Recall the Koszul objects from Definition \ref{notation for kos}. We now explain a relation between these objects and the primes $\P(H,\aa,\pp)$.

\begin{lemma}\label{lem:koszul_conjugate_subgroups_ker}
     Let $H$ and $H'$ be subgroups of $G$, $\pp\in \Spec(R)$ and  $\aa \in V_{\Weyl{G}{H'},k(\pp)}$. Suppose that the characteristic of $k(\pp)$ is $p$. Then $\widetilde{\kos}_G(H,R)\in \P_{G, R}(H', \aa,\pp) $ if and only if $H' \leq_G H$. 
\end{lemma}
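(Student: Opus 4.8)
The statement is a base-changed, equivariant analogue of the well-understood fact in the modular case, so my plan is to reduce it to a statement about the residue field $k(\pp)$ and then invoke the field-case results of Balmer--Gallauer. The key observation is that $\P_{G,R}(H',\aa,\pp) = \Spc(\iota_\pp^\ast)(\check\psi^{G,H'}_\pp(\aa))$ by definition, so membership $\widetilde{\kos}_G(H,R)\in\P_{G,R}(H',\aa,\pp)$ is equivalent, by the defining property of the map on spectra induced by a tt-functor, to the membership $\iota_\pp^\ast(\widetilde{\kos}_G(H,R))\in\check\psi^{G,H'}_\pp(\aa)$ in $\Spc(\K(G,k(\pp)))$. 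Now Corollary \ref{supp of kos under base ch} tells us that $\iota_\pp^\ast(\widetilde{\kos}_G(H,R))$ generates the same thick tensor ideal in $\K(G,k(\pp))$ as $\kos_G(H,k(\pp))$, so the membership is equivalent to $\kos_G(H,k(\pp))\in\check\psi^{G,H'}_\pp(\aa)$.

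Having landed entirely in the field case, I would then cite the corresponding computation from \cite{BG23b} (in the form used there for the support of Koszul objects against the primes in the image of the modular fixed-point maps), together with the identification in \cite{Gom25} of $\check\psi^{G,H'}_\pp$ with the map on spectra induced by the modular $H'$-fixed points functor — valid since $\mathrm{char}(k(\pp))=p$. The point is that the modular $H'$-fixed points functor $\K(G,k(\pp))\to\K(\WGH',k(\pp))$ sends $\kos_G(H,k(\pp))$ to an object that is trivial on applying $\Spc(\Upsilon^{\WGH'})$-pulled-back primes precisely when $H'\leq_G H$; concretely one checks that $\Psi^{H'}(\kos_G(H,k(\pp)))$ is zero in $\K(\WGH',k(\pp))$ when $H'\leq_G H$ (so the Koszul object lies in every prime of the form $\check\psi^{G,H'}_\pp(\aa)$), and when $H'\not\leq_G H$ it restricts to a nonzero object whose support is the whole cohomological spectrum $V_{\WGH',k(\pp)}$ via the restriction-kernel description of Corollary \ref{supp of kos} applied inside $\WGH'$ — so it lies in no such prime.

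The one point that needs care, and which I expect to be the main obstacle, is the interaction between tensor-induction / modular fixed points and subconjugacy: one must verify that $\Psi^{H'}(\kos_G(H,k(\pp)))\simeq 0$ exactly when $H'\leq_G H$. The forward direction follows from the Mackey-type formula for the composite of modular fixed points with tensor-induction (analogous to the formula used in the proof of Lemma \ref{lem:Res_vanishing}(1)): when $H'\leq_G H$ one of the tensor factors is a restriction of the contractible complex $(0\to R\to R\to 0)$ and hence the whole tensor product is contractible. For the converse, when $H'\not\leq_G H$ the same Mackey formula shows no factor becomes contractible, and one identifies $\Psi^{H'}(\kos_G(H,k(\pp)))$ up to generating the same ideal with a Koszul object $\kos_{\WGH'}(?,k(\pp))$ for a \emph{proper} subgroup, whose support is therefore nonempty; chasing this through $\psi^{G,H'}$ and using that $\aa$ ranges over cohomological primes gives $\kos_G(H,k(\pp))\notin\check\psi^{G,H'}_\pp(\aa)$. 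I would organize the argument so that this field-case computation is isolated as a lemma (or simply quoted from \cite{BG23b}) and the present lemma becomes a short base-change formality on top of Corollary \ref{supp of kos under base ch} and Proposition \ref{base change and triangular fixed points}.
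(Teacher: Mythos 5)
Your proposal follows essentially the same route as the paper: reduce membership in $\P_{G,R}(H',\aa,\pp)$ to membership of $\iota_\pp^\ast(\widetilde{\kos}_G(H,R))$ in $\check\psi^{G,H'}_\pp(\aa)$, replace that object by $\kos_G(H,k(\pp))$ via Corollary \ref{supp of kos under base ch}, and then invoke the field-case result of Balmer--Gallauer. The paper does exactly this, citing \cite[Lemma 7.12]{BG23b} for the field case.

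One inaccuracy in your elaboration of the field-case step is worth flagging: you claim that when $H'\leq_G H$, $\Psi^{H'}(\kos_G(H,k(\pp)))$ is zero \emph{in} $\K(\WGH',k(\pp))$. This is false already for $H=H'=1$ and $G$ nontrivial, where $\Psi^1$ is the identity, $\WGH'=G$, and $\kos_G(1,k(\pp))$ is a nonzero generator of $\K_{ac}(G,k(\pp))=\ker(\Res^G_1)$. The correct statement is that the image under the composite $\check\Psi^{H'}=\Upsilon^{\WGH'}\circ\Psi^{H'}$ into $\mathbf{D}_\mathrm{perm}(k(\pp)\WGH')$ vanishes (equivalently, $\Psi^{H'}(\kos_G(H,k(\pp)))$ becomes acyclic), which is what places the Koszul object in every prime $\check\psi^{G,H'}_\pp(\aa)=(\check\Psi^{H'})^{-1}(\aa)$. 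Since you indicate you would ultimately quote \cite{BG23b} for this step rather than re-derive it, this does not invalidate the overall structure of your argument, but the conflation of $\Psi^{H'}$ with $\check\Psi^{H'}$ should be corrected if you choose to spell out the field-case computation.
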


\begin{proof}
Unpacking the definitions, we get that 
\begin{equation}\label{P under iota}
  \iota^\ast_\pp(\P_{G,R}(H', \aa,\pp))\subseteq \check\psi^{G,H'}_\pp(\aa)\eqqcolon \P_{G, k(\pp)}(H', \aa) 
\end{equation}
where the prime $\P_{G, k(\pp)}(H', \aa)$ is precisely  one of those defined in \cite[Definition 7.4]{BG23b}.
If $\widetilde{\kos}_G(H,R)\in \P_{G, R}(H', \aa,\pp)$, then by Equation \ref{P under iota}, we obtain that 
\[
\langle \kos_G (H, k(\pp)) \rangle = \langle \iota_\pp^* (\widetilde{\kos}_{G}(H, R)) \rangle\subseteq \P_{G, k(\pp)}(H',\aa)
\]
where the first equality follows from Corollary \ref{supp of kos under base ch}. Then $H'\leq_G H$ by  \cite[Lemma 7.12]{BG23b}. 

The converse is similar. Indeed,  assume that $H'\leq_G H$.  By definition, 
\[
\P_{G, R}(H', \aa,\pp) = (\iota_\pp^\ast)^{-1}\P_{G,k(\pp)}(H',\aa).
\]
 But by Corollary  \ref{supp of kos under base ch} we have 
 \[
 \iota_\pp^* (\widetilde{\kos}_{G}(H, R))\in \langle \kos_G (H, k(\pp)) \rangle.
 \]
 Another layer of  \cite[Lemma 7.12]{BG23b} gives us that $\kos_G (H, k(\pp))\in \P_{G,k(\pp)}(H',\aa)$ and hence so is $\iota_\pp^* (\widetilde{\kos}_{G}(H, R))$. We conclude that $\widetilde{\kos}_{G}(H, R)\in \P_{G, R}(H',\aa,\pp)$.
\end{proof}

\begin{remark}\label{rem:primes in coprime char}
    Let $\pp\in \Spec(R)$ such that the characteristic of $k(\pp)$ is coprime to $p$. In this case, note that $\P(H,\aa,\pp)=\P(H',\aa,\pp)$ for any subgroup $H\leq G$. Indeed, this follows since the maps $\check\psi^{G,H}$ factor through $\Spc(\K(G,k(\pp)))$, and the latter is a point. This explains the relevance of the characteristic of $k(\pp)$ in the previous result. 
\end{remark}

\begin{corollary}\label{cor:inclusions of P}
Let $H$ and $H'$ be subgroups of $G$, and let $\pp\in \Spec(R)$ such that the characteristic of $k(\pp)$ is $p$.  If $\P_{G, R}(H, \aa,\pp) \subseteq \P_{G, R}(H', \aa',\pp)$ then $H' \leq_G H$. Therefore if $\P_{G, R}(H, \aa,\pp) = \P_{G, R}(H', \aa',\pp)$ then $H$ and $H'$ are $G$--conjugate.
\end{corollary}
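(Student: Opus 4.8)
The whole statement will follow formally from Lemma~\ref{lem:koszul_conjugate_subgroups_ker}, which (for $\pp$ with $\mathrm{char}\,k(\pp)=p$) translates membership of the Koszul object $\widetilde{\kos}_G(H,R)$ in a prime $\P_{G,R}(H',\aa',\pp)$ into the purely group-theoretic condition $H'\leq_G H$. So the plan is: manufacture the Koszul object of $H$ inside $\P_{G,R}(H,\aa,\pp)$, push it forward along the assumed inclusion of primes, and read off the subconjugacy relation from the other direction of the lemma.

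Concretely, I would first apply Lemma~\ref{lem:koszul_conjugate_subgroups_ker} with the subgroup $H$ playing the role of both subgroups: since $H\leq_G H$, the lemma gives $\widetilde{\kos}_G(H,R)\in\P_{G,R}(H,\aa,\pp)$ for the given $\aa\in V_{\Weyl{G}{H},k(\pp)}$. Assuming $\P_{G,R}(H,\aa,\pp)\subseteq\P_{G,R}(H',\aa',\pp)$, this forces $\widetilde{\kos}_G(H,R)\in\P_{G,R}(H',\aa',\pp)$. Now apply Lemma~\ref{lem:koszul_conjugate_subgroups_ker} a second time, with the subgroup $H$ and the prime $\P_{G,R}(H',\aa',\pp)$ --- here $\aa'\in V_{\Weyl{G}{H'},k(\pp)}$ and $\mathrm{char}\,k(\pp)=p$, so the hypotheses are met --- to conclude $H'\leq_G H$.

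For the last sentence: if $\P_{G,R}(H,\aa,\pp)=\P_{G,R}(H',\aa',\pp)$, running the previous paragraph in both directions yields $H'\leq_G H$ and $H\leq_G H'$. Since $G$ is finite, the first relation gives $|H'|\le|H|$ and the second $|H|\le|H'|$, so $|H|=|H'|$; then a conjugate of $H'$ sitting inside $H$ must coincide with $H$, i.e.\ $H$ and $H'$ are $G$-conjugate. I do not expect a genuine obstacle here: all the real content sits in Lemma~\ref{lem:koszul_conjugate_subgroups_ker}, and the only thing to watch is that the characteristic and ambient-spectrum hypotheses of that lemma are in force at each of the two invocations, which they are by the standing assumptions on $\pp$, $\aa$ and $\aa'$.
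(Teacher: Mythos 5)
Your proposal is correct and matches the paper's proof: both deduce $\widetilde{\kos}_G(H,R)\in\P_{G,R}(H,\aa,\pp)$ from the "if" direction of Lemma~\ref{lem:koszul_conjugate_subgroups_ker}, push it through the assumed inclusion, and apply the "only if" direction to get $H'\leq_G H$, with the conjugacy claim following by symmetry plus finiteness of $G$. The paper is terser but the argument is the same.
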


\begin{proof}
    This follows from  Lemma \ref{lem:koszul_conjugate_subgroups_ker} since 
    \[\widetilde{\kos}_G(H, R) \in \P_{G, R}(H, \aa,\pp) \subseteq \P_{G, R}(H', \aa',\pp)\]
    holds if and only if $H' \leq_G H$. The second part follows by symmetry of the argument.
\end{proof}

We are almost ready to describe the set $\Spc(\K(G,R))$ in terms of $\Spc(\K(G,k(\pp)))$. We need some preparations first.

\begin{recollection}\label{primes under homeo induced by conjugation}
   Let $G$ be a subgroup of $G'$, and let $g$ be an element in $G'$. Then the conjugation homomorphism $c_g: G \xrightarrow{} G^g, \, h\mapsto h^g$,  induces a tt-equivalence $c_g^*\colon \K(G, R) \xrightarrow{\sim} \K(G^g, R)$ and hence a homeomorphism
\[
(\_)^g \coloneqq \Spc(c_g^*)\colon \Spc(\K(G, R)) \xrightarrow{\sim} \Spc(\K(G, R)); \,  \P \mapsto \P^g.
\]
In particular, if $g \in G$, the functor $c_g^*\colon \K(G, R) \xrightarrow{\sim} \K(G, R)$
 is isomorphic to the identity and therefore $\P^g = \P$ for all   $\P \in \Spc(\K(G, R))$. See Section 4.1 of \cite{BG23b} for more details.
\end{recollection}

\begin{recollection}\label{primes P under restriction}
We want to record the behavior of the primes $\P(H,\aa,\pp)$ under tt-functors induced by group homomorphisms. This follows a similar pattern as \cite[Remark 7.6]{BG23b}. 
    Let $\alpha\colon G \to G'$ be a group homomorphism, and let $H$ be a subgroup of $G$. Let $H' = \alpha(H) \leq G'$. Then $\alpha(N_G(H)) \leq N_{G'}(H')$ and hence induces a morphism $\bar{\alpha}\colon \WGH \to \Weyl{G'}{H'}$. Let $\pp\in \mathrm{Spec}(R)$. We claim that
 \[
 \Spc(\alpha^*)(\P_G(H, \aa,\pp)) = \P_{G'}(H', \Spc(\bar{\alpha}^*) (\aa),\pp)
 \]
     in $\Spc(\K(G', R))$, where $\alpha^*$ is the restriction functor along $\alpha$; similarly the functor  $\bar{\alpha}^*\colon \mathbf{D}_\mathrm{perm}(\WGH, R) \to \mathbf{D}_\mathrm{perm}(\Weyl{G'}{H'}, R)$ is the derived restriction functor induced by $\bar{\alpha}$. Indeed, 
     we only need to verify that the following square is commutative.
     \begin{center}
         \begin{tikzcd}[column sep = 3em]
             V_{\Weyl{G'}{H'},k(\pp)} \arrow[r,"\Spc(\bar \alpha^\ast)"] \arrow[d,"\Spc(\lambda_\pp^\ast)"'] & V_{\Weyl{G'}{H'},k(\pp)}  \arrow[d,"\Spc(\lambda_\pp^\ast)"]  \\        
             V_{\Weyl{G'}{H'},R} \arrow[r,"\Spc(\bar \alpha^\ast)"] \arrow[d,"\check \psi^{G',H'}"'] & V_{\Weyl{G'}{H'},R}  \arrow[d,"\check \psi^{G,H}"]  \\ 
             \Spc(\K(G',R)) \arrow[r,"\Spc(\alpha^\ast)"] & \Spc(\K(G,R))
         \end{tikzcd}
     \end{center}
    Now, recall that the functors $\Upsilon^G$ are well behaved under group homomorphism (see \cite[Remark 4.1]{BG23b}), and the category $\K(-,R)$  functorial along group homomorphisms (see Remark \ref{functoriality of K}). Unpacking the definition of $\check\psi^{G,H}$ we observe that it is enough to check that the following diagram is commutative
    \begin{center}
        \begin{tikzcd}
            \K(N_G(H),R) \arrow[r, "\alpha^*"] \arrow[d,"\mathrm{Quot}"'] & \K(N_{G'}(H'), R) \arrow[d, "\mathrm{Quot}"]\\
            \tilde{\K}(\WGH, R) \arrow[r, "\bar{\alpha}^*"] & \tilde{\K}(\Weyl{G'}{H'}, R)
        \end{tikzcd}
    \end{center}
    which is easy to check. In particular we get the following two particular cases. 
    \begin{enumerate}
        \item Let $H\leq K\leq G$, and $\aa\in V_{\Weyl{K}{H},k(\pp)}$. Then 
        \[ \rho_H(\P_K(H,\aa,\pp))=\P_G(H,\bar\rho_H(\aa),\pp).\]
        \item Let $G\leq G'$, $g\in G'$, and $\aa\in V_{\WGH,k(\pp)}$. Then 
        \[
         \P_G(H,\aa,\pp)^g= \P_{G^g}(H^g,\aa^g,\pp).
        \]
    \end{enumerate}
\end{recollection}

\begin{recollection}\label{image of restriction}
    Let $H\leq G$, and let $(A_H,\mu,\eta)$ be the commutative algebra with $A_H=R(G/H)$, multiplication is given by $\mu(\gamma\otimes \gamma')=\gamma $ if $\gamma=\gamma'$ in $G/H$ or $0$ otherwise,  and  unit $\eta(1)=\sum_{\gamma\in G/H} \gamma$.  This algebra is separable and has finite degree; a section is given by the map $\gamma \mapsto \gamma\otimes \gamma$. Moreover, following \cite{Bal17}, we can identify modules in $\K(G,R)$ over $A_H$ with the category $\K(H,R)$ and via this identification restriction becomes extension of scalars and induction becomes the forgetful functor.  
   Then we can apply \cite[Theorem 3.19]{Bal16} to obtain a coequalizer of topological spaces 
   \[\coprod_{[g]\in H\backslash G/H}\mathrm{Spc}(\K(H\cap{}^gH,R))\rightrightarrows \mathrm{Spc}(\K(H,R)\xrightarrow{\rho^G_H} \mathrm{supp}(A_H)\] 
   where one of the left hand side arrows is induced by restriction and the other is induced by conjugation along $[g]$ followed by restriction. Further details  appear in \cite[Proposition 4.7]{BG23b}.
\end{recollection}

\begin{proposition}\label{psi check is injective}
    Let $H $ be a subgroup of $G$, and $\pp\in \Spec(R)$. If $\P_{G, R}(H, \aa,\pp) = \P_{G, R}(H, \aa',\pp)$, then $\aa = \aa'$. In particular, 
    \[
    \check\psi^{G,H}_\pp\colon V_{\WGH,k(\pp)}\to \Spc (\K(G,R))
    \]
    is injective.
\end{proposition}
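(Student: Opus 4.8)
The plan is to reduce the injectivity of $\check\psi^{G,H}_\pp$ to two facts we already have: the known injectivity of its field-theoretic analogue from \cite{BG23b}, and the fact that everything in sight is compatible with the separable-extension/coequalizer description of $\rho^G_H$ recalled in Recollection \ref{image of restriction}. Concretely, unpacking the definition of $\P_{G,R}(H,\aa,\pp)$ via Equation \eqref{P under iota}, we have $\P_{G,R}(H,\aa,\pp) = (\iota_\pp^\ast)^{-1}\bigl(\P_{G,k(\pp)}(H,\aa)\bigr)$, and $\check\psi^{G,H}_\pp$ is exactly the map $\aa\mapsto \P_{G,k(\pp)}(H,\aa)$ taking values in $\Spc(\K(G,k(\pp)))$. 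So the statement is really about the field case: we must show $\check\psi^{G,H}_\pp\colon V_{\WGH,k(\pp)}\to\Spc(\K(G,k(\pp)))$ is injective. If the characteristic of $k(\pp)$ is coprime to $p$ the target is a point but so is the source (by Proposition \ref{Spc is a point in ordinary char}, since $V_{\WGH,k(\pp)}$ embeds in $\Spc(\K(\WGH,k(\pp)))$ which is a point), so there is nothing to prove; hence assume $\mathrm{char}(k(\pp))=p$.

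First I would reduce to the normal case, i.e. to $\psi^{G,N}$ for $N\trianglelefteq G$. By the factorization in Definition \ref{triangular fixed points}, $\check\psi^{G,H}_\pp = \rho^G_{N_G(H)}\circ \check\psi^{N_G(H),H}_\pp$, so it suffices to show (a) $\check\psi^{N_G(H),H}_\pp$ is injective, and (b) $\rho^G_{N_G(H)}$ is injective on the image of $\check\psi^{N_G(H),H}_\pp$. For (a): $H$ is normal in $N_G(H)$, and the triangular $H$-fixed points map for a normal subgroup is a split injection by Proposition \ref{split injection normal case}; precomposing with $\Spc(\Upsilon^{\WGH})$, which is injective by Remark \ref{Def of coho primes}, gives that $\check\psi^{N_G(H),H}_\pp$ is injective. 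For (b): here I would invoke the coequalizer presentation of $\mathrm{supp}(A_H)\cong\mathrm{im}(\rho^G_H)$ from Recollection \ref{image of restriction} together with Recollection \ref{primes under homeo induced by conjugation}; two primes $\check\psi^{N_G(H),H}_\pp(\aa)$ and $\check\psi^{N_G(H),H}_\pp(\aa')$ of $\K(N_G(H),R)$ that become equal after $\rho^G_{N_G(H)}$ must be identified by one of the conjugation-then-restriction maps in the coequalizer, and one checks using the conjugation-equivariance formula (2) of Recollection \ref{primes P under restriction} — namely $\P_G(H,\aa,\pp)^g = \P_{G^g}(H^g,\aa^g,\pp)$ — that for $g\in N_G(H)$ this forces $\aa = \aa'$ (the conjugation acts trivially since $g$ normalizes $H$), while for $g\notin N_G(H)$ the relevant subgroup $H\cap{}^gH$ is strictly smaller and a Koszul-object argument as in Corollary \ref{cor:inclusions of P} / Lemma \ref{lem:koszul_conjugate_subgroups_ker} rules out equality.

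Alternatively — and this is probably cleaner — I would simply appeal directly to the field-case result of Balmer–Gallauer: the primes $\P_{G,k(\pp)}(H,\aa)$ are exactly those of \cite[Definition 7.4]{BG23b}, and \cite{BG23b} already proves that distinct cohomological primes $\aa\neq\aa'$ of $\mathbf{D}_\mathrm{perm}(k(\pp)\WGH)$ yield distinct primes $\P_{G,k(\pp)}(H,\aa)\neq\P_{G,k(\pp)}(H,\aa')$ (this is part of their set-level description of $\Spc(\K(G,k(\pp)))$). Composing with the bijection $\coprod_\pp V_{\WGH,k(\pp)}\to V_{\WGH,R}$ of Remark \ref{cohomological primes from residue fields} and the identity $\P_{G,R}(H,\aa,\pp)=(\iota_\pp^\ast)^{-1}\P_{G,k(\pp)}(H,\aa)$, injectivity of $\aa\mapsto\P_{G,R}(H,\aa,\pp)$ for fixed $\pp$ follows, and then $\check\psi^{G,H}_\pp$ factors as $V_{\WGH,k(\pp)}\xrightarrow{\aa\mapsto\P_{G,k(\pp)}(H,\aa)}\Spc(\K(G,k(\pp)))$, which is injective.

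The main obstacle is extracting the clean statement "$\aa\mapsto\P_{G,k(\pp)}(H,\aa)$ is injective" from \cite{BG23b}: there it is packaged inside the full description of $\Spc(\K(G,k))$ as a set (their analogue of our Theorem \ref{primes in spc up to conj}), where injectivity on each "sheet" $V_{\WGH,k}$ is established, roughly, by the same two-step reduction (normal case via their Proposition 5.4 plus separability, then $\rho^G_{N_G(H)}$ via the coequalizer). So in practice the proof in the paper will either cite the relevant lemma of \cite{BG23b} for the field case and transport it along residue fields, or — if no single citable statement exists — run the normal-case-plus-coequalizer argument sketched above. I expect the delicate point to be the bookkeeping in step (b): making precise that the only relation imposed by the coequalizer on primes in the image of $\check\psi^{N_G(H),H}_\pp$ is the (trivial) one coming from $g\in N_G(H)$, which is where Lemma \ref{lem:koszul_conjugate_subgroups_ker} and the conjugation formula do the real work.
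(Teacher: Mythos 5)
Your Approach 1 is essentially the paper's proof: after reducing to $\mathrm{char}\,k(\pp)=p$ and setting $N=N_G(H)$, one applies the coequalizer from Recollection \ref{image of restriction} to $\rho^G_N$, uses Corollary \ref{cor:inclusions of P} (i.e., the Koszul-object machinery) to pin down the subgroup $L$ indexing the auxiliary prime $\mathcal{Q}$ and to force $g\in N$, and then concludes with Proposition \ref{split injection normal case} together with the injectivity of $\Spc(\Upsilon^{\WGH})$ and of $\Spc(\lambda_\pp^\ast)$. One correction to your sketch of step (b): the coequalizer for $\rho^G_N$ is indexed by double cosets $N\backslash G/N$ and the relevant intersection is $N\cap{}^gN$, not $H\cap{}^gH$. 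The paper then writes $\mathcal{Q}=\P_{N\cap{}^gN,R}(L,\aa'',\pp)$, deduces $L\sim_N H\sim_N L^g$ from Corollary \ref{cor:inclusions of P} applied to the two images of $\mathcal{Q}$, uses normality of $H$ in $N$ to get $L=H=L^g$ and hence $g\in N$, and only then obtains $\P_{N,R}(H,\aa,\pp)=\mathcal{Q}=\mathcal{Q}^g=\P_{N,R}(H,\aa',\pp)$ before invoking the split injection.

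Your Approach 2, which you call cleaner, has a genuine gap. You want to pass from $\P_{G,R}(H,\aa,\pp)=\P_{G,R}(H,\aa',\pp)$ to $\P_{G,k(\pp)}(H,\aa)=\P_{G,k(\pp)}(H,\aa')$ and then invoke the field-case result. But $\P_{G,R}(H,\aa,\pp)=(\iota_\pp^\ast)^{-1}\bigl(\P_{G,k(\pp)}(H,\aa)\bigr)$, and two distinct primes of $\K(G,k(\pp))$ can a priori have the same preimage under $\iota_\pp^\ast$; to rule this out you need $\Spc(\iota_\pp^\ast)$ to be injective. That injectivity is established only in Proposition \ref{Spc as a set}, whose proof goes through Theorem \ref{primes in spc up to conj}, which in turn cites the present Proposition. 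So as written, Approach 2 is circular, and the coequalizer argument (your Approach 1, the paper's route) is the one that actually carries the weight.
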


\begin{proof}
Note that the statement is trivial if the characteristic of $k(\pp)$ is not $p$ since $V_{\WGH,k(\pp)}$ is a point. Hence assume that $k(\pp)$ has characteristic $p$. In this case, the argument is essentially the same as the one in the proof of \cite[Proposition 4.14]{BG23b}. For completeness, let us include the details. Let $N=N_G(H)$. By the definition of triangular fixed points we have \[\rho^G_N(\P_{N, R}(H, \aa,\pp)) = \rho^G_N(\P_{N, R}(H, \aa',\pp)).\] 
 Now, by Recollection \ref{image of restriction} we obtain that there is $g\in G$ together with a point $\mathcal{Q}$ in $\mathrm{Spc}(\K(N\cap{}^gN,R))$ such that 
 \begin{equation}\label{image of Q}
     \rho^N_{N\cap{}^gN}(\mathcal{Q})= \P_{N, R}(H, \aa,\pp) \mbox{ and }  {}^g(\rho^{{}^gN}_{N\cap{}^gN}(\mathcal{Q}))= \P_{N, R}(H, \aa',\pp)
 \end{equation}
Moreover, $\mathcal{Q}$ is of the form  $\P_{N\cap{}^gN,R}(L, \aa'',\pp)$ for some $L\leq N\cap{}^gN$, and some cohomological prime $\aa''\in V_{N\cap{}^gN /\!\!/ L, k(\pp)}$ (See Remark \ref{all points in K(G) are come from check psi}). Now, using Recollection \ref{primes P under restriction}, we know the behavior of the $\mathcal{Q}$ under the map on Balmer spectra induced by restriction:  
\[
\P_{N, R}(H, \aa,\pp)=\rho^N_{N\cap{}^gN}(\mathcal{Q})=\P_{N,R}(L,\bb,\pp) 
\]
and similarly for the map induced by conjugation 
\[
\P_{N, R}(H, \aa',\pp)= {}^g(\rho^{{}^gN}_{N\cap{}^gN}(\mathcal{Q}))= \P_{N,R}(L^g,\bb',\pp)
\]
for suitable cohomological primes $\bb\in V_{N/\!\!/ L,k(\pp)}$ and $\bb'\in V_{N/\!\!/ L^g,k(\pp)}$. By Corollary \ref{cor:inclusions of P}, we deduce that $L\sim_N H\sim_N L^g$. But $H$ is normal in $N$, and hence $L=H=L^g$. In particular, $g\in N$. Thus $N\cap N^g=N$ and $N^g=N$. This equalities applied to Equation \ref{image of Q} give us that $\P_{N,R}(H,\aa,\pp)=\mathcal{Q}=\mathrm{Q}^g=\P_{N,R}(H,\aa',\pp)$. Since $H\trianglelefteq N$, we can invoke Proposition \ref{split injection normal case} which tell us that 
\[
\psi^{N,H}\colon \mathrm{Spc}(\K(H,R))\to \mathrm{Spc}(\K(N,R))
\]
is a split injection. We conclude that $\aa=\aa'$ as we wanted.  
\end{proof}

\begin{theorem}\label{primes in spc up to conj}
    Every prime ideal of $\K(G, R)$ is of the form $\P_{G, R}(H, \aa,\pp)$ for some subgroup $H$ of $G$, some $\pp\in \Spec(R)$ and some cohomological prime $\aa\in V_{\WGH,k(\pp)}$. Moreover, the following properties hold.
    \begin{enumerate}
        \item If the characteristic of $k(\pp)$ is $p$, then $\P_{G, R}(H, \aa,\pp) = \P_{G, R}(H', \aa',\pp)$  if and only if there exists  $g \in G$ such that $H' = H^g$ and $\aa = \aa^g$. 
        \item If the characteristic of $k(\pp)$ is not $p$, then $\P_{G, R}(H, \aa,\pp) = \P_{G, R}(H', \aa',\pp)$ for any $H\leq G$, and any $\aa\in V_{\WGH,k(\pp)}$. 
        \item $\P_{G, R}(H, \aa,\pp) \not= \P_{G, R}(H', \aa',\pp')$ as soon as $\pp\not= \pp'$.
    \end{enumerate} 
\end{theorem}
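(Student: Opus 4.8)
The plan is to assemble the statement from the pieces already established. The very first sentence---that every prime has the form $\P_{G,R}(H,\aa,\pp)$---is exactly Remark \ref{all points in K(G) are come from check psi}, which combines the surjectivity in Theorem \ref{cover of spc KG} with the residue-field bijection on cohomological spectra in Remark \ref{cohomological primes from residue fields}. So the work is entirely in verifying the three numbered equivalences, and these should be treated in the order (3), (2), (1).

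First I would dispatch part (3): if $\pp \neq \pp'$ then $\P_{G,R}(H,\aa,\pp) \neq \P_{G,R}(H',\aa',\pp')$. This is precisely Corollary \ref{cohomological primes over different fibers}, which in turn rests on Corollary \ref{comparison for check psi to residue fields}: applying Balmer's comparison map $\Spc(\K(G,R)) \to \Spec(R)$ sends $\P_{G,R}(H,\aa,\pp)$ to $\pp$, so distinct $\pp$'s force distinct primes. Next, part (2): when $\operatorname{char} k(\pp) \nmid p$ (equivalently, is coprime to $p$ since $G$ is a $p$-group), the category $\K(G,k(\pp))$ is a single point by Proposition \ref{Spc is a point in ordinary char}. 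Since $\check\psi^{G,H}_\pp$ factors through $\Spc(\K(G,k(\pp)))$ and $\P_{G,R}(H,\aa,\pp) = \Spc(\iota_\pp^\ast)(\check\psi^{G,H}_\pp(\aa))$, all these primes coincide regardless of $H$ and $\aa$; this is exactly the content of Remark \ref{rem:primes in coprime char}.

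The substantive case is part (1), where $\operatorname{char} k(\pp) = p$. The ``if'' direction is immediate from Recollection \ref{primes P under restriction}(2): conjugation by $g \in G$ acts on $\Spc(\K(G,R))$ as the identity (Recollection \ref{primes under homeo induced by conjugation}), yet sends $\P_G(H,\aa,\pp)$ to $\P_{G}(H^g,\aa^g,\pp)$, so if $H' = H^g$ and $\aa' = \aa^g$ the two primes agree. For the ``only if'' direction, suppose $\P_{G,R}(H,\aa,\pp) = \P_{G,R}(H',\aa',\pp)$. By Corollary \ref{cor:inclusions of P} (applied in both directions, using Lemma \ref{lem:koszul_conjugate_subgroups_ker} with the Koszul objects), equality of the primes forces $H$ and $H'$ to be $G$-conjugate. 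Pick $g$ with $H' = H^g$; replacing $\aa'$ by $(\aa')^{g^{-1}}$ via the ``if'' direction reduces us to the case $H = H'$. Then Proposition \ref{psi check is injective} --- injectivity of $\check\psi^{G,H}_\pp$ on $V_{\WGH,k(\pp)}$ --- gives $\aa = \aa'$, hence $\aa' = \aa^g$ in the original coordinates, completing the proof.

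The main obstacle, and the only place where real content is imported rather than bookkeeping, is the ``only if'' half of (1): getting from an equality of primes to $G$-conjugacy of the subgroups and then to equality of cohomological primes. But both ingredients are already in hand---Corollary \ref{cor:inclusions of P} handles the subgroup-conjugacy step via the Koszul-object membership criterion of Lemma \ref{lem:koszul_conjugate_subgroups_ker}, and Proposition \ref{psi check is injective} handles the cohomological-prime step via a reduction (through Recollection \ref{image of restriction}'s coequalizer and the normality of $H$ in its normalizer) to the split injectivity of $\psi^{N,H}$ from Proposition \ref{split injection normal case}. So the proof of the theorem itself is mostly a matter of quoting these results in the right order and threading the conjugation bookkeeping carefully; no new estimates or constructions are needed.
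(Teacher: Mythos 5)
Your proposal is correct and follows exactly the same route as the paper's proof: the first claim is Remark \ref{all points in K(G) are come from check psi}, part (3) is Corollary \ref{cohomological primes over different fibers}, part (2) is Remark \ref{rem:primes in coprime char}, and part (1) is assembled from Corollary \ref{cor:inclusions of P}, Proposition \ref{psi check is injective}, and the conjugation bookkeeping from Recollection \ref{primes P under restriction}. The paper's proof simply cites these ingredients; you spell out how they combine, in particular the reduction-to-$H=H'$ step in the ``only if'' half of (1), which is the same argument the cited results implicitly package.
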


\begin{proof}
    The first claim follows from Remark \ref{all points in K(G) are come from check psi}. Part $(1)$ is an immediate consequence of Corollary \ref{cor:inclusions of P}, Proposition \ref{psi check is injective}, and the behavior of primes under restriction along homomorphisms of groups (see Recollection \ref{primes P under restriction}). Part $(2)$ is Remark \ref{rem:primes in coprime char}. Part $(4)$ is Corollary \ref{cohomological primes over different fibers}. 
\end{proof}

We can now combine the previous results to give an alternative set-theoretic description of the Balmer spectrum of $\K(G,R)$.

\begin{proposition}\label{Spc as a set}
   There is a set bijection induced by base change along all residue fields of $R$: 
   \[
   \coprod_{\pp\in\mathrm{Spec}(R)} \mathrm{Spc}(\K(G,k(\pp))) \xrightarrow[]{} \mathrm{Spc}(\K(G,R)).
   \]
 \end{proposition}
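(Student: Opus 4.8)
The plan is to assemble the bijection from the pieces already established earlier in this section. The candidate map is the coproduct of the base-change maps $\Spc(\iota_\pp^\ast)\colon \Spc(\K(G,k(\pp)))\to \Spc(\K(G,R))$ over all $\pp\in\Spec(R)$. First I would recall that by Theorem~\ref{cover of spc KG} together with Remark~\ref{cohomological primes from residue fields} (i.e.\ Remark~\ref{all points in K(G) are come from check psi}), every prime of $\K(G,R)$ has the form $\P_{G,R}(H,\aa,\pp)$ for some $H\leq G$, some $\pp\in\Spec(R)$, and some cohomological prime $\aa\in V_{\WGH,k(\pp)}$. By the identity $\P_{G,R}(H,\aa,\pp)=\Spc(\iota_\pp^\ast)(\check\psi^{G,H}_\pp(\aa))$ from Proposition~\ref{base change and triangular fixed points} and the definition of $\P$, each such prime lies in the image of $\Spc(\iota_\pp^\ast)$; since $\check\psi^{G,H}_\pp(\aa)=\P_{G,k(\pp)}(H,\aa)$ is a prime of $\K(G,k(\pp))$, this shows the proposed map is surjective.

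For injectivity, suppose $\Spc(\iota_\pp^\ast)(\mathcal{P})=\Spc(\iota_{\pp'}^\ast)(\mathcal{P}')$ for primes $\mathcal{P}\in\Spc(\K(G,k(\pp)))$ and $\mathcal{P}'\in\Spc(\K(G,k(\pp')))$. Writing $\mathcal{P}=\P_{G,k(\pp)}(H,\aa)$ and $\mathcal{P}'=\P_{G,k(\pp')}(H',\aa')$ via the field-case classification of \cite{BG23b} (again Remark~\ref{all points in K(G) are come from check psi} applied over a field), the two sides become $\P_{G,R}(H,\aa,\pp)$ and $\P_{G,R}(H',\aa',\pp')$. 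Part~(3) of Theorem~\ref{primes in spc up to conj} (equivalently Corollary~\ref{cohomological primes over different fibers}) forces $\pp=\pp'$. So we are comparing two primes coming from the \emph{same} residue field $k(\pp)$; since $\Spc(\iota_\pp^\ast)$ applied to $\mathcal{P}$ and $\mathcal{P}'$ agree, and since $\iota_\pp^\ast(\mathcal{P})\subseteq\mathcal{P}$ characterizes $\mathcal{P}=(\iota_\pp^\ast)^{-1}(\mathcal{P})$ only when $\mathcal{P}$ itself is of the form $(\iota_\pp^\ast)^{-1}$ of something—more cleanly: by parts~(1) and~(2) of Theorem~\ref{primes in spc up to conj}, the equality $\P_{G,R}(H,\aa,\pp)=\P_{G,R}(H',\aa',\pp)$ holds iff $H,H'$ are $G$-conjugate with $\aa,\aa'$ correspondingly conjugate (in characteristic $p$) or iff there is no condition at all (in characteristic coprime to $p$, where $\Spc(\K(G,k(\pp)))$ is a single point by Proposition~\ref{Spc is a point in ordinary char}). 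In either case this is precisely the condition under which $\P_{G,k(\pp)}(H,\aa)=\P_{G,k(\pp)}(H',\aa')$ in $\Spc(\K(G,k(\pp)))$ by the analogous theorem of Balmer--Gallauer, so $\mathcal{P}=\mathcal{P}'$.

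The main obstacle is bookkeeping rather than mathematical depth: one must be careful that the classification of primes in the field case over $k(\pp)$ matches, \emph{point for point and with matching equivalence relation}, the restriction to the $\pp$-fiber of the classification over $R$ given in Theorem~\ref{primes in spc up to conj}. The key input making this work is Proposition~\ref{base change and triangular fixed points}, which intertwines $\check\psi^{G,H}_\pp$ and $\check\psi^{G,H}$ via $\iota_\pp^\ast$ and $\lambda_\pp^\ast$, together with Remark~\ref{cohomological primes from residue fields} ensuring the cohomological primes themselves are exhausted by the residue fields. I would therefore phrase the proof as: the map is well defined and continuous as a coproduct of spectral maps; surjectivity is Theorem~\ref{primes in spc up to conj} (first sentence); injectivity combines part~(3) of that theorem to separate distinct $\pp$, and then parts~(1)--(2) to match the fiber-wise identification with the field-case result of \cite{BG23b}. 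No topological statement is claimed—only a bijection of underlying sets—so we need not worry about whether the coproduct topology is preserved.
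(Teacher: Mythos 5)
Your proposal is correct and takes essentially the same route as the paper: both reduce to showing $\Spc(\iota_\pp^\ast)$ is injective fibre-by-fibre (surjectivity being immediate from the $\P_{G,R}(H,\aa,\pp)$-classification), and then match the identifications of Theorem~\ref{primes in spc up to conj} against the Balmer--Gallauer field-case classification of $\Spc(\K(G,k(\pp)))$. The only cosmetic difference is that the paper packages the fibre-wise step as a factorization argument --- the injective map $\coprod_H V_{\WGH,k(\pp)}\to\Spc(\K(G,R))$ factors through $\Spc(\K(G,k(\pp)))$ via the bijection of \cite[Proposition~7.32]{BG23b}, forcing $\Spc(\iota_\pp^\ast)$ to be injective --- whereas you unpack the $\P_{G,R}$-notation on both sides of the putative equality; the underlying ingredients are identical.
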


\begin{proof}
First of all, consider the following diagram. 
\begin{center}
        \begin{tikzcd}[column sep = 4em]
            \displaystyle\coprod_{\pp\in \Spec(R)} \Spc(\K(G,k(\pp))) \arrow[d, "\mathrm{Comp}"'] \arrow[r, "\Spc(\iota_\pp^\ast)"] & \Spc (\K(G, R)) \arrow[d,"\mathrm{Comp}"]\\
           \displaystyle\coprod_{\pp\in \Spec(R)} \Spec(k(\pp)) \arrow[r,"\sim"] & \mathrm{Spec}(R)
        \end{tikzcd}
    \end{center} 
and note that this diagram is commutative by the naturality of the comparison map. Hence, we only need to prove that  the map $\Spc(\iota_\pp^\ast)$ is injective for each $\pp\in \Spec(R)$. If the characteristic of $k(\pp)$ is coprime to $p$, then this is trivial. Assume otherwise. The result follows from  Theorem \ref{primes in spc up to conj}. Indeed, this theorem tells us that the  map 
\[
\displaystyle\coprod_{H\in \mathrm{Sub}(G)/G} V_{\WGH,k(\pp)} \xrightarrow[]{\coprod\Spc(\iota_\pp^\ast)\circ \check\psi^{G,H}_\pp} \Spc(\K(G,R)) 
\]
is injective. But this map, by definition, factors through $\Spc(\K(G,k(\pp)))$. Then the result follows by \cite[Proposition 7.32]{BG23b} which states that $\coprod \psi^{G,H}_\pp$ is a bijection. 
\end{proof}

%%%%%%%%%%%%%%%%%%%%%%%%%%%%%%%%%%%%%%%%%%%
\section{Reduction to elementary abelian groups}
%%%%%%%%%%%%%%%%%%%%%%%%%%%%%%%%%%%%%%%%%%%

Let $G$ be a $p$-group and $R$ a commutative Noetherian ring. In this section, we reduce the computation of the Balmer spectrum of $\K(G,R)$ to the elementary abelian case. In other words, we show that $\Spc(\K(G,R))$ decomposes as a colimit indexed over the category of elementary abelian $p$-sections of $G$, generalizing \cite[Theorem 11.10]{BG23b}. Our strategy is slightly different from the one in \textit{loc. cit.}; in particular, we need some general results about spectral spaces. Let us deal with this first.

\begin{recollection}\label{reflection funtor}
    Let  $\mathrm{Top}_{\mathrm{Spec}}$ and  $\mathrm{Top}$ denote the categories of spectral spaces and topological spaces, respectively. The \textit{spectral reflection functor} $\mathrm{Top} \to \mathrm{Top}_{\mathrm{Spec}}$ is the left adjoint to the forgetful functor. Write $S_X\colon X\to S(X)$ to denote the unit of this adjunction.  Let us highlight that the spectral reflection of a space is a homeomorphism if and only if the space is spectral and Noetherian. See \cite[Theorem 7.1]{Schwartz17}. 

    On the other hand, recall that the category of sober spaces is a reflective subcategory of $\mathrm{Top}$. The  left adjoint is called \textit{soberification} and it is denoted by $\mathrm{Sob}$. Let $\mathrm{Sob}_X\colon X\to \mathrm{Sob}(X)$ denote the unit of this adjunction. We highlight that $\mathrm{Sob}_X$ is a homeomorphism if and only if it is a bijection if and only if  $X$  is already sober. 
\end{recollection}

\begin{lemma}\label{spectral criteria}
  Let $X$ be a Noetherian topological space. Assume that the following properties hold. 
\begin{enumerate}
    \item There is a continuous bijection $f\colon X\to Y$ with $Y$ a Noetherian spectral space.  
    \item There is a surjection $g\colon Z\to X$ with $Z$ a sober space. 
\end{enumerate}
  Then $X$ is a Noetherian spectral space. 
\end{lemma}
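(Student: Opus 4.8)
The plan is to verify the abstract criterion characterizing Noetherian spectral spaces, namely that a topological space is Noetherian spectral if and only if it is Noetherian, sober, and has a basis of quasi-compact open sets (equivalently, since $X$ is Noetherian, $T_0$ and sober suffices together with quasi-compactness of $X$ itself, which is automatic for Noetherian spaces). So the real content to extract from hypotheses (1) and (2) is that $X$ is sober.

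First I would record that $X$ is quasi-compact: any Noetherian space is quasi-compact, and indeed every open subset is quasi-compact, so $X$ has (trivially) a basis of quasi-compact opens. Next I would use the soberification adjunction from Recollection \ref{reflection funtor}. The surjection $g\colon Z \to X$ with $Z$ sober factors through the unit $\mathrm{Sob}_X\colon X \to \mathrm{Sob}(X)$ as $g = \bar g \circ \mathrm{Sob}_X$ for a (continuous) map $\bar g\colon \mathrm{Sob}(Z) \to \mathrm{Sob}(X)$... — more directly, since $Z$ is sober, $g$ extends uniquely to $\mathrm{Sob}(X) \to Z$? No: the universal property goes the other way. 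The cleaner route: $\mathrm{Sob}_X$ is always surjective, and the composite $Z \xrightarrow{g} X \xrightarrow{\mathrm{Sob}_X} \mathrm{Sob}(X)$ equals, by the universal property applied to $g$ (with $Z$ sober), the canonical map associated to $g$; in particular $\mathrm{Sob}_X \circ g$ is the same as $g$ composed with $\mathrm{Sob}_X$, which is a surjection onto $\mathrm{Sob}(X)$. What I actually need is injectivity of $\mathrm{Sob}_X$, i.e. that $\mathrm{Sob}_X$ is a bijection, since by Recollection \ref{reflection funtor} that forces $X$ to be sober. For injectivity I would combine the continuous bijection $f\colon X \to Y$: $f$ induces $\mathrm{Sob}(f)\colon \mathrm{Sob}(X) \to \mathrm{Sob}(Y) = Y$ (as $Y$ is spectral, hence sober) with $f = \mathrm{Sob}(f)\circ \mathrm{Sob}_X$. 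Since $f$ is injective, $\mathrm{Sob}_X$ must be injective; being always surjective, it is a bijection, hence a homeomorphism, so $X$ is sober.

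Then I would assemble the conclusion: $X$ is Noetherian (hypothesis), sober (just shown), $T_0$ (follows from the continuous bijection to the $T_0$ space $Y$, or from sobriety), and quasi-compact with a basis of quasi-compact opens (automatic for Noetherian spaces). By the standard characterization — or directly by \cite[Theorem 7.1]{Schwartz17}, since a Noetherian sober space coincides with its spectral reflection — $X$ is a Noetherian spectral space. Concretely: $S_X\colon X \to S(X)$ is always a continuous bijection onto a Noetherian spectral space when $X$ is Noetherian and $T_0$; I can run the same soberification/injectivity argument, or simply invoke that the spectral reflection is a homeomorphism precisely when the space is spectral and Noetherian, after separately confirming spectrality via sobriety plus the quasi-compact basis.

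\textbf{Main obstacle.} The delicate point is that neither hypothesis alone gives sobriety: a continuous bijection onto a sober space need not be a homeomorphism, so (1) by itself is too weak, and a surjection from a sober space certainly does not make the target sober. The trick is that (1) controls injectivity of the soberification unit while (2) is really only needed to control surjectivity onto something — but since $\mathrm{Sob}_X$ is automatically surjective, I should double-check whether hypothesis (2) is genuinely used or whether it serves to guarantee quasi-compactness of certain opens or the basis condition in the non-Noetherian generalization; in the Noetherian setting I expect the argument to go through using (1) alone for sobriety, with (2) perhaps a convenience. I would make sure to pin down exactly where (2) enters (likely: to see that $X$ is quasi-compact, or to avoid assuming $X$ is $T_0$ a priori) so the proof is honest about its hypotheses.
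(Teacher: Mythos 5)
There is a genuine gap, and it is exactly at the point you flagged as uncertain. Your claim that ``$\mathrm{Sob}_X$ is always surjective'' is false, and with it your conclusion that hypothesis (2) is dispensable. For a $T_0$ space, $\mathrm{Sob}_X$ is injective, but its surjectivity is precisely the assertion that every irreducible closed subset of $X$ has a generic point — which is the entire content of sobriety. A Noetherian $T_0$ space need not have this property: take $X=\mathbb{N}$ with the cofinite topology; the whole space is irreducible and closed but has no generic point, so $\mathrm{Sob}_X$ misses the point of $\mathrm{Sob}(X)$ corresponding to $X$ itself. Moreover this $X$ even satisfies hypothesis (1): equip $\mathbb{N}$ with the coarser topology whose nonempty opens are the cofinite sets containing $0$ — one checks this is Noetherian spectral (with $0$ as generic point), and the identity is a continuous bijection from $\mathbb{N}_{\mathrm{cof}}$ onto it. So (1) alone cannot give sobriety, contrary to what you predict in your ``Main obstacle'' paragraph.

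The paper's proof uses (1) only to get $T_0$ (hence injectivity of $\mathrm{Sob}_X$) — which matches your injectivity argument in spirit, though they route it through the spectral reflection $S$ rather than $\mathrm{Sob}$ — and then uses (2) to argue surjectivity of $\mathrm{Sob}_X$: in the naturality square for the unit of soberification applied to $g$, the map $\mathrm{Sob}_Z$ is a homeomorphism because $Z$ is sober, and the paper asserts that $\mathrm{Sob}(g)$ is surjective, forcing $\mathrm{Sob}_X$ to be surjective. So (2) is not a convenience; it is the engine of the surjectivity half. I would add one word of caution: since $\mathrm{Sob}_X\circ g=\mathrm{Sob}(g)\circ\mathrm{Sob}_Z$ and $\mathrm{Sob}_Z$ is invertible, surjectivity of $\mathrm{Sob}(g)$ (for surjective $g$) is in fact equivalent to surjectivity of $\mathrm{Sob}_X$, so the paper's one-line justification of that step — ``epimorphisms in topological spaces are continuous surjections'' — deserves independent scrutiny; indeed the cofinite example above, with $Z=\mathbb{N}_{\mathrm{disc}}\twoheadrightarrow\mathbb{N}_{\mathrm{cof}}$, is a case where $g$ is surjective and $Z$ is sober yet $\mathrm{Sob}(g)$ is not surjective. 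In the paper's actual application $g$ is a topological quotient of a coproduct of spectral spaces, which is stronger than the bare hypothesis (2), and that extra structure is likely what is really needed. In any case, for the lemma as stated, your proposal cannot be repaired by simply invoking automaticity of surjectivity; you must use (2), and you should verify carefully what it actually buys you.
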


\begin{proof}
   It is enough to prove that $X$ is sober. First, consider the following commutative diagram. 
    \begin{center}
        \begin{tikzcd}
          X \arrow[r,"f"]  \arrow[d,"S_X"'] & Y \arrow[d,"S_Y"] \\ 
            S(X) \arrow[r,"S(f)"] & S(Y)
        \end{tikzcd}
    \end{center} 
    and by our assumption, the right vertical map is a homeomorphism (see Recollection \ref{reflection funtor}). By the commutativity of the square, the left vertical map is injective, hence $X$ is a $T_0$ space (see \cite{Schwartz17}). Similarly, consider the commutative diagram induced by the soberification. 
    \begin{center}
        \begin{tikzcd}
          Z \arrow[r,"g"]  \arrow[d,"\mathrm{Sob}_Z"'] & X \arrow[d,"\mathrm{Sob}_X"] \\ 
            \mathrm{Sob}(Z) \arrow[r,"\mathrm{Sob}(g)"] & \mathrm{Sob}(X)
        \end{tikzcd}
    \end{center}
    Now, the the left vertical map is a homeomorphism since $Z$ is sober; the bottom map is surjective since epimorphisms in topological spaces are simply continuous surjections. It follows that the right vertical map is also surjective. Moreover, we already proved that $X$ is $T_0$, then $\mathrm{Sob}_X\colon X\to \mathrm{Sob}(X)$ is also injective. It follows that $X$ is sober and hence spectral.   
\end{proof}

We will also need the following straightforward result. 

\begin{lemma}\label{criteria for open maps}
    Let $f\colon X\to Y$ be a continuous spectral bijection between Noetherian spectral spaces. Then $f$ is a homeomorphism if and only if $f$ lifts specializations.  
\end{lemma}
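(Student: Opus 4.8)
\textbf{Plan for the proof of Lemma \ref{criteria for open maps}.}

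The forward direction is immediate: a homeomorphism is in particular a closed map, and for spectral spaces being closed under specialization is equivalent to the image of a closed set being closed, so a homeomorphism certainly lifts specializations. The real content is the converse, so assume $f$ is a continuous spectral bijection that lifts specializations, and the goal is to show $f^{-1}$ is continuous, equivalently that $f$ is a closed map. The strategy is to exploit Noetherianity: in a Noetherian spectral space every closed subset is a finite union of irreducible closed subsets, and every irreducible closed subset has a unique generic point (sobriety). So it suffices to show that $f$ sends irreducible closed subsets to closed subsets, and for this it is enough to show that if $Z \subseteq X$ is irreducible closed with generic point $x$, then $f(Z) = \overline{\{f(x)\}}$.

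Here is how I would argue that equality. The inclusion $f(Z) \subseteq \overline{\{f(x)\}}$ holds because $f$ is continuous and $Z = \overline{\{x\}}$: continuous maps send the closure of a point into the closure of its image. For the reverse inclusion, take any $y \in \overline{\{f(x)\}}$, i.e. $y$ is a specialization of $f(x)$. Since $f$ lifts specializations and $f(x) = f(x)$ is realized by $x \in X$, there is $x' \in X$ with $f(x') = y$ and $x'$ a specialization of $x$, that is $x' \in \overline{\{x\}} = Z$. Hence $y = f(x') \in f(Z)$. This shows $\overline{\{f(x)\}} \subseteq f(Z)$, giving the desired equality and proving $f(Z)$ is closed. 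Combining with the finite-union decomposition of arbitrary closed subsets of the Noetherian space $X$, we conclude $f$ is a closed map; since it is a continuous bijection, it is a homeomorphism.

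I do not expect a serious obstacle here — the statement is essentially a soft point-set fact once one unwinds the definitions, and the hypotheses (Noetherian, spectral, hence sober with generic points and finite irreducible decompositions) are exactly what make the above argument go through. The only point requiring a little care is making sure "lifts specializations" is used in the correct direction: we need that a specialization downstairs, together with a lift of its generic point, lifts to a specialization upstairs — which is precisely the standard meaning of the phrase. If one wanted to avoid even mentioning irreducible decompositions, an alternative is to note that a continuous bijection between sober spaces which lifts specializations and is spectral must be closed on irreducible closeds as above, and then invoke that a spectral map which is a continuous closed bijection is a homeomorphism; but the decomposition argument is cleanest and I would present that.
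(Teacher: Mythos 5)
Your proof is correct, but it takes a genuinely different route from the paper's. You decompose a closed set $Z\subseteq X$ into finitely many irreducible components (possible by Noetherianity), pick off their generic points (possible by sobriety), and show directly that each $f(\overline{\{x_i\}})$ equals $\overline{\{f(x_i)\}}$ by running the lifting-of-specializations hypothesis pointwise. The paper instead argues at the level of the spectral-space toolkit: since $f$ is a spectral map between Noetherian spectral spaces, $f(Z)$ is constructible; the lifting hypothesis makes $f(Z)$ stable under specialization; and a constructible, specialization-closed subset of a spectral space is closed. Your argument is more elementary and self-contained — it only needs sobriety and finite irreducible decomposition, both consequences of "Noetherian spectral", and never invokes constructibility or the Chevalley-type fact that spectral images of constructible sets in Noetherian spectral spaces remain constructible. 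The paper's argument is shorter on the page but quietly leans on that nontrivial background. One small caution on your forward direction: the sentence "for spectral spaces being closed under specialization is equivalent to the image of a closed set being closed" is garbled as stated — the clean justification is just that a homeomorphism is closed, and any continuous closed map lifts specializations because $f(\overline{\{x\}})=\overline{\{f(x)\}}$ for such maps.
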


\begin{proof}
   Let $Z$ be a closed subset of  $X$. Then $f(Z)$ is constructible in $Y$. By assumption on $f$, we deduce that $f(Z)$ is specialization closed. But constructible and specialization closed subsets are closed. We conclude the result.  
\end{proof}

Those are all the abstract results we need, so let’s get back to the derived category of permutation modules. In order to state the main result of this section, we need to recall the construction of \textit{the category of elementary abelian sections $\mathcal{E}(G)$ of $G$}; see \cite[Construction 11.1]{BG23b}.

\begin{recollection}\label{rec category of p sections}
     Let $\mathcal{E}(G)$ denote the category whose objects are pairs $(H,K)$ such that $K\unlhd H\leq G$, and $H/K$ is elementary abelian; a morphisms in this category from  $(K,H)$ to $(K',H')$ corresponds to an element $g$ of $G$ such that 
     \[
     K'\leq K^g\leq H^g \leq H'
     \]
     and the composition of morphisms is given by the multiplication in $G$. 
\end{recollection}

\begin{construction}
    Let $R$ be a commutative Noetherian ring. For an object $(H,K)$ in $\mathcal{E}(G)$, we will consider the tt-category $\K(H/K,R)$. For a morphism $g$ in $\mathcal{E}(G)$ we define a morphism $\mathrm{Spc}(\K(g))$ on Balmer spectra as the composition 
    \begin{center}
    \begin{tikzcd}
\mathrm{Spc}(\K(H/K,R)) \arrow[r,"\mathrm{Spc}(c_g^\ast)"]  
&    \mathrm{Spc}(\K(H^g/K^g,R)) \arrow[d, phantom, ""{coordinate, name=Z}]
\arrow[d,
rounded corners,"\rho_{K^g/K^g}",
to path={ -- ([xshift=4ex]\tikztostart.east)
 |- (Z) [near start]\tikztonodes
-|([xshift=-4ex]\tikztotarget.west)
-- (\tikztotarget)}]
 &  \\
  & \mathrm{Spc}(\K(H'/K^g,R)) \arrow[r,"\psi^{\bar K}"] & \mathrm{Spc}(\K(H'/K',R)) 
\end{tikzcd}
\end{center}
 \noindent  here $\rho_{K^g/K^g}$ denotes the induced map by the restriction functor, and  $\psi^{\Bar{K}}$ denotes the triangular fixed points map, and  $\Bar{K}=K^g/K'$. We are also using that $(H'/K')/\Bar{K}=H'/K^g$. This defines a functor $\Xi_G$ from $\mathcal{E}(G)^\textrm{op}$ to topological spaces.  
 
 On the other hand,  for each object $(H,K)$ in $\mathcal{E}(G)$, we obtain a map $\varphi_{(H,K)}$ of topological spaces as the composition \[
 \mathrm{Spc}(\K(H/K,R))\xrightarrow{\psi^{H,K}} \mathrm{Spc}(\K(H,R))\xrightarrow{\rho^G_H} \mathrm{Spc}(\K(G,R)).
 \] 
 In fact, unpacking the definition of triangular fixed points we can verify that the following square is commutative. 
 \begin{center}
    \begin{tikzcd}[column sep = 4em]
        \Spc(\K(H/K,R)) \arrow[r,"\psi^{H,K}"] \arrow[d,"\rho_{H/K}"'] & \Spc(\K(H,R)) \arrow[d,"\rho_H"] \\
        \Spc(\K(\WGK,R)) \arrow[r,"\psi^{G,K}"'] & \Spc(\K(G,R))
     \end{tikzcd}
\end{center}
using that $H\leq N_G(K)$. Moreover, the maps $\varphi_{(H,K)}$ are compatible with the morphisms in $\mathcal{E}(G)$, as we will show later.
\end{construction}

 \begin{remark}
     If $R$ is a field, then the functor $\Xi_G$ agrees with the functor defined in \cite[Construction 11.4]{BG23b}, since the map on spectra induced by modular fixed points agrees with the triangular fixed points map, see Section \ref{sec: triangular fixed points}. 
 \end{remark}

 \begin{remark}
    In contrast with the field case, the maps $\varphi_{(H,K)}$ are not necessarily closed in our generality. A key step in the field case was to show that, for a normal subgroup $N \unlhd G$, the modular $N$-fixed points functor induces a closed immersion on Balmer spectra (see \cite[Proposition 7.18]{BG23b}). This result followed by showing that the image of $\check \psi^{G,N}$ corresponds to the support of the tt-ideal $\bigcap_{\mathcal{F}_H} \Ker(\Res^G_H)$, where $\mathcal{F}_H = \{ K \leq G \mid H \not\leq K\}$. However, this rarely happens in our generality, since $\Ker(\Res^G_H)$ is only supported on $\Spc(\K(G,k(\pp)))$ for $k(\pp)$ of characteristic $p$. Indeed, if the characteristic of $k(\pp)$ is not $p$, then $\mathrm{supp}(\iota^\ast_\pp \Ker(\Res^G_H)) = \varnothing$ since $\K(G,k(\pp))$ is semisimple. Nevertheless, this will still play a relevant role here.
 \end{remark}

\begin{proposition}
     Let $g\colon (H,K)\to (H',K')$ be a morphism in $\mathcal{E}(G)$. Then the following diagram of topological spaces is commutative. 
\begin{center}
    \begin{tikzcd}[column sep = 4em]
        \Spc(\K(H/K,R)) \arrow[r,"\varphi_{(H,K)}"] \arrow[d,"\Spc(\K(g))"'] & \Spc(\K(G,R)) \\
        \Spc(\K(H'/K',R)) \arrow[ru,"\varphi_{(H',K')}"']
     \end{tikzcd}
\end{center}
 In particular, we obtain a continuous map 
 \begin{equation}\label{comparison with colim}
\varphi\colon \underset{(H,K)\in \mathcal{E}(G)^\textrm{op}}{\colim} \mathrm{Spc}(\K(H/K,R))\to \mathrm{Spc}(\K(G,R))     
 \end{equation}
whose component at $(H,K)$ is precisely $\varphi_{(H,K)}$.  
 \end{proposition}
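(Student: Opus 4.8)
The plan is to verify the commutativity of the triangle by unpacking both definitions and reducing everything to the compatibility of the triangular fixed points map with restriction and conjugation, together with the commutative square already recorded in the Construction (the one relating $\psi^{H,K}$, $\psi^{G,K}$ and the two restriction maps). First I would record what the morphism $g\colon (H,K)\to (H',K')$ amounts to: it is an element $g\in G$ with $K'\leq K^g\leq H^g\leq H'$. By definition, $\Spc(\K(g))$ is the composite
\[
\Spc(\K(H/K,R))\xrightarrow{\Spc(c_g^\ast)}\Spc(\K(H^g/K^g,R))\xrightarrow{\rho_{H^g/K^g}}\Spc(\K(H'/K^g,R))\xrightarrow{\psi^{\bar K}}\Spc(\K(H'/K',R)),
\]
with $\bar K=K^g/K'$, and $\varphi_{(H',K')}=\rho^G_{H'}\circ\psi^{H',K'}$. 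So the task is to show
\[
\varphi_{(H,K)}=\rho^G_{H'}\circ\psi^{H',K'}\circ\psi^{\bar K}\circ\rho_{H^g/K^g}\circ\Spc(c_g^\ast).
\]

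The key steps, in order, are as follows. First, I would establish a \emph{transitivity} statement for triangular fixed points along a chain of normal subgroups: if $K'\unlhd K^g\unlhd H'$, then $\psi^{H',K'}\circ\psi^{\bar K}=\psi^{H'/K',\,K^g/K'}\circ(\text{identification})$, matching $\psi^{H'/K',K^g/K'}$ viewed through $(H'/K')/\bar K=H'/K^g$; this should follow directly from Definition \ref{triangular fixed points} together with Lemma \ref{composite is homeo on spectra} applied to the tower, since quotient and inflation functors compose well. Combined with the commutative square in the Construction (applied with the group $H'$, normal subgroup $K^g$, and intermediate subgroup $H^g$), this reduces the right-hand side to $\rho^G_{H'}\circ\rho_{H'/K^g}'\circ\psi^{H^g,K^g}\circ\Spc(c_g^\ast)$ — wait, more carefully: I would use the square to rewrite $\psi^{H'/K',K^g/K'}\circ\rho_{H^g/K^g}$ in terms of $\rho$ applied after $\psi$ on the subgroup $H^g$, landing us at an expression built from $\psi^{H^g,K^g}$, a restriction $\rho^{\,?}_{H^g}$, the conjugation $\Spc(c_g^\ast)$, and finally $\rho^G_{H'}$. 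Second, I would invoke the conjugation-equivariance of $\psi$ from Recollection \ref{primes P under restriction}(2) (in the form $\P_G(H,\aa,\pp)^g=\P_{G^g}(H^g,\aa^g,\pp)$, equivalently naturality of $\psi^{G,H}$ under $c_g$) to move $\Spc(c_g^\ast)$ past $\psi^{H^g,K^g}$, turning it into $\psi^{H,K}$ followed by a conjugation homeomorphism on $\Spc(\K(H,R))$. Third, since conjugation by $g\in G$ is isomorphic to the identity on $\Spc(\K(G,R))$ (Recollection \ref{primes under homeo induced by conjugation}), and the various restriction maps compose transitively, the accumulated conjugations and restrictions collapse to $\rho^G_H$, yielding exactly $\varphi_{(H,K)}=\rho^G_H\circ\psi^{H,K}$.

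The existence of the map $\varphi$ out of the colimit is then formal: the cocone $\{\varphi_{(H,K)}\}$ is compatible with all morphisms of $\mathcal{E}(G)^{\mathrm{op}}$ by the triangle just proved, so the universal property of the colimit in topological spaces produces the continuous map $\varphi$ with the asserted components.

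I expect the main obstacle to be the first key step — the transitivity/compatibility bookkeeping for triangular fixed points along the chain $K'\leq K^g\leq H'$ and its interaction with the restriction map $\rho_{H^g/K^g}$. Unlike in the modular setting of \cite{BG23b}, where the inflation–quotient composite is an honest equivalence of additive categories, here $\psi$ is only defined as a composite involving the \emph{inverse} of the homeomorphism of Lemma \ref{composite is homeo on spectra}, so one cannot argue at the level of functors and must instead carefully chase the diagram of spectra, repeatedly using functoriality of $\Spc(-)$ and the naturality statements in Recollection \ref{primes P under restriction}. Everything else — conjugation being trivial, transitivity of restriction, and the universal property of the colimit — is routine.
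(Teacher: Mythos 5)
Your proposal is correct and follows essentially the same route as the paper: both unpack $\Spc(\K(g))$ into its conjugation, restriction, and fixed-points factors and verify commutativity against $\varphi_{(-,-)}$ one factor at a time, invoking the same ingredients (the $\rho$-$\psi$ square recorded in the Construction, conjugation-equivariance of $\psi$, triviality of inner conjugation, and transitivity of restriction). The only cosmetic difference is that you package the quotient step as a transitivity identity $\psi^{H',K'}\circ\psi^{\bar K}=\psi^{H',K^g}$, whereas the paper proves the equivalent statement in the composed form $\varphi_{(H',K^g)}=\varphi_{(H',K')}\circ\psi^{\bar K}$ by the same kind of functor-level diagram chase used for the conjugation triangle; you correctly flag this bookkeeping (chasing through the inverse homeomorphism of Lemma \ref{composite is homeo on spectra}) as the only delicate point.
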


\begin{proof}
Our strategy is to unpack the definition of the maps $\varphi_{(-,-)}$ and $\Spc(\K(-))$ in order to break the above diagram into triangles that would be commutative from the functoriality of the Balmer spectrum. In order to ease notation, let us omit the ground ring $R$ from the notation.  
Recall that the map $\Spc(\K(g))$ is the composite $\psi^{\bar K}\circ \rho^{H/K^g}_{H'/K^g} \circ\Spc(c^\ast_g)$, where $\bar K=K^g/K'$. Let us begin by showing that the following diagram is commutative.  
\begin{center}
\begin{equation}\label{triangle for cg}
    \begin{tikzcd}[column sep = 4em]
        \Spc(\K(H/K)) \arrow[r,"\varphi_{(H,K)}"] \arrow[d,"\Spc(c^\ast_g)"'] & \Spc(\K(G)) \\
        \Spc(\K(H^g/K^g)) \arrow[ru,"\varphi_{(H^g,K^g)}"']
     \end{tikzcd}
\end{equation}
\end{center}
Consider the following  diagram 
\begin{center}
     \begin{tikzcd}[column sep = 3em]
    \K(G) \arrow[r,"\mathrm{Res}"] \arrow[rd,"\mathrm{Res}"']  &  \K(H) \arrow[r,"\mathrm{Qout}"] \arrow[d,"c_g^\ast"] & \tilde{\K}(H/K) \arrow[d,"c_g^\ast"] & \arrow[l,"\mathrm{Qout}\circ\mathrm{Infl}"'] \K(H/K) \arrow[d,"c_g^\ast"]  \\ 
     &  \K(H^g) \arrow[r,"\mathrm{Qout}"']  & \tilde{\K}(H^g/K^g) & \arrow[l,"\mathrm{Qout}\circ\mathrm{Infl}"] \K(H^g/K^g)
     \end{tikzcd}
\end{center}
here  $ \tilde{\K}(H/K)$ is the category $\mathbf{K}_b(\perm(H)/\mathrm{proj}\mathcal{F}_K)^\natural$, see Section \ref{sec: triangular fixed points}. Note that the diagram is commutative. Indeed, the left triangle is commutative since it is induced by restriction. Also, the inflation functor is compatible with restriction along the conjugation by $g$, hence it remains to check that the quotient functor is also compatible with conjugation. But this is clear since for a subgroup $L$ of $H$ such that $L\not \leq K$, we have that $L^g\leq H^g$ and $L\not \leq K^g$.  Now, the commutativity of the Diagram \ref{triangle for cg} follows by applying the functor $\mathrm{Spc}(-)$ and noticing that the top and bottom arrows are precisely the ones given $\varphi_{(H,K)}$ and $\varphi_{(H^g,K^g)}$, respectively. 

In the same fashion, we can verify the following identities:  
\begin{enumerate}
    \item $\varphi_{(H^g,K^g)} = \varphi_{(H',K^g)} \circ \rho^{H/K^g}_{H'/K^g}$. 
    \item $\varphi_{(H',K^g)} =  \varphi_{(H',K')}  \circ \psi^{\bar K}$.
\end{enumerate}
We deduce that $\varphi_{(H',K')}\circ \Spc(\K(g))=\varphi_{(H,K)}$.
The second claim is clear. 
\end{proof}

\begin{theorem}\label{colimit theorem}
    The comparison map \[\varphi\colon \underset{{(H,K)\in \mathcal{E}(G)^\textrm{op}}}{\colim} \mathrm{Spc}(\K(H/K,R))\to \mathrm{Spc}(\K(G,R))  \] from Equation \ref{comparison with colim} is a homeomorphism. 
\end{theorem}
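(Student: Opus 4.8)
The plan is to show that $\varphi$ is a continuous bijection of Noetherian spectral spaces that lifts specializations, and then invoke Lemma~\ref{criteria for open maps} to conclude it is a homeomorphism. The source space $X \coloneqq \colim_{(H,K)\in\mathcal{E}(G)^{\mathrm{op}}} \Spc(\K(H/K,R))$ is a colimit of Noetherian spectral spaces over a finite index category, so it is a Noetherian space; the target $\Spc(\K(G,R))$ is a Noetherian spectral space by Theorem~\ref{cover of spc KG}. To apply Lemma~\ref{spectral criteria} to $X$, I would produce a continuous bijection from $X$ to a Noetherian spectral space and a surjection onto $X$ from a sober space — the latter can be taken to be the disjoint union $\coprod_{(H,K)} \Spc(\K(H/K,R))$, which is sober, mapping onto the colimit; the former will come out of the bijectivity argument for $\varphi$ itself (once we know $X$ is $T_0$, which the square in the proof of Lemma~\ref{spectral criteria} handles).

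The heart of the matter is bijectivity of $\varphi$, and here is where I would reduce to the field case fiberwise. Using the set-level description of $\Spc(\K(G,R))$ from Proposition~\ref{Spc as a set} — namely $\Spc(\K(G,R)) \cong \coprod_{\pp\in\Spec(R)} \Spc(\K(G,k(\pp)))$ — together with the analogous base-change decomposition applied to each $\K(H/K,R)$, and the compatibility of all the structure maps $\psi^{G,K}$, $\rho^G_H$, $c_g^\ast$ with base change along residue fields (Recollection~\ref{restriction under base change}, Proposition~\ref{base change and triangular fixed points}, Recollection~\ref{primes P under restriction}), the map $\varphi$ decomposes as a disjoint union over $\pp\in\Spec(R)$ of maps
\[
\varphi_\pp\colon \underset{(H,K)\in\mathcal{E}(G)^{\mathrm{op}}}{\colim} \Spc(\K(H/K,k(\pp))) \to \Spc(\K(G,k(\pp))).
\]
For $\pp$ with $\operatorname{char} k(\pp) = p$, this is exactly the comparison map of \cite[Theorem~11.10]{BG23b}, which is a homeomorphism — in particular a bijection. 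For $\pp$ with $\operatorname{char} k(\pp)$ coprime to $p$, every $\Spc(\K(H/K,k(\pp)))$ is a point (Proposition~\ref{Spc is a point in ordinary char}), so the colimit is a point and $\varphi_\pp$ is trivially a bijection. Hence $\varphi$ is a bijection.

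It remains to check that $\varphi$ lifts specializations. A specialization $\Q \rightsquigarrow \Q'$ in $\Spc(\K(G,R))$: by Corollary~\ref{cohomological primes over different fibers} both primes lie over the same $\pp\in\Spec(R)$ precisely when the specialization is "internal" to a fiber; but one must also allow specializations from the ordinary fiber to the modular fiber, i.e.\ from a prime over $\pp$ with $\operatorname{char} k(\pp)\ne p$ (equivalently $p$ invertible, giving a point of $\Spec(R[1/p])$) down to a prime over $\qq$ with $\operatorname{char} k(\qq) = p$. For internal specializations I would push the problem into the field case: given $\varphi(x) = \Q$, the image $\Q$ sits in some $\Spc(\K(H/K,k(\pp)))$ component via $\varphi_\pp$, and since $\varphi_\pp$ is a homeomorphism (hence lifts specializations) for $\operatorname{char} k(\pp) = p$, I can lift; for $\operatorname{char} k(\pp)$ coprime to $p$ there are no nontrivial internal specializations. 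The cross-fiber specializations are the genuinely new phenomenon here; I would handle them by observing that the structure maps $\rho$ to $\Spec(R)$ are compatible on both sides, so a cross-fiber specialization downstairs is detected by the corresponding specialization $\pp \rightsquigarrow \qq$ in $\Spec(R)$, and one lifts it by combining a lift along $\Spec(R) \to \Spec(\mathbb{Z})$-type arguments with the behavior of the triangular fixed point maps under base change; concretely, since the primes over the ordinary fiber are of the form $\P(H,\aa,\pp)$ with the datum of $H$ and $\aa$ trivial (Remark~\ref{rem:primes in coprime char}), such a prime specializes to $\P(1,\aa',\qq)$ for the cohomological prime $\aa'$ sitting over the generic point of $H^\bullet(G,k(\qq))$, and this specialization is already realized inside $\Spc(\K(G,R))$ — one checks it is hit by $\varphi$ because the colimit contains the $(1,1)$-term $\Spc(\K(1,R)) = \Spc(\mathbf{D}_{\mathrm{perf}}(R))$, which surjects onto $\Spec(R)$.

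I expect the main obstacle to be precisely the bookkeeping around cross-fiber specialization relations: unlike \cite{BG23b}, where $\varphi$ is proven closed via a support-theoretic identification of the image of $\check\psi^{G,N}$ that fails in our generality (as flagged in the remark preceding the theorem), here one cannot argue closedness directly and must instead verify the specialization-lifting criterion by hand, carefully tracking how specializations interact with the two-step comparison $\Spc(\K(G,R)) \to \Spec(R) \to \Spec(\mathbb{Z})$ and with the residue-field base-change decomposition. The field-case input \cite[Theorem~11.10]{BG23b} does the heavy lifting on each fiber, but stitching the fibers together along specializations — and confirming that no specialization in the target is "missed" by the colimit — is the delicate part.
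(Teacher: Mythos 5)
Your overall strategy — continuous bijection, Noetherian spectrality of the colimit via Lemma~\ref{spectral criteria}, and then specialization lifting via Lemma~\ref{criteria for open maps} — matches the paper exactly, and your bijectivity argument (decomposing over $\Spec(R)$ and invoking \cite[Theorem~11.10]{BG23b} on modular fibers, Proposition~\ref{Spc is a point in ordinary char} on ordinary ones) is essentially Proposition~\ref{varphi is injective}.

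The gap is in the specialization-lifting step, which you correctly flag as the delicate part but then misdescribe and leave unproven. First, your characterization of the cross-fiber specializations is wrong: a prime $\P(H,\aa,\pp)$ in the ordinary fiber need \emph{not} specialize only to primes of the form $\P(1,\aa',\qq)$ over the modular fiber. Already for $G=C_{p^n}$ over $\mathbb{Z}$ the generic point $(0)$ specializes to all closed points $\mm_i$, which are of the form $\P(N_i,0,(p))$ for each subgroup $N_i$ in the normal series, not just the trivial one. Second, even under your (incorrect) description, observing that the $(1,1)$-term of the colimit surjects onto $\Spec(R)$ shows at best that certain specializations exist in the target; it does not produce, for a given $\P$ in the colimit mapping to the source of a specialization in $\Spc(\K(G,R))$, a specialization in the colimit starting at that same $\P$. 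Third, your treatment of "internal" specializations conflates a single fiber over $\pp$ with the whole ordinary fiber: the ordinary fiber is $\Spec(R[1/p])$, which certainly has nontrivial specializations between distinct residue fields, so the claim "no nontrivial internal specializations when $\operatorname{char}k(\pp)$ is coprime to $p$" does not dispose of that case.

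The paper's argument for the ordinary case uses machinery you do not invoke: it first establishes a preparatory lemma (that $\varphi$ restricted to the modular fiber is already a homeomorphism, via closedness of the triangular fixed point maps there, Proposition~\ref{psi is closed normal case} and Corollary~\ref{transition maps are closed}); then for a specialization starting over $\Spec(R[1/p])$ it reduces, via connectedness of $\mathcal{E}(G)$ and the fact that the transition maps act trivially on $\Spec(R[1/p])$, to a single component $\varphi_{(H,K)}=\rho^G_H\circ\psi^{H,K}$, lifts the specialization along the closed map $\rho^G_H$ (Recollection~\ref{image of restriction}), uses the coequalizer description to replace the lift by one in the image of $\psi^{H,K}$, and finally applies split injectivity of $\psi^{H,K}$ for $K\trianglelefteq H$ (Proposition~\ref{split injection normal case}) to lift the specialization into $\Spc(\K(H/K,R))$. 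None of these ingredients appear in your sketch, so the cross-fiber case — which you yourself identify as the crux — is not actually established.
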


Our strategy is as follows. We will first show that the map $\varphi$ is a continuous bijection. From this, we will deduce that the colimit space is indeed a Noetherian spectral space. The last step consists in showing that  specializations lift along $\varphi$.

\begin{proposition}\label{varphi is injective}
    The map $\varphi$ is a continuous bijection over an arbitrary Noetherian base $R$. 
\end{proposition}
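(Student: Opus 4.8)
The plan is to establish surjectivity and injectivity of $\varphi$ separately, using the set-theoretic description of $\Spc(\K(G,R))$ developed in Section 5 together with the known field case \cite[Theorem 11.10]{BG23b}.

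First I would prove surjectivity. By Remark \ref{all points in K(G) are come from check psi}, every point of $\Spc(\K(G,R))$ has the form $\P_{G,R}(H,\aa,\pp)$ for some $H\leq G$, some $\pp\in\Spec(R)$ and some cohomological prime $\aa\in V_{\WGH,k(\pp)}$. If $k(\pp)$ has characteristic coprime to $p$, then $\Spc(\K(G,k(\pp)))$ is a point and this prime factors through $\varphi_{(1,1)}$ (the trivial section), so we may assume $\mathrm{char}(k(\pp))=p$. In that case the prime is $\Spc(\iota_\pp^\ast)$ applied to a prime of $\Spc(\K(G,k(\pp)))$, and by the field case \cite[Theorem 11.10]{BG23b} that prime lies in the image of the colimit map over $\mathcal{E}(G)^{\mathrm{op}}$ built from the $\varphi_{(H,K)}$ for $\K(-,k(\pp))$. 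Chasing through the base-change compatibility of $\check\psi^{G,H}$ and the triangular fixed point maps (Proposition \ref{base change and triangular fixed points} and its analogues, plus Recollection \ref{restriction under base change}), the image point over $k(\pp)$ pulls back along $\iota_\pp^\ast$ to a point in the image of $\varphi_{(H,K)}$ over $R$; hence $\varphi$ is surjective.

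Next I would prove injectivity. Suppose $\varphi(x)=\varphi(y)$ for $x$ in the $(H,K)$-component and $y$ in the $(H',K')$-component of the colimit. Applying the comparison map to $\Spec(R)$ and using the diagram from Corollary \ref{comparison for check psi to residue fields}, both points lie over the same prime $\pp\in\Spec(R)$. If $k(\pp)$ has characteristic coprime to $p$ everything collapses to a point and there is nothing to check, so assume $\mathrm{char}(k(\pp))=p$. Then $x$ and $y$ are images under $\Spc(\iota_\pp^\ast)$ of points $\bar x,\bar y$ in the colimit $\colim_{\mathcal{E}(G)^{\mathrm{op}}}\Spc(\K(H/K,k(\pp)))$. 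Since $\Spc(\iota_\pp^\ast)$ is injective (this is exactly what the proof of Proposition \ref{Spc as a set} establishes, via Theorem \ref{primes in spc up to conj}), the images of $\bar x,\bar y$ in $\Spc(\K(G,k(\pp)))$ coincide; by the field case \cite[Theorem 11.10]{BG23b}, $\varphi$ is a homeomorphism over $k(\pp)$, so $\bar x=\bar y$ in the colimit over $k(\pp)$. Finally, since the transition maps $\Spc(\lambda_\pp^\ast)$ on each term are compatible with the functors $\Spc(\iota_\pp^\ast)$ (Proposition \ref{base change and triangular fixed points}) and these are jointly injective across all $\pp$, one gets $x=y$ in $\colim_{\mathcal{E}(G)^{\mathrm{op}}}\Spc(\K(H/K,R))$.

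The main obstacle I anticipate is the bookkeeping in the injectivity argument: one must be careful that equality of two points in the colimit over $R$ is genuinely detected after base change to all residue fields, i.e.\ that the colimit over $\mathcal{E}(G)^{\mathrm{op}}$ of the spaces $\Spc(\K(H/K,R))$ decomposes compatibly with the residue-field decomposition of Proposition \ref{Spc as a set}, term by term and in a way respecting the structure maps $\psi^{\bar K}$, $\rho$, and $\Spc(c_g^\ast)$ of the diagram. This requires combining the base-change square of Proposition \ref{base change and triangular fixed points} with the explicit description of morphisms in $\mathcal{E}(G)$ and Recollection \ref{primes P under restriction}. Once this compatibility is in hand, the statement follows formally; the remaining facts that the colimit is a Noetherian spectral space and that $\varphi$ lifts specializations are then handled by Lemma \ref{spectral criteria} and Lemma \ref{criteria for open maps}, using the surjection from $\coprod_{(H,K)}\Spc(\K(H/K,R))$ onto the colimit and the Noetherian spectral target $\Spc(\K(G,R))$.
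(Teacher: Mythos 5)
Your approach is essentially the paper's: both reduce to residue fields via Proposition~\ref{Spc as a set}, invoke the field-case colimit theorem \cite[Theorem 11.10]{BG23b}, and rely on compatibility of $\iota_\pp^\ast$ with the transition maps. The paper packages this more compactly into a single commutative square with three bijective sides (using that coproducts commute with colimits and that a natural isomorphism of $\mathcal{E}(G)^{\mathrm{op}}$-diagrams induces an isomorphism of colimits), rather than separating surjectivity and injectivity and chasing individual points as you do; the ingredients are the same, but the square spares you the bookkeeping you flag as the main obstacle. One detail you gloss over: when $\mathrm{char}\,k(\pp)\neq p$, to conclude that $\colim_{\mathcal{E}(G)^{\mathrm{op}}}\Spc(\K(H/K,k(\pp)))$ is a single point you need $\mathcal{E}(G)$ to be connected, which the paper explicitly justifies (any $(H,K)$ admits a zig-zag to $(1,1)$ because $G$ is a $p$-group); without this the colimit of one-point spaces over the diagram need not be a point. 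Also, your final injectivity step is overcomplicated: once $\bar x=\bar y$ in $\colim_{k(\pp)}$, their common image $x=y$ in $\colim_R$ is automatic, and the appeal to joint injectivity of the $\Spc(\lambda_\pp^\ast)$ is unnecessary.
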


\begin{proof}
     Let $\pp\in \mathrm{Spec}(R)$. Then the comparison map 
 \[\varphi_{k(\pp)}\colon \underset{{(H,K)\in \mathcal{E}(G)^\textrm{op}}}{\colim} \mathrm{Spc}(\K(H/K,k(\pp)))\to \mathrm{Spc}(\K(G,k(\pp)))  \]
 is an homeomorphism. Indeed, if the characteristic of $k(\pp)$ is not $p$, then the left hand side correspond to the connected components of $\mathcal{E}(G)$. But since $G$ is 
a $p$-group, we get that any $(H,K)$ has a zig-zag of maps to $(1,1)$ (just use a maximal normal series of the subgroup $H$). It follows that $\varphi_{k(\pp)}$ is a homeomorphisms in this case since both sides are a point. On the other hand, if the characteristic of $k(\pp)$ is $p$, then it follows from \cite[Theorem 11.10]{BG23b}. Now, let us consider the following commutative diagram obtained from base change along all residue fields of $R$. 
 \begin{center}
    \begin{tikzcd}
       \displaystyle \coprod_{\pp\in\mathrm{Spec}(R)} 
 \underset{(H,K)\in \mathcal{E}(G)^\mathrm{op}}{\colim} \Spc(\K(H/K,k(\pp))) \arrow[r,"\coprod\varphi_{k(\pp)}"] \arrow[d] & \displaystyle \coprod_{\pp\in\mathrm{Spec}(R)} \mathrm{Spc}(\K(G,k(\pp))) \arrow[d] \\ 
\displaystyle  \underset{(H,K)\in\mathcal{E}(G)^\mathrm{op}}{\colim} \Spc(\K(H/K,R)) \arrow[r,"\varphi"] & \mathrm{Spc}(\K(G,R))
    \end{tikzcd}
\end{center}
    here we are using that coproducts commute with colimits to rewrite the top left corner. Now, note that the top map is a bijection by the previous observation; the  vertical maps are bijections by Proposition \ref{Spc as a set}. It follows that $\varphi$ is also a bijection as we wanted.
\end{proof}

\begin{corollary}\label{colim is noetherian spectral}
    The space $\colim_{(H,K)\in \mathcal{E}(G)^\textrm{op}} \mathrm{Spc}(\K(H/K,R))$ is a Noetherian spectral space. 
\end{corollary}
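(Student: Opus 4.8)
The plan is to obtain this directly from Lemma \ref{spectral criteria}, taking for $X$ the colimit space $\colim_{(H,K)\in \mathcal{E}(G)^\textrm{op}} \mathrm{Spc}(\K(H/K,R))$; both hypotheses of that lemma are already available. For hypothesis (1) I would use $f = \varphi$, which by Proposition \ref{varphi is injective} is a continuous bijection onto $\mathrm{Spc}(\K(G,R))$, a space that is Noetherian by the Theorem recalled in the introduction and spectral because every Balmer spectrum is spectral (see \cite{Bal05}). For hypothesis (2) I would observe that, $G$ being finite, the category $\mathcal{E}(G)$ has only finitely many objects and morphisms, so the colimit is computed in $\mathrm{Top}$ as a quotient of the finite coproduct $W = \coprod_{(H,K)} \mathrm{Spc}(\K(H/K,R))$; the quotient map $q\colon W \to X$ is then a continuous surjection whose source $W$ is a finite coproduct of Balmer spectra and hence sober.

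Before invoking Lemma \ref{spectral criteria} one still has to check its standing hypothesis that $X$ is Noetherian. For this I would note that each $\mathrm{Spc}(\K(H/K,R))$ is Noetherian --- again by the introductory Theorem, now applied to the finite group $H/K$ --- so the finite coproduct $W$ is Noetherian; and a continuous surjective image of a Noetherian space is Noetherian, since an ascending chain of open subsets of $X$ pulls back along $q$ to an ascending chain of opens of $W$, which stabilizes, and surjectivity of $q$ transports stabilization back to $X$. With $X$ Noetherian, $\varphi$ a continuous bijection to a Noetherian spectral space, and $q$ a surjection from a sober space, Lemma \ref{spectral criteria} yields that $X$ is a Noetherian spectral space.

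I do not anticipate a genuine obstacle here: the corollary is a bookkeeping combination of Proposition \ref{varphi is injective}, the Noetherianity result of the introduction, and the abstract Lemma \ref{spectral criteria}. The one point worth being careful about is not to overclaim --- at this stage we only obtain that $\varphi$ is a continuous bijection between Noetherian spectral spaces, whereas promoting it to a homeomorphism, which is the content of Theorem \ref{colimit theorem}, will require separately verifying that $\varphi$ lifts specializations and then applying Lemma \ref{criteria for open maps}.
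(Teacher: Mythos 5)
Your proposal is correct and follows essentially the same route as the paper: the paper also notes that the colimit is Noetherian as a quotient of a Noetherian space, and then applies Lemma \ref{spectral criteria} with $f=\varphi$ and with $g$ the quotient map from the (sober) coproduct $\coprod_{(H,K)}\mathrm{Spc}(\K(H/K,R))$. Your write-up merely spells out the finiteness of $\mathcal{E}(G)$ and the transport of Noetherianity along the surjection $q$ in a bit more detail than the paper does.
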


\begin{proof}
    The Noetherianity is clear since it is a quotient of a Noetherian space. The second claim follows from Lemma \ref{spectral criteria} applied to $\varphi$ and to the quotient from $\coprod_{(H,K)\in \mathcal{E}(G)^\textrm{op}} \mathrm{Spc}(\K(H/K,R))$ to $\colim_{(H,K)\in \mathcal{E}(G)^\textrm{op}} \mathrm{Spc}(\K(H/K,R))$. 
\end{proof}

In order to proceed, we need some terminology. 

\begin{definition}\label{def: modular fiber}
Let $G$ be a $p$-group. The \textit{modular fiber of $\Spc(\K(G,R))$} is the preimage of the point $(p)$ under the composition
\[
\Spc(\K(G,R))\xrightarrow[]{\rho} \Spec(R)\xrightarrow[]{\eta} \Spec(\mathbb Z)
\]
where $\rho$ denotes Balmer’s comparison map between the triangular spectrum and the Zariski spectrum, and $\eta$ denotes the structural map of $R$. We write $\Spc(\K(G,R))_{(p)}$ to denote the modular fiber. The \textit{ordinary fiber of  $\Spc(\K(G,R))$} is the set theoretic complement of the modular fiber. 
\end{definition}

\begin{recollection}\label{rec: modular fiber of colim}
   Consider the canonical map from the proof of Proposition \ref{varphi is injective}
   \[
  \mu\colon \coprod_{\pp\in\mathrm{Spec}(R)} 
 \underset{(H,K)\in \mathcal{E}(G)^\mathrm{op}}{\colim} \Spc(\K(H/K,k(\pp))) \to  \underset{(H,K)\in\mathcal{E}(G)^\mathrm{op}}{\colim} \Spc(\K(H/K,R))
   \]
   Let $\colim_{(H,K)\in\mathcal{E}(G)^\mathrm{op}} \Spc(\K(H/K,R))_{(p)}$ denote the image of 
   \[
   \coprod_{\pp\in\eta^{-1}(p)} 
 \underset{(H,K)\in \mathcal{E}(G)^\mathrm{op}}{\colim} \Spc(\K(H/K,k(\pp))). 
   \]
   under $\mu$. By Proposition \ref{Spc as a set}, we deduce that $\varphi$ induces a bijection
   \[
  \underset{(H,K)\in\mathcal{E}(G)^\mathrm{op}}{\colim} \Spc(\K(H/K,R))_{(p)}\to \Spc(\K(G,R))_{(p)} 
   \]
 In fact, we will see that this is actually a homeomorphisms. 
\end{recollection}

\begin{lemma}\label{lemma: rest of varphi is homeo}
 The restriction of the comparison map 
 \[
 \varphi\colon \underset{(H,K)\in\mathcal{E}(G)^\mathrm{op}}{\colim} \Spc(\K(H/K,R))_{(p)} \to \Spc(\K(G,R))_{(p)}
 \]
 is a a homeomorphism. Here we are considering both spaces with the subspace topology. 
\end{lemma}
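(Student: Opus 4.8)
The plan is to reduce the statement to the field case via base change, exploiting the bijection established in Proposition~\ref{varphi is injective} together with the explicit description of the modular fiber in Recollection~\ref{rec: modular fiber of colim}. We already know from Corollary~\ref{colim is noetherian spectral} that the colimit space is Noetherian spectral, and $\Spc(\K(G,R))$ is Noetherian spectral as well; the modular fiber on each side is a closed subspace (it is the preimage of the closed point $(p)$ under a continuous map to $\Spec(\Z)$), hence again Noetherian spectral. Since $\varphi$ restricts to a continuous bijection between these closed subspaces (by Recollection~\ref{rec: modular fiber of colim}), by Lemma~\ref{criteria for open maps} it suffices to check that this restriction lifts specializations.

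The key observation is that all the relevant specialization relations take place \emph{within a single fiber} over $\Spec(R)$, i.e.\ over a fixed prime $\pp$ with $k(\pp)$ of characteristic $p$. Indeed, suppose $\mathcal{Q} \rightsquigarrow \mathcal{Q}'$ is a specialization in $\Spc(\K(G,R))_{(p)}$ and $\mathcal{P}$ is a point of the colimit with $\varphi(\mathcal{P}) = \mathcal{Q}$. By Corollary~\ref{cohomological primes over different fibers} (or rather its consequence that $\mathrm{Comp}$ is compatible with $\rho$), the image of $\mathcal{Q}$ in $\Spec(R)$ is some prime $\pp$ with residue characteristic $p$; and since the modular fiber of $\Spec(R)$ — that is, $\eta^{-1}(p) \subseteq \Spec(R)$ — is itself a closed (hence specialization-closed) subset, the specialization $\mathcal{Q}\rightsquigarrow\mathcal{Q}'$ maps to a specialization $\pp \rightsquigarrow \pp'$ inside $\eta^{-1}(p)$. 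Here I would want one extra ingredient: that specializations along $\rho\colon \Spc(\K(G,R))\to\Spec(R)$ themselves lift. This should follow because $\rho$ is (up to the comparison-map identification) induced by the structure map $\K(1,R)\to\K(G,R)$, and base change $\K(G,R)\to\K(G,R_{\pp})$ or localization arguments let one reduce to a specialization of length one $\pp'\subsetneq\pp$ with no primes strictly between; then one lifts $\mathcal{Q}$ to a prime over $\pp'$ using surjectivity of $\Theta$ and the set-theoretic description in Theorem~\ref{primes in spc up to conj}. In fact, the cleanest route is: the fibers of $\rho$ over $\pp$ and $\pp'$ are, by Proposition~\ref{Spc as a set}, identified with $\Spc(\K(G,k(\pp)))$ and $\Spc(\K(G,k(\pp')))$, and the point $\mathcal{Q}' \in \Spc(\K(G,R))$ over $\pp'$ is the image under $\Spc(\iota_{\pp'}^\ast)$ of a point realizing a cohomological prime that ``continues'' the one realizing $\mathcal{Q}$ — which exists by the residue-field bijection of Remark~\ref{cohomological primes from residue fields} applied inside $\mathbf{D}_{\mathrm{perm}}$.

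Granting this, the proof is completed by the following diagram chase. Write $\mathcal{P} \in \underset{(H,K)}{\colim}\Spc(\K(H/K,R))_{(p)}$ with $\varphi(\mathcal{P}) = \mathcal{Q}$, and let $\mathcal{Q}\rightsquigarrow\mathcal{Q}'$ be given. By Recollection~\ref{rec: modular fiber of colim}, $\mathcal{P}$ lies in the image of $\mu$, so there is a prime $\pp\in\eta^{-1}(p)$ and a point $\bar{\mathcal{P}}$ of $\underset{(H,K)}{\colim}\Spc(\K(H/K,k(\pp)))$ mapping to $\mathcal{P}$; and via the homeomorphism $\varphi_{k(\pp)}$ of Proposition~\ref{varphi is injective}, $\bar{\mathcal{P}}$ corresponds to a point $\mathcal{Q}_{k(\pp)}\in\Spc(\K(G,k(\pp)))$ with $\Spc(\iota_\pp^\ast)(\mathcal{Q}_{k(\pp)}) = \mathcal{Q}$. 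Now push the specialization $\mathcal{Q}\rightsquigarrow\mathcal{Q}'$ forward to $\Spec(R)$, giving $\pp\rightsquigarrow\pp'$ in $\eta^{-1}(p)$; lift it along $\rho$ to find that $\mathcal{Q}'$ is the image under $\Spc(\iota_{\pp'}^\ast)$ of some $\mathcal{Q}'_{k(\pp')}\in\Spc(\K(G,k(\pp')))$. Using functoriality of $\K(G,-)$ in the ring, a localization $R\to R_\pp$ reduces us to the case where $\pp$ is maximal, and then the specialization $\mathcal{Q}_{k(\pp)}\rightsquigarrow ?$ is realized by base-changing along $R_\pp \to k(\pp')$ versus $R_\pp\to k(\pp)$; applying $\varphi_{k(\pp)}^{-1}$ and then $\mu$, and tracking everything through the commutative diagram in the proof of Proposition~\ref{varphi is injective}, produces a point $\mathcal{P}'$ of the colimit with $\mathcal{P}\rightsquigarrow\mathcal{P}'$ and $\varphi(\mathcal{P}') = \mathcal{Q}'$. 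Hence $\varphi$ restricted to the modular fiber lifts specializations, and by Lemma~\ref{criteria for open maps} it is a homeomorphism.

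The main obstacle I anticipate is precisely the ``specializations lift along $\rho$'' step — i.e.\ showing that a specialization $\pp\rightsquigarrow\pp'$ in $\Spec(R)$ can be lifted to a specialization in $\Spc(\K(G,R))$ compatibly with the identification of fibers with $\Spc$ over residue fields. This is where one genuinely uses that the comparison to $\coprod V_{\WGH,R}$ behaves well under base change (Remark~\ref{cohomological primes from residue fields} and Proposition~\ref{base change and triangular fixed points}), and where a careful reduction to the local ring $R_\pp$, or even to a DVR dominating the specialization, is likely needed. Everything else — the bijectivity, the Noetherian-spectral property, the reduction to lifting specializations — is already in hand from the earlier results.
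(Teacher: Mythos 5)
Your proposal takes a genuinely different — and ultimately incomplete — route from the paper. The paper's proof is a one-liner: it combines Recollection~\ref{rec: modular fiber of colim} (the restriction of $\varphi$ to modular fibers is a continuous bijection), Corollary~\ref{transition maps are closed} (the transition maps $\varphi_{(H,K),(p)}$ are \emph{closed}, proved via the explicit description of the image of $\psi^{G,H}_{(p)}$ as $\bigcap_{K\in\mathcal{F}_H}\mathrm{supp}(\widetilde\kos_G(K,R))$ in Proposition~\ref{psi is closed normal case}), and the fact that modular fibers are closed. Since each transition map is closed and $\mathcal{E}(G)$ is finite, the image of any closed subset of the colimit (pulled back through the quotient $\mathbf q$ and pushed forward through the finitely many $\varphi_{(H,K),(p)}$) is closed, so $\varphi_{(p)}$ is a closed continuous bijection, hence a homeomorphism. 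You never invoke these closedness results; instead you try to lift specializations directly.

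The central gap in your argument is the claim that \emph{specializations lift along $\rho\colon\Spc(\K(G,R))\to\Spec(R)$}, i.e.\ that $\rho$ has the going-down property. You flag it yourself as ``the main obstacle,'' but you do not prove it, and the subsequent reduction — localizing at $\pp$, passing to a DVR, and ``tracking everything through the commutative diagram in the proof of Proposition~\ref{varphi is injective}'' — does not actually establish the specialization $\mathcal{P}\rightsquigarrow\mathcal{P}'$ in the colimit even granting the claim. Note that the paper does establish specialization-lifting elsewhere (in the proof of Theorem~\ref{colimit theorem}, for the ordinary fiber), but there it goes through the restriction maps $\rho^G_H$, which are closed by the coequalizer of Recollection~\ref{image of restriction}, \emph{not} through the comparison map $\rho$ to $\Spec(R)$; the latter is not known to lift specializations, and the argument is carefully routed to avoid needing this. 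To repair your approach, you would either have to prove the going-down property for $\rho$ (nontrivial and not in the paper) or, more naturally, replace the lifting-specializations strategy by the closedness argument: prove that the maps $\psi^{G,H}_{(p)}$ are closed, which is exactly what Proposition~\ref{psi is closed normal case} does using the support of the Koszul objects $\widetilde\kos_G(K,R)$ and the bijectivity of base change along residue fields.
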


We need the following result. 

\begin{proposition}\label{psi is closed normal case}
    Let $H$ be a normal subgroup of $G$. Then the restriction of  triangular $H$-fixed points map 
    \[\psi^{G,H}_{(p)}\coloneqq\psi^{G,H}\colon \Spc(\K(G/H,R))_{(p)}\to \Spc(\K(G,R))_{(p)}\]
    is a closed map. Its image agrees with the support of the tt-ideal $\bigcap_{\mathcal{F}_H}\Ker(\Res^G_K))$, where $\mathcal{F}_H=\{K\leq G\mid H\not\leq K\}.$ 
\end{proposition}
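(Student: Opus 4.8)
The plan is to show that $\psi^{G,H}_{(p)}$ is a homeomorphism onto a closed subspace of $\Spc(\K(G,R))_{(p)}$; such a map is in particular closed, and its image gets identified as $\mathrm{supp}\big(\bigcap_{K\in\mathcal{F}_H}\Ker(\Res^G_K)\big)$ along the way.

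The first ingredient is essentially free. By Proposition \ref{split injection normal case}, for $H$ normal the map $\psi^{G,H}$ is a section of the continuous retraction $\Spc(\Infl^G_{G/H})$, and any continuous map of topological spaces admitting a continuous retraction is a topological embedding. Hence, over an arbitrary Noetherian ring, $\psi^{G,H}$ is a homeomorphism onto its image with the subspace topology (the inverse being a restriction of $\Spc(\Infl)$). The second ingredient is a base-change square for $\psi^{G,H}$: since this map is assembled from the $R$-linear tt-functors $\Infl$, $\mathrm{Quot}$ and $\Res$, all of which commute with base change (base change visibly preserves $\mathrm{proj}\mathcal{F}_N$), the analogue of Proposition \ref{base change and triangular fixed points} holds with $\psi^{G,H}$ in place of $\check\psi^{G,H}$. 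Combined with Proposition \ref{Spc as a set}, this shows $\psi^{G,H}$ carries the $\pp$-fiber of $\Spc(\K(G/H,R))$ into the $\pp$-fiber of $\Spc(\K(G,R))$; as the modular fiber is the union of the $\pp$-fibers with $k(\pp)$ of characteristic $p$, it follows that $\psi^{G,H}$ restricts to the embedding $\psi^{G,H}_{(p)}$ between modular fibers.

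It then remains to compute the image of $\psi^{G,H}_{(p)}$ and check it is closed. Set $\mathcal{J}=\bigcap_{K\in\mathcal{F}_H}\Ker(\Res^G_K)$ in $\K(G,R)$ and $y=\bigotimes_{K\in\mathcal{F}_H}\widetilde{\kos}_G(K,R)$. Using Corollary \ref{supp of kos} and $\mathrm{supp}(\mathcal{I}_1\cap\mathcal{I}_2)=\mathrm{supp}(\mathcal{I}_1)\cap\mathrm{supp}(\mathcal{I}_2)$, one gets $\mathrm{supp}(\mathcal{J})=\bigcap_{K\in\mathcal{F}_H}\mathrm{supp}(\widetilde{\kos}_G(K,R))=\mathrm{supp}(y)$, a finite intersection of closed subsets, hence closed. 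For each $\pp$ with $k(\pp)$ of characteristic $p$, \cite[Proposition 7.18]{BG23b} identifies the image of $\psi^{G,H}_{k(\pp)}$ with $\mathrm{supp}_{k(\pp)}\big(\bigcap_{K\in\mathcal{F}_H}\Ker(\Res^G_K)\big)$, which equals $\mathrm{supp}_{k(\pp)}(\iota_\pp^\ast y)$ by Corollary \ref{supp of kos under base ch}; since $\Spc(\iota_\pp^\ast)$ is injective with image the $\pp$-fiber (Proposition \ref{Spc as a set}) and $\Spc(\iota_\pp^\ast)\big(\mathrm{supp}_{k(\pp)}(\iota_\pp^\ast y)\big)=\mathrm{supp}(y)\cap(\pp\text{-fiber})$, the base-change square and Proposition \ref{Spc as a set} give that the image of $\psi^{G,H}_{(p)}$ is $\mathrm{supp}(\mathcal{J})\cap\Spc(\K(G,R))_{(p)}$. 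Finally, for $H\neq 1$ we have $1\in\mathcal{F}_H$, so $\mathcal{J}\subseteq\K_{ac}(G,R)=\Ker(\Res^G_1)$; the terms of a complex in $\K_{ac}(G,R)$ are free over $R$ and the complex is $R$-exact, hence $R$-contractible, so base change sends $\K_{ac}(G,R)$ into $\K_{ac}(G,k(\pp))$, which vanishes whenever $p$ is invertible in $k(\pp)$. Therefore $\mathrm{supp}(\mathcal{J})\subseteq\Spc(\K(G,R))_{(p)}$, so the image of $\psi^{G,H}_{(p)}$ is exactly $\mathrm{supp}(\mathcal{J})$, a closed subset of the modular fiber, and $\psi^{G,H}_{(p)}$ is a homeomorphism onto it, hence a closed map. (For $H=1$ this is trivial, $\psi^{G,1}_{(p)}$ being the identity and $\mathcal{J}$ the empty intersection, i.e.\ all of $\K(G,R)$.)

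The step I expect to be the main obstacle is the last one: matching the fiberwise description of the image from \cite[Proposition 7.18]{BG23b} with the global picture through the set bijection of Proposition \ref{Spc as a set}, and, above all, verifying the containment $\mathrm{supp}(\mathcal{J})\subseteq\Spc(\K(G,R))_{(p)}$ — this is precisely what upgrades the glued image from an arbitrary Thomason subset to a \emph{closed} subset of the modular fiber. Spelling out the base-change compatibility for $\psi^{G,H}$ (rather than $\check\psi^{G,H}$) is routine but must also be done; by contrast the homeomorphism-onto-image half of the argument is immediate from Proposition \ref{split injection normal case}.
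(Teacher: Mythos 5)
Your proof is correct and follows essentially the same strategy as the paper: pass to residue fields, invoke \cite[Proposition 7.18]{BG23b} to identify the fiberwise images, carry the identification back up using the base change square and the bijection of Proposition \ref{Spc as a set}, and use Corollaries \ref{supp of kos} and \ref{supp of kos under base ch} to express everything in terms of supports of Koszul objects. Where you differ is in being more explicit about three points that the paper's own proof glosses over but does implicitly need.

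First, you make the deduction ``closed image $\Rightarrow$ closed map'' precise by invoking Proposition \ref{split injection normal case}: a continuous map admitting a continuous retraction is a topological embedding, so a homeomorphism onto its image, and hence closed once the image is closed. The paper's proof only computes the image and says this completes the argument; your observation is the missing bridge, and it is exactly where the hypothesis ``$H$ normal'' is used. Second, you verify that $\mathrm{supp}(\mathcal{J}) \subseteq \Spc(\K(G,R))_{(p)}$ by noting $\mathcal{J}\subseteq\K_{ac}(G,R)$ and that base change to ordinary characteristic annihilates acyclics; without this containment, the fiberwise computation of the paper only shows $\mathrm{Im}(\psi^{G,H}_{(p)}) = \mathrm{supp}(\mathcal{J})\cap\Spc(\K(G,R))_{(p)}$, not the stated equality $\mathrm{Im}(\psi^{G,H}_{(p)})=\mathrm{supp}(\mathcal{J})$. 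Third, you spell out the base-change compatibility for $\psi^{G,H}$ rather than $\check\psi^{G,H}$; again routine, but it must be done. Your final self-assessment of where the obstacles lie is accurate.

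One small quibble: your parenthetical about $H=1$ is slightly off. When $H=1$, $\mathcal{F}_1=\varnothing$, so $\mathcal{J}$ is the improper ideal and $\mathrm{supp}(\mathcal{J})=\Spc(\K(G,R))$, while $\mathrm{Im}(\psi^{G,1}_{(p)})$ is only the modular fiber; the stated image identification therefore fails literally in this degenerate case (unless the ordinary fiber is empty). This is a minor defect of the proposition's phrasing rather than of your argument, but the case is not ``trivial'' in the sense of the image identification holding verbatim.
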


\begin{proof}
  Recall that the Koszul object $\widetilde{\kos}_G(H,R)$ generates the tt-ideal $\Ker(\Res^G_H)$, and hence they have the same support (see Corollary \ref{supp of kos}). We will show that the image of $\psi^{G,H}_{(p)}$ is precisely $\bigcap_{\mathcal{F}_H}\mathrm{supp}(\widetilde{\kos}_G(H,R))$ which will complete the result since the latter is closed. 

    Consider the following commutative square induced by base change along the residue fields. 
    \begin{center}
        \begin{tikzcd}[column sep = 3em]
            \displaystyle\coprod_{\pp\in \eta^{-1}(p)} \Spc(\K(G/N,k(\pp))) \arrow[d,"\coprod \Spc \iota_\pp^\ast"'] \arrow[r, "\coprod\psi^{H}_{k(\pp)}"] & \displaystyle\coprod_{\pp\in \eta^{-1}(p)} \Spc (\K(G, k(\pp))) \arrow[d,"\coprod\Spc\iota_\pp^\ast"]\\
            \Spc(\K(G/N,R)_{(p)} \arrow[r,"\psi^{G,H}_{(p)}"] & \Spc(\K(G,R))_{(p)}
        \end{tikzcd}
    \end{center} 
Recall that the vertical maps are bijections by Proposition \ref{Spc as a set} combined with the definition of the modular fiber.
By this observation, we obtain that $\mathrm{Im}(\psi^{G,H}_{(p)})$ coincides with $\mathrm{Im}( \coprod \mathrm{Spc}\iota_\pp^\ast \circ \coprod \psi^{G,H}_\pp)$.  We also recall the behavior of the support of Koszul objects under base change; namely
\begin{equation}\label{base change of kosz}
\langle\iota_\ast\widetilde{\kos}_G(H,R)\rangle = \langle \kos_G(H,k(\pp))\rangle.    
\end{equation}
Moreover, we have  
\begin{align*}
\mathrm{Im}\left(\coprod  \psi^{G,H}_\pp\right) & =  \coprod \bigcap_{K\in \mathcal{F}_H} \mathrm{supp}(\kos_G(K,k(\pp)))    \\
 & = \coprod \bigcap_{K\in \mathcal{F}_H} \mathrm{supp}(\iota_\pp^\ast \widetilde\kos_G(K,R)) \\
 &= \coprod \Spc(\iota_\pp^\ast)^{-1} \bigcap_{K\in \mathcal{F}_H} \mathrm{supp}(\widetilde\kos_G(K,R))
\end{align*}
where the first equality holds by \cite[Proposition 7.18]{BG23b}; the second equality follows by Equation \ref{base change of kosz}; and the last one is simply the behavior of the support with respect to tt-functors together with the injectivity of $\mathrm{Spc}(\iota_\pp^\ast)$. Since $\coprod \mathrm{Spc}(\iota_\pp^\ast)$ is a bijection, we get 
\begin{align*}
    \mathrm{Im}(\psi^{G,H}_{(p)}) &= \coprod \mathrm{Spc}(\iota_\pp^\ast)(\coprod \Spc(\iota_\pp^\ast)^{-1} \bigcap_{K\in \mathcal{F}_H} \mathrm{supp}(\widetilde\kos_G(K,R)))  \\ 
    &= \bigcap_{K\in \mathcal{F}_H} \mathrm{supp}(\widetilde\kos_G(K,R)))
\end{align*}
which completes the proof.
\end{proof}

\begin{corollary}\label{psi is closed}
    Let $H$ be an arbitrary subgroup of $G$. Then the restriction of the triangular $H$-fixed points map 
    \[\psi^{G,H}_{(p)}\coloneqq \psi^{G,H}\colon \Spc\K(G/H,R)_{(p)} \to \Spc(\K(G,R))_{(p)}\]
    is a closed map. 
\end{corollary}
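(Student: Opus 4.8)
The plan is to reduce to the normal case already settled in Proposition \ref{psi is closed normal case}, by exploiting the definition of the triangular $H$-fixed points map for an arbitrary subgroup. Set $N=N_G(H)$. By Definition \ref{triangular fixed points} the map $\psi^{G,H}$ is the composite
\[
\Spc(\K(\WGH,R))\xrightarrow{\psi^{N,H}}\Spc(\K(N,R))\xrightarrow{\rho^G_N}\Spc(\K(G,R)),
\]
where $H$ is now a \emph{normal} subgroup of $N$ and $\rho^G_N$ is induced by $\Res^G_N$. Since a composite of closed maps is closed, and since all maps in sight are compatible with Balmer's comparison map to $\Spec(\mathbb Z)$ — so that modular fibers are carried to modular fibers; this is already built into the statement of Proposition \ref{psi is closed normal case} for the first factor, and follows from naturality of the comparison map for the second — it suffices to check that the restrictions of $\psi^{N,H}$ and of $\rho^G_N$ to the corresponding modular fibers are closed.

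The first factor is immediate: since $H\trianglelefteq N$, Proposition \ref{psi is closed normal case} gives that $\psi^{N,H}$ restricts to a closed map between the modular fibers of $\Spc(\K(N/H,R))$ and of $\Spc(\K(N,R))$. For the second factor I would first argue that $\rho^G_N$ is already a closed map on all of $\Spc(\K(N,R))$. Indeed, by Recollection \ref{image of restriction} the restriction functor $\Res^G_N$ is identified with extension of scalars along the commutative algebra $A_N=R(G/N)$ in $\K(G,R)$, which is separable and of finite degree; by Balmer's theory of such extensions (see \cite{Bal16} and the coequalizer description recalled in Recollection \ref{image of restriction}), the induced map $\rho^G_N=\Spc(\Res^G_N)$ is a finite, and in particular closed, map.

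It then remains to pass to modular fibers. The modular fiber is by definition the preimage of the closed point $(p)\in\Spec(\mathbb Z)$ under $\Spc(\K(-,R))\xrightarrow{\rho}\Spec(R)\xrightarrow{\eta}\Spec(\mathbb Z)$, hence a closed subset, and $\rho^G_N$ carries $\Spc(\K(N,R))_{(p)}$ into $\Spc(\K(G,R))_{(p)}$ by naturality of $\rho$. The restriction of a closed map to a closed subspace, with image contained in a given subset, is again closed for the subspace topologies; hence $\rho^G_N$ restricted to modular fibers is closed. Composing with the first factor yields that $\psi^{G,H}_{(p)}$ is closed.

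The only non-formal ingredient here is the closedness of $\rho^G_N$, and this is where the real content sits: it rests on $A_N=R(G/N)$ having \emph{finite} degree (equivalently, $\Res^G_N$ being a finite tt-functor), exactly as in the coequalizer description of Recollection \ref{image of restriction}. Everything else — the factorization through $N_G(H)$, the bookkeeping with modular fibers, and the stability of closedness under composition and restriction to closed subspaces — is routine.
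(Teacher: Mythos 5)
Your argument is correct and follows the same route as the paper: factor $\psi^{G,H}$ as $\rho^G_N\circ\psi^{N,H}$ with $N=N_G(H)$, invoke Proposition~\ref{psi is closed normal case} for the first factor, use that $\rho^G_N$ is closed (via Recollection~\ref{image of restriction}, i.e.\ $A_N=R(G/N)$ separable of finite degree), and finish by observing the modular fiber is closed. Your elaboration on why $\rho^G_N$ is closed makes explicit a point the paper compresses into a citation, but the content is identical.
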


\begin{proof}
    Recall that $\psi^{G,H}$ is defined as the composite $\rho^G_{N}\circ\psi^{N,H}$, where $N$ denotes $N_G(H)$. The map $\rho^G_{N}$ is closed by Recollection \ref{image of restriction}, and $\psi^{N,H}_{(p)}$ is closed by Proposition \ref{psi is closed normal case}. We conclude by noticing that the modular fiber is a closed subset of the spectrum by definition. 
\end{proof}

\begin{corollary}\label{transition maps are closed}
    Let $(H,K)\in \mathcal{E}(G)$. Then the restriction of the transition map 
    \[\varphi_{(H,K),(p)}\coloneqq \varphi_{(H,K)}\colon \K(H/K,R)_{(p)} \to \Spc(\K(G,R)) \] lies in $\Spc(\K(G,R))_{(p)}$. In particular, $\varphi_{(H,K),(p)}$ is a closed map. 
\end{corollary}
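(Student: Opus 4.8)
The plan is to deduce Corollary~\ref{transition maps are closed} directly from the earlier results on triangular fixed points together with the definition of the transition maps $\varphi_{(H,K)}$. First I would unpack the definition: recall that $\varphi_{(H,K)}$ is the composite
\[
\Spc(\K(H/K,R))\xrightarrow{\psi^{H,K}} \Spc(\K(H,R))\xrightarrow{\rho^G_H} \Spc(\K(G,R)),
\]
and that by Corollary~\ref{psi is closed} the triangular fixed points map $\psi^{H,K}$ sends the modular fiber $\Spc(\K(H/K,R))_{(p)}$ into the modular fiber $\Spc(\K(H,R))_{(p)}$ and is closed there; likewise $\rho^G_H$ is closed by Recollection~\ref{image of restriction} (it is a quotient map of topological spaces, being a coequalizer).

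The first point to check is that the image of $\varphi_{(H,K),(p)}$ lands in $\Spc(\K(G,R))_{(p)}$. This is a compatibility of comparison maps: the composite $\Spc(\K(H/K,R)) \to \Spc(\K(G,R)) \to \Spec(R) \to \Spec(\Z)$ factors, by naturality of Balmer's comparison map $\rho$ applied to the tt-functors underlying $\psi^{H,K}$ and $\rho^G_H$, through the analogous composite for $\K(H/K,R)$. Concretely, the graded endomorphism ring of the unit is preserved by these constructions up to the maps the paper has already set up, so a point in the modular fiber of $\Spc(\K(H/K,R))$ — i.e.\ one mapping to $(p)\in\Spec(\Z)$ — has image mapping to $(p)$ as well. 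Hence $\varphi_{(H,K)}$ restricts to a map $\Spc(\K(H/K,R))_{(p)} \to \Spc(\K(G,R))_{(p)}$.

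Granting that, closedness is then immediate: $\varphi_{(H,K),(p)}$ is the composite of $\psi^{H,K}$ restricted to modular fibers, which is closed by Corollary~\ref{psi is closed}, followed by $\rho^G_H$ restricted to modular fibers. Since the modular fiber $\Spc(\K(G,R))_{(p)}$ is by definition a closed subset of $\Spc(\K(G,R))$ (preimage of the closed point $(p)$ under a continuous map), the restriction $\rho^G_H|_{(p)}\colon \Spc(\K(H,R))_{(p)}\to \Spc(\K(G,R))_{(p)}$ is again closed — a closed map between closed subspaces restricts to a closed map. A composite of closed maps is closed, so $\varphi_{(H,K),(p)}$ is closed.

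\textbf{Main obstacle.} The routine part is the closedness bookkeeping; the only genuine thing to verify is the first claim, that $\varphi_{(H,K)}$ really does respect modular fibers, i.e.\ the commutativity of the relevant square of comparison maps to $\Spec(\Z)$. For $\rho^G_H$ this is standard naturality of $\rho$. The subtlety is with $\psi^{H,K}$: since $\psi^{H,K}$ is not known to be induced by an honest tt-functor, one cannot invoke naturality of $\rho$ blindly — instead one must go back to its construction as $\Spc(\mathrm{Quot})\circ\Spc(\mathrm{Quot}\circ\mathrm{Infl})^{-1}$ (Definition~\ref{triangular fixed points}) and note that both $\mathrm{Quot}$, $\mathrm{Infl}$ \emph{are} tt-functors whose underlying map on graded unit-endomorphism rings is the identity on the relevant piece, so the comparison-to-$\Spec(\Z)$ maps do commute. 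This is the same mechanism used in Corollary~\ref{comparison for check psi to residue fields}, and I would simply cite that corollary's argument.
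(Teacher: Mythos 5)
Your proof is correct and follows essentially the same route as the paper: unpack $\varphi_{(H,K)} = \rho^G_H \circ \psi^{H,K}$, invoke Corollary~\ref{psi is closed} for the modular-fiber restriction of $\psi^{H,K}$, invoke Recollection~\ref{image of restriction} for $\rho^G_H$, and observe that the modular fiber is closed. The one place you expend more effort than necessary is the ``main obstacle'' paragraph: the fact that $\psi^{H,K}$ sends modular fiber to modular fiber is already part of the content of Corollary~\ref{psi is closed} (via Proposition~\ref{psi is closed normal case}, whose proof runs the base-change commutative square over $\eta^{-1}(p)$), so you need not re-derive it from the construction of $\psi^{H,K}$ as $\Spc(\mathrm{Quot})\circ\Spc(\mathrm{Quot}\circ\mathrm{Infl})^{-1}$ — citing Corollary~\ref{psi is closed} already gives you both the well-definedness of $\psi^{H,K}_{(p)}$ and its closedness in one stroke, which is exactly what the paper does.
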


\begin{proof}
     Both claims follow from the definition of the map together with  the fact that triangular fixed points map restricted to the modular fiber is closed by Corollary \ref{psi is closed}, and also the map on spectra induced by restriction is closed by Recollection \ref{image of restriction}.
\end{proof}

\begin{proof}[Proof of Lemma \ref{lemma: rest of varphi is homeo}]
  This follows by combining the fact that the modular fiber is a closed subset, Corollary \ref{transition maps are closed}, and Recollection \ref{rec: modular fiber of colim}.
\end{proof}

%--------------------------------------------------------------------------------

\begin{proof}[Proof of Theorem \ref{colimit theorem}]
    In view of  Lemma \ref{criteria for open maps} and Corollary \ref{colim is noetherian spectral}, we are left to prove $\varphi$ is spectral and that specializations lift along $\varphi$.  The first claim follows by noticing that the colimit agrees with the colimit in the category of spectral spaces. For the second claim, fix  $\P\in \colim_{(H,K)\in \mathcal{E}(G)^\textrm{op}} \mathrm{Spc}(\K(H/K,R))$. Let $\pp$ be the image of $\varphi(\P)$ under the comparison map, and $q$ be the characteristic of the residue field $k(\pp)$. We have two cases. First, assume that  $p=q$. In this case, $\varphi(\P)$ is in the modular fiber which is a closed subset. Hence $\overline{\varphi(\P)}$ must be contained in the modular fiber as well. We conclude this case by Lemma \ref{lemma: rest of varphi is homeo}.

 Now, assume that $p\not=q$. Let $\mathcal{Q}\subseteq \varphi(\P)$, equivalently $\mathcal{Q}\in \overline{\varphi(\P)}$. We already know that the map 
\[
\coprod_{(H,K)\in \mathcal{E}(G)^\textrm{op}} \mathrm{Spc}(\K(H/K,R)) \xrightarrow[]{\varphi'} \Spc(\K(G,R))
\]
is surjective. Here $\varphi'$ is short for $\amalg\varphi_{(H,K)}$. Let $(H',K'),(H,K)\in \mathcal{E}(G)$, together with $\P'\in\Spc(\K(H'/K',R))$ and $\mathcal{Q}'\in \Spc(\K(H/K,R))$ such that $\varphi'(\P')=\varphi(\P)$ and $\varphi'(\mathcal{Q}')=\mathcal{Q}$.  Moreover, note that for any $p$-group $H$, $\Spc(\K(H,R[1/p]))\cong \Spec( R[1/p])$. Indeed, the comparison map in this case is a bijection, hence a homeomorphism since the category $\K(G,R[1/p])$ is End-finite (see \cite[Lemma 2.8]{Lau23}). In particular, we deduce that  $\varphi(\P)$ is in $\Spec(R[1/p])$ via such identification. Since the category $\mathcal{E}(G)$ is connected, and maps in there act trivially on the part of the spectrum corresponding to $\Spec(R[1/p])$, we can find a zig-zag of maps from $(H,K)$ to $(H',K')$ that allows to  identify $\P'$ with an element $\P''$ in $\Spc(\K(H/K,R))$ and such that $\varphi_{(H,K)}(\P'')=\varphi(\P)$.  In other words, we can assume that both $\varphi(\P)$ and $\mathcal{Q}$ are in the image of the map $\varphi_{(H,K)}= \rho^G_H\circ \psi^{H,K}$.

Recall that the map $\rho^G_H$ is closed, and hence it lifts specializations. In particular, from the specialization
\[
\rho^G_H(\psi^{H,K}(\P'')) \rightsquigarrow
 \mathcal{Q}
\]
we obtain that there is prime $\mathcal{L}\in \Spc(\K(H,R))$ such that $\rho^G_H(\mathcal{L})=\mathcal{Q}$ and 
\[
\psi^{H,K}(\P'') \rightsquigarrow
 \mathcal{L}.
\]
On the other hand, by the coequalizer in Recollection \ref{image of restriction}, we get that there is $g\in G$ and $\mathcal{L}'\in\Spc(\K(H\cap{}^gH,R))$ such that \begin{equation}
     \rho^N_{N\cap{}^gN}(\mathcal{L}')=\psi^{H,K}(\mathcal{Q})  \mbox{ and }  {}^g(\rho^{{}^gN}_{N\cap{}^gN}(\mathcal{L}'))=\mathcal{L}
 \end{equation}
In particular, we deduce that both points $\mathcal{L}$ and $\psi^{H,K}(\mathcal{Q})$ can be identified in the colimit. Hence, we can replace the prime $\mathcal{Q}'$ in such a way that we end up in the following situation. $\varphi'(\mathcal{Q}')=\mathcal{Q}$, and 
\[
\psi^{H,K}(\P'') \rightsquigarrow
 \psi^{H,K}(\mathcal{Q}')
\]
in $\Spc(\K(H,R))$. By Proposition \ref{split injection normal case}, we get  $\P''\rightsquigarrow \mathcal{Q}'$. Now, consider the quotient map 
\[
\mathbf{q}\colon \coprod_{(H,K)\in \mathcal{E}(G)^\textrm{op}} \mathrm{Spc}(\K(H/K,R))\to \underset{{(H,K)\in \mathcal{E}(G)^\textrm{op}}}{\colim} \mathrm{Spc}(\K(H/K,R)).
\] 
In particular, we get that  $\mathbf{q}(\mathcal{Q}')$ is in the closure of $\mathbf{q}(\P'')=\P$ in the colimit.  Moreover, $\varphi(\mathbf{q}(\mathcal{Q}'))=\mathcal{Q}$. This completes the proof. 
\end{proof}

%%%%%%%%%%%%%%%%%%555

\begin{remark}
    The previous theorem applies to $p$-groups and allows us to reduce the understanding of the topology of $\Spc(\K(G,R))$ to the understanding of the category of $p$-sections together with the understanding of the topology of $\Spc(\K(E,R))$ for elementary abelian $p$-groups. A similar reduction can be made for general groups. Namely, for a finite group $G$, we can use the same argument as in the proof of \cite[Lemma 6.1]{Gom25} to reduce the understanding of the topology of $\Spc(\K(G,R))$ to understanding the orbit category of $G$ with isotropy on $p$-power order subgroups, together with the understanding of the topology of $\Spc(\K(S,R))$ for $p$-groups (here we let $p$ vary). Altogether, this gives us that the topology of $\Spc(\K(G,R))$ for an arbitrary finite group $G$ can be understood from the topology of $\Spc(\K(E,R))$ for elementary abelian groups, together with information about the category of $p$-sections and the orbit category with isotropy on $p$-groups, for all primes dividing the order of $G$. 
\end{remark}

%%%%%%%%%%%%%%%%%%%%%%%%%%%%%%%%%%%%%%%%%%%
\section{Tensor invertible objects}
\label{section:invertible objects}
%%%%%%%%%%%%%%%%%%%%%%%%%%%%%%%%%%%%%%%%%%%

Throughout this section, we let $G$ be a $p$-group. Unfortunately, we will soon need to place some additional assumptions on our ground ring, but for the moment, let us still consider $R$ to be a commutative Noetherian ring.

Our goal in this section is to introduce some tensor-invertible objects in the category $\K(G,R)$ that will allow us to extend the Balmer-Gallauer twisted cohomology ring in greater generality. Invertible objects in this category has been extensively studied in the modular case, we refer to the work of Miller \cite{Mil24} for an account. 

\subsection*{Frobenius structure on lattices.} Recall that the group algebra $kG$ is  a Frobenius algebra when the ground ring $k$ is a field and hence, projective $kG$--modules and  injective $kG$--modules agree. However, the group algebra is not necessarily Frobenius over arbitrary commutative rings, and in general projectives do not have to agree with injectives. We will restrict ourselves to work in the full subcategory $\mathrm{Mod}(G,R)$  of $\mathrm{Mod}(kG)$ on $RG$--lattices, that is, $RG$--modules that are projective as $R$--modules. Once and for all, let us fixed the exact structure on  $\mathrm{Mod}(G,R)$ induced by the split exact structure on $\mathrm{Proj}(R)$. In fact, this exact structure turns $\mathrm{Mod}(G,R)$ into an exact Frobenius category.  We will record the following result for the ease of reference, a proof can be found in \cite[Section 2]{BBIKP}. 

\begin{proposition}\label{Frobenius structure}
The category of $RG$--lattices $\Mod(G,R)$ is an exact subcategory of $\Mod(RG)$ with the exact structure given by the split exact sequences of $R$--modules. In fact, $\Mod(G,R)$ is a Frobenius exact category where the injectives and projectives are given as the additive closure of the injective $RG$--modules and the additive closure of projective $RG$--modules, respectively. 
\end{proposition}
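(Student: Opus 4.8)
This is \cite[Section~2]{BBIKP}; the plan is to build the exact structure, exhibit enough relatively projective and relatively injective objects, and then use the Frobenius self-duality of the finite group algebra to see that the two resulting classes coincide.

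First I would note that if $0 \to A \to B \to C \to 0$ is an exact sequence of $RG$--modules with $A$, $B$, $C$ all $RG$--lattices, then it is automatically split over $R$ (the quotient $C$ is $R$--projective, so the underlying sequence of $R$--modules splits), and conversely an $R$--split exact sequence of $RG$--modules with $A$ and $C$ lattices forces $B \cong A \oplus C$ over $R$, hence $B$ a lattice. Thus $\Mod(G,R)$ is a full additive subcategory of the abelian category $\Mod(RG)$ closed under extensions, so by the standard criterion it inherits an exact structure whose conflations are exactly the exact sequences of $RG$--modules with all three terms in $\Mod(G,R)$ --- equivalently, the $R$--split exact sequences of $RG$--modules with lattice terms; one may also obtain this structure by pulling back the split exact structure on $\Mod(R)$ along the exact restriction functor $\Res^G_1\colon \Mod(RG) \to \Mod(R)$. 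The category is idempotent complete, since a summand of an $R$--projective module is $R$--projective, and it is closed under $\otimes_R$ with the diagonal action.

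Next, for $L$ in $\Mod(R)$ write $\Ind^G_1 L = RG \otimes_R L$ and $\operatorname{Coind}^G_1 L = \Hom_R(RG, L)$, each with the diagonal $G$--action. For $L$ projective over $R$ the adjunction isomorphism $\Hom_{RG}(\Ind^G_1 L, -) \cong \Hom_R(L, \Res^G_1(-))$, together with the fact that $\Res^G_1$ sends conflations to $R$--split sequences of $R$--modules, on which $\Hom_R(L,-)$ remains exact, shows $\Ind^G_1 L$ is projective in $\Mod(G,R)$; dually $\operatorname{Coind}^G_1 L$ is injective. There are enough of each: the counit $\varepsilon_M\colon \Ind^G_1\Res^G_1 M \to M$, $g\otimes m \mapsto gm$, is an $R$--split epimorphism of $RG$--modules (section $m \mapsto 1\otimes m$) whose source $RG\otimes_R M$, and hence whose kernel, is a lattice whenever $M$ is, so it is an admissible epimorphism onto $M$ in $\Mod(G,R)$; dually the unit $\eta_M\colon M \to \operatorname{Coind}^G_1\Res^G_1 M$, $m \mapsto (g\mapsto gm)$, is an admissible monomorphism. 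It follows that the projective objects of $\Mod(G,R)$ are exactly the summands of the $\Ind^G_1 L$ with $L$ projective over $R$, and the injective objects exactly the summands of the $\operatorname{Coind}^G_1 L$.

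Finally I would invoke that the group algebra of a finite group is a symmetric Frobenius algebra over $R$: the $R$--linear form picking out the coefficient of the identity induces an $RG$--bimodule isomorphism $RG \xrightarrow{\sim} \Hom_R(RG, R)$, and tensoring with $L$ over $R$ (using that $RG$ is finitely generated free over $R$) yields $\Ind^G_1 L = RG\otimes_R L \cong \Hom_R(RG,R)\otimes_R L \cong \Hom_R(RG,L) = \operatorname{Coind}^G_1 L$ as $RG$--modules; equivalently, induction and coinduction from the trivial subgroup agree. Hence the projective and the injective objects of $\Mod(G,R)$ form the same class, so $\Mod(G,R)$ is Frobenius, and since any projective $RG$--module is a summand of a free module $RG^{(\kappa)} = RG \otimes_R R^{(\kappa)}$ with $R^{(\kappa)}$ projective over $R$ --- while conversely $RG\otimes_R L$ is a summand of a free $RG$--module whenever $L$ is a summand of some $R^{(\kappa)}$ --- this common class is precisely the additive closure in $\Mod(G,R)$ of the projective $RG$--modules, and, through the Frobenius isomorphism, of the (relatively injective, coinduced) $RG$--modules $\operatorname{Coind}^G_1 L$, which is the description asserted in the proposition. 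The one point that is not purely formal is the verification that the $R$--split sequences genuinely define an exact structure (stability of admissible epimorphisms under base change, closure under composition, etc.); rather than reprove Quillen's axioms by hand I would cite the general formalism of relative/split exact structures as spelled out in \cite[Section~2]{BBIKP}.
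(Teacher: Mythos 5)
The paper does not supply its own proof of this proposition: it simply records the statement and refers to \cite[Section~2]{BBIKP}. Your argument is the standard one, and it is correct; it is what one finds in that reference. The essential chain of ideas — extension-closedness in $\Mod(RG)$ plus the observation that exactness in $\Mod(RG)$ with lattice terms is automatically $R$-split (and conversely), then induction/coinduction from the trivial subgroup via the tensor-hom adjunction to get enough projectives and injectives, then the bimodule isomorphism $RG \cong \Hom_R(RG,R)$ to identify $\Ind^G_1$ with $\operatorname{Coind}^G_1$ — is exactly the intended route. The closing identification of the common class with the (additive closure of) projective $RG$--modules, via the fact that every projective $RG$--module is a summand of some $RG^{(\kappa)} = \Ind^G_1(R^{(\kappa)})$, is also correct.

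One point worth flagging, which you handled more carefully than the proposition's wording does: when the proposition says the injectives are ``the additive closure of the injective $RG$--modules,'' this should be read as the \emph{relatively} injective objects (equivalently, summands of coinduced modules $\operatorname{Coind}^G_1 L$ with $L \in \operatorname{Proj}(R)$), which by the Frobenius isomorphism coincide with the projective $RG$--modules. Read literally, actual injective objects of $\Mod(RG)$ are typically not lattices when $R$ is not self-injective (e.g.\ $R=\mathbb Z$, $G=1$), so the additive closure of the honest injectives inside $\Mod(G,R)$ would be too small. Your parenthetical ``(relatively injective, coinduced)'' records the correct reading, and the argument you give establishes precisely the statement in that form.
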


\begin{remark}
Note that permutation modules are indeed lattices. We consider   $\mathrm{Perm}(G,R)$ as a full subcategory of $\mathrm{Mod}(G,R)$.    
\end{remark}

\subsection*{Invertible objects for $C_p$} Let $R$ be a commutative Noetherian ring,  $C_p=\langle \sigma\rangle$ be the cyclic group of prime order $p$ and $RC_p$ be the group algebra. Note that $RC_p = R[\sigma]/(\sigma^p - 1)$. Recall that there is a periodic resolution which computes the cohomology of $C_p$: 
\begin{align}\label{eq:standar resolution}
    \ldots \xrightarrow{\mathbf{N}} RC_p  \xrightarrow{\sigma-1}  RC_p\xrightarrow{\mathbf{N}} RC_p \xrightarrow{\sigma -1} RC_p \xrightarrow{\epsilon} R\to 0
\end{align}

\noindent where $\mathbf{N}$ denotes the \textit{norm map} which is given by multiplying with $1+\sigma+\ldots+ \sigma^{p-1}$, and $\epsilon$ denotes the augmentation map.

\begin{definition}\label{def: uN}
  Let $u_{p,R}$ denote the complex of $RC_p$--modules: 
  \begin{enumerate}
      \item $0 \to  RC_p \xrightarrow{\epsilon} R \to 0$ if $p=2$; or
      \item $0 \to RC_p \xrightarrow{\sigma-1} RC_p \xrightarrow{\epsilon} R \to 0$, if $p>2$. 
  \end{enumerate}
In both cases the copy of $R$ is considered in degree 0.  Moreover, note that  the complex $u_{p,R}$ is obtained as a brutal truncation of the complex given in \ref{eq:standar resolution}. If the context is clear, we will drop $R$ from the notation and simply write $u_p$. 
\end{definition}

\begin{remark}
Recall that the tensor structure on $\mathrm{Perm}(G,R)$ given by the tensor product with diagonal $G$--action endows $\K(G,R)$ with a symmetric monoidal structure where the monoidal unit is $R$ viewed as a complex concentrated in degree 0. In particular, makes sense to talk about \textit{invertible objects}; explicitly, we say that $x\in \K(G,R)$ is $\otimes$-invertible if there exists $y \in \K(G,R)$ such that $x\otimes y\simeq R$. We will see that  $u_{p,R}$ are examples of invertible objects extending the observation made in \cite{BG23b} when $R$ is a field.     
\end{remark}

\begin{lemma}\label{u_p is invertible}
    The object $u_{p, R}$ is a $\otimes$-invertible object in $\K(G,R)$. Moreover, for $\pp\in \Spec(R)$, the base change functor along the residue field $R\to k(\pp)$ give us an isomorphism
    \[
    \iota_\pp^\ast( u_{p, R} ) \simeq u_{p, k(\pp)}.
    \] 
\end{lemma}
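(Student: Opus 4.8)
The plan is to prove invertibility by exhibiting an explicit tensor-inverse, following the field case from \cite{BG23b} but checking that everything makes sense over a general Noetherian ring. First I would reduce to $G = C_p$: the object $u_{p,R}$ is defined as a complex of $RC_p$-modules and is inflated along $G \to C_p$ (or rather, we work with it as a $C_p$-object and its invertibility in $\K(C_p,R)$ passes to $\K(G,R)$ via any group homomorphism, since inflation/restriction functors are monoidal and send $R$ to $R$; more precisely, if $x \otimes y \simeq R$ in $\K(C_p,R)$ then $\alpha^*x \otimes \alpha^*y \simeq R$ in $\K(G,R)$ for any $\alpha$). Actually one should be slightly careful about how $u_{p,R}$ is regarded as an object of $\K(G,R)$ for general $G$; presumably it is via a chosen surjection $G \to C_p$, and then invertibility in $\K(C_p,R)$ suffices. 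So the heart of the matter is $G = C_p$.

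For $G = C_p$ and $p$ odd, I would compute $u_{p,R} \otimes u_{p,R}$ directly. Each term of $u_{p,R}$ is either $R$ or the free module $RC_p = R(C_p/1)$, so the tensor square is a complex whose terms are sums of $R$ and $RC_p$ (using $RC_p \otimes_R RC_p \cong RC_p^{\oplus p}$ with diagonal action, i.e. a free module). One then checks, exactly as in the field case, that $u_{p,R}^{\otimes 2}$ is homotopy equivalent to a shift of $u_{p,R}$ or to $R$ after cancelling contractible free summands; more efficiently, note that $u_{p,R}$ is the brutal truncation of the (now $R$-split, degreewise) periodic resolution \eqref{eq:standar resolution}, whose terms past degree $0$ are free, so its "$\otimes$-inverse" is built from the dual truncation, and the composite telescopes. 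The key algebraic fact that must be checked over $R$ rather than a field is that the relevant cones of the multiplication maps are \emph{contractible} complexes of permutation (equivalently free, in the relevant degrees) $RC_p$-modules — and this holds because the maps involved ($\epsilon$, $\sigma - 1$, $\mathbf{N}$) assemble, after tensoring, into complexes of free $R$-modules with null-homotopies that are already defined integrally (the standard contracting homotopy for the periodic resolution of $C_p$ works over $\mathbb{Z}$, hence over any $R$). For $p = 2$ the same argument applies with the shorter two-term complex $0 \to RC_2 \xrightarrow{\epsilon} R \to 0$, and one checks $u_{2,R}^{\otimes 2} \simeq R$ (up to a shift) directly.

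For the base-change compatibility, I would argue that $\iota_\pp^*$ is a strong monoidal exact functor sending $R(C_p/H)$ to $k(\pp)(C_p/H)$ and commuting with the differentials appearing in $u_{p,R}$, because those differentials ($\epsilon$ and $\sigma - 1$) are defined by integral formulas and base change is $-\otimes_R k(\pp)$ applied termwise; the terms of $u_{p,R}$ are free $R$-modules, so $-\otimes_R k(\pp)$ is exact on them and there are no $\mathrm{Tor}$ obstructions. Hence $\iota_\pp^*(u_{p,R})$ is literally the complex $u_{p,k(\pp)}$ on the nose, giving the asserted isomorphism (even equality of complexes, a fortiori an isomorphism in $\K(G,k(\pp))$). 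The main obstacle I anticipate is purely bookkeeping: carefully identifying the diagonal $G$-action on the tensor powers of $RC_p$ and organizing the cancellation of contractible free summands so that $u_{p,R}^{\otimes n}$ telescopes correctly — in the field case this is \cite[Section 3]{BG23b}, and the only genuinely new point is verifying that all the null-homotopies used there are integral (equivalently, that the relevant short exact sequences of lattices are $R$-split), which they are because they come from the standard resolution of the trivial module over $\mathbb{Z} C_p$.
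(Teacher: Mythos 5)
Your outline of the candidate inverse is right, and your treatment of the base change claim matches the paper exactly: the terms of $u_{p,R}$ are $R$-free and the differentials are integral formulas, so $\iota_\pp^*(u_{p,R})$ is the complex $u_{p,k(\pp)}$ on the nose. The paper phrases it as ``the resolution \eqref{eq:standar resolution} is obtained from $\mathbb{Z}$ by base change,'' which is the same observation.

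There is, however, a genuine gap in the invertibility argument. You claim that the cone of the multiplication map $u_{p,R}\otimes u_{p,R}^* \to R$ is a contractible complex of free $RC_p$-modules, contractible because ``the standard contracting homotopy for the periodic resolution of $C_p$ works over $\mathbb{Z}$.'' Two problems: first, the cone is \emph{not} a complex of free $RC_p$-modules --- the degree-zero term of $u_{p}\otimes u_p^*$ contains a direct summand $R\otimes R\cong R$, which is a permutation module but not projective over $RC_p$ (it is $R$-free, which is a different condition). Second, the contracting homotopy you invoke is only $R$-linear, not $RC_p$-linear, so even where the terms \emph{are} free it does not directly witness a homotopy equivalence in $\K(C_p,R)$. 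The ingredient that makes the cancellation work, and which your proposal never invokes, is the Frobenius exact structure on $RG$-lattices (Proposition \ref{Frobenius structure}): finitely generated projective $RG$-modules are also \emph{injective} for the $R$-split exact structure. The paper's proof uses Proposition \ref{homology of X otimes its dual} (a K\"unneth argument) to show $u_p\otimes u_p^*$ has homology $R$ concentrated in degree $0$, then observes that the outermost differentials are admissible monos/epis into/out of projective (hence injective) terms, and therefore split; iterating strips off contractible summands until only a complex concentrated in degree $0$ remains, which must be $R$. Your ``the composite telescopes'' is pointing in the same direction, but without the projective-equals-injective step the splitting you need is not automatic and the argument does not close. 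Your reduction to $G=C_p$ via inflation is fine but is how the paper handles $u_{N,R}$ in Definition \ref{def:u_N}, not Lemma \ref{u_p is invertible} itself, which is already stated for the $C_p$-complex.
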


We need some preparations first.

\begin{proposition}\label{homology of X otimes its dual}
    Let $X$ and $Y$ be a complexes of finitely generated $RG$--modules satisfying the following properties:
    \begin{enumerate}
        \item For any $n$, both $X_n$ and $Y_n$ are free as $R$--modules. 
        \item $X$ has homology in a single degree $i$ and it is free as $R$--module and $Y$ has homology in a single degree $-i$ and it is free as $R$--module.
    \end{enumerate}
    Then $X\otimes Y$ has homology concentrated in degree 0.
\end{proposition}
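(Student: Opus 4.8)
The plan is to reduce the statement to a computation of the homology of a tensor product of complexes of free $R$-modules, where the Künneth spectral sequence (or the universal coefficient spectral sequence for $R$) degenerates because of the freeness hypotheses. First I would observe that since each $X_n$ and $Y_n$ is free as an $R$-module, the complex $X$ (viewed as a complex of $R$-modules, forgetting the $G$-action) is a bounded complex of free $R$-modules, hence $K$-flat; thus $X\otimes_R Y$ computes the derived tensor product $X\otimes^{\mathbf L}_R Y$. Consequently there is a Künneth spectral sequence
\[
E^2_{s,t}=\bigoplus_{a+b=s}\operatorname{Tor}^R_t\bigl(H_a(X),H_b(Y)\bigr)\;\Longrightarrow\;H_{s+t}(X\otimes_R Y).
\]
By hypothesis $H_\ast(X)$ is concentrated in degree $i$ and $H_\ast(Y)$ is concentrated in degree $-i$, so the only possibly nonzero $E^2$-terms sit in total degree $s=i+(-i)=0$, i.e. on the column $s=0$. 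But moreover $H_i(X)$ is free as an $R$-module, so $\operatorname{Tor}^R_t(H_i(X),-)=0$ for all $t>0$; hence the spectral sequence is concentrated in the single spot $(s,t)=(0,0)$, where $E^2_{0,0}=H_i(X)\otimes_R H_{-i}(Y)$. It degenerates immediately and yields $H_n(X\otimes_R Y)=0$ for $n\neq 0$ and $H_0(X\otimes_R Y)\cong H_i(X)\otimes_R H_{-i}(Y)$.

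An alternative, more elementary route avoiding spectral sequences — which may be preferable for a self-contained write-up — is the following. Since $H_i(X)$ is a free $R$-module, the surjection $Z_i(X)\twoheadrightarrow H_i(X)$ splits, and one can use this to show that $X$ is chain-homotopy equivalent, as a complex of $R$-modules, to the two-term-split decomposition $H_i(X)[i]\oplus (\text{an }R\text{-split acyclic complex of frees})$; similarly for $Y$. Tensoring over $R$, the acyclic split pieces contribute split-acyclic summands (their terms are free, and $-\otimes_R(\text{split acyclic of frees})$ preserves split-acyclicity), while the remaining summand is $H_i(X)[i]\otimes_R H_{-i}(Y)[-i]=\bigl(H_i(X)\otimes_R H_{-i}(Y)\bigr)[0]$, concentrated in degree $0$. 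Either way one concludes $H_\ast(X\otimes_R Y)$ is concentrated in degree $0$.

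The main obstacle — really the only subtle point — is making precise that the $R$-freeness of the terms lets us ignore the $G$-action entirely when computing homology (the $G$-action is preserved throughout but is irrelevant to the vanishing statement), and that one genuinely needs the homology of $X$, not just its terms, to be free as an $R$-module for the $\operatorname{Tor}$-terms to vanish; the freeness hypothesis on $H_i(X)$ is exactly what is used here. I expect the write-up to be short: invoke $K$-flatness of bounded complexes of frees, cite the Künneth spectral sequence, and check the two degeneration conditions (single total degree from the homology concentration, vanishing higher $\operatorname{Tor}$ from freeness of $H_i(X)$).
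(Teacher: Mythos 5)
Your argument is correct, and its main route—reduce to a K\"unneth computation over $R$, using freeness of the terms to ensure $X\otimes_R Y$ computes the derived tensor product—is essentially the paper's. The paper writes down the K\"unneth short exact sequence
\[
0\to \bigoplus_{r+s=n}H_r(X)\otimes H_s(Y)\to H_n(X\otimes Y)\to \bigoplus_{r+s=n-1}\mathrm{Tor}^1(H_r(X),H_s(Y))\to 0
\]
and kills the $\mathrm{Tor}^1$ term using the freeness of $H_i(X)$, just as you do via the $E^2$-page. The one substantive point of difference worth noting: the short-exact-sequence form of K\"unneth that the paper invokes requires the boundaries $d(X_n)\subseteq X_{n-1}$ to be $R$-flat, not merely the terms $X_n$; the paper does not verify this. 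It does in fact hold here, by an easy induction on degree (starting from the bottom of the bounded complex $X$, each $B_n$ is $R$-projective because $X_n$ is free, $H_n(X)$ is free or zero, and projectivity of $B_{n-1}$ splits $0\to Z_n\to X_n\to B_{n-1}\to 0$), but it is an implicit hypothesis check. Your spectral-sequence formulation needs only $K$-flatness of $X$, which is immediate, so it sidesteps this entirely; and your alternative splitting argument—that a bounded complex of $R$-projectives with $R$-projective homology splits $R$-linearly, so $X$ is chain homotopy equivalent over $R$ to $H_i(X)[i]$ plus a split acyclic summand—is arguably the cleanest route and is, in substance, exactly what the induction verifying flatness of the boundaries gives you. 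Any of these writes up correctly.
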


\begin{proof}
    By $(1)$, we obtain that any term of $X\otimes Y$ is free as $R$--module. Now, by the Künneth formula we have that the homology of $X\otimes Y$ in degree $n$ is given by  
    \[
    0\to \bigoplus_{r+s=n}H_r(X)\otimes H_s(Y)\to H_n(X\otimes Y)\to \bigoplus_{s+r=n-1}\mathrm{Tor}^1(H_r(X),H_s(Y))\to 0.
    \]
    By $(2)$, we deduce that $\mathrm{Tor}^1$ vanishes, and the homology must be concentrated in degree 0 as we wanted. 
\end{proof}

\begin{proof}[Proof of Lemma \ref{u_p is invertible}]
Note that $u_p$ satisfies the hypothesis of Proposition \ref{homology of X otimes its dual}, hence we obtain that the homology of $X= u_p\otimes u_p^\ast$ is concentrated in degree 0. Here $u_p^\ast$ denotes the dual complex of $u_p$. Moreover, since the non-trivial homology group of $u_p$ is $R$, we deduce that $H_0(X)\cong R$. 

We will first deal with $p$ odd. In this case, $X_n$  is non-trivial for $n\in \{-2,\ldots, 2\}$, and it is projective as $RG$--module as long as $n\not=0$. We claim that the differentials $d_2\colon X_2\to X_1$ and $d_{-1}\colon X_{-1}\to X_{-2}$ must be split injective and split surjective, respectively. Indeed, since projective $RG$--modules are the same as injective $RG$--modules with respect to the exact structure of $\mathrm{Mod}(G,R)$, it is enough to verify that $d_2$ and ${d_{-1}}$ are admissible mono and admissible epi, respectively.  But this follows since  $X$ has homology concentrated in degree 0 and it is free as $R$--module.
As a consequence, we can `remove' the extremes of the complex $X$. That is, we can write $X$ as  $X'\oplus Y \oplus Z$ with
\[
Y= 0\to X_2\xrightarrow{\simeq} X_2\to 0, \quad \mbox{and} \quad Z=0\to X_{-2}\xrightarrow{\simeq} X_{-2}\to0
\]
 where the first $X_2$ is in degree 2 and the first $X_{-2}$ is in degree -1, and some complex $X'$ with $X'_n=0$ except in degree -1,0 and 1. Since $Y$ and $Z$ are contractible, we obtain that $X$ is homotopy equivalent to $X'$. Since   $X'_{\pm 1}$ is a summand of $X_{\pm 1}$, both $X'_1$ and $X'_{-1}$ are projective $RG$--modules, so we can apply the same argument to `remove' the extremes. This leads us to a complex $X''$ concentrated in degree 0, and which is homotopy equivalent to $X$. Since the homology of $X$ in degree 0 is given by $R$ and trivial otherwise, we obtain that $X''$ must be $R$ as a complex concentrated in degree 0, as we wanted.  The case $p=2$ is completely analogous, except that we only need to apply the previous argument once. 

The second claim is an immediate consequence of the fact that the resolution given in \ref{eq:standar resolution} can be obtained from $R=\mathbb{Z}$ by base change. 
\end{proof}

\begin{remark}
 A different proof of Lemma \ref{u_p is invertible} is possible by invoking \cite[Corollary 4.8]{Gom25}. In this case,   we obtain  that the tt-functor 
 \[
 \prod \iota_\pp^\ast\colon\K(C_p, R) \to \prod_{\pp \in \mathrm{Spec}(R)} \K(C_p, k(\pp))
 \]
 is conservative. 
     Hence it is enough to show that the objects $\iota_\pp^\ast(u_{p,R})\simeq u_{p, k(\pp)}$ are tensor invertible for each $\pp \in \mathrm{Spec}(R)$.
    If the characteristic of $k(\pp)$ is $p$ then this follows from \cite{BG23b}. Otherwise, note that $u_{p, k(\pp)}$ is isomorphic to $k(\pp)[2]$ or $k(\pp)[1]$, depending on whether $p$ is even or odd. 
\end{remark}

\subsection*{Invertible objects in the general case} We extend the definition of the complexes $u_p$ to a more general context as follows.

\begin{definition}\label{def:u_N}
Let $G$ be a finite group and   $N$ be a normal subgroup of $G$ of  index $p$.  Then define 
\[
u_{N, R} \coloneqq \mathrm{Infl}_N^G (u_{p,R})
\]
where $\mathrm{Infl}_N^G\colon \K(G/N,R)\to \K(G,R)$ denotes the inflation functor obtained by the homomorphism of groups $G\to G/N$.  
\end{definition}

\begin{remark}
There are a few considerations to keep in mind regarding the previous definition:
\begin{enumerate}
\item The complex $u_{N,R}$ also depends on the choice of a generator of $G/N$ to make the identification with $C_p$. We will make this choice explicit whenever needed. For now, observe that the isomorphism type of $u_{N,R}$ is independent of this choice (see \cite[Remark 12.5]{BG23b}).
\item The inflation functor is indeed a tt-functor; in particular, the objects $u_{N,R}$ are $\otimes$-invertible in $\K(G,R)$.
\item A \textit{concrete} description of the objects $u_{N,R}$ is as follows.   \[u_{N,R}= \left\{
        \begin{array}{ll}
           
            0\to  R(G/N) \xrightarrow{\epsilon} R \to 0 & \textrm{ if } p=2\\
            0 \to R(G/N)\xrightarrow{\sigma-1} R(G/N) \xrightarrow{\epsilon} R \to 0 & \textrm{ if } p>2.
        \end{array}
    \right. \]
\end{enumerate}
\end{remark}

\begin{remark}
Recall that tensoring any two invertible objects is again invertible. In particular, any tensor power of $u_{N,R}$ is again invertible in $\K(G,R)$. We will give a description of such tensor powers since they will play a crucial role in what follows. In fact, such a description generalizes the field case treated in  \cite[Lemma 12.6]{BG23b}.   
\end{remark}

\begin{notation}
    From now on, we will adopt Balmer-Gallauer's notation and write $2'$ to denote either $1$ for $p=2$, or $2$ for $p>2$.
\end{notation}

\begin{lemma}\label{tensor powers of uN}
Let $G$ be a finite group and  $N$ be a normal subgroup of $G$ of index $p$ and  $q \geq 1$. Then there is a canonical identification in $\K(G, R)$
    \[u_{N, R}^{\otimes q} \simeq (0 \to R(G/N) \xrightarrow{\sigma-1} R(G/N) \xrightarrow{\mathbf{N}} \cdots \xrightarrow{\sigma -1} R(G/N) \xrightarrow{\epsilon} R \to 0)\] 
    where the copy of $R$ in the right-hand side is considered in degree 0, and the left most $R(G/N)$ is in degree $q\cdot 2'$.
\end{lemma}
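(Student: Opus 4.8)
The plan is to prove the identification by induction on $q$, using the base case $q=1$ (which is the concrete description of $u_{N,R}$ recorded above) and the periodic resolution \ref{eq:standar resolution} pulled back along inflation. First I would reduce to the cyclic case: since $u_{N,R}=\Infl_N^G(u_{p,R})$ and inflation is a tt-functor, it commutes with tensor powers, so it suffices to establish the analogous identification for $u_{p,R}^{\otimes q}$ in $\K(C_p,R)$ and then apply $\Infl_N^G$. So the real content is a statement about $\K(C_p,R)$.

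Next I would set up the inductive step. Write $P_\bullet$ for the complex on the right-hand side with $q$ copies of $RC_p$ (for $p$ odd; with the obvious modification using $2'$ for $p=2$), i.e. the brutal truncation of \ref{eq:standar resolution} at homological degree $q\cdot 2'$. The claim is $u_{p,R}^{\otimes q}\simeq P_\bullet$. Observe that $P_\bullet$ has free $R$-modules in each degree, is acyclic except in top degree $q\cdot 2'$ where its homology is the image of the relevant map, which one checks is a free $R$-module (here for $p$ odd it is the augmentation ideal or the norm kernel, all of which are $R$-free summands; this is already implicit in the analysis behind \ref{eq:standar resolution}). Then the strategy of the proof of Lemma \ref{u_p is invertible} applies verbatim: tensoring $u_{p,R}$ with $P_{\bullet}$ (shifted so the homology sits in complementary degrees) gives a complex with homology concentrated in a single degree and free over $R$, whose non-top terms are projective-hence-injective $RC_p$-modules, so one can cancel contractible summands from the extremes repeatedly and be left with the desired complex. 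Concretely, I would argue that $u_{p,R}\otimes P_\bullet$ is homotopy equivalent to the complex obtained by appending one more period (i.e.\ $2'$ more copies of $RC_p$ with differentials $\sigma-1$, $\mathbf N$ as appropriate) to $P_\bullet$, which is exactly $P'_\bullet$ for the index $q+1$; the cancellation of extremal contractible pieces is justified because a bounded complex of $RC_p$-modules that are projective (equivalently injective in $\Mod(C_p,R)$) in all but finitely many interior degrees, with $R$-free terms and homology $R$-free in a single degree, splits off a contractible summand at each end until only the homology-carrying stretch remains.

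The main obstacle, and the step that needs genuine care rather than routine bookkeeping, is controlling the differentials through the cancellation: unlike over a field, ``removing the extremes'' requires knowing that each extremal differential is an \emph{admissible} mono/epi in the Frobenius exact structure on $\Mod(C_p,R)$, which in turn uses that the homology is $R$-free and the terms are $R$-free so that the relevant short exact sequences are $R$-split. I would invoke Proposition \ref{Frobenius structure} for the Frobenius structure and Proposition \ref{homology of X otimes its dual} (or its evident iterate) for the homology computation, and I would state explicitly that the canonical identification is the one induced by the truncation maps from \ref{eq:standar resolution}, so that compatibility across different $q$ is automatic. A final sanity check: base change along a residue field $\iota_\pp^\ast$ sends this identification to the corresponding one over $k(\pp)$, recovering \cite[Lemma 12.6]{BG23b}, since the whole construction descends from $R=\mathbb Z$; I would remark on this to match the pattern of the surrounding results, though it is not needed for the statement itself.
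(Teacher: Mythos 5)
Your reduction to $G=C_p$ by inflation matches the paper, as does the basic idea of reducing a complex by splitting off contractible extremal summands using the Frobenius structure on lattices (Propositions \ref{Frobenius structure} and \ref{homology of X otimes its dual}). But the inductive step has a genuine gap, and it is not a bookkeeping issue: the extremes cancellation simply does not apply to $u_{p,R}\otimes P_\bullet^{(q)}$.

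Write $P^{(q)}$ for the truncated complex with $R$ in degree $0$ and $RC_p$ in degrees $1,\dots,q\cdot 2'$. The cancellation argument, as used in the proof of Lemma \ref{u_p is invertible} and as you yourself formulate it, reduces a bounded complex of lattices to the stretch between the degree carrying the non-projective term and the degree carrying the homology; nothing inside that stretch can be removed from the ends. Now look at $u_{p}\otimes P^{(q)}$: its unique non-projective term sits in degree $0$ (namely $R\otimes R\cong R$, all other terms being sums of $RC_p\otimes(\text{--})$), while its homology sits at the \emph{top} degree $(q+1)\cdot 2'$. So the ``stretch'' is the entire complex: the top differential has kernel $R$ (hence is not injective, so not split mono), and the bottom differential maps onto the non-injective object $R$ (so has no $RC_p$-linear section). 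No contractible summand can be split off from either end, no matter how you shift, and yet $u_p\otimes P^{(q)}$ is strictly larger than $P^{(q+1)}$ in every positive degree. The homotopy equivalence to $P^{(q+1)}$ exists, but the cancellations it hides live in the interior of the complex and are invisible to the extremes argument; over a field one could pass to a minimal model, but that option is unavailable here. Your phrase ``shifted so the homology sits in complementary degrees'' cannot be made literal either: for any shift of $P^{(q)}$, the non-projective term and the homology of $u_p\otimes P^{(q)}[s]$ remain $q\cdot 2'+2'$ degrees apart, never coinciding, so the stretch never collapses.

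The paper sidesteps this exactly by using the \emph{dual}: it shows that $u_p^{\otimes q}\otimes\bigl(P^{(q)}\bigr)^{\!*}\simeq\mathbb{1}$. In that tensor product the two $R$-summands (from $(u_p^{\otimes q})_0$ and $(P^{(q)*})_0$) and the homology all sit in degree $0$, while every term in nonzero degree is projective and every term at the extremes is projective; so the extremes cancellation really does grind the whole complex down to $R$ in degree $0$. Having identified $(P^{(q)})^*$ as the inverse of $u_p^{\otimes q}$, one concludes $P^{(q)}\simeq u_p^{\otimes q}$ by uniqueness of inverses and double duality, with no induction needed. If you want to keep an inductive flavor, the fix is to tensor with duals: show $u_p\otimes\bigl(P^{(q+1)}\bigr)^{\!*}\simeq\bigl(P^{(q)}\bigr)^{\!*}$ or, more in the spirit of what you wrote, show directly that $P^{(q+1)}\otimes\bigl(u_p^{*}\bigr)^{\otimes(q+1)}\simeq\mathbb{1}$ -- in each case the whole point is to arrange the non-projective terms and the homology in the \emph{same} degree before invoking the cancellation.

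Your final remark about compatibility with $\iota_\pp^*$ is fine and mirrors the surrounding material, but it is orthogonal to the gap above.
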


\begin{proof}
Let $Y$ denote the right hand side complex.  We will only deal with the case $p>2$, the other case  follows in a similar pattern. First, assume that $G=C_p$. By Lemma \ref{u_p is invertible} we know that $u_p$ is invertible, then $u_p^{\otimes q}$ is invertible for any $q\geq1$. In particular, it is well known that if $x\otimes y\simeq 1$ in a closed symmetric monoidal category, then we can identify $y$ with the dual of $x$, that is, $x^\ast\simeq y$, and for an invertible object $x$, there is a canonical isomorphisms from $x$ to its double dual $x^{\ast \ast}$. Fix $q\geq 2$. We will show that the dual $Y^\ast$ of $Y$ is an inverse for $X=u_p^{\otimes q}$, and this will imply the desired result. Note that $Y^\ast$ is given by\footnote{The complex $Y^\ast$ corresponds to the truncation of the standard resolution to compute the cohomology of $C_p$.} 
\[
0\to R \xrightarrow{\eta}  RG \xrightarrow{\sigma-1} RG \xrightarrow{\mathbf{N}} \cdots \xrightarrow{\sigma-1} RG \to 0
\]
with $R$ in degree 0, and $\eta$ the counit of the adjunction restriction-induction. We can use a slightly variation of Proposition \ref{homology of X otimes its dual} to show that $X$ has homology concentrated in degree $2q$, while $Y^\ast$ has homology concentrated in degree $-2q$. In particular, the pair $X$ and $Y$ satisfies the hypothesis of  Proposition \ref{homology of X otimes its dual}, and we deduce that the homology of $X\otimes Y^\ast$ is concentrated in degree 0 and it is given by $R$. By inspection, $(X\otimes Y^\ast)_n$ is a projective a $RG$--module, for any $n\not=0$. Moreover, note that, as in the proof of Proposition  \ref{u_p is invertible}, the differentials $d_{2q}$ and $d_{-2q+1}$ are admissible mono and admissible epi in $\Mod(G,R)$, respectively. Hence,   we can `remove' the extremes of $X\otimes Y^\ast$ without changing the homotopy type. That is, we can write $X\otimes Y^\ast$ as $(X\otimes Y^\ast)'\oplus A \oplus B$ with
\begin{align*}
     A & = 0\to (X\otimes Y^\ast)_{2q}\xrightarrow{\simeq} (X\otimes Y^\ast)_{2q}\to 0, \mbox{ and} \\ 
      B & = 0\to (X\otimes Y^\ast)_{-2q}\xrightarrow{\simeq} (X\otimes Y^\ast)_{-2q}\to0
\end{align*}
where the first $(X\otimes Y^\ast)_{2q}$ is in degree 2q and the first $(X\otimes Y^\ast)_{-2q}$ is in degree $-2q+1$, and some complex $(X\otimes Y^\ast)'$ with $(X\otimes Y^\ast)'_n=0$ except for $n\in \{-2q+1,-2q+2,\ldots, 2q-1\}$. Since $A$ and $B$ are contractible, we have that $(X\otimes Y^\ast)$ is homotopy equivalent to $(X\otimes Y^\ast)'$. Again, by inspection, we have that $(X\otimes Y^\ast)'_n$ is a projective $RG$--module, for any  $n\not=0$. We apply the same argument until we get a complex concentrated in degree $0$ homotopy equivalent to $X\otimes Y^\ast$. Since we know the homology of $X\otimes Y^\ast$, we deduce that it must be homotopy equivalent to $R$ concentrated in degree $0$. This completes the case $G=C_p$. For the general case, we only need to inflate since inflation is a tensor triangulated functor. 
\end{proof}

\section{Twisted integral cohomology}\label{sec: twisted}

In this section, we will define the twisted cohomology ring for any $p$-group and study certain properties that it has under base change along residue fields. Before jumping into the construction, let us place a restriction on our ground ring.

\begin{convention}\label{convention on R}
From now on, we will assume that whenever $p \neq 0$ in $R$, then $\mathrm{ann}_R(p) = 0$. In other words, if $p$ is not trivial in $R$, then we will assume that $p$ is not a zero divisor.
\end{convention}

\begin{remark}
    We decided to place the previous restriction in order to simplify several of the arguments that will appear in the rest of this document, rather than for a conceptual reason.
\end{remark}

\begin{definition}
Let $N$ be a normal subgroup of $G$ of index $p$. We will define the following maps in $\K(G,R)$ which will depend on $p$ and $R$. Recall that the definition of the invertible elements $u_N$ also depend on this, see Definition \ref{def: uN}.  

\begin{enumerate}
    \item[$(C1)$]  If $p=2$, and $p$ is trivial in $R$, define $a_N\colon \mathbb{1}\to u_N$ and $b_N\colon \mathbb{1}\to u_N[-1]$ as follows: 
    \begin{center}
     \begin{tikzcd}
  \mathbb{1} \arrow[d, "a_N"']  = & 0 \arrow[r] \arrow[d] &  R \arrow[d,"1"] \\ 
 u_N = & R(G/N) \arrow[r,"\epsilon"] & R.
    \end{tikzcd} 
    \end{center}
    and 
    \begin{center}
     \begin{tikzcd}
  \mathbb{1} \arrow[d, "b_N"']  = & R \arrow[r] \arrow[d,"\eta"] &  0 \arrow[d] \\ 
 u_N[-1] = & R(G/N) \arrow[r,"\epsilon"] & R 
    \end{tikzcd}    
    \end{center}
 \item[$(C2)$] If $p=2$, and $p$ is non-trivial in $R$, define $a_N\colon \mathbb{1}\to u_N$ and $b_N\colon \mathbb{1}\to u_N^{\otimes 2}[-2]$ as follows: 
 \begin{center}
 \begin{tikzcd}
  \mathbb{1} \arrow[d, "a_N"']  = & 0 \arrow[r] \arrow[d] &  R \arrow[d,"1"] \\ 
 u_N = & R(G/N) \arrow[r,"\epsilon"] & R
    \end{tikzcd} 
    \end{center}
    and 
    \begin{center}
     \begin{tikzcd}
  \mathbb{1} \arrow[d, "b_N"']  = & R \arrow[r] \arrow[d,"\eta"] &  0 \arrow[d] \arrow[r] & 0 \arrow[d] \\ 
 u_N^{\otimes 2}[-2] = & R(G/N) \arrow[r,"\sigma -1"] & R(G/N)\arrow[r,"\epsilon"] & R.
    \end{tikzcd}
    \end{center}

    \item[$(C3)$] If $p>2$, and $p$ is trivial in $R$, define $a_N\colon \mathbb{1}\to u_N$, $b_N\colon \mathbb{1}\to u_N[-2]$ and $c_N\colon \mathbb{1}\to u_N[-1]$ as follows: 
    \begin{center}
    \begin{tikzcd}
  \mathbb{1} \arrow[d, "a_N"']  = & 0 \arrow[r] \arrow[d] & 0 \arrow[r] \arrow[d] &  R \arrow[d,"1"] \\ 
 u_N = & R(G/N)\arrow[r,"\sigma-1"] & R(G/N) \arrow[r,"\epsilon"] & R
    \end{tikzcd} 
    \end{center}
     and 
     \begin{center}
     \begin{tikzcd}
  \mathbb{1} \arrow[d, "b_N"']  = & R \arrow[r] \arrow[d,"\eta"] &  0 \arrow[d] \arrow[r] & 0 \arrow[d] \\ 
 u_N[-2] = & R(G/N) \arrow[r,"\sigma -1"] & R(G/N)\arrow[r,"\epsilon"] & R 
    \end{tikzcd}
    \end{center}
and
\begin{center}
 \begin{tikzcd}
  \mathbb{1} \arrow[d, "c_N"']  = & 0 \arrow[r] \arrow[d] &  R \arrow[d,"\eta"] \arrow[r] & 0 \arrow[d] \\ 
 u_N[-1] = & R(G/N) \arrow[r,"\sigma -1"] & R(G/N)\arrow[r,"\epsilon"] & R.
    \end{tikzcd}
\end{center}

\item[$(C4)$] If $p>2$, and $p$ is non-trivial in $R$, define $a_N\colon \mathbb{1}\to u_N$ and $b_N\colon \mathbb{1}\to u_N[-2]$ as follows:
\begin{center}
 \begin{tikzcd}
  \mathbb{1} \arrow[d, "a_N"']  = & 0 \arrow[r] \arrow[d] & 0 \arrow[r] \arrow[d] &  R \arrow[d,"1"] \\ 
 u_N = & R(G/N)\arrow[r,"\sigma-1"] & R(G/N) \arrow[r,"\epsilon"] & R
    \end{tikzcd}
    \end{center}
      and
      \begin{center}
     \begin{tikzcd}
  \mathbb{1} \arrow[d, "b_N"']  = & R \arrow[r] \arrow[d,"\eta"] &  0 \arrow[d] \arrow[r] & 0 \arrow[d] \\ 
 u_N[-2] = & R(G/N) \arrow[r,"\sigma -1"] & R(G/N)\arrow[r,"\epsilon"] & R. 
    \end{tikzcd}
 \end{center}
\end{enumerate}
\noindent In all cases, $\eta\colon R\to R(G/N)$ denotes the unit of the adjunction given by restriction and induction. If we need to emphasize the role of the ring, we write $a_{N,R}$, $b_{N,R}$ and $c_{N,R}$, respectively. 
\end{definition}

%%%%55

\begin{remark}
In case $(C2)$, note that there are no non-trivial maps from $\mathbb{1}$ to $u_N[-1]$. However, as is well known, there is a non-trivial class in integral cohomology in degree 2 for the cyclic group of order 2. This justifies, in some sense, considering the maps $b_i\colon \mathbb{1} \to u_i^{\otimes 2}[-2]$.
\end{remark}

\begin{remark}
Note that the maps $c_N$ do not appear in case  $(C4)$. The reason is  that $p\not =0$ in $R$, hence  there is no  non-trivial map $R\to R(G/N)$ which has image in the kernel of the augmentation map. 
\end{remark}

\begin{remark}\label{cN is nilpotent}
    Just as in the field case, the morphisms $c_{N,R}$ defined in the case $(C3)$ are tensor-nilpotent.  Indeed, one can compute the switch of factors $(12)\colon u_{N,R}\otimes u_{N,R}\cong u_{N,R}\otimes u_{N,R}$ directly or use that  $\iota_\pp^\ast(u_{N,R})\simeq u_{N,k(\pp)}$ for each point $\pp$ in $\mathrm{Spec}(R)$, and the fact that the family of functors obtained by base change along residue fields is jointly conservative together with  \cite[Remark 12.8]{BG23b} to obtain that the switch is given by 1. Thus one obtains that the switch of $u_N[-1]$ is $-1$, and hence $c_N\otimes c_N\simeq 0$. 
\end{remark}

Let us record the behavior of the maps $a_N, \, b_N$ and $c_N$ under the restriction functor. But before, let us recall the meaning of the notation $2'$; we write $2'$ to denote either 2 in the cases $(C3)$ and $(C4)$ or 1 in the cases $(C1)$ and $(C2)$.   

\begin{remark}\label{invertibles under restriction}
Consider a subgroup $H$ of $G$. Note that $\mathrm{Res}^G_H(u_N)$ has two possibilities. If $H$ is contained in $N$, then the action of $H$ on $u_N$ is trivial, hence it is isomorphic to $\mathbb{1}[2']$. On the other hand, if  $H\not\leq N$, then $H\cap N$ is normal in $H$ and has index $p$. In this case, we can identify the complex $\mathrm{Res}^G_H(u_N)$ with $u_{H\cap N}$. This identifications allow us to describe the behavior of the maps $a_N$ and $b_N$ under the restriction functor. Explicitly, we have that 
\[
\mathrm{Res}^G_H(a_N)= \left\{
        \begin{array}{ll}
           
            0 & \textrm{ if } H\leq N\\
            a_{H\cap N} & \textrm{ if } H\not\leq N
        \end{array}
    \right.
\] and 
\[
\mathrm{Res}^G_H(b_N)= \left\{
        \begin{array}{ll}
           
            \mathrm{id}_\mathbb{1} & \textrm{ if } H\leq N\\
            b_{H\cap N} & \textrm{ if } H\not\leq N.
        \end{array}
    \right.
\]
Moreover, whenever $c_N$ is defined, we obtain 
\[
\mathrm{Res}^G_H(c_N)= \left\{
        \begin{array}{ll}
           
            0 & \textrm{ if } H\leq N\\
            c_{H\cap N} & \textrm{ if } H\not\leq N.
        \end{array}
    \right.
\]
\end{remark}

\begin{lemma}\label{aN, bN and CN are generators}
    Let $N_1,\ldots,N_l$ be normal subgroups of $G$ of index $p$, $q_1,\ldots,q_l$ be non-negative integers and $s$ be an arbitrary integer. Then any map 
    \[
    f\colon \mathbb{1}\to u_1^{\otimes q_1}\otimes \ldots\otimes u_l^{\otimes q_l}[s]
    \]
    is polynomial in $a_i$ and $b_i$ (and $c_i$ in case $(C3)$). Here we are using the subscript $i$ instead of $N_i$ to simplify notation. 
\end{lemma}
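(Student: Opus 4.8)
The plan is to reduce the statement to the case of a field, where it is the analogue \cite[Lemma~12.9]{BG23b} (when $\mathrm{char}\,k=p$) or follows from a direct computation in the semisimple category $\K(G,k)$ (when $p$ is invertible in $k$, so that every $u_{N,k}$ is a shift of a one-dimensional representation), and to carry out the reduction by base change along the residue fields of $R$. The essential input is a chain-level model for the Hom-groups in question that is manifestly compatible with base change.

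Write $U\coloneqq u_1^{\otimes q_1}\otimes\cdots\otimes u_l^{\otimes q_l}$. By Lemma~\ref{tensor powers of uN} (and the Mackey formula for tensor products of permutation modules) this is a bounded complex whose terms are permutation $RG$--modules, so the complex $U^G$ of $G$--fixed points is a bounded complex of finitely generated \emph{free} $R$--modules — indeed $R(G/H)^G$ is freely spanned by the norm element $\sum_{gH}gH$, which is defined over $\Z$. Since $\mathbb 1=R$ is concentrated in degree $0$, unwinding the definition of morphisms in $\mathbf K_b(\perm(G,R))$ gives a natural isomorphism $\Hom_{\K(G,R)}(\mathbb 1,U[s])\cong H_{-s}(U^G)$; in particular this $R$--module is finitely generated. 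The key formal point is that forming $G$--fixed points of a permutation module commutes with base change, so $(\iota_\pp^\ast U)^G=U^G\otimes_R k(\pp)$ as complexes, and the universal coefficient theorem supplies, for each $\pp\in\Spec(R)$, a natural \emph{injection}
\[
M\otimes_R k(\pp)\;=\;H_{-s}(U^G)\otimes_R k(\pp)\;\hookrightarrow\;H_{-s}(U^G\otimes_R k(\pp))\;=\;\Hom_{\K(G,k(\pp))}(\mathbb 1,\iota_\pp^\ast U[s]),\qquad M\coloneqq\Hom_{\K(G,R)}(\mathbb 1,U[s]).
\]
Let $M_0\subseteq M$ be the $R$--submodule spanned by the monomials in $a_{i,R},b_{i,R}$ (and $c_{i,R}$ in case $(C3)$). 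Since $M/M_0$ is finitely generated over the Noetherian ring $R$, it suffices to prove $(M/M_0)\otimes_R k(\pp)=0$ for every $\pp$, i.e.\ that the image of $M_0$ in $M\otimes_R k(\pp)$ is all of it. Now $\iota_\pp^\ast$ sends $u_{N,R}$ to $u_{N,k(\pp)}$ (Lemma~\ref{u_p is invertible}, Definition~\ref{def:u_N}) and sends the structure maps — given by $\mathrm{id}$ and by the units $\eta$ of induction, hence compatible with $-\otimes_R k(\pp)$ by construction — to the corresponding classes over $k(\pp)$; combined with the field case this shows that $M\otimes_R k(\pp)$, via the injection above, is spanned by the (reductions of the) monomials, hence by the image of $M_0$. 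This completes the reduction.

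The main obstacle is the bookkeeping hidden in that last comparison: the set of generators is not literally stable under base change. Over a field of characteristic $p>2$ one is in case $(C3)$ and the exterior classes $c_i$ are present, whereas over $R$ in case $(C4)$ — and in particular over $R=\Z$ — there is no $c_N$; similarly the integral class $b_{N,\Z}$ of case $(C2)$ reduces modulo $2$ to the \emph{square} of the mod-$2$ class of case $(C1)$. One must therefore verify that the monomials over $k(\pp)$ which actually lie in the image of $M\otimes_R k(\pp)$ are exactly those obtained by reduction from $M_0$ — equivalently, that the image of $\iota_\pp^\ast$ on twisted cohomology is the ``$c$--free'', Bockstein-closed part. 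This matches precisely the extra summand $\mathrm{Tor}_1^R(H_{-s-1}(U^G),k(\pp))$ in the universal coefficient sequence and reduces to the classical description of the image of $H^\ast(G;\Z)\to H^\ast(G;\F_p)$; handling this, together with the easier reconciliation of $(C1)$ with $(C2)$ and the use of Remark~\ref{cN is nilpotent} to bound the $c$--degrees, is where the real work lies. As an alternative route one can avoid residue fields and argue by induction on $|G|$, using the exact triangles $\mathbb 1\xrightarrow{a_N}u_N\to\mathrm{cone}(a_N)\to\mathbb 1[1]$ together with the observation that $\mathrm{cone}(a_N)$ is induced from the index-$p$ subgroup $N$, so that the relevant Hom-groups are computed inside $\K(N,R)$ by Frobenius reciprocity.
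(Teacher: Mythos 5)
Your main route has a genuine gap, and you flag it yourself: after setting up the base-change machinery correctly, everything hinges on showing that the image of $M_0\otimes_R k(\pp)$ inside $M\otimes_R k(\pp)$ is all of it, and you do not prove this. The difficulty is real: over a residue field of characteristic $p$ the twisted cohomology ring has generators that simply do not lift — $c_{N,k(\pp)}$ in case $(C3)$ has no preimage over $R$ in case $(C4)$, and $b_{N,k(\pp)}$ in case $(C1)$ is only reached up to a square in case $(C2)$. So knowing that $\Hom_{\K(G,k(\pp))}(\mathbb 1,\iota_\pp^\ast U[s])$ is spanned by monomials over $k(\pp)$ says nothing immediate about what $M_0$ hits after reduction. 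You correctly relate the discrepancy to the $\mathrm{Tor}_1$ term in the universal coefficient sequence and to the image of $H^\ast(G;\Z)\to H^\ast(G;\F_p)$, but that identification is precisely the content that would need to be carried out in the twisted setting, and you explicitly defer it ("is where the real work lies"). As written, the argument reduces the statement to a nonobvious claim about the Bockstein structure of twisted cohomology over residue fields, rather than proving it.

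There is also a subtler point worth flagging: even granting a clean description of the image in each fiber, you would still need this description to match \emph{exactly} the reductions of your chosen integral generators, including in mixed characteristic and including the nilpotent contributions from Remark~\ref{cN is nilpotent}. None of this is impossible, but it is not simpler than the lemma itself.

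The alternative you mention in your final sentence — induction using the triangle $\mathbb 1\xrightarrow{a_N}u_N\to R(G/N)[1]$, the projection formula, and Frobenius reciprocity to push the computation into $\K(N,R)$ — is in fact the argument the paper gives (generalizing the Balmer–Gallauer proof in the field case): one does a double induction on the number $l$ of subgroups and on the exponent $q_l$, uses $\mathrm{cone}(a_N\otimes\mathrm{id})\simeq\Ind_N^G\Res_N^G(v)$ to produce an exact sequence whose third term is computed over $N$, applies the inductive hypothesis there, and lifts back by tensoring with $b_N^{\otimes q}$. This avoids residue fields entirely and never needs to know which classes survive reduction. I would encourage you to develop that route instead; the base-change strategy, while natural given the tools of this paper, transfers the difficulty to a statement about twisted cohomology of fields that is not easier to verify than the lemma it is meant to prove.
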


The proof given by Balmer and Gallauer for the field case extends to our generality without much difficulty. The relevant ingredients in their proof are the adjunction given by restriction-induction, the description of the tensor powers of the tensor-invertible objects $u_N$ (see \cite[Lemma 12.6]{BG23b}), and the behavior of the maps $a_N$ and $b_N$ under the restriction functor (see \cite[Proposition 12.10]{BG23b}). The analogous ingredients in our generality correspond to Lemma \ref{tensor powers of uN} and Remark \ref{invertibles under restriction}, and, of course, the adjunction holds in this case as well. We will include case $(C2)$ here for completeness; the other cases, as we already mentioned, follow a similar pattern.

\begin{proof}[Proof of Lemma \ref{aN, bN and CN are generators}]
 Let $p=2$, and assume that $2$ in not trivial in $R$, that is, we are considering the case $(C2)$.  
 Since there are no non-trivial maps from $\mathbb{1}\to \mathbb{1}[m]$ for any $m\not=0$, we deduce that  the graded endomorphism ring of the monoidal unit $\mathbb{1}$ in $\K(G,R)$ is just $R$. 
 We are now ready to start with the proof. We will argue by induction on $l$ using the previous observation as the base for the induction. Suppose that $l>0$, and that all the subgroups $N_1,\ldots, N_l$ are different, otherwise we could apply our induction hypothesis. We write $v$ to denote $u_1^{q_1}\otimes \ldots\otimes u_{l-1}^{q_{l-1}}[s]$, and $u$ to denote $u_l$ and $q$ to denote $q_l$. In particular, we can write $f$ as a morphism from $\mathbb{1}\to v\otimes u^{\otimes q}$.
 Now, we proceed by induction on $q\geq0$. The base follows by the first induction hypothesis, hence assume that $q>0$. We can use the description of the tensor powers of $u_N$ given in Lemma \ref{tensor powers of uN}, and consider the map $a_n=a_N\otimes \mathrm{Id}_{u^{\otimes (q-1)}}\colon u^{\otimes (q-1)}\to u^{\otimes q}$ which is the identity in all degrees different to $q$ and zero in degree $q$.

 Note that the cone of this map in $\K(G,R)$ is simply $R(G/N)[q]$. After tensoring with $v$ we  we obtain a triangle of the form 
 \[
 v\otimes u^{\otimes q-1}\xrightarrow{\mathrm{id}_v\otimes a_N} v\otimes u^{\otimes q}\to v\otimes R(G/N)[q]\to 
 \]
 Moreover, by the projection formula, we can identify $v\otimes R(G/N)[q]$ with the complex $\Ind_N^G(\Res^G_N(v))$. Hence, applying the functor $\mathrm{Hom}_{\K(G,R)}(\mathbb{1},-)$ we obtain an exact sequence 
 \[
 \mathrm{Hom}_{\K(G,R)}(\mathbb{1},v\otimes u^{\otimes q-1})\to \mathrm{Hom}_{\K(G,R)}(\mathbb{1},v\otimes u^{\otimes q})\xrightarrow{\varphi} \mathrm{Hom}_{\K(H,R)}(\mathbb{1},v\restr_H [q]) 
 \]
 where the first map is induced by $a_N$ and the third term is given the the adjunction restriction-induction. Our map $f$ belongs to the middle term, and let us write $f'$ to denote its image under the right hand side map. Recall that $v$ denotes $u_1^{q_1}\otimes \ldots\otimes u_{l-1}^{q_{l-1}}$, hence by Remark \ref{invertibles under restriction} we obtain  $\Res_N(v)=u_{N_1\cap N}^{q_1}\otimes \ldots\otimes u_{N_{l-1}\cap N}^{q_{l-1}}$ where the $N_i\cap N$ are all different normal subgroups of $N$ of index $p$, hence we can apply our induction step on $l$ to write $f'$ as a polynomial combination on the morphisms $a_{N_i\cap N}$ and $b_{N_i\cap N}$   say 
 \[
 f'=\sum_{i,j}\alpha_{i,j}a_{N_i\cap N}^{\otimes\beta_{i,j}} \otimes b_{N_j\cap N}^{\otimes \delta_{i,j}}
 \]
 with $\alpha_{i,j}$ in $k$ and $\beta_{i,j}$ and $\delta_{i,j}$ non negative numbers. Let $f''\colon \mathbb{1}\to v[q]$ denote the morphism obtained with the same polynomial combination but now with the $a_i$ and $b_i$, that is, 
\[
f''=\sum_{i,j} \alpha_{i,j}a_{N_i}^{\otimes\beta_{i,j}} \otimes b_{N_j}^{\otimes \delta_{i,j}}.
\]
 Tensoring together $f''$ and $b_N^{\otimes q}\colon \mathbb{1}\to u_N^{\otimes q}[-q] $ give us a map in $\mathrm{Hom}_{\K(G)}(\mathbb{1},v\otimes u^{\otimes q})$. We claim that $f''\otimes b_N^{\otimes q} $ is mapped to $f'$ under the map $\varphi$. Note that this will complete the proof since this will tell us that $f''-f$ is multiplication of $a_N$ with an element $g$ in $\mathrm{Hom}_{\K(G)}(\mathrm{1}, v\otimes u^{q-1})$. In other words, $f=f''b_N^{\otimes q}-g a_N$, but by the induction hypothesis on $q$, we deduce that $g$ is polynomial in the $a_i$ and $b_j$ with $i,j\leq l$. 
It remains to show the claim. Note that our map $f'$ of $RN$--complexes  goes to the map $f'\otimes \eta$ of $RG$--complexes via the adjunction restriction-induction, with $\eta$ the unit. On the other hand, the map $b_N^{\otimes}$ is given by $\eta$ in degree 0 and trivial otherwise, with the same $\eta$ as before. We deduce that $f''\otimes b_N^{\otimes q}$ corresponds to $f'\otimes \eta$ under the map $\varphi$, and this completes the claim. 
\end{proof}

\begin{remark}
    In context of the previous lemma, in the case $(C2)$, we already observed that there are no non-trivial maps  $\mathbb{1}\to u_i[-1]$ for any $i$. In particular, we can trace back the relevant steps in the previous proof to show that there are no non-trivial maps $\mathbb{1}\to u_1^{\otimes q_1}\otimes \ldots\otimes u_l^{\otimes q_l}[s]$ provided that $s$ is odd.   
\end{remark}

  \begin{definition}
  Let $\mathcal{I}$ be a finite set, and $\N^\mathcal{N}=\{\mathcal{J}\to \N\}$. Let $\mathcal{U}=\{u_i\mid i\in \mathcal{I}\}$ be a collection of tensor-invertible objects in $\K(G,R)$. We let $H^{\bullet,\bullet}_\mathcal{U}(G,R)$ denote the $(\mathbb{Z}\times \N^\mathcal{I})$--graded ring 
  \[
  \bigoplus_{s\in \mathbb{Z}} \bigoplus_{q\in \N^\mathcal{I}} \mathrm{Hom}_{\K(G,R)}(\mathbb{1},\mathbb{1}(q)[s])
  \]
  where $\mathbb{1}(q)$ is short for $\bigotimes_{i\in \mathcal I} u_i^{\otimes q(i)}$. The multiplication on $H^{\bullet,\bullet}_\mathcal{U}(G,R)$ is the one induced by the tensor product in $\K(G,R)$.  We will refer to $\N^\mathcal{I}$ as the monoid of \textit{twists}. 
  \end{definition}

  \begin{remark}
      The ring $H^{\bullet,\bullet}_\mathcal{U}(G,R)$ is graded commutative by considering only the parity of the shift and ignore the twist. For further details, we refer the reader to \cite[Section 12]{BG23b}.
  \end{remark}

We will be mainly interested in the following case.

\begin{definition}\label{def:twisted coh}
We let $\mathcal{N}$ denote the set of all normal subgroups of $G$ of index $p$. Let $\mathbf{U}=\{u_{N,R}\mid N\in \mathcal{N}\}$, where the $u_{N,R}$ are the tensor-invertible objects constructed in Section \ref{section:invertible objects}.  The \textit{twisted cohomology ring of $G$} is the multigraded ring  $H^{\bullet,\bullet}_\mathbf{U}(G,R)$ and we usually forget about the subscript and simply write $H^{\bullet, \bullet}(G,R)$.
\end{definition}

\begin{corollary}\label{twisted coh is noetherian}
    The twisted cohomology ring $H^{\bullet, \bullet}(G,R)$ is a finitely generated $R$--algebra. Therefore it is Noetherian.  
\end{corollary}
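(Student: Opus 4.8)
The plan is to reduce to the case of an elementary abelian $p$-group, where the twisted cohomology ring is most amenable to explicit computation, and then relate that case back to a general $p$-group $G$. First I would observe that by Lemma \ref{aN, bN and CN are generators}, every homogeneous element of $H^{\bullet,\bullet}(G,R)$ lying in the graded piece indexed by a twist supported on finitely many normal subgroups $N_1,\dots,N_l$ of index $p$ is a polynomial in the classes $a_{N_i}$, $b_{N_i}$ (and $c_{N_i}$ in case $(C3)$). Since $G$ is a finite $p$-group, it has only finitely many normal subgroups of index $p$; call them $N_1,\dots,N_r$, so that $\mathcal{N}=\{N_1,\dots,N_r\}$ is finite and the monoid of twists is $\N^r$. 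Hence the classes $a_{N_i},b_{N_i}$ (and $c_{N_i}$ where defined), together with the degree-$0$, twist-$0$ part $\End_{\K(G,R)}(\mathbb{1})=R$, generate $H^{\bullet,\bullet}(G,R)$ as an $R$-algebra: this is a finite generating set, so the ring is a finitely generated $R$-algebra. Being a finitely generated algebra over the Noetherian ring $R$, it is Noetherian by the Hilbert basis theorem.

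The one subtlety that needs care is whether Lemma \ref{aN, bN and CN are generators} genuinely covers \emph{all} graded components. The lemma is stated for a map $\mathbb{1}\to u_1^{\otimes q_1}\otimes\cdots\otimes u_l^{\otimes q_l}[s]$ where $N_1,\dots,N_l$ are normal subgroups of index $p$; since $\mathcal{N}$ is finite, any twist $q\in\N^{\mathcal{N}}$ is of exactly this shape (allowing some $q_i=0$), and $s$ ranges over all of $\Z$. So every homogeneous piece $\Hom_{\K(G,R)}(\mathbb{1},\mathbb{1}(q)[s])$ is spanned over $R$ by monomials in the finitely many generators $a_{N_i},b_{N_i}$ (and $c_{N_i}$), which is precisely the statement that these elements generate the ring. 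I would spell this out as the body of the proof, perhaps noting explicitly that the finiteness of $\mathcal{N}$ is what makes the generating set finite (as opposed to merely the individual graded pieces being finitely generated).

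The main obstacle, such as it is, is purely bookkeeping: one must confirm that the invertible objects $u_{N,R}$ in Definition \ref{def:u_N} are indexed precisely by $\mathcal{N}$ and that no further twists enter, and that the maps $a_N,b_N,c_N$ are the correct generators in each of the four cases $(C1)$–$(C4)$ of the definition preceding Lemma \ref{aN, bN and CN are generators}. There is no real analytic or homological difficulty remaining once Lemma \ref{aN, bN and CN are generators} is in hand; the corollary is essentially a formal consequence of that lemma plus the finiteness of the set of index-$p$ normal subgroups in a finite $p$-group and the Hilbert basis theorem. I would therefore write a short proof: invoke Lemma \ref{aN, bN and CN are generators} to get a finite generating set, then cite Hilbert's basis theorem for Noetherianity.

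\begin{proof}
Since $G$ is a finite $p$-group, the set $\mathcal{N}$ of normal subgroups of $G$ of index $p$ is finite, say $\mathcal{N}=\{N_1,\ldots,N_r\}$. Thus the monoid of twists is $\N^r$, and every homogeneous component of $H^{\bullet,\bullet}(G,R)$ has the form $\Hom_{\K(G,R)}(\mathbb{1}, u_{N_1,R}^{\otimes q_1}\otimes\cdots\otimes u_{N_r,R}^{\otimes q_r}[s])$ for some $q_1,\ldots,q_r\in\N$ and $s\in\Z$ (where some of the $q_i$ may be zero). By Lemma \ref{aN, bN and CN are generators}, any such morphism is a polynomial in the classes $a_{N_i,R}$ and $b_{N_i,R}$ (together with $c_{N_i,R}$ in the case $(C3)$). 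Hence $H^{\bullet,\bullet}(G,R)$ is generated as an $R$-algebra by the finite set $\{a_{N_i,R}, b_{N_i,R}\}_{i=1}^r$ (enlarged by $\{c_{N_i,R}\}_{i=1}^r$ in case $(C3)$), where we regard $R=\End_{\K(G,R)}(\mathbb{1})$ as sitting in degree $0$ and twist $0$. In particular $H^{\bullet,\bullet}(G,R)$ is a finitely generated $R$-algebra. Since $R$ is Noetherian, the Hilbert basis theorem implies that $H^{\bullet,\bullet}(G,R)$ is Noetherian.
\end{proof}
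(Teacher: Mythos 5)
Your proof is correct and follows essentially the same route as the paper: cite Lemma \ref{aN, bN and CN are generators} to conclude that the finitely many classes $a_{N},b_{N}$ (and $c_{N}$ in case $(C3)$) generate $H^{\bullet,\bullet}(G,R)$ over $R$, and then apply the Hilbert basis theorem. The paper's own proof is merely a one-line version of the same argument; your version simply makes explicit that finiteness of $\mathcal{N}$ is what yields a finite generating set.
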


\begin{proof}
    This first part is a consequence of Lemma \ref{aN, bN and CN are generators}. The second claim follows from Hilbert's basis theorem. 
\end{proof}

\begin{remark}   
     Let us highlight that the twisted cohomology ring differs considerably from the group cohomology ring, even for $G=C_p$ in the semisimple case as we will see in the following examples. 
\end{remark}

 Let us discuss twisted cohomology for the cyclic group with $p$ elements over different ground rings.

\begin{example}
   Let $G$ denote the cyclic group $C_p$ and $R$ be a field of characteristic $p$ First, note that $\mathcal{N}$ consist of a single subgroup; the trivial one. Hence $H^{\bullet, \bullet}(G,R)$ is a $(\mathbb{Z}\times \mathbb{N})$-graded ring. 
   In this case,  we obtain that $H^{\bullet, \bullet}(G,R)$  corresponds to $R[a_N,b_N]$ for $p=2$, and corresponds to $R[a_N,b_N,c_N]/(c_N^2)$ for $p>2$. Here $N$ is the trivial subgroup, and the generators $a_N$ and $b_N$ are in degree $(0,1)$ and $(-1,1)$, respectively (see \cite[Section 12]{BG23b}). 
\end{example}

\begin{example}
   Let $G$ denote the cyclic group $C_p$ and $R$ be field of characteristic coprime to $p$. In this case, we obtain that $RG$ is semi-simple. In particular, one verifies that $a_N$ is null-homotopic in this case, but $b_N$ is not. In this case, the ring 
   $H^{\bullet, \bullet}(G,R)$  corresponds to $R[b_N]$  and $b_N$ is in degree $(-2,2)$. 
\end{example}

\begin{example}\label{Ex: twisted cohomology}
    Let $G$ denote the cyclic group $C_p$, and let  $R=\mathbb{Z}$. In this case,   $H^{\bullet, \bullet}(G,R)$  corresponds to $\mathbb Z[a_N,b_N]/(p\cdot a_N, p\cdot b_N)$, and the generators are in degree $(0,1)$ and $(-2,2)$, respectively. The relations arise precisely when computing the groups $\Hom_{\K(C_p,\mathbb Z)}(\mathbb 1, u_N^{\otimes q})$. 
\end{example}

In the remainder of this section, we will focus on exploring the behavior of the twisted cohomology ring under base change, at least at the level of homogeneous spectra.

%%%---------------------------

\begin{remark}
Let $\pp$ be a point in $\mathrm{Spec}(R)$. Recall that  base change along the residue field $R\to k(\pp)$  defines a tt-functor  
\[
\iota^\ast_\pp\colon \K(G,R)\to \K(G,k(\pp)).
\]
Now, consider a normal subgroup $N$ of $G$ of index $p$. As we already observed in Lemma \ref{u_p is invertible}, the tensor invertible object $u_{N,R}$ maps to $u_{N,k(\pp)}$ under the functor $\iota^\ast_\pp$. Let us record the behavior of the maps $a_{N,R}, \, b_{N,R}$ and $c_{N,R}$ under $\iota^\ast_\pp$.
\end{remark}

\begin{lemma}\label{base chage of maps aN, bN, cN}
  Let $N$ be a normal subgroup of $G$ of index $p$. Consider the tt-functor $\iota^\ast_\pp$ obtained by $x\mapsto x\otimes_R k(\pp)$. Then we have the following identifications under this functor.  
  \begin{itemize}
      \item[(i)] $\iota_\pp^\ast(a_{N,R})$ identifies  with $a_{N,k(\pp)}$ in all cases $(C1)$-$(C4)$.
      \item[(ii)] $\iota_\pp^\ast(b_{N,R})$ identifies with $b_{N,k(\pp)}$ in cases $C(1)$, $(C3)$ and $(C4)$, and with $b_{N,k({\pp)}}^{\otimes 2}$ in case $(C2)$. 
      \item[(iii)] $\iota_\pp^\ast(c_{N,R})$ identifies with $c_{N,k(\pp)}$ in case $(C3)$.
  \end{itemize}
\end{lemma}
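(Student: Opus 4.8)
The plan is to read off both sides from the explicit complexes and chain maps that define these objects. Recall that $\iota_\pp^\ast$ is a symmetric monoidal exact functor, computed on a bounded complex of permutation $RG$-modules by applying $(-)\otimes_R k(\pp)$ in each degree; in particular it sends a chain map to a chain map and a chain homotopy to a chain homotopy, so it is well defined on $\K(G,k(\pp))$. By Lemma \ref{u_p is invertible} together with the concrete description of the complexes $u_{N,R}$, the identification $\iota_\pp^\ast(u_{N,R})\simeq u_{N,k(\pp)}$ already holds at the level of complexes: the terms $R(G/N)$ base change to $k(\pp)(G/N)$, and the differentials $\epsilon$ and $\sigma-1$ base change to the corresponding maps over $k(\pp)$. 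The key point is that the chain maps defining $a_{N,R}$, $b_{N,R}$ and $c_{N,R}$ in the four cases $(C1)$--$(C4)$ have as their only non-zero components the identity $1\colon R\to R$, the augmentation $\epsilon\colon R(G/N)\to R$, and the unit $\eta\colon R\to R(G/N)$ of the restriction--induction adjunction; and each of these is natural in the base ring. Concretely, $\iota_\pp^\ast$ sends $1_R$ to $1_{k(\pp)}$, $\epsilon_R$ to $\epsilon_{k(\pp)}$, and $\eta_R$ to $\eta_{k(\pp)}$, the last of these by naturality of the restriction--induction adjunction under base change (cf.\ Remark \ref{functoriality of K} and Recollection \ref{restriction under base change}).

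Granting this, the proof amounts to matching the cases $(C1)$--$(C4)$ over $R$ with the cases they reduce to over $k(\pp)$. For part (i) the formula for $a_N$ --- the identity of $R$ placed in degree $0$, the degree-$0$ term of $u_N$ --- is the same in all four cases, and a residue field of a ring in a given case always lies in a case with the same target $u_N$; hence $\iota_\pp^\ast(a_{N,R})=a_{N,k(\pp)}$ with no further bookkeeping. For part (iii), the map $c_N$ only occurs in case $(C3)$, where $p=0$ in $R$, so every residue field has characteristic $p$ and is again in case $(C3)$; the same naturality yields $\iota_\pp^\ast(c_{N,R})=c_{N,k(\pp)}$. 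For part (ii) in cases $(C1)$ and $(C3)$ the residue field stays in the same case, and in case $(C4)$ it lies in $(C3)$ or $(C4)$; in all of these the target of $b_N$ is literally $u_N[-2]$ (respectively $u_N[-1]$ in case $(C1)$), so no reshaping is needed and $\iota_\pp^\ast(b_{N,R})=b_{N,k(\pp)}$ follows exactly as for $a_N$.

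The only case that is not purely formal is part (ii) in case $(C2)$: there $p=2$ is a non-zero-divisor in $R$, so $b_{N,R}\colon \mathbb{1}\to u_{N,R}^{\otimes 2}[-2]$, whereas at a prime $\pp$ with $k(\pp)$ of characteristic $2$ one is in case $(C1)$, with $b_{N,k(\pp)}\colon \mathbb{1}\to u_{N,k(\pp)}[-1]$; to get matching targets one must compare $\iota_\pp^\ast(b_{N,R})$ with $b_{N,k(\pp)}^{\otimes 2}$. Here I would use Lemma \ref{tensor powers of uN} to write $u_N^{\otimes 2}[-2]$ explicitly and then check that both $\iota_\pp^\ast(b_{N,R})$ and $b_{N,k(\pp)}^{\otimes 2}$ correspond, under the homotopy equivalence $u_N[-1]^{\otimes 2}\simeq u_N^{\otimes 2}[-2]$ of that lemma, to the map given by $\eta$ in degree $0$. (At primes $\pp$ with $k(\pp)$ of characteristic different from $2$ one remains in case $(C2)$ and $\iota_\pp^\ast(b_{N,R})=b_{N,k(\pp)}$, exactly as before.) The only genuine work, and hence the main --- though minor --- obstacle, is making the homotopy equivalence $u_N[-1]^{\otimes 2}\simeq u_N^{\otimes 2}[-2]$ explicit and tracing $\eta\otimes\eta$ through it; note that over the characteristic-$2$ residue field the Koszul signs coming from the shifts all disappear, so this reduces to a short direct computation with the explicit complexes.
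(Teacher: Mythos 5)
Your proof is correct and follows essentially the same strategy as the paper's: apply $\iota_\pp^\ast$ to the explicit chain maps, note that the only nonzero components ($1$, $\epsilon$, $\eta$) are natural in the ground ring, and conclude via the pointwise identification $\iota_\pp^\ast(u_{N,R})\simeq u_{N,k(\pp)}$ of Lemma~\ref{u_p is invertible}. The paper spells this out only for part (i) and leaves the rest as ``similar'', whereas you correctly isolate the one genuinely non-formal step --- case $(C2)$ in part (ii), where a characteristic-$2$ residue field falls into case $(C1)$ and $\iota_\pp^\ast(b_{N,R})$ must be matched with $b_{N,k(\pp)}^{\otimes 2}$ under the homotopy equivalence of Lemma~\ref{tensor powers of uN} --- which is a worthwhile clarification.
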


\begin{proof}
     All cases follow a similar pattern, hence we will only show the identification regarding the map $a_N$, that is, part $(i)$. For readability, let us write $k$ to denote $k(\pp)$. Since $\iota_\pp^\ast$ is a tt-functor, we get that $\mathbb{1}_{\K(G,R)}=R$ goes to $\mathbb{1}_{\K(G,k)}=k$ via the canonical map.  Moreover, we showed in Lemma \ref{u_p is invertible} that $\iota_\pp^\ast(u_{N,R})$ agrees with $u_{N,k}$ via the isomorphism $R(G/N)\otimes_Rk\simeq k(G/R)$ point-wise. In particular, we get a diagram 
     \begin{equation*}
         \begin{tikzcd}
         R \otimes_R k \arrow[d,"a_{N,R}"'] \arrow[r,"\simeq"] & k \arrow[d,"a_{N,k}"]\\
             u_{R,N}\otimes_Rk \arrow[r,"\simeq"]  & u_{N,k}          \end{tikzcd}
     \end{equation*}
     and it is easy to verify that this square is commutative. 
\end{proof}

Since the functor $\iota_\pp^\ast\colon \K(G,R)\to \K(G,k(\pp))$ is symmetric monoidal, we obtain a morphism of $(\mathbb{Z}\times \mathbb{N}^\mathcal{N})$-graded rings 
\[
\Theta_\pp\colon H^{\bullet,\bullet}(G,R)\otimes_R k(\pp)\to H^{\bullet,\bullet}(G,k(\pp)).
\]
Let us record the following property of the morphism $\Theta_\pp$.

\begin{corollary}\label{coro: injectivity by base change}
    The map $\Theta_\pp$ is power surjective, that is, for any $\alpha$ in $H^{\bullet,\bullet}(G,k(\pp))$, there is natural number $n$, such that $\alpha^{\otimes n}$ is in the image of $\Theta_\pp$. In particular, we get that $\Theta_\pp$ induces an injective map 
    \[
    \mathrm{Spec}^h(H^{\bullet,\bullet}(G,k(\pp)))\to \mathrm{Spec}^h(H^{\bullet,\bullet}(G,R)\otimes_R k(\pp)). 
    \]
\end{corollary}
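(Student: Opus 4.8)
The plan is to prove power surjectivity of $\Theta_\pp$ and then deduce injectivity of the induced map on homogeneous spectra by a standard argument.

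\textbf{Power surjectivity.} By Lemma \ref{aN, bN and CN are generators}, the ring $H^{\bullet,\bullet}(G,k(\pp))$ is generated as a $k(\pp)$-algebra by the classes $a_{N,k(\pp)}$, $b_{N,k(\pp)}$ (and $c_{N,k(\pp)}$ in case $(C3)$), where $N$ ranges over the normal subgroups of $G$ of index $p$. Since $\Theta_\pp$ is a ring homomorphism, it therefore suffices to check that each of these algebra generators has a power in the image of $\Theta_\pp$. Now invoke Lemma \ref{base chage of maps aN, bN, cN}: the classes $a_{N,R}$ and $c_{N,R}$ map to $a_{N,k(\pp)}$ and $c_{N,k(\pp)}$ respectively, so these are literally in the image (not just a power); and $b_{N,R}$ maps either to $b_{N,k(\pp)}$ (cases $(C1)$, $(C3)$, $(C4)$) or to $b_{N,k(\pp)}^{\otimes 2}$ (case $(C2)$). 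In the first three cases $b_{N,k(\pp)}$ is in the image directly; in case $(C2)$ the \emph{square} $b_{N,k(\pp)}^{\otimes 2}$ is in the image, which is exactly the power we are allowed. Hence every algebra generator of $H^{\bullet,\bullet}(G,k(\pp))$ has a power in the image of $\Theta_\pp$, and since a product of elements each having a power in the image again has a power in the image, every element of $H^{\bullet,\bullet}(G,k(\pp))$ has a power in the image of $\Theta_\pp$.

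\textbf{Injectivity on spectra.} Write $\theta = \Theta_\pp$, $A = H^{\bullet,\bullet}(G,R)\otimes_R k(\pp)$ and $B = H^{\bullet,\bullet}(G,k(\pp))$, so $\theta\colon A\to B$ is power surjective. Suppose $\qq_1,\qq_2\in \mathrm{Spec}^h(B)$ have the same preimage $\pp_0 = \theta^{-1}(\qq_1) = \theta^{-1}(\qq_2)$ in $\mathrm{Spec}^h(A)$; I claim $\qq_1 = \qq_2$. Indeed, take any homogeneous $\beta\in \qq_1$. By power surjectivity there is $n\geq 1$ and $\alpha\in A$ with $\theta(\alpha) = \beta^n$. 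Then $\theta(\alpha) = \beta^n\in \qq_1$, so $\alpha\in\theta^{-1}(\qq_1) = \pp_0 = \theta^{-1}(\qq_2)$, whence $\beta^n = \theta(\alpha)\in\qq_2$, and since $\qq_2$ is prime, $\beta\in\qq_2$. By symmetry $\qq_1 = \qq_2$, so the map $\mathrm{Spec}^h(\theta)\colon \mathrm{Spec}^h(B)\to \mathrm{Spec}^h(A)$ is injective.

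\textbf{Expected difficulty.} There is essentially no obstacle here: both halves are formal once Lemmas \ref{aN, bN and CN are generators} and \ref{base chage of maps aN, bN, cN} are in hand. The only point requiring a moment's care is the bookkeeping in case $(C2)$, where $b_{N,R}$ lands on $b_{N,k(\pp)}^{\otimes 2}$ rather than on $b_{N,k(\pp)}$ itself — but this is precisely why the statement is phrased in terms of \emph{power} surjectivity rather than surjectivity, so it causes no trouble. One should also note in passing that the multigrading is irrelevant to the argument: the prime-ideal manipulation above works verbatim for homogeneous primes in a $(\mathbb{Z}\times\mathbb{N}^{\mathcal{N}})$-graded ring.
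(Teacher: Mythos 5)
Your argument is mostly the paper's, but you've overlooked exactly the subtlety the paper flags. The cases $(C1)$--$(C4)$ refer to $R$, not to $k(\pp)$, and they need not agree after base change: if $R$ is in case $(C4)$ (so $p>2$ and $p\neq 0$ in $R$, hence there is no $c_{N,R}$) and $\pp$ contains $p$, then $k(\pp)$ has characteristic $p$ and $H^{\bullet,\bullet}(G,k(\pp))$ is computed in case $(C3)$, so it \emph{does} contain the generator $c_{N,k(\pp)}$. Your sentence ``the classes $a_{N,R}$ and $c_{N,R}$ map to $a_{N,k(\pp)}$ and $c_{N,k(\pp)}$ respectively'' implicitly assumes $c_{N,R}$ exists whenever $c_{N,k(\pp)}$ does, which is false precisely in this case $(C4)\rightsquigarrow(C3)$ situation. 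There is no element of $H^{\bullet,\bullet}(G,R)\otimes_R k(\pp)$ hitting $c_{N,k(\pp)}$, so you cannot conclude directly that this generator has a power in the image.

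The fix is the one the paper gives: $c_{N,k(\pp)}$ is tensor-nilpotent, $c_{N,k(\pp)}^{\otimes 2}=0$ (Remark~\ref{cN is nilpotent}), so the power $n=2$ lands in the image trivially, and power surjectivity still holds. Your reduction to algebra generators and your formal deduction of injectivity on $\Spec^h$ from power surjectivity are both fine (and the latter is stated with more care than the paper's ``in particular''), but the argument is incomplete without explicitly treating the missing $c_{N}$ in case $(C4)$.
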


\begin{proof}
    This is an immediate consequence of Lemma \ref{base chage of maps aN, bN, cN}, since the maps $a_N$, $b_N$, and $c_N$ are generators. In particular, in case $(C3)$, we get that for $b_{N,k(\pp)}$, the required $n$ is precisely $2$. It is probably worth highlighting the following scenario: in case $(C4)$, we might have a morphism $c_{N,k(\pp)}$ (depending on the characteristic of $k(\pp)$), but we have no $c_{N,R}$. This is not a problem since such $c_{N,k(\pp)}$ are nilpotent (see Remark \ref{cN is nilpotent}).
\end{proof}

%%%%%%%%%%%%%%%%%%%%%%%%%%%%%%%%%%%%%%%%%%%
\section{Comparing Balmer and homogeneous spectra}\label{sec: comparing}
%%%%%%%%%%%%%%%%%%%%%%%%%%%%%%%%%%%%%%%%%%%

Recall that $G$ denotes a $p$-group and $R$ denotes a commutative Noetherian ring subject to Convention \ref{convention on R}. In this section, we relate the Balmer spectrum of $\K(G,R)$ and the homogeneous spectrum of $H^{\bullet,\bullet}(G,R)$ via the comparison map. Let us recall the construction of this map and refer to \cite[Theorem 5.3]{Bal10} and \cite[Proposition 13.4]{BG23b} for the proof.

\begin{lemma}\label{lemma:comparison map}
    There is a continuous comparison map
    \begin{align*}
     \mathrm{Comp}_{G,R}\colon \mathrm{Spc}(\K(G,R)) & \to \mathrm{Spec}^h(H^{\bullet,\bullet}(G,R)) \\ 
        \mathcal{P} & \mapsto \langle f\colon \mathbb{1}\to \mathbb{1}(q)[s]\mid \mathrm{cone}(f)\not\in \mathcal{P}\rangle. 
    \end{align*} 
    where the right hand side denotes the ideal generated by the maps $f$. 
\end{lemma}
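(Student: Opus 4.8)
The plan is to obtain this as an instance of Balmer's general comparison map between a tt-spectrum and the homogeneous spectrum of the graded central endomorphism ring of the tensor unit, in its multigraded form. By Definition~\ref{def:twisted coh} the ring $H^{\bullet,\bullet}(G,R)$ is the $(\mathbb{Z}\times\N^{\mathcal{N}})$-graded ring $\bigoplus_{s,q}\Hom_{\K(G,R)}(\mathbb{1},\mathbb{1}(q)[s])$, each $\mathbb{1}(q)[s]$ being $\otimes$-invertible in $\K(G,R)$, and — by the graded-commutativity remark following the definition of $H^{\bullet,\bullet}_{\mathcal{U}}$ — it is graded-commutative with respect to the sign given by the parity of the $\mathbb{Z}$-component. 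This is exactly the input data required by \cite[Theorem~5.3]{Bal10}, in the multigraded variant of \cite[Proposition~13.4]{BG23b}, so the statement is a formal consequence; what I would actually do is record why each step of that argument applies in the present setting.

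First I would check that $\mathrm{Comp}_{G,R}(\mathcal{P})$ is a homogeneous prime ideal. Homogeneity is automatic, since it is generated by homogeneous elements. Primeness rests on the octahedral computation of \cite{Bal10}: for homogeneous $f,g$, applying the octahedral axiom to the factorisation $f\cdot g=(f\otimes\mathrm{id})\circ(\mathrm{id}\otimes g)$ and using invertibility of the twisting objects gives $\mathrm{cone}(f\cdot g)\in\langle\mathrm{cone}(f),\mathrm{cone}(g)\rangle$, together with the companion inclusions; fed the primeness of $\mathcal{P}$, this shows that the set of homogeneous $f$ with $\mathrm{cone}(f)\notin\mathcal{P}$ behaves multiplicatively as needed. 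Properness follows because $\mathrm{cone}(\mathrm{id}_{\mathbb{1}})=0\in\mathcal{P}$ and, comparing twists (the monoid $\N^{\mathcal{N}}$ is free, hence cancellative), the only generators that can contribute to an expression of $\mathrm{id}_{\mathbb{1}}$ lie in twist $0$, i.e.\ are scalar multiples of $\mathrm{id}_{\mathbb{1}}$; this reduces properness to the well-definedness of the classical comparison map $\Spc(\K(G,R))\to\Spec(R)$ of \cite{Bal10}.

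For continuity I would argue on a basis of closed sets: for homogeneous $f$ one should have $\mathrm{Comp}_{G,R}^{-1}(V(f))=\mathrm{supp}(\mathrm{cone}(f))=\{\mathcal{P}\mid\mathrm{cone}(f)\notin\mathcal{P}\}$, which is closed in $\Spc(\K(G,R))$. The inclusion $\mathrm{supp}(\mathrm{cone}(f))\subseteq\mathrm{Comp}_{G,R}^{-1}(V(f))$ is immediate from the definition of $\mathrm{Comp}_{G,R}$; the reverse inclusion, that $f\in\mathrm{Comp}_{G,R}(\mathcal{P})$ forces $\mathrm{cone}(f)\notin\mathcal{P}$, is the substantive point and is precisely what \cite[Theorem~5.3]{Bal10} establishes. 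I do not expect a genuinely new obstacle here: the one thing to be careful about is that the monoid of twists is $\N^{\mathcal{N}}$ rather than a group, so that the inverses of the twisting objects are not recorded inside $H^{\bullet,\bullet}(G,R)$; but, as in \cite[Section~13]{BG23b}, this causes no trouble, because $\mathbb{1}(q)[s]$ is still $\otimes$-invertible in $\K(G,R)$, which is all that the octahedral manipulations use.
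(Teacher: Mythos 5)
Your proposal is correct and takes the same approach as the paper, which itself gives no proof but simply refers to \cite[Theorem~5.3]{Bal10} and \cite[Proposition~13.4]{BG23b}. You have essentially unwound those citations: homogeneity and primeness via Balmer's octahedral argument, properness via the twist-zero reduction (where indeed $\End^\ast_{\K(G,R)}(\mathbb{1})=R$ concentrated in degree $0$, as the paper notes in the proof of Lemma~\ref{aN, bN and CN are generators}), and continuity via $\mathrm{Comp}^{-1}(V(f))=\mathrm{supp}(\mathrm{cone}(f))$, which is exactly what the cited results establish once one observes that $\{f\mid\mathrm{cone}(f)\notin\mathcal{P}\}$ is already an ideal.
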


% \begin{proof}
% It follows in the same fashion as \cite[Theorem 5.3]{Bal10}. We will include some details for completeness.  Fix a prime $\mathcal{P}$ in $\K(G,R)$, and consider the Verdier localization $\mathbf{q}\colon\K(G,R)\xrightarrow{} \K(G,R)/\mathcal{P}$.  Since $\mathbf{q}$ is a tt-functor, the image of the twisted cohomology ring $H^{\bullet,\bullet}(G,R)$ is also a multi-graded ring and we also have an induced map of graded rings 
% \[
% \mathbf{q}\colon H^{\bullet,\bullet}(G,R)\to \mathbf{q}(H^{\bullet,\bullet}(G,R))
% \]
%  Following the proof of \cite[Theorem 4.5]{Bal10} one can show that  $\mathbf{q}(H^{\bullet,\bullet}(G,R))$ is local, and the maximal ideal corresponds to the non-invertible arrows in $\mathbf{q}(H^{\bullet,\bullet}(G,R))$, that is, morphisms with cone in $\mathcal{P}$. It follows that  $\mathrm{Comp}_{G,R}(\mathcal{P})$ is the preimage of the maximal ideal in $\mathbf{q}(H^{\bullet,\bullet}(G,R))$ under the induced morphism by $\mathbf q$. 
%  Now, for an element $f\in H^{\bullet,\bullet}(G,R)$ consider the quasi-compact basic open $D(f)=\{\mathfrak{p}\mid f\not\in \mathfrak{p}\}\subseteq \Spec^h(H^{\bullet,\bullet}(G,R))$. Note that $\mathrm{Comp}_{G,R}^{-1}(D(s))=U(\mathrm{cone}(f))$, hence $\mathrm{Comp}_{G,R}$ is continuous.  
% \end{proof}
 
Let us record  that this comparison map is compatible with base change along the residue fields of $R$.  In fact, the comparison map is natural with respect to tt-functors and the same proof as \cite[Theorem 5.3]{Bal10} applies in this case. 

\begin{proposition}\label{base change and comparison map}
    Let $\pp$ be a prime of $R$. Then base change along the residue field $R\to k(\pp)$ makes the following diagram commutative. 
\begin{center}
    \begin{tikzcd}[column sep = 5em]
       \mathrm{Spc}(\K(E,k(\pp))) \arrow[r,"\mathrm{Spc}(\iota_\pp^\ast)"] \arrow[d,"\mathrm{Comp}_{G,R}"'] & \mathrm{Spc}(\K(E,R)) \arrow[d,"\mathrm{Comp}_{G,k(\pp)}"] \\ 
 \mathrm{Spec}^h(H^{\bullet,\bullet}(E,k(\pp))) \arrow[r,"\mathrm{Spec}(\iota_\pp^\ast)"] & \mathrm{Spec}^h(H^{\bullet, \bullet }(E,R)).
    \end{tikzcd}
\end{center}
Recall that $\iota_\pp^\ast$ denote the tt-functor induced by $x\mapsto x\otimes_R k(\pp)$.
\end{proposition}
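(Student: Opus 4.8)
The plan is to deduce this from the naturality of Balmer's comparison map with respect to tt-functors, exactly as in the proof of \cite[Theorem 5.3]{Bal10} and \cite[Proposition 13.4]{BG23b}, once the twist data on the two sides has been matched up along $\iota_\pp^\ast$. Concretely, since $\iota_\pp^\ast\colon \K(E,R)\to \K(E,k(\pp))$ is a tt-functor (Remark \ref{functoriality of K}), it induces a graded ring homomorphism on graded endomorphism rings of the units, and the content of the proposition is that this homomorphism and the comparison maps fit into the displayed square.

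First I would pin down the twist compatibility, which is the one ingredient that is not purely formal. For every normal subgroup $N\leq E$ of index $p$ one has $\iota_\pp^\ast(u_{N,R})\simeq u_{N,k(\pp)}$, by Lemma \ref{u_p is invertible} together with the fact that inflation commutes with base change; these isomorphisms arise from the pointwise identifications $R(E/N)\otimes_R k(\pp)\cong k(\pp)(E/N)$ and, since $\iota_\pp^\ast$ is symmetric monoidal, a choice of them for each $N$ extends multiplicatively to compatible isomorphisms $\iota_\pp^\ast(\mathbb{1}(q))\simeq\mathbb{1}(q)$ for all twists $q\in\N^{\mathcal{N}}$. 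Fixing such choices once and for all, $\iota_\pp^\ast$ sends a morphism $f\colon \mathbb{1}\to\mathbb{1}(q)[s]$ in $\K(E,R)$ to a morphism $\mathbb{1}\to\mathbb{1}(q)[s]$ in $\K(E,k(\pp))$ with the same shift $s$ and the same twist $q$. Hence the Hom-level assignment is a morphism of $(\mathbb{Z}\times\N^{\mathcal{N}})$-graded rings $H^{\bullet,\bullet}(E,R)\to H^{\bullet,\bullet}(E,k(\pp))$ whose associated map on homogeneous spectra is precisely the bottom arrow $\Spec(\iota_\pp^\ast)$ of the square.

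It then remains to verify $\mathrm{Comp}_{E,R}\circ\Spc(\iota_\pp^\ast)=\Spec(\iota_\pp^\ast)\circ\mathrm{Comp}_{E,k(\pp)}$, which I would check directly on points. Recall that for a prime $\mathcal{P}$ the homogeneous prime ideal $\mathrm{Comp}(\mathcal{P})$ is precisely the set of homogeneous $f$ with $\mathrm{cone}(f)\notin\mathcal{P}$ (\cite[Theorem 5.3]{Bal10}, \cite[Proposition 13.4]{BG23b}). Fix $\mathcal{P}\in\Spc(\K(E,k(\pp)))$, so that $\Spc(\iota_\pp^\ast)(\mathcal{P})=(\iota_\pp^\ast)^{-1}(\mathcal{P})$ is again a prime. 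For a homogeneous $f\colon \mathbb{1}\to\mathbb{1}(q)[s]$ in $H^{\bullet,\bullet}(E,R)$ one has $\mathrm{cone}(f)\notin(\iota_\pp^\ast)^{-1}(\mathcal{P})$ if and only if $\iota_\pp^\ast(\mathrm{cone}(f))\notin\mathcal{P}$; since $\iota_\pp^\ast$ is exact and monoidal, $\iota_\pp^\ast(\mathrm{cone}(f))\simeq\mathrm{cone}(\iota_\pp^\ast(f))$, so this is equivalent to $\mathrm{cone}(\iota_\pp^\ast(f))\notin\mathcal{P}$, i.e.\ to $\iota_\pp^\ast(f)\in\mathrm{Comp}_{E,k(\pp)}(\mathcal{P})$. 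Therefore $\mathrm{Comp}_{E,R}\bigl((\iota_\pp^\ast)^{-1}(\mathcal{P})\bigr)=(\iota_\pp^\ast)^{-1}\bigl(\mathrm{Comp}_{E,k(\pp)}(\mathcal{P})\bigr)=\Spec(\iota_\pp^\ast)\bigl(\mathrm{Comp}_{E,k(\pp)}(\mathcal{P})\bigr)$, which is the desired equality; here the outer $(\iota_\pp^\ast)^{-1}$ in the first expression is preimage of thick ideals under the functor, and in the second it is preimage under the induced graded ring homomorphism. Continuity of the four maps is standard (continuity of $\mathrm{Comp}$ is part of Lemma \ref{lemma:comparison map}, and $\Spc$, $\Spec^h$ take tt- and ring maps to continuous maps), so the square commutes on the nose. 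I do not expect a genuine obstacle here: the statement is formal once one has the compatibility $\iota_\pp^\ast(u_{N,R})\simeq u_{N,k(\pp)}$, and the only point deserving any care is to fix those isomorphisms coherently so that the induced map on twisted cohomology is strictly multigraded.
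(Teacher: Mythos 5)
Your proof is correct and takes essentially the same approach as the paper, which simply invokes naturality of the comparison map with respect to tt-functors (as in Balmer's Theorem 5.3). You have made explicit the one non-formal point the paper leaves implicit: that $\iota_\pp^\ast$ respects the twist data via the isomorphisms $\iota_\pp^\ast(u_{N,R})\simeq u_{N,k(\pp)}$ from Lemma~\ref{u_p is invertible}, so that $\iota_\pp^\ast$ induces a strictly multigraded ring homomorphism on twisted cohomology; the pointwise verification via cones is then the standard naturality argument.
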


\begin{lemma}\label{comaprison is injective}
    Let $E$ be an elementary abelian $p$-group. Then the comparison map 
    \[
    \mathrm{Comp}_E\colon \mathrm{Spc}(\K(E,R))  \to \mathrm{Spec}^h(H^{\bullet,\bullet}(E,R))
    \]
    is injective. 
\end{lemma}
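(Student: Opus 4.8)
The plan is to reduce the injectivity of $\mathrm{Comp}_E$ to the known field case via the jointly-conservative family of base-change functors $\{\iota_\pp^\ast\}_{\pp\in\Spec(R)}$. Concretely, I would first invoke Theorem \ref{primes in spc up to conj} together with Proposition \ref{Spc as a set}: every prime of $\K(E,R)$ is of the form $\P(H,\aa,\pp)$ for a unique $\pp\in\Spec(R)$, and the fiber over $\pp$ is precisely the image of $\Spc(\iota_\pp^\ast)$, which is a bijection onto $\{$primes of $\K(E,R)$ lying over $\pp\}$. By Proposition \ref{base change and comparison map}, the comparison maps fit into a commutative square, and by the naturality of the comparison map between the Balmer spectrum and $\Spec$ of the graded endomorphism ring (see Corollary \ref{comparison for check psi to residue fields}), $\mathrm{Comp}_E(\P)$ lies over the point $\pp\in\Spec(R)$ determined by $\P$ (this uses that the degree-$(0,0)$ part of $H^{\bullet,\bullet}(E,R)$ is $R$). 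Hence it suffices to prove injectivity of $\mathrm{Comp}_E$ on each fiber over a fixed $\pp$.

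Fix $\pp\in\Spec(R)$ and let $k=k(\pp)$. If $\mathrm{char}(k)$ is coprime to $p$, then $\Spc(\K(E,k))$ is a single point by Proposition \ref{Spc is a point in ordinary char}, so there is nothing to prove on that fiber. So assume $\mathrm{char}(k)=p$. Using the commutative square of Proposition \ref{base change and comparison map} and the fact that $\Spc(\iota_\pp^\ast)$ restricts to a bijection from $\Spc(\K(E,k))$ onto the fiber, injectivity of $\mathrm{Comp}_E$ on that fiber will follow once I show: (a) $\mathrm{Comp}_{E,k}\colon \Spc(\K(E,k))\to\Spec^h(H^{\bullet,\bullet}(E,k))$ is injective, and (b) $\Spec(\iota_\pp^\ast)\colon \Spec^h(H^{\bullet,\bullet}(E,k))\to\Spec^h(H^{\bullet,\bullet}(E,R))$ is injective when restricted to the image of $\mathrm{Comp}_{E,k}$. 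For (a), I would cite Balmer--Gallauer: the comparison map is injective in the field case by \cite[Theorem 13.5 / Corollary 13.7]{BG23b} (indeed it is an open immersion there). For (b), the relevant input is Corollary \ref{coro: injectivity by base change}, which says $\Theta_\pp\colon H^{\bullet,\bullet}(E,R)\otimes_R k\to H^{\bullet,\bullet}(E,k)$ is power-surjective and hence induces an injection $\Spec^h(H^{\bullet,\bullet}(E,k))\hookrightarrow\Spec^h(H^{\bullet,\bullet}(E,R)\otimes_R k)$; composing with the injection $\Spec^h(H^{\bullet,\bullet}(E,R)\otimes_R k)\hookrightarrow\Spec^h(H^{\bullet,\bullet}(E,R))$ (a closed immersion along the surjection $H^{\bullet,\bullet}(E,R)\twoheadrightarrow H^{\bullet,\bullet}(E,R)\otimes_R k$, fibred over $\pp\in\Spec R$) gives the claim.

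Putting it together: suppose $\mathrm{Comp}_E(\P)=\mathrm{Comp}_E(\mathcal{Q})$. Reading off the underlying point of $\Spec(R)$ shows $\P$ and $\mathcal{Q}$ lie in the same fiber over some $\pp$; write $\P=\Spc(\iota_\pp^\ast)(\bar\P)$, $\mathcal{Q}=\Spc(\iota_\pp^\ast)(\bar{\mathcal{Q}})$ with $\bar\P,\bar{\mathcal{Q}}\in\Spc(\K(E,k(\pp)))$. Commutativity of the square in Proposition \ref{base change and comparison map} gives $\Spec(\iota_\pp^\ast)(\mathrm{Comp}_{E,k(\pp)}(\bar\P))=\Spec(\iota_\pp^\ast)(\mathrm{Comp}_{E,k(\pp)}(\bar{\mathcal{Q}}))$; injectivity of $\Spec(\iota_\pp^\ast)$ on the image of $\mathrm{Comp}_{E,k(\pp)}$ forces $\mathrm{Comp}_{E,k(\pp)}(\bar\P)=\mathrm{Comp}_{E,k(\pp)}(\bar{\mathcal{Q}})$, and then injectivity in the field case gives $\bar\P=\bar{\mathcal{Q}}$, whence $\P=\mathcal{Q}$.

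The main obstacle I anticipate is bookkeeping around step (b): one must be careful that $\Spec(\iota_\pp^\ast)$ as it appears in Proposition \ref{base change and comparison map} is genuinely the composite of the base-change-mod-$\pp$ map with $\Theta_\pp$'s dual, and that the image of $\mathrm{Comp}_{E,k(\pp)}$ lands where Corollary \ref{coro: injectivity by base change} applies — i.e.\ that no extra primes of $H^{\bullet,\bullet}(E,R)$ outside the fiber over $\pp$ can be hit. This is exactly handled by the naturality of the Balmer-to-Zariski comparison map with respect to the structure map to $\Spec(\mathbb{Z})$ (equivalently, tracking the $R$-algebra structure through $\Theta_\pp$), so it is routine but needs to be spelled out. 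Everything else is a formal consequence of results already established in the excerpt and in \cite{BG23b}.
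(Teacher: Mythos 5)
Your proposal is correct and follows essentially the same route as the paper: both reduce to the field case via the commutative square from Proposition \ref{base change and comparison map}, invoke the bijectivity of the base-change map on Balmer spectra (Proposition \ref{Spc as a set}), cite Balmer--Gallauer's injectivity of $\mathrm{Comp}$ over a field of characteristic $p$ (the paper uses \cite[Proposition 15.1]{BG23b}), use that $\Spc(\K(E,k(\pp)))$ is a point when $\mathrm{char}\,k(\pp)\neq p$, and use Corollary \ref{coro: injectivity by base change} to get injectivity on the $\Spec^h$ side. Your fiber-by-fiber phrasing is a slight reorganization of the paper's four-corner-square argument, but the ingredients and their roles are identical.
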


\begin{proof}
    By Proposition \ref{base change and comparison map}, we obtain  the following commutative square
    
\begin{center}
    \begin{tikzcd}
       \displaystyle \coprod_{\pp\in\mathrm{Spec}(R)} \mathrm{Spc}(\K(E,k(\pp))) \arrow[r] \arrow[d] & \mathrm{Spc}(\K(E,R)) \arrow[d] \\ 
\displaystyle \coprod_{\pp\in\mathrm{Spec}(R)} \mathrm{Spec}^h(H^{\bullet,\bullet}(E,k(\pp))) \arrow[r] & \mathrm{Spec}^h(H^{\bullet, \bullet }(E,R)).
    \end{tikzcd}
\end{center}

\noindent We claim that the left-vertical map is injective. Indeed, if the characteristic of $k(\pp)$ is $p$, then the corresponding component is injective by  \cite[Proposition 15.1]{BG23b}; if the characteristic of $k(\pp)$ is not $p$, then $\mathrm{Spc}(\K(E,k(\pp))$ is simply a point, and the respective component map is trivially injective. Now, note that bottom map is injective  since the components are inclusions into the fibers induced by the map $\mathrm{Spec}^h(H^{\bullet,\bullet}(E,R))\to \mathrm{Spec}^h(R)$ (see Corollary \ref{coro: injectivity by base change}).  It follows that the top map is injective as well. Moreover, by Proposition \ref{Spc as a set}, the top map is a bijection. We deduce that  the comparison map is injective by the commutativity of the square.    
\end{proof}

\begin{remark}
   One should not expect the previous result to hold for more general $p$-groups, not even when $R$ is a field, as discussed in \cite{BG23b}. The point is that twisted cohomology \textit{only sees} the Frattini subgroup of $G$. For instance, if $G$ is a non-cyclic simple $p$-group, twisted cohomology is not very interesting. 
\end{remark}

%%%%%%%%%%%%%%%%%%%%%%%%%%%%%%%%%%%%%%%%%%%
\section{A Dirac scheme structure}\label{sec:dirac}
%%%%%%%%%%%%%%%%%%%%%%%%%%%%%%%%%%%%%%%%%%%

Let $E$ be an elementary abelian $p$-group. In this section, we define a special cover of the Balmer spectrum $\mathrm{Spc}(\K(E,R))$, along with a sheaf of graded rings, which together exhibit $\mathrm{Spc}(\K(E,R))$ as a Dirac scheme. This generalizes the result of Balmer and Gallauer in the case where the base is a field.  We need some preparations.

\begin{definition}\label{def:open(f)}
  Let $G$ be a $p$-group. Let $f$ be an  homogeneous map in the twisted cohomology ring $H^{\bullet,\bullet}(G,R)$. Following \cite[Notation 13.7]{BG23b}, we let $\mathrm{open}(f)\coloneqq\mathrm{open}(\mathrm{cone}(f))$ denote the open subset of $\mathrm{Spc}(\K(G,R))$ defined as the preimage under $\mathrm{Comp}_G$ of the principal open $Z(f)^c=\{\pp\mid f\not\in \pp\}$. That is, $\mathrm{open}(f)$ is given by $\{\mathcal{P}\mid \mathrm{cone}(f)\in \mathcal{P}\}$. In other words, the subset $\mathrm{open}(f)$ is the open locus of $\mathrm{Spc}(\K(G,R))$ where $f$ is invertible. 
\end{definition}

\begin{proposition}\label{cone of an kills cone of bn}
    Let $N$ be subgroup of $G$ of index $p$. Then 
    \[
    \mathrm{cone}(a_{N,R})\otimes \mathrm{cone}(b_{N,R})
    \]
    is trivial. In particular, 
    \[
    \mathrm{open}(a_{N,R} ) \cup \mathrm{open}( b_{N,R})=\mathrm{Spc}(\K(G,R)).
    \]
\end{proposition}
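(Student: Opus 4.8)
The plan is to reduce the whole statement to the single identity $\mathrm{cone}(a_{N,R})\otimes\mathrm{cone}(b_{N,R})=0$ in $\K(G,R)$. Once that is known, the covering statement is purely formal: by Definition \ref{def:open(f)} we have $\mathrm{open}(f)=\mathrm{Spc}(\K(G,R))\setminus\mathrm{supp}(\mathrm{cone}(f))$ for any homogeneous $f$, so $\mathrm{open}(a_{N,R})\cup\mathrm{open}(b_{N,R})$ is the complement of $\mathrm{supp}(\mathrm{cone}(a_{N,R}))\cap\mathrm{supp}(\mathrm{cone}(b_{N,R}))$; since $\mathrm{supp}(X\otimes Y)=\mathrm{supp}(X)\cap\mathrm{supp}(Y)$ in any tt-category, this intersection equals $\mathrm{supp}(\mathrm{cone}(a_{N,R})\otimes\mathrm{cone}(b_{N,R}))=\mathrm{supp}(0)=\varnothing$, which gives the claimed equality.

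For the vanishing I would first identify $\mathrm{cone}(a_{N,R})$ up to homotopy. Inspecting the four cases $(C1)$--$(C4)$, the map $a_{N,R}\colon\mathbb{1}\to u_{N,R}$ is in every case the identity of $R$ in degree $0$ and zero in all other degrees; equivalently, it is the degreewise split inclusion of $\mathbb{1}$ as the degree-$0$ subcomplex of $u_{N,R}$. Hence $\mathrm{cone}(a_{N,R})$ is homotopy equivalent to the quotient complex $u_{N,R}/\mathbb{1}$. Since every positive-degree term of $u_{N,R}$ is a sum of copies of $R(G/N)$ while the degree-$0$ term is killed in the quotient, $u_{N,R}/\mathbb{1}$ is a bounded complex of sums of $R(G/N)$; in particular $\mathrm{cone}(a_{N,R})$ lies in the thick subcategory of $\K(G,R)$ generated by $R(G/N)$.

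Next I would compute $R(G/N)\otimes\mathrm{cone}(b_{N,R})$. Writing $R(G/N)=\mathrm{Ind}_N^G(\mathbb{1})$ and using the projection formula together with exactness of $\mathrm{Res}^G_N$, this object is $\mathrm{Ind}_N^G\bigl(\mathrm{cone}(\mathrm{Res}^G_N b_{N,R})\bigr)$. By Remark \ref{invertibles under restriction} we have $\mathrm{Res}^G_N(b_{N,R})=\mathrm{id}_{\mathbb{1}}$ (as $N\leq N$), so this cone vanishes, and therefore $R(G/N)\otimes\mathrm{cone}(b_{N,R})=0$. Consequently the exact functor $-\otimes\mathrm{cone}(b_{N,R})$ annihilates $R(G/N)$, hence annihilates the entire thick subcategory it generates, and in particular $\mathrm{cone}(a_{N,R})\otimes\mathrm{cone}(b_{N,R})=0$, completing the proof.

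The only point requiring care is the case distinction in the second step: one must check uniformly across $(C1)$--$(C4)$ that $a_{N,R}$ is a split monomorphism onto the degree-$0$ term of $u_{N,R}$ and that the remaining terms are of the form $R(G/N)$. An alternative route avoiding this inspection is to base change along all residue fields of $R$: the functor $\iota_\pp^\ast$ carries $\mathrm{cone}(a_{N,R})\otimes\mathrm{cone}(b_{N,R})$ to $\mathrm{cone}(a_{N,k(\pp)})\otimes\mathrm{cone}(b_{N,k(\pp)})$ (up to replacing $b$ by $b^{\otimes 2}$ in case $(C2)$, which does not change the generated $\otimes$-ideal), so one could invoke the field case of Balmer--Gallauer together with joint conservativity of $\{\iota_\pp^\ast\}_{\pp}$; but the argument above seems cleaner and self-contained.
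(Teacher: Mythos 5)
Your proof is correct, and it takes a genuinely different route from the paper's. The paper reduces the vanishing of $\mathrm{cone}(a_{N,R})\otimes\mathrm{cone}(b_{N,R})$ to the case of residue fields: it observes that base change sends $a_{N,R}$, $b_{N,R}$ to their $k(\pp)$-analogues (up to a $\otimes$-square in case $(C2)$), invokes the field-case identity from Balmer--Gallauer for residue fields of characteristic $p$ and a quasi-isomorphism argument in coprime characteristic, and then concludes by joint conservativity of the functors $\{\iota_\pp^\ast\}_\pp$ from \cite[Cor.~4.8]{Gom25}. Your argument is internal and does not pass through fields at all: you identify $\mathrm{cone}(a_{N,R})$ with the quotient complex $u_{N,R}/\mathbb{1}$ (legitimate, since $a_{N,R}$ is a degreewise split monomorphism in every case $(C1)$--$(C4)$), note that this complex is built from copies of $R(G/N)$, and then use the projection formula $R(G/N)\otimes X\cong\mathrm{Ind}_N^G(\mathrm{Res}^G_N X)$ together with $\mathrm{Res}^G_N(b_{N,R})=\mathrm{id}_{\mathbb{1}}$ from Remark~\ref{invertibles under restriction} to kill $R(G/N)\otimes\mathrm{cone}(b_{N,R})$ on the nose. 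The formal deduction of the covering statement from the vanishing is the same in both proofs. Your route is more elementary and self-contained: it avoids base change, joint conservativity, and the citation of the field case, at the cost of a short verification across the four case definitions that $a_{N,R}$ is the degree-$0$ inclusion and the higher terms of $u_{N,R}$ are copies of $R(G/N)$ --- a verification which is uniform and immediate. In fact your argument also quietly sidesteps a small imprecision in the paper's proof, which states $\mathrm{cone}(b_{N,R}\otimes_R k(\pp))\cong\mathrm{cone}(b_{N,k(\pp)})$ even though in case $(C2)$ one gets $\mathrm{cone}(b_{N,k(\pp)}^{\otimes 2})$ instead (harmless, since the two generate the same $\otimes$-ideal, as you note).
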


\begin{proof}
     For a prime $\pp$ in $R$, consider the residue field $R\to k(\pp)$. Note that 
     \[
     \mathrm{cone}(a_{N,R}\otimes_R k(\pp))\cong \mathrm{cone}(a_{N,k(\pp)}) \, \textrm{ and } \, \mathrm{cone}(b_{N,R}\otimes_R k(\pp))\cong \mathrm{cone}(b_{N,k(\pp)}) 
     \]
     Moreover, note that $\mathrm{cone}(a_{N,k(\pp)})\otimes \mathrm{cone}(b_{N,k(\pp)})=0$. Indeed, if the characteristic of $k(\pp)$ is $p$, this follows by \cite[Corollary 13.3]{BG23b}; if the characteristic of $k(\pp)$ is coprime to $p$, this follows by the fact that $b_N$ is a quasi-isomorphism. Hence  the first claim follows by jointly conservativity of the functors from \cite[Corollary 4.8]{Gom25}. For the second claim, note that $\mathrm{open}(a_{N,R} ) \cup \mathrm{open}(b_{N,R})=\mathrm{open}(\mathrm{cone}(a_{N,R})\otimes \mathrm{cone}(b_{N,R}))=\mathrm{open}(0)=\mathrm{Spc}(\K(G,R)).$
\end{proof}

For the rest of this section, we focus on an elementary abelian $p$-group $E$. Recall that $\mathcal{N}(E)$ denotes the set of all subgroups of $E$ of index $p$.

\begin{definition}\label{def:u(H)}
Let $H$ be a subgroup of $E$. Then we define the open subset $U_E(H)$ of $\mathrm{Spc}(\K(E,R))$ given by  
\[
U_E(H)\coloneqq \left( \displaystyle \bigcap_{N\in\mathcal{N}, \, H\not\leq N} \mathrm{open}(a_{N,R}) \right) \cap \left(\bigcap_{N\in\mathcal{N}, \, H\leq N} \mathrm{open}(b_{N,R})\right).
\]
\end{definition}

\begin{definition}\label{def:LH}
    For $H\leq E$, let $S_H\subset H^{\bullet,\bullet}(G,R)$ be the multiplicative subset generated by the set 
    \[
    \{ a_{N,R}\mid N\in\mathcal{N}(E),\, H\not \leq N\}\cup \{ b_{N,R}\mid N\in\mathcal{N}(E),\, H \leq N\}.
    \]
    The tt-category $\mathcal{L}(H,R)$ is defined as the Verdier quotient $\K(G,R)/I$ where $I$ denotes the tt-ideal generated by the collection $\{\mathrm{cone}(f)\mid f\in S_H\}$.  Let $\mathcal{O}^\bullet_E(H)$ denote the $\mathbb{Z}$--graded ring $ \mathrm{End}^\ast_{\mathcal{L}(H,R)}(\mathbb{1})$. 
\end{definition}

\begin{remark}\label{rmk:spc of L(H)}
 Imitating  Construction 14.12 in \cite{BG23b}, we consider the open subset $U=U_E(H)$ which agrees with $\bigcap_{f\in S_H} \mathrm{open}(f)$. Just as the field case, one verifies  that the complement of $U$ agrees with $Z\coloneqq\bigcup_{f\in S_H} \mathrm{supp}(\mathrm{cone}(f))$. In particular, the tt-ideal $\K(E,R)_Z$ supported on $Z$ is the one generated by $\{\mathrm{cone}(f)\mid f\in S_H\}$. Therefore the tt-category $\K(G,R)_U=(\K(G,R)/\K(G,R)_Z)^\natural$ is the idempotent completion of $\mathcal{L}(H,R)$. It follows that $\mathrm{Spc}(\mathcal{L}(H,R))\cong \mathrm{Spc}(\K(G,R)_U)\cong U$, see \cite{BF11} for a proof of the last homeomorphism. 
\end{remark}

\begin{remark}\label{rem:morphism as fractions in LH}
    Note that the $\mathbb{Z}$--graded ring $\mathcal{O}^\bullet_E(H)$ can be identified with the twist-zero part of the localization of $H^{\bullet,\bullet}(G,R)$ on $S_H$, in symbols: 
\[
\mathcal{O}^\bullet_E(H)=(H^{\bullet,\bullet}(G,R)[S_H^{-1}])_{0\textrm{-twist}}
\]
 see \cite[Definition 14.9]{BG23b}. In particular, we can consider the elements of $\mathcal{O}^\bullet_E(H)$ as fractions $\frac{f}{g}$ with $f\colon \mathbb{1}\to \mathbb{1}(q)[s]$ and $g\colon \mathbb{1}\to \mathbb{1}(q)[s'] \in S_H$, for some $q\in \mathcal{N}(E)$ and $s,s'\in\mathbb{Z}$. On the other hand, the morphisms in $\mathcal{L}(E,R)$ can be described as fractions $\frac{f}{g}$ where $g\colon \mathbb{1}\to \mathbb{1}(q)[s]$ is a map in $S_H$ and $f\colon x\to y\otimes \mathbb{1}(q)[1]$ is any morphism in $\K(E,R)$, see \cite[Construction 14.12]{BG23b} and the references therein. 
\end{remark}

Recall that a tt-category $\T$ is called \textit{End-finite} if the graded endomorphism ring $\mathrm{End}^\ast_\T(\mathbb{1})$ is Noetherian and for each object $x$ in $\T$, the $\mathrm{End}^\ast_\T(\mathbb{1})$--module $\mathrm{End}^\ast(x)$ is finitely generated. 

\begin{proposition}\label{prop:L(H) is end-finite}
    Let $H$ be a subgroup of $E$. Then the tt-category $\mathcal{L}(H,R)$ is End-finite.
\end{proposition}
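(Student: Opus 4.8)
The plan is to verify the two defining conditions of End-finiteness for $\mathcal{L}(H,R)$ separately: that $\mathcal{O}^\bullet_E(H)=\End^\ast_{\mathcal{L}(H,R)}(\mathbb{1})$ is Noetherian, and that $\End^\ast_{\mathcal{L}(H,R)}(x)$ is a finitely generated $\mathcal{O}^\bullet_E(H)$-module for every $x$. For the first, recall from Remark~\ref{rem:morphism as fractions in LH} that $\mathcal{O}^\bullet_E(H)$ is the twist-zero part of the localization $H^{\bullet,\bullet}(E,R)[S_H^{-1}]$. By Corollary~\ref{twisted coh is noetherian} the ring $H^{\bullet,\bullet}(E,R)$ is generated over $R$ by the classes $a_{N,R},b_{N,R}$ (and $c_{N,R}$ in case $(C3)$), each of twist $1$ in its slot $N\in\mathcal{N}(E)$, and $S_H$ is generated by one such class $f_N\in\{a_{N,R},b_{N,R}\}$ in every slot. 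Since the twist grading takes values in the free abelian group $\mathbb{Z}^{\mathcal N}$, the twist-zero part is $R$-spanned by twist-zero monomials, and each such monomial is a product of the finitely many ratios $a_{N,R}/f_N,\ b_{N,R}/f_N,\ c_{N,R}/f_N$. Hence $\mathcal{O}^\bullet_E(H)$ is a finitely generated $R$-algebra, and therefore Noetherian by Hilbert's basis theorem.

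For the second condition, I would first prove that for every $z\in\K(E,R)$ the $(\mathbb{Z}\times\mathbb{N}^{\mathcal N})$-graded module $M(z):=\bigoplus_{q,s}\Hom_{\K(E,R)}(\mathbb{1},z\otimes\mathbb{1}(q)[s])$ is finitely generated over $H^{\bullet,\bullet}(E,R)$. By dévissage along distinguished triangles and retracts — legitimate because $H^{\bullet,\bullet}(E,R)$ is Noetherian — this reduces to the generators $z=R(E/K)$, $K\le E$. For these, the projection formula together with Frobenius reciprocity for the separable algebra $A_K$ (Recollection~\ref{image of restriction}) identifies $M(R(E/K))$ with the twisted cohomology ring $H^{\bullet,\bullet}_{\mathrm{Res}^E_K\mathbf{U}}(K,R)$, viewed as an $H^{\bullet,\bullet}(E,R)$-module through $\mathrm{Res}^E_K$. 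By Remark~\ref{invertibles under restriction}, $\mathrm{Res}^E_K$ carries $a_N,b_N,c_N$ to $a_{K\cap N},b_{K\cap N},c_{K\cap N}$ when $K\not\le N$ and (a power of) the polynomial variable $\beta_N$ when $K\le N$; since $E$ is elementary abelian, every index-$p$ subgroup of $K$ is of the form $K\cap N$, so $\mathrm{Res}^E_K$ is surjective onto $H^{\bullet,\bullet}(K,R)$ and hits a power of each remaining $\beta_N$. Thus $H^{\bullet,\bullet}_{\mathrm{Res}^E_K\mathbf{U}}(K,R)$ is module-finite over $H^{\bullet,\bullet}(E,R)$. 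For arbitrary $x,y$, rigidity of $\K(E,R)$ gives $M_{x,y}:=\bigoplus_{q,s}\Hom_{\K(E,R)}(x,y\otimes\mathbb{1}(q)[s])\cong M(x^\vee\otimes y)$, again finitely generated.

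It then remains to descend to the Verdier quotient $\mathcal{L}(H,R)$. By the calculus-of-fractions description (Remark~\ref{rem:morphism as fractions in LH}, cf. \cite[Construction 14.12]{BG23b}), $\Hom^\ast_{\mathcal{L}(H,R)}(x,y)$ is the twist-zero part of $M_{x,y}[S_H^{-1}]$, which is finitely generated over $H^{\bullet,\bullet}(E,R)[S_H^{-1}]$ since localization preserves finite generation. Now each $f_N$ is a homogeneous unit of twist $e_N$ in $H^{\bullet,\bullet}(E,R)[S_H^{-1}]$, so this ring is strongly $\mathbb{Z}^{\mathcal N}$-graded over $\mathcal{O}^\bullet_E(H)$; consequently, if $M_{x,y}[S_H^{-1}]$ is generated by finitely many twist-homogeneous elements $m_1,\dots,m_r$ of twists $\gamma_1,\dots,\gamma_r$, then its twist-zero part is generated over $\mathcal{O}^\bullet_E(H)$ by $f^{-\gamma_1}m_1,\dots,f^{-\gamma_r}m_r$, where $f^{-\gamma_i}$ is the corresponding monomial in the $f_N^{\pm1}$. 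Applying this to $M_{x,x}[S_H^{-1}]$ shows $\End^\ast_{\mathcal{L}(H,R)}(x)$ is finitely generated over $\mathcal{O}^\bullet_E(H)$, which together with the first paragraph finishes the proof.

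I expect the main obstacle to be the two "hidden" finiteness inputs in the middle paragraph: the module-finiteness of $\mathrm{Res}^E_K$ on twisted cohomology (which genuinely uses that $E$ is elementary abelian — it fails for general $p$-groups, where twisted cohomology only records the Frattini quotient, and this is precisely why the proposition is stated only for elementary abelian $E$) and the bookkeeping needed to identify $\Hom^\ast_{\mathcal{L}(H,R)}(x,y)$ with the twist-zero part of a localized $H^{\bullet,\bullet}(E,R)$-module. The passage to twist-zero parts is then painless only because, after inverting $S_H$, the twist grading becomes strong.
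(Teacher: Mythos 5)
Your argument is correct and is essentially a uniform, more explicit version of the paper's. The paper splits into cases: for $(C1)$--$(C3)$ it invokes, as in \cite[Theorem 15.3]{BG23b}, that $\mathcal{L}(H,R)$ is generated as a thick subcategory by $\mathbb{1}$, whence End-finiteness is automatic once $\mathcal{O}^\bullet_E(H)$ is known to be Noetherian; only in case $(C4)$ --- where the paper does not know whether $\mathbb{1}$ generates --- does it carry out the computation you propose, reducing via rigidity of the generators $R(E/K)$ to showing $\Hom^\ast_{\mathcal{L}(H,R)}(R(E/K),\mathbb{1})$ is finite over $\mathcal{O}^\bullet_E(H)$, then expressing these maps through the restriction--induction adjunction, Remark~\ref{invertibles under restriction}, and Lemma~\ref{aN, bN and CN are generators} applied over $K$. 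You run that second argument in all four cases and package the last step as a graded-ring statement (strong grading plus Dade-type descent to the degree-zero part). What you buy is uniformity and the avoidance of the implicit claim that generation by $\mathbb{1}$ transfers from the field case to general $R$; what the paper buys is brevity in three of the four cases. On Noetherianity of $\mathcal{O}^\bullet_E(H)$, your version is in fact the more careful one: the paper calls it ``a localization'' of $H^{\bullet,\bullet}(E,R)$, when strictly it is the twist-zero part of a homogeneous localization, which is what you actually verify.

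One bookkeeping correction you should make: in case $(C2)$ (so $p=2$ and $p\neq 0$ in $R$) the class $b_{N,R}\colon\mathbb{1}\to u_N^{\otimes 2}[-2]$ has twist $2e_N$, not $e_N$. Hence for those $N$ with $H\leq N$ (so $f_N=b_N\in S_H$) the localization $H^{\bullet,\bullet}(E,R)[S_H^{-1}]$ is \emph{not} strongly $\mathbb{Z}^{\mathcal N}$-graded; the degrees of the inverted classes only span the finite-index sublattice $\Gamma'=\langle e_N\mid H\not\leq N\rangle+\langle 2e_N\mid H\leq N\rangle$. So the ring decomposes into finitely many $\Gamma'$-coset pieces, the $\Gamma'$-piece is the one strongly graded over $\mathcal{O}^\bullet_E(H)$, and your Dade-type descent (as well as the finite generation of $\mathcal{O}^\bullet_E(H)$ over $R$, where the twist-zero monomials are controlled by Gordan's lemma rather than by the simple ratios $a_N/f_N$, $b_N/f_N$) must be routed through that finite coset decomposition. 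This is a fixable imprecision rather than a gap, but since your argument explicitly invokes a strong $\mathbb{Z}^{\mathcal N}$-grading it should be corrected.
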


\begin{proof}
First, recall that $\mathcal{O}^\bullet_{E}(H)$ is a localization of the the twisted cohomology ring $H^{\bullet,\bullet}(E,R)$, and the latter is Noetherian by Corollary \ref{twisted coh is noetherian}. We conclude that the former is Noetherian as well. For the cases $(C1)$, $(C2)$ and $(C3)$ one can argue as in the proof of \cite[Theorem 15.3]{BG23b}, that is, one has that $\mathcal{L}(H,R)$ is generated,  as a thick subcategory, by the monoidal unit, and hence the result follows. 

For case $(C4)$, we do not know if the category  $\mathcal{L}(H,R)$ is generated by the monoidal unit, but it is generated by the permutation modules $R(E/K)$, for  $K\leq E$. Hence, we only need to prove that $\mathrm{End}^\ast_{\mathcal{L}(H,R)}(R(E/K))$ is a finitely generated $\mathcal{O}^\bullet_E(H)$--module, for each $R(E/K)$. Moreover, since $R(E/K)$ is dualizable in $\mathcal{L}(E,R)$, and $R(E/K)\otimes R(E/K)^\ast $ is a sum of permutation modules, it is enough to show that $\mathrm{Hom}^\ast_{\mathcal{L}(H,E)}(R(E/K), \mathbb{1})$ is a finitely generated $\mathcal{O}^\bullet_E(H)$--module. 
Fix $K\leq E$. For each $N\in \mathcal{N}(E)$ consider the morphisms $\Bar{a}_N, \, \Bar{b}_N$ and $\Bar{c}_N$ in $\K(E,R)$ obtained from the morphisms $\mathrm{Res}^E_K (a_N), \, \mathrm{Res}^E_K(b_N)$ and $\mathrm{Res}^E_K(c_N)$ in $\K(K,R)$, respectively, via the adjunction restriction-induction. By Remark \ref{invertibles under restriction}, we have the following identifications 

\[
\mathrm{Res}^E_K(a_N)= \left\{
        \begin{array}{ll}
           
            0 & \textrm{ if } K\leq N\\
            a_{K\cap N} & \textrm{ if } K\not\leq N
        \end{array}
    \right.
\] 
and 
\[
\mathrm{Res}^E_K(b_N)= \left\{
        \begin{array}{ll}
           
            \mathrm{id}_\mathbb{1} & \textrm{ if } K\leq N\\
            b_{K\cap N} & \textrm{ if } K\not\leq N.
        \end{array}
        \right.
\]
\noindent We also have an analogous behavior for $c_N$. We claim that any map $R(E/K)\to \mathbb{1}(q)[s]$ in $\K(E,R)$, for $q\in \mathbb{N}^{\mathcal{N}(E)}$ and $s\in \mathbb{Z}$, is generated as an $R$-linear combination of the maps  $\Bar{a}_N, \, \Bar{b}_N$ and $\Bar{c}_N$, for $N\in \mathcal{N}(E)$, and multiplications by homogeneous maps $g$ in $H^{\bullet,\bullet}(E,R)$, where multiplication of $g\colon \mathbb{1}\to \mathbb{1}(q')[s']$ on a morphism $f\colon R(E/K)\to \mathbb{1}(q)[s] $ is given by the composition 
\[
g\cdot f= R(E/K)\cong R \otimes R (E/K)\xrightarrow{g\otimes f} \mathbb{1}(q')[s']\otimes \mathbb{1}(q)[s]\cong \mathbb{1}(q'q)[s+s'].
\]

\noindent First, one verifies that the multiplication of $g\colon \mathbb{1}\to \mathbb{1}(q')[s']$ on  $\Bar{a}_N$ corresponds to the map $\mathrm{Res}^E_K(g) \cdot a_N $ via the adjunction restriction-induction. For instance, use the explicit description of the unit and counit of this adjunction. The analogous result holds for the multiplication of $g$ on $\Bar{b}_N$ and $\Bar{c}_N$. We are ready to prove the claim. Let $\Bar{f}\colon R(E/K)\to \mathbb{1}(q)[s]$ as above, and consider the corresponding map under the adjunction, say 
\[
f\colon \mathbb{1}\to \mathrm{Res}^E_K(\mathbb{1}(q))[s].
\]
By Lemma \ref{aN, bN and CN are generators} we know that $f$ is a $R$--linear combination of the maps $a_{N'},\, b_{N'}$  and $c_{N'}$ for $N'\in \mathcal{N}(K)$. But any $N'\leq K$ of index $p$ has the form $N\cap K$ for some subgroup $N\in \mathcal{N}(E)$, so we can write $f$ as 
\[
\displaystyle \sum_{i,j,k}x_{i,j,j}\mathrm{Res}^E_K (a_{N_i})^{\otimes\alpha_{i,j,k}} \otimes \mathrm{Res}^E_K(b_{N_j})^{\otimes \beta_{i,j,k}}\otimes \mathrm{Res}^E_K (c_{N_k})^{\otimes \delta_{i,j,k}}
\] 
for $x_{i,j,k} \in R$, $N_?\in \mathcal{N}(E)$ and $\alpha_?,\, \beta_?$ and $\delta_?$ natural numbers. Recall that restriction is monoidal, so we can rewrite $f$ in the following fashion 
 \[
 \displaystyle \sum_{i,j,k}x_{i,j,j}\mathrm{Res}^E_K (a_{N_i}^{\otimes\alpha_{i,j,k}} \otimes (b_{N_j}^{\otimes \beta_{i,j,k}}\otimes c_{N_k}^{\otimes \delta_{i,j,k}})\otimes \varphi_{N'}
 \]
  where $\varphi_{N'}$ is one of the distinguished maps $a_{N'}$, $b_{N'}$ or $b_{N'}$ for some $N'\in \mathcal{N}(K)$. By the previous observation, we obtain that $\Bar{f}$ can be written as  
  \[
  \displaystyle \sum_{i,j,k}x_{i,j,j}(a_{N_i}^{\otimes\alpha_{i,j,k}} \otimes b_{N_j}^{\otimes \beta_{i,j,k}}\otimes c_{N_k}^{\otimes \delta_{i,j,k}})\cdot \Bar{\varphi}_{N'}
  \]
  where $\Bar{\varphi}_{N'}$ is the  map in $\K(E,R)$ corresponding to $\varphi_{N'}$ via the adjunction. This completes the claim. 

 On the other hand, a morphism  in $\mathcal{L}(E,R)$ corresponds to a fraction $\frac{f}{g}$ where $g\colon \mathbb{1}\to \mathbb{1}(q)[s]$ is a map in $S_H$ and $f\colon x\to y\otimes \mathbb{1}(q)[1]$ is a in $\K(E,R)$ (see Remark \ref{rem:morphism as fractions in LH}). Moreover, the action of a fraction $\frac{r}{s}$ in $\mathcal{O}_E^\bullet(G)$ on a morphism $\frac{f}{g}$ corresponds to $\frac{r\cdot f}{s\otimes g}$. In particular, the previous claim and the fact that any morphism $g$ in $S_H$ can be described as a finite product of the $a_{N,R}$ with $ N\in\mathcal{N}(E)$ such that $ H\not \leq N\}$ and the maps $ b_{N,R}$ with $ N\in\mathcal{N}(E)$ such that $ H \leq N$, we deduce that 
 $\mathrm{Hom}^\ast_{\mathcal{L}(H,E)}(R(E/K), \mathbb{1})$ is finitely generated as $\mathcal{O}^\bullet_E(H)$--module.
\end{proof}

\begin{remark}\label{rem:comparison of LH and KE}
Let $H$ be a subgroup of $E$.  Consider the comparison map 
\[
\rho_{\mathcal{L}(H,R)}\colon \mathrm{Spc}(\mathcal{L}(H,R))\to \mathrm{Spec}^h(\mathcal{O}^\bullet_H(H))
\]
constructed in  \cite{Bal10} corresponding to the tensor invertible complex $\mathbb{1}[1]$. We obtain  the following 
commutative square.  

\begin{center}
   \begin{tikzcd}
    \mathrm{Spc}(\mathcal{L}(H,R)) \arrow[d,"\rho_{\mathcal{L}(H,R)}"'] \arrow[r] & \mathrm{Spc}(\K(E,R)) \arrow[d,"\mathrm{Comp}_G"] \\
    \Spec^h(\mathcal{O}^\bullet_H(H)) \arrow[r] &  \Spec^h(H^{\bullet,\bullet}(E,R)) 
\end{tikzcd} 
\end{center}
 \noindent where the horizontal maps are induced by the respective localizations. This square is commutative by the definition of the comparison maps. 
\end{remark}

\begin{theorem}\label{thm:comp LH is an homeomorphism}
    Let $H$ be a subgroup of $E$. Then the comparison map 
    \[
    \rho_{\mathcal{L}(H,R)}\colon \mathrm{Spc}(\mathcal{L}(H,R))\to \Spec^h(\mathcal{O}^\bullet_H(H))
    \]
    of Remark \ref{rem:comparison of LH and KE} is a homeomorphism. In particular, $\Spec^h(\mathcal{O}^\bullet_H(H))\cong U_E(H)$.
\end{theorem}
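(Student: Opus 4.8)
The plan is to deduce the theorem from the two inputs already assembled: the identification $\mathrm{Spc}(\mathcal{L}(H,R)) \cong U_E(H)$ from Remark \ref{rmk:spc of L(H)}, and the End-finiteness of $\mathcal{L}(H,R)$ from Proposition \ref{prop:L(H) is end-finite}. Since $\mathcal{L}(H,R)$ is End-finite, Lau's comparison theorem \cite[Corollary 2.8]{Lau23} tells us that $\rho_{\mathcal{L}(H,R)}$ is a homeomorphism \emph{as soon as we know it is bijective}; in fact End-finiteness already gives that it is a homeomorphism onto its image, or — more precisely in the form we want — that it suffices to check surjectivity, injectivity coming along for free in the End-finite setting via the general theory. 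So the real content to verify is that $\rho_{\mathcal{L}(H,R)}$ is a bijection.

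First I would reduce the bijectivity to a statement over residue fields, exactly as in the proofs of Lemma \ref{comaprison is injective} and Proposition \ref{Spc as a set}. Using Proposition \ref{base change and comparison map} and the fact that base change along all residue fields $R \to k(\pp)$ induces bijections both on $\mathrm{Spc}$ (Proposition \ref{Spc as a set}, applied to the localizations $\mathcal{L}(H,-)$, or rather to $\K(E,-)$ restricted to the open $U_E(H)$) and on the homogeneous spectra of the relevant graded rings (Corollary \ref{coro: injectivity by base change} together with the compatibility of $\mathcal{O}^\bullet$ with base change), we assemble a commutative square whose two vertical arrows are bijections. Then $\rho_{\mathcal{L}(H,R)}$ is a bijection provided each of its fiberwise components $\rho_{\mathcal{L}(H,k(\pp))}$ is. When $k(\pp)$ has characteristic coprime to $p$, both sides degenerate — $\mathrm{Spc}(\K(E,k(\pp)))$ is a point by Proposition \ref{Spc is a point in ordinary char}, and one checks the corresponding localization of twisted cohomology is a graded field (or zero), so the map is trivially a bijection; actually here one must be a little careful since $b_{N}$ is a quasi-isomorphism in this case, so $\mathcal{L}(H,k(\pp))$ either is the whole point or is zero depending on whether $H$ forces some $a_N$ with $H \not\le N$ to be invertible — but in all cases it is a single point or empty, matching $\Spec^h$ of the localized ring. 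When $k(\pp)$ has characteristic $p$, this is precisely \cite[Theorem 14.16]{BG23b} (the field-case statement that $\mathrm{Spc}(\mathcal{L}(H,k))\to \Spec^h(\mathcal{O}^\bullet_H(H))$ is a homeomorphism), so we simply invoke it.

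Having established bijectivity, I would then combine it with Proposition \ref{prop:L(H) is end-finite} and \cite[Corollary 2.8]{Lau23} to conclude $\rho_{\mathcal{L}(H,R)}$ is a homeomorphism. Finally, the last sentence ``$\Spec^h(\mathcal{O}^\bullet_H(H)) \cong U_E(H)$'' follows by composing this homeomorphism with the homeomorphism $\mathrm{Spc}(\mathcal{L}(H,R)) \cong U_E(H)$ from Remark \ref{rmk:spc of L(H)}, which is compatible with the localization maps in the square of Remark \ref{rem:comparison of LH and KE}.

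The main obstacle I anticipate is the bookkeeping in the fiberwise reduction: one needs the base-change bijection on $\mathrm{Spc}$ not for $\K(E,R)$ itself but for its Verdier quotient/open complement $\mathcal{L}(H,R)$, which requires knowing that the open subset $U_E(H)$ and its complement $Z$ are compatible with base change along $\iota_\pp^\ast$ — i.e. that $\iota_\pp^\ast$ sends $\mathrm{cone}(f)$ to $\mathrm{cone}(f_{k(\pp)})$ (which we have from Lemma \ref{base chage of maps aN, bN, cN} and Proposition \ref{cone of an kills cone of bn}), and that support behaves well under this base change (which follows from the injectivity of $\mathrm{Spc}(\iota_\pp^\ast)$ established in Proposition \ref{Spc as a set}). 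A secondary subtlety is matching the coprime-characteristic fibers on both sides precisely; but since everything there is either a point or empty, any careful case check closes it. None of this is conceptually hard — it is the same pattern used repeatedly earlier in the paper — so the proof should be short once the ingredients are cited in the right order.
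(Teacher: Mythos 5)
Your proposal and the paper's proof part ways after the first paragraph: both agree that, thanks to End-finiteness (Proposition~\ref{prop:L(H) is end-finite}) and Lau's result, it suffices to show $\rho_{\mathcal{L}(H,R)}$ is bijective, but the paper then establishes injectivity and surjectivity separately by two quick citations, whereas you attempt a single fiberwise reduction to residue fields. The paper's injectivity is immediate from the commutative square of Remark~\ref{rem:comparison of LH and KE} together with the already-proved injectivity of $\mathrm{Comp}_G$ (Lemma~\ref{comaprison is injective}); no new fiberwise argument is run. For surjectivity the paper just observes that $\mathcal{O}^\bullet_E(H)$ is Noetherian (Corollary~\ref{twisted coh is noetherian}) and invokes Balmer's general surjectivity theorem for the comparison map from \cite{Bal10}. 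This is the cleanest available route, and you never use it.

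The gap in your argument is precisely on the surjectivity side. Your fiberwise square needs both $\coprod_\pp \mathrm{Spc}(\mathcal{L}(H,k(\pp)))\to \mathrm{Spc}(\mathcal{L}(H,R))$ and $\coprod_\pp \mathrm{Spec}^h(\mathcal{O}^\bullet_{E,k(\pp)}(H)) \to \mathrm{Spec}^h(\mathcal{O}^\bullet_{E}(H))$ to be \emph{bijections}; but the result you cite, Corollary~\ref{coro: injectivity by base change}, only gives \emph{injectivity} of the component maps $\mathrm{Spec}^h(H^{\bullet,\bullet}(E,k(\pp)))\to \mathrm{Spec}^h(H^{\bullet,\bullet}(E,R)\otimes_R k(\pp))$, as its name advertises. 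Injectivity of the bottom arrow is enough to transport injectivity of the fiberwise $\rho$'s up to $\rho_{\mathcal{L}(H,R)}$ (this is exactly the pattern of Lemma~\ref{comaprison is injective}), but it is \emph{not} enough to deduce surjectivity: a homogeneous prime of $\mathcal{O}^\bullet_E(H)$ lying over some $\pp$ need not, on this citation alone, come from a prime of $\mathcal{O}^\bullet_{E,k(\pp)}(H)$. Surjectivity of that bottom arrow is in fact true --- $\Theta_\pp$ is power-surjective and the target is a finitely generated algebra over the image, hence an integral extension, hence surjective on $\mathrm{Spec}^h$ --- but you would have to say this, and you would also need to check the corresponding statement for the localized rings $\mathcal{O}^\bullet_E(H)$, not just the full twisted cohomology ring; you wave at both with ``compatibility of $\mathcal{O}^\bullet$ with base change.'' Filling these in is not conceptually hard, so your route can be completed, but as written it has a hole exactly where the paper inserts the one-line appeal to \cite{Bal10}.
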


\begin{proof}
   Using the commutative square from Remark \ref{rem:comparison of LH and KE} and the injectivity of the comparison map $\mathrm{Comp}_G$ (see Lemma \ref{comaprison is injective}) we deduce that the comparison map  $\rho_{\mathcal{L}(H,R)}$ is injective as well. Since $\mathrm{End}^\ast_{\mathcal{L}(H,R)}(\mathbb{1})$ is Noetherian, by \cite{Bal10} we obtain that the map $\rho_{\mathcal{L}(H,R)}$ is surjective. By Proposition \ref{prop:L(H) is end-finite}, the category $\mathcal{L}(H,R)$ is End-finite, then \cite[Lemma 2.8]{Lau23} gives us that $\rho_{\mathcal{L}(H,R)}$ is a homeomorphism. 
\end{proof}

\begin{remark}\label{rem:LE under base change}
 Let $\pp$ be a prime in $R$, and consider the residue field $R\to k(\pp)$. Fix a subgroup $H$ of $E$. Let $g$ be a map in the multiplicative subset $S_{H,R}$ of $H^{\bullet,\bullet}(E,R)$ from Definition \ref{def:LH}. Note that the map $k(\pp)\otimes g$, obtained by base change along $R\to k(\pp)$, lies in the multiplicative set $S_{E,k(\pp)}$. We deduce that the functor $\K(E,R)\to \K(E,k(\pp))$ fits into a commutative diagram
\[\begin{tikzcd}
    \K(E,R)\arrow[r] \arrow[d] & \K(E,k(\pp))\arrow[d]\\
    \mathcal{L}(H,R) \arrow[r] & \mathcal{L}(H,k(\pp))
\end{tikzcd}\]
 \noindent where the bottom map is also induced by base change along $R\to k(\pp)$.
\end{remark}

\begin{lemma}\label{lemma:open cover}
    The collection $\{U_E(H)\mid H\leq E\}$ is an open cover of the Balmer spectrum of $\K(E,R)$.
\end{lemma}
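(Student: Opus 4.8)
The plan is to reduce the covering statement to the case of a field via base change along residue fields, and there to invoke the field case. The preliminary point — that each $U_E(H)$ is open — is immediate: since $E$ is finite the index set $\mathcal{N}(E)$ is finite, each $\mathrm{open}(a_{N,R})$ and $\mathrm{open}(b_{N,R})$ is open by Definition~\ref{def:open(f)}, and $U_E(H)$ is a finite intersection of these. So the real content is that $\bigcup_{H\leq E}U_E(H)=\mathrm{Spc}(\K(E,R))$.

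The first step towards this is to check that the subsets $U_E(H)$ are compatible with base change. Given $\pp\in\mathrm{Spec}(R)$ with $k=k(\pp)$, the functor $\iota_\pp^\ast$ is a tt-functor and hence commutes with cones, so by Lemma~\ref{base chage of maps aN, bN, cN} it sends $\mathrm{cone}(a_{N,R})$ to $\mathrm{cone}(a_{N,k})$ and $\mathrm{cone}(b_{N,R})$ to an object generating the same thick $\otimes$-ideal as $\mathrm{cone}(b_{N,k})$ (using $\mathrm{open}(h^{\otimes 2})=\mathrm{open}(h)$ for $h\colon\mathbb 1\to L$ with $L$ invertible, to handle case $(C2)$). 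Reading off the description $\mathrm{open}(-)=\{\mathcal P\mid\mathrm{cone}(-)\in\mathcal P\}$ and using that $\mathcal P=\mathrm{Spc}(\iota_\pp^\ast)(\mathcal Q)=(\iota_\pp^\ast)^{-1}(\mathcal Q)$, I would deduce that $\mathcal P\in\mathrm{open}(a_{N,R})\iff\mathcal Q\in\mathrm{open}(a_{N,k})$, and likewise for $b_{N}$, whence $\mathcal P\in U_E(H)\iff\mathcal Q\in U_E(H)$. Since Proposition~\ref{Spc as a set} writes every prime of $\K(E,R)$ as $\mathrm{Spc}(\iota_\pp^\ast)(\mathcal Q)$ for some $\pp$ and some $\mathcal Q\in\mathrm{Spc}(\K(E,k(\pp)))$, this reduces the covering claim to the analogous claim over each residue field $k$ of $R$.

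Over a field $k$ I would split into two cases. If $\mathrm{char}(k)\nmid p$, then $\mathrm{Spc}(\K(E,k))$ is a single point by Proposition~\ref{Spc is a point in ordinary char}; since $b_{N,k}$ is a quasi-isomorphism in this situation (as recalled in the proof of Proposition~\ref{cone of an kills cone of bn}), $\mathrm{cone}(b_{N,k})$ is acyclic, hence lies in the proper thick $\otimes$-ideal $\K_{ac}(E,k)=\ker(\mathrm{Res}^E_1)$ of Corollary~\ref{supp of kos}, hence in the unique prime; so $\mathrm{open}(b_{N,k})=\mathrm{Spc}(\K(E,k))$ for every $N$, and already $U_E(1)=\bigcap_{N}\mathrm{open}(b_{N,k})=\mathrm{Spc}(\K(E,k))$ since $\{N\mid 1\not\leq N\}=\varnothing$. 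If $\mathrm{char}(k)=p$, the objects $a_{N,k}$, $b_{N,k}$ and the open subsets $U_E(H)$ coincide with those of Balmer and Gallauer, and the assertion that the $U_E(H)$ form an open cover of $\mathrm{Spc}(\K(E,k))$ is part of the analysis of \cite[Section~14]{BG23b}, which I would simply cite.

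The main obstacle here is conceptual rather than computational: in the modular field case I do not expect to be able to avoid citing Balmer--Gallauer, since a direct proof would require control of the image of the comparison map $\mathrm{Comp}_E$ (which at this point is only known to be injective, by Lemma~\ref{comaprison is injective}), i.e.\ essentially a reproof of their explicit computation of $\mathrm{Spc}(\K(E,k))$. The plan therefore deliberately isolates that input and handles everything else — openness, base-change compatibility, and the semisimple fibres — by the elementary arguments sketched above.
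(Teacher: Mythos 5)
Your proof is correct and takes essentially the same route as the paper: reduce to residue fields via Proposition~\ref{Spc as a set}, check base-change compatibility, and cite the Balmer--Gallauer covering result in the modular field case. The paper phrases the reduction through the localization categories $\mathcal{L}(H,-)$ (Remark~\ref{rem:LE under base change} plus the identification $\mathrm{Spc}(\mathcal{L}(H,R))\cong U_E(H)$) rather than manipulating $\mathrm{open}(-)$ and cones directly, and it does not make explicit the semisimple residue-field case which you handle by hand, but these are presentational rather than substantive differences.
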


\begin{proof}
    For each prime $\pp$ in $R$ consider the commutative diagram induced by base change along the residue field $R\to k(\pp)$ given in Remark \ref{rem:LE under base change}. We obtain a commutative diagram of topological spaces   
\begin{center}
    \begin{tikzcd}
       \displaystyle \coprod_{\pp\in\mathrm{Spec}(R)} 
 \coprod_{H\leq E} \mathrm{Spc}(\mathcal{L}(H,k(\pp))) \arrow[r] \arrow[d] & \displaystyle \coprod_{\pp\in\mathrm{Spec}(R)} \mathrm{Spc}(\K(E,k(\pp))) \arrow[d] \\ 
\displaystyle  \coprod_{H\leq E} \mathrm{Spc}(\mathcal{L}(H,R)) \arrow[r] & \mathrm{Spc}(\K(E,R))
    \end{tikzcd}
\end{center}
where the right-hand vertical map is bijective by Proposition \ref{Spc as a set}, and the top map is surjective by \cite[Proposition 13.11]{BG23b}. We deduce that the bottom is surjective. Moreover, we already know that $\mathrm{Spc}(\mathcal{L}(H,R))\cong U_E(H)$ under the map induced by localization, hence the result follows. 
\end{proof}

Now, let $\mathcal{O}^\bullet_E$ be the sheaf  of graded rings on $\mathrm{Spc}(\K(E,R))$ obtained by sheafifying the presheaf 
\[
U\mapsto \mathrm{End}^\ast_{\K(H,R)(U)}(\mathbb{1})
\]
where $\K(H,R)(U)$ is the localization over a quasi-compact open $U\subset \mathrm{Spc}(\K(E,R))$, i.e.,  $\K(E,R)(U)=(\K(E,R)/\K(E,R)_Z)^\natural$, with $Z=\mathrm{Spc}(\K(E,R))\backslash U$. By Theorem \ref{thm:comp LH is an homeomorphism} and Lemma \ref{lemma:open cover} we have exhibited an affine cover of $\mathrm{Spc}(\K(E,R)$ which gives us the following result. 

\begin{corollary}\label{cor: spc is a dirac scheme}
    Let $E$ be an elementary abelian $p$-group. Then $(\mathrm{Spc}(E,R),\mathcal{O}_E^\bullet)$ is a Dirac scheme in the language of \cite{HP23}.
\end{corollary}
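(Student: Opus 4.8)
The plan is to assemble the Dirac scheme structure on $\mathrm{Spc}(\K(E,R))$ from the affine pieces already constructed, following the template of Balmer--Gallauer but leveraging the base-change results proven earlier in the paper. Recall that a Dirac scheme in the sense of \cite{HP23} is a topological space equipped with a sheaf of $\mathbb{Z}$-graded (graded-commutative) rings that admits an open cover by affine Dirac schemes, i.e.\ by pieces of the form $(\mathrm{Spec}^h(A), \widetilde{A})$ for $A$ a graded-commutative ring. So the task reduces to: (i) exhibit the open cover, (ii) identify each member with the homogeneous spectrum of a graded ring together with its structure sheaf, and (iii) check the structure sheaf $\mathcal{O}^\bullet_E$ on $\mathrm{Spc}(\K(E,R))$ restricts correctly on each member.

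First I would invoke Lemma \ref{lemma:open cover}, which gives that $\{U_E(H)\}_{H\leq E}$ is an open cover of $\mathrm{Spc}(\K(E,R))$; since $E$ has finitely many subgroups this is a finite affine cover, and since the Balmer spectrum here is quasi-compact (indeed Noetherian, by Theorem \ref{cover of spc KG}), each $U_E(H)$ is quasi-compact. Next, for each $H$, Theorem \ref{thm:comp LH is an homeomorphism} identifies $U_E(H) \cong \mathrm{Spc}(\mathcal{L}(H,R))$ homeomorphically with $\mathrm{Spec}^h(\mathcal{O}^\bullet_E(H))$ via the comparison map, where $\mathcal{O}^\bullet_E(H) = \mathrm{End}^\ast_{\mathcal{L}(H,R)}(\mathbb{1})$ is the twist-zero part of the localization $H^{\bullet,\bullet}(E,R)[S_H^{-1}]$ by Remark \ref{rem:morphism as fractions in LH}; this ring is graded-commutative (inheriting graded-commutativity from $H^{\bullet,\bullet}(E,R)$, using only the parity of the shift) and Noetherian by Proposition \ref{prop:L(H) is end-finite}. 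Then I would check that the sheaf $\mathcal{O}^\bullet_E$, obtained by sheafifying $U \mapsto \mathrm{End}^\ast_{\K(E,R)(U)}(\mathbb{1})$, restricts on $U_E(H)$ to the structure sheaf of $\mathrm{Spec}^h(\mathcal{O}^\bullet_E(H))$: on quasi-compact opens $U \subseteq U_E(H)$ one has $\K(E,R)(U) \simeq \mathcal{L}(H,R)(U)$ (a localization of a localization is a localization), so the presheaf of graded endomorphism rings agrees on a basis, hence the sheafifications agree; and on the affine scheme $\mathrm{Spec}^h(\mathcal{O}^\bullet_E(H))$, the sheafification of the presheaf of endomorphism rings of localizations of an End-finite tt-category with Noetherian graded endomorphism ring recovers $\widetilde{\mathcal{O}^\bullet_E(H)}$ — this is the content of the Dirac-scheme statement in the field case \cite[Theorem 14.16]{BG23b} or its analogue, and the argument is formal once End-finiteness (Proposition \ref{prop:L(H) is end-finite}) and the homeomorphism (Theorem \ref{thm:comp LH is an homeomorphism}) are in hand. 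Combining these, $(\mathrm{Spc}(\K(E,R)), \mathcal{O}^\bullet_E)$ is covered by affine Dirac schemes and hence is a Dirac scheme.

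The main obstacle, I expect, is the compatibility of the sheaf on overlaps and the verification that the \emph{sheafified} $\mathcal{O}^\bullet_E$ really is a sheaf of graded rings whose stalks and sections over the affine pieces agree with the localizations of the twisted cohomology ring — in other words, checking that the local model $\widetilde{\mathcal{O}^\bullet_E(H)}$ on $\mathrm{Spec}^h(\mathcal{O}^\bullet_E(H))$ coincides with the restriction of $\mathcal{O}^\bullet_E$. In the field case this is handled by Balmer--Gallauer using that the relevant tt-categories are End-finite and that the comparison maps are homeomorphisms on a cofinal system of opens; here we have all the analogous inputs (Proposition \ref{prop:L(H) is end-finite}, Theorem \ref{thm:comp LH is an homeomorphism}, Lemma \ref{comaprison is injective}), so the argument should go through verbatim. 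The one genuinely new subtlety over a general base is case $(C4)$, where $\mathcal{L}(H,R)$ need not be generated by the unit; but Proposition \ref{prop:L(H) is end-finite} already absorbs this difficulty, so at the level of this corollary nothing further is required beyond citing it and assembling the pieces.
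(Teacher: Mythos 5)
Your proposal is correct and follows essentially the same route as the paper: the paper's proof is simply to cite Lemma \ref{lemma:open cover} for the affine open cover and Theorem \ref{thm:comp LH is an homeomorphism} to identify each piece with $\Spec^h(\mathcal{O}_E^\bullet(H))$, which is exactly your skeleton. The additional detail you supply — restriction of the sheafified presheaf to each $U_E(H)$, using End-finiteness of $\mathcal{L}(H,R)$ and the analogous field-case argument — is left implicit in the paper but is the right way to fill in the remaining verification.
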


\begin{corollary}\label{cor: comp is open immersion}
    The comparison map 
    \[
    \mathrm{Comp}_E\colon \mathrm{Spc}(\K(E,R))\to \Spec^h(H^{\bullet,\bullet}(E,R))
    \]
    is an open immersion.
\end{corollary}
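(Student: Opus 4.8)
The plan is to deduce that $\mathrm{Comp}_E$ is an open immersion by combining the injectivity already established in Lemma \ref{comaprison is injective} with the affine local picture provided by Theorem \ref{thm:comp LH is an homeomorphism} and the open cover of Lemma \ref{lemma:open cover}. First I would recall that $\mathrm{Comp}_E$ is continuous (Lemma \ref{lemma:comparison map}) and injective (Lemma \ref{comaprison is injective}), so it only remains to show that $\mathrm{Comp}_E$ restricts to a homeomorphism onto an \emph{open} subset of $\Spec^h(H^{\bullet,\bullet}(E,R))$. The natural strategy is to check this open-image claim locally on the source, using the cover $\{U_E(H)\}_{H\leq E}$.

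The key computation is to identify, for each $H\leq E$, the image $\mathrm{Comp}_E(U_E(H))$ inside $\Spec^h(H^{\bullet,\bullet}(E,R))$. By Definition \ref{def:u(H)}, $U_E(H)=\bigcap_{f\in S_H}\mathrm{open}(f)$, and by the definition of $\mathrm{open}(f)$ (Definition \ref{def:open(f)}) together with the construction of the comparison map, $\mathrm{open}(f)=\mathrm{Comp}_E^{-1}(Z(f)^c)$ where $Z(f)^c\subseteq \Spec^h(H^{\bullet,\bullet}(E,R))$ is the principal open where $f$ is invertible. Since $S_H$ is generated by finitely many homogeneous elements (the $a_{N,R}$ with $H\not\leq N$ and the $b_{N,R}$ with $H\leq N$), the intersection $\bigcap_{f\in S_H} Z(f)^c$ is the (quasi-compact) principal open $D(S_H)$ associated to the multiplicative set $S_H$, i.e.\ $\Spec^h(H^{\bullet,\bullet}(E,R)[S_H^{-1}])$. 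Then I would invoke the commutative square of Remark \ref{rem:comparison of LH and KE}: the left vertical map $\rho_{\mathcal{L}(H,R)}$ is a homeomorphism onto $\Spec^h(\mathcal{O}^\bullet_E(H))$ by Theorem \ref{thm:comp LH is an homeomorphism}, the top map identifies $\mathrm{Spc}(\mathcal{L}(H,R))$ with $U_E(H)$ (Remark \ref{rmk:spc of L(H)}), and the bottom map is the canonical open immersion $\Spec^h(\mathcal{O}^\bullet_E(H))\hookrightarrow \Spec^h(H^{\bullet,\bullet}(E,R))$ coming from the localization at $S_H$ — here one uses that, by Remark \ref{rem:morphism as fractions in LH}, $\mathcal{O}^\bullet_E(H)$ is exactly the twist-zero part of $H^{\bullet,\bullet}(E,R)[S_H^{-1}]$, and that passing to the twist-zero part of a $\mathbb{Z}\times\mathbb{N}^{\mathcal{N}}$-graded ring and then taking $\Spec^h$ does not change the underlying space of the localization. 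Chasing the square shows $\mathrm{Comp}_E$ carries $U_E(H)$ homeomorphically onto the open set $\Spec^h(\mathcal{O}^\bullet_E(H))=D(S_H)$.

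Finally I would assemble the local statements: $\{U_E(H)\}_{H\leq E}$ covers $\mathrm{Spc}(\K(E,R))$ by Lemma \ref{lemma:open cover}, and on each $U_E(H)$ the map $\mathrm{Comp}_E$ is an open immersion by the previous paragraph; since $\mathrm{Comp}_E$ is globally injective, the images $\mathrm{Comp}_E(U_E(H))$ patch to an open subset and $\mathrm{Comp}_E$ is a homeomorphism onto it. Hence $\mathrm{Comp}_E$ is an open immersion.

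The main obstacle I anticipate is the bookkeeping around the bigraded-versus-$\mathbb{Z}$-graded passage: one must be careful that the open subset $U_E(H)$, which is cut out inside $\Spec^h(H^{\bullet,\bullet}(E,R))$ by inverting the \emph{twisted} elements of $S_H$, really coincides with $\Spec^h$ of the \emph{twist-zero} localization $\mathcal{O}^\bullet_E(H)$, and that the homeomorphism $\rho_{\mathcal{L}(H,R)}$ from Theorem \ref{thm:comp LH is an homeomorphism} is compatible with the inclusion $U_E(H)\hookrightarrow\mathrm{Spc}(\K(E,R))$ under $\mathrm{Comp}_E$. This is exactly the point where the analogue of \cite[Theorem 13.10]{BG23b} in the field case is used, and the argument here is a formal diagram chase once the identifications of Remarks \ref{rmk:spc of L(H)} and \ref{rem:morphism as fractions in LH} are in hand; no genuinely new difficulty arises beyond what Lemma \ref{comaprison is injective} already resolved.
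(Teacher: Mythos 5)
Your proposal is correct and follows essentially the same route as the paper: both arguments use the open cover $\{U_E(H)\}_{H\leq E}$, identify the image of each $U_E(H)$ with the principal open cut out by inverting $S_H$ via Theorem~\ref{thm:comp LH is an homeomorphism} and the commutative square of Remark~\ref{rem:comparison of LH and KE}, and then patch using the global injectivity from Lemma~\ref{comaprison is injective}. Your proposal spells out the bigraded-versus-$\mathbb{Z}$-graded bookkeeping a bit more explicitly than the paper's terse proof, but the underlying argument is identical.
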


\begin{proof}
    Note that the comparison map is an open immersion locally. Indeed,  for the open cover $\{U_E(H)\mid H\leq E\}$  of the spectrum, the restriction of the comparison map is a homeomorphism to the  open subset of $\Spec^h(H^{\bullet,\bullet}(E,R)$ given by 
\[
U(H)'=\bigcap_{H\not \leq N}Z(a_n)^c \cap \bigcap_{H\leq N}Z(b_N)^c
\] 
 where $Z(f)^c$ denotes the principal open defined by $f$ in $\Spec^h(H^{\bullet,\bullet}(E,R)$ (see Theorem \ref{thm:comp LH is an homeomorphism}). Since the comparison map is injective by Lemma \ref{comaprison is injective} and a locally open embedding, we obtain that it is an open embedding.
\end{proof}

\section{Examples}\label{sec: examples}

In this section, we use the tools developed in this article to describe the Balmer spectrum of $\K(G,\mathbb Z)$  for $G$ a cyclic $p$-group. We also discuss on other examples that follow a similar pattern.   

\begin{remark}
Let $G$ be a $p$-group. Recall that the \textit{modular fiber of $\Spc(\K(G,R))$} is the preimage of the comparison map $\Spc(\K(G,R))\to \Spec(\mathbb Z)$ at $(p)$. The \textit{ordinary fiber of  $\Spc(\K(G,R))$} is the set theoretic complement of the modular fiber.    
\end{remark}

\begin{proposition}\label{ordinary and modular fiber}
Let $G$ be a $p$-group. Then the ordinary fiber of $\Spc(\K(G,R))$, with the subspace topology, is homeomorphic to  $\Spec(R[1/p])$. In particular, we obtain a set decomposition 
\[
\Spc(\K(G,R))= \left( \coprod_{\pp\in \eta^{-1}(p)} \Spc(\iota_\pp^\ast) \Spc(\K(G,k(\pp))) \right) \amalg \Spec(R[1/p])
\]
where $\eta$ denotes the structural map $\Spec(R)\to \Spec(\mathbb Z)$. 
\end{proposition}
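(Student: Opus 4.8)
The plan is to analyze the two fibers of the composite $\Spc(\K(G,R)) \xrightarrow{\rho} \Spec(R) \xrightarrow{\eta} \Spec(\mathbb{Z})$ separately, using the set-theoretic description of $\Spc(\K(G,R))$ already established in Proposition \ref{Spc as a set} (the bijection $\coprod_{\pp} \Spc(\K(G,k(\pp))) \to \Spc(\K(G,R))$) together with Corollary \ref{comparison for check psi to residue fields}, which tells us that this bijection is compatible with the comparison maps down to $\Spec(R)$. First I would observe that a point $\P(H,\aa,\pp) \in \Spc(\K(G,R))$ maps to $\pp \in \Spec(R)$ under $\rho$ (this is exactly Corollary \ref{cohomological primes over different fibers} and its proof), hence it lies in the modular fiber precisely when $\pp \in \eta^{-1}(p)$, i.e., when $\mathrm{char}(k(\pp)) = p$. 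This immediately gives the set-level decomposition
\[
\Spc(\K(G,R)) = \left( \coprod_{\pp\in\eta^{-1}(p)} \Spc(\iota_\pp^\ast)\Spc(\K(G,k(\pp))) \right) \amalg \left( \coprod_{\pp \notin \eta^{-1}(p)} \Spc(\iota_\pp^\ast)\Spc(\K(G,k(\pp))) \right),
\]
and the second summand is the ordinary fiber.

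Next I would identify the ordinary fiber with $\Spec(R[1/p])$. For a prime $\pp$ with $\mathrm{char}(k(\pp))$ coprime to $p$ (equivalently $\pp \in \Spec(R[1/p]) \subseteq \Spec(R)$), Proposition \ref{Spc is a point in ordinary char} gives that $\Spc(\K(G,k(\pp)))$ is a single point, since $|G|$ is a power of $p$ and hence invertible in $k(\pp)$. Therefore the ordinary fiber is in bijection with $\{\pp \in \Spec(R) : p \text{ invertible in } k(\pp)\} = \Spec(R[1/p])$. To upgrade this bijection to a homeomorphism onto its image (with the subspace topology), I would invoke the naturality square of the comparison map: the ordinary fiber sits inside $\Spc(\K(G,R))$, maps via $\rho$ to $\Spec(R)$, and the image is exactly the open subset $\Spec(R[1/p])$. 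The cleanest argument is that $\K(G, R[1/p])$ is End-finite with $\End^\ast_{\K(G,R[1/p])}(\mathbb{1}) = R[1/p]$ (concentrated in degree $0$, since all the relevant cohomology classes are $p$-torsion — compare the argument in the proof of Theorem \ref{colimit theorem} where exactly this point is used), so by \cite[Lemma 2.8]{Lau23} the comparison map $\Spc(\K(G,R[1/p])) \to \Spec(R[1/p])$ is a homeomorphism; and $\Spc(\K(G,R[1/p]))$ is the image in $\Spc(\K(G,R))$ of the localization $R \to R[1/p]$, whose image is precisely the ordinary fiber (since $\Spec(R[1/p]) \hookrightarrow \Spec(R)$ picks out exactly those primes away from $p$, and localization of tt-categories induces an open inclusion on spectra identifying the source with this subspace). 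One then checks the subspace topology on the ordinary fiber coming from $\Spc(\K(G,R))$ agrees with the topology transported from $\Spc(\K(G,R[1/p]))$, which follows since both are governed by the same comparison map into $\Spec(R)$ and the relevant subspace of $\Spec(R)$ is $\Spec(R[1/p])$.

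The main obstacle I anticipate is the topological (rather than set-theoretic) part: verifying that the subspace topology on the ordinary fiber really is that of $\Spec(R[1/p])$, as opposed to merely a continuous bijection. The subtlety is that a priori there could be extra specialization relations within the ordinary fiber, or the constructible topology could differ; this needs the End-finiteness input to rule out. I would handle this by first establishing that $\End^\bullet_{\K(G,R)}(\mathbb{1})$ becomes $R[1/p]$ after inverting $p$ — concentrated in twist and shift zero — which makes the comparison map for $\K(G,R[1/p])$ a homeomorphism by the Lau criterion \cite[Lemma 2.8]{Lau23}, and then transporting this along the open localization $\Spc(\K(G,R[1/p])) \cong \{\text{ordinary fiber}\} \subseteq \Spc(\K(G,R))$. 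Everything else is bookkeeping with the already-established bijections and naturality squares.
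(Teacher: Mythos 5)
Your overall strategy matches the paper's: reduce the topological claim to the statement that $\Spc(\K(G,R[1/p])) \to \Spc(\K(G,R))$ is a topological embedding with image the ordinary fiber, then use End-finiteness of $\K(G,R[1/p])$ plus \cite[Lemma 2.8]{Lau23} to identify $\Spc(\K(G,R[1/p]))$ with $\Spec(R[1/p])$. The set-theoretic decomposition part is fine and in fact the paper derives it in the reverse order (as an immediate consequence of the first claim), which is a harmless reordering.

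However, there is a genuine gap in how you justify the key topological step. You assert that ``localization of tt-categories induces an open inclusion on spectra'' and that $\Spc(\K(G,R[1/p])) \hookrightarrow \Spc(\K(G,R))$ is an \emph{open} embedding onto the ordinary fiber. This is incorrect on two counts. First, the base change functor $-\otimes_R R[1/p]\colon \K(G,R)\to \K(G,R[1/p])$ is \emph{not} a Verdier quotient of $\K(G,R)$ by a thick tensor ideal; the open-embedding result for tt-localizations (Balmer--Neeman) therefore does not apply. Second, and more importantly, the paper explicitly remarks that there is no reason for the image of this map to be either open or closed in $\Spc(\K(G,R))$ --- indeed, the modular fiber is closed, so its complement (the ordinary fiber) is generally open only as a subset of $\Spec(R)$, not of $\Spc(\K(G,R))$; there can be specializations from the ordinary fiber into the modular fiber (see the $C_p$ example later in the paper, where $(0)$ specializes to $\mm_0$ and $\mm_1$). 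The openness of $\Spec(R[1/p]) \hookrightarrow \Spec(R)$ does not transport to openness in the Balmer spectrum.

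The correct argument, which you are missing, is this: the base change functor $\K(G,R)\to \K(G,R[1/p])$ is \emph{essentially surjective} by \cite[Lemma 2.24]{BG22}, and an essentially surjective tt-functor induces a topological embedding on Balmer spectra (this is what the proof of \cite[Lemma 6.9]{Gom25} establishes). A topological embedding, not an open embedding, is exactly what is needed: it identifies $\Spc(\K(G,R[1/p])) \cong \Spec(R[1/p])$ with the ordinary fiber carrying the subspace topology, without any claim about the image being open or closed. With this substitution your proof becomes correct and is essentially the paper's.
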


\begin{proof}
Consider the base change functor $f^\ast=-\otimes_R\colon \K(G,R)\to \K(G,R[1/p])$. By \cite[Lemma 2.24]{BG22}, we know that the functor $f^\ast$ is essentially surjective.  In particular, the proof of \cite[Lemma 6.9]{Gom25} tell us that the induced map on Balmer spectra 
\[
\Spc(\K(G,R[1/p])) \to \Spc(\K(G,R)) 
\]
is a topological embedding. Of course, there is no reason for the image of this map to be either closed or open in $\Spc(\K(G,R))$. Moreover, as we already noticed in the proof of Theorem \ref{colimit theorem}, the comparison map \[
\Spc(\K(G,R[1/p])) \to \Spec(R[1/n]) 
\]
is a homeomorphism. Indeed, it is a bijection by Proposition \ref{Spc as a set}; and hence an homeomorphisms since the category is End-finite. Thus the first claim follows. The second claim is immediate since the ordinary and modular fiber of $\Spc(\K(G,R))$ form a set partition. 
\end{proof}

\begin{remark}
Since the  map 
\[\Spc(\K(G,R))\xrightarrow{\mathrm{Comp}} \Spec(R)\xrightarrow{\eta}  \Spec(\mathbb Z)\] is continuous, we obtain that the modular fiber of $\Spc(\K(G,R))$ is a closed subset. Hence, there are no specialization relations from the modular fiber to the ordinary fiber but there might be specialization relations the other way around. In fact, such specialization relations are hard to describe in general. 
\end{remark}

Let us focus now on $\K(G,\mathbb Z)$ for $G$ a cyclic $p$-group.  

\begin{recollection}\label{spc for cp over fields}
Let $k$ denote a field of positive characteristic $p$, and let $G=C_{p^n}$ be the cyclic group of order $p^n$. By \cite[Section 8]{BG23b} we  know that the spectrum of $\K(C_{p^n},k)$ has the form 
\begin{equation}\label{eq:Spc Cn}
\Spc(\K(C_{p^n},k))=\qquad
\vcenter{\xymatrix@R=1em@C=.7em{
{\scriptstyle\mm_0\kern-1em}
& {\bullet} \ar@{-}@[Gray] '[rd] '[rr] '[drrr]
&&
{\bullet}
&{\kern-1em\scriptstyle\mm_1}
&&{\scriptstyle\mm_{n-1}\kern-1em}&\bullet&&\bullet&{\kern-1em\scriptstyle\mm_n}
\\
&{\scriptstyle\pp_1\kern-1em}& {\bullet}&&& {\cdots}& \ar@{-}@[Gray] '[ru] '[rr] '[rrru] &&\bullet&{\kern-1em}\scriptstyle\pp_{n}}}
\end{equation} 
consisting of $2n+1$ points and specialization relations pointing upward. In particular,  the closed subsets are   those that contain a $\pp_i$ only if they contain $\mm_{i-1}$ and $\mm_i$.
Hence the $\mm_i$ are closed points and the $\pp_i$ are generic points of the $n$ irreducible V-shaped closed subsets $\{\mm_{i-1},\pp_i,\mm_i\}$. Moreover, the points~$\pp_i$ and~$\mm_i$ can be described as follows. Consider a maximal normal series  of $G$
\[
0=N_n<N_{n-1}<\cdots<N_0=G.
\]
Then the points $\pp_i$ and $\mm_i$ in~$\Spc(\K(G,k))$ are given by 
\[
\mm_i=(\check{\Psi}^{N_i})(0)
\mbox{ and }
\pp_i=(\check{\Psi}^{N_i})(\mathbf{D}_{\mathrm{perf}}(\K(G/N_i,k)))
\]
where $\check{\Psi}^{N}=\Upsilon_{G/N}\circ\Psi^N\colon \K(G,k)\to\K(G/N,k)\to\mathbf{D}_b(\K(kG/N))$ is the functor from \cite[Notation 6.10]{BG22b}.
\end{recollection}

\begin{remark}
By Proposition \ref{ordinary and modular fiber}, we are left to check the possible specialization relations that can occur from the ordinary fiber to the  modular fiber. In fact, using again that the comparison map is continuous combined with the structure of $\Spec(\mathbb Z)$, we observe that  such specialization relations can only happen from the generic point in the ordinary fiber. We deal first with $G=C_p$ and the use Theorem \ref{colimit theorem} to extend the results to an arbitrary cyclic $p$-group.  
\end{remark}

\begin{notation}
Let $G$ be a cyclic $p$-group. Let identify $\Spc(\K(G,\mathbb F_p))$ with its image in $\Spc(\K(G,\mathbb F_p))$ under the map induced by the base change functor $\iota_p^\ast$ without further decorations on the notation. In particular, we simply write $\mm_i$ and $\pp_i$ to denote the image, under $\Spc(\iota_p^\ast)$, of the corresponding points in $\Spc(\K(G,\mathbb F_p))$ described in Recollection \ref{spc for cp over fields}.
\end{notation}

\begin{proposition}
Let $G=C_p$ be the cyclic group with $p$ elements. Then the Balmer spectrum of $\K(G,\mathbb Z)$ is homeomorphic to the space 
\begin{equation}
\kern2em\vcenter{\xymatrix@C=.0em@R=.4em{
{\color{Brown}\overset{(2)}{\bullet}}   \ar@{-}@[Brown][rrrrrrrrrrrdddd]  &&& {\color{Brown}\overset{(3)}{\bullet}}  \ar@{-}@<.1em>@[Brown][rrrrrrrrdddd]
&& \ldots
&& {\color{OliveGreen}\overset{\mm_0}{\bullet}} \ar@{-}@[OliveGreen][rdd] \ar@{~}@[Gray][rrrrdddd]
&& {\color{OliveGreen}\overset{\mm_1}{\bullet}} \ar@{-}@[OliveGreen][ldd] \ar@{~}@[Gray][rrdddd]
&&  \cdots  
&&  {\color{Brown}\overset{(q)}{\bullet}}  \ar@{-}@[Brown][lldddd] \ar@{.}@[Gray][r] 
&& \cdots
\\ \\
&&
&&
&&
&& {\color{OliveGreen}\underset{\pp_0}{\bullet}} 
&
& 
& 
&& 
&& 
&&
&&
&&
\\ \\
&&&&&&&&& && {\color{Brown}\bullet_{(0)}}\kern-1em
&&&&&&&& 
}}\kern-1.11em
\end{equation}
where the green part corresponds to the modular fiber and the brown part to the ordinary fiber. 
\end{proposition}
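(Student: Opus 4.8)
The plan is to compute $\Spc(\K(C_p,\mathbb{Z}))$ by combining the set-theoretic description from Section 5 with the explicit topology over residue fields and the general results on the ordinary and modular fibers.

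\medskip

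\textbf{Step 1: Identify the underlying set.} By Proposition \ref{Spc as a set}, we have a set bijection $\coprod_{\pp\in\Spec(\mathbb{Z})}\Spc(\K(C_p,k(\pp)))\to\Spc(\K(C_p,\mathbb{Z}))$. The residue fields of $\mathbb{Z}$ are $\mathbb{Q}$ and $\mathbb{F}_q$ for primes $q$. For $q\neq p$ and for $\mathbb{Q}$, the group order is invertible, so by Proposition \ref{Spc is a point in ordinary char} the space $\Spc(\K(C_p,k(\pp)))$ is a single point; these points are exactly the ordinary fiber, which by Proposition \ref{ordinary and modular fiber} is homeomorphic to $\Spec(\mathbb{Z}[1/p])$ (the generic point $(0)$ and one closed point for each prime $q\neq p$). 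For $\pp=(p)$, Recollection \ref{spc for cp over fields} (the case $n=1$) tells us $\Spc(\K(C_p,\mathbb{F}_p))$ has three points $\mm_0,\mm_1,\pp_1$ (with the V-shape), which form the modular fiber. So as a set the spectrum is $\{(0)\}\cup\{(q):q\neq p\}\cup\{\mm_0,\pp_1,\mm_1\}$, matching the picture (with $\pp_0$ there denoting $\pp_1$ for $n=1$).

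\medskip

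\textbf{Step 2: Topology within each fiber.} The modular fiber, with its subspace topology, is homeomorphic to $\Spc(\K(C_p,\mathbb{F}_p))$ by Proposition \ref{ordinary and modular fiber} combined with Lemma \ref{lemma: rest of varphi is homeo}-style reasoning — actually more directly, $\Spc(\iota_{(p)}^\ast)$ is a closed immersion onto the modular fiber since the modular fiber is closed and $\Spc(\iota_{(p)}^\ast)$ is injective with closed image (its image is $V$, the support of $\K_{ac}$-type ideals; alternatively use that $\iota_{(p)}^\ast$ has a section-like behaviour giving a topological embedding, cf. the proof of Proposition \ref{ordinary and modular fiber}). So inside the modular fiber we get exactly the V-shape: $\pp_1\rightsquigarrow\mm_0$ and $\pp_1\rightsquigarrow\mm_1$, with $\mm_0,\mm_1$ closed. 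The ordinary fiber is homeomorphic to $\Spec(\mathbb{Z}[1/p])$ by Proposition \ref{ordinary and modular fiber}, so there $(0)$ is generic and each $(q)$ is closed.

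\medskip

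\textbf{Step 3: Cross-fiber specializations.} Since the composite $\Spc(\K(C_p,\mathbb{Z}))\xrightarrow{\mathrm{Comp}}\Spec(\mathbb{Z})$ is continuous and the modular fiber is closed, there are no specializations from the modular fiber to the ordinary fiber. The only possible extra specializations are from the generic point $(0)$ of the ordinary fiber into the modular fiber (the points $(q)$ are closed in $\Spec(\mathbb{Z})$, so their closures can only add points in the same fiber). So I must determine: does $(0)\rightsquigarrow\mm_0$, $(0)\rightsquigarrow\pp_1$, and/or $(0)\rightsquigarrow\mm_1$? This is \textbf{the main obstacle}. To resolve it I would use the Koszul objects and the comparison map to twisted cohomology: by Corollary \ref{cor: comp is open immersion}, $\mathrm{Comp}_{C_p}$ is an open immersion into $\Spec^h(H^{\bullet,\bullet}(C_p,\mathbb{Z}))$, and by Example \ref{Ex: twisted cohomology} this ring is $\mathbb{Z}[a_N,b_N]/(p\cdot a_N,\,p\cdot b_N)$ with $a_N$ in degree $(0,1)$ and $b_N$ in degree $(-2,2)$. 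Its homogeneous spectrum: modulo the $p$-torsion part, one gets the $\mathbb{Z}$-part $\Spec(\mathbb{Z})$ sitting inside, while the locus where $a_N$ or $b_N$ is nonzero forces $p=0$, giving a copy of $\Spec^h(\mathbb{F}_p[a_N,b_N])$. Chasing which prime of this graded ring is the image of $(0)$ and computing its specializations (equivalently, checking whether $\widetilde{\kos}_{C_p}(1,\mathbb{Z})$, resp. the cones of $a_N,b_N$, lie in the relevant primes via Lemma \ref{lem:koszul_conjugate_subgroups_ker} and the structure of $\Spec^h$) shows that $(0)$ specializes only to $\mm_0$ — the point $\mm_0=\check\psi^{C_p,1}(0)$ sits over $(p)$ and is the "generic direction" compatible with the $\mathbb{Z}$-line, whereas $\pp_1,\mm_1$ involve inverting classes that are $p$-torsion and hence killed away from $p$. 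This gives exactly one edge from $(0)$ up to $\mm_0$, as drawn.

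\medskip

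\textbf{Step 4: Assemble.} Combining Steps 1--3: the underlying set is as in Step 1; the only specialization relations are the two internal to the modular fiber ($\pp_1\rightsquigarrow\mm_0,\mm_1$), the ones internal to $\Spec(\mathbb{Z}[1/p])$ ($(0)\rightsquigarrow(q)$ for each $q\neq p$), and the single cross-fiber relation $(0)\rightsquigarrow\mm_0$. Since $\Spc(\K(C_p,\mathbb{Z}))$ is Noetherian (Theorem \ref{cover of spc KG}), its topology is determined by these inclusion relations, so the displayed Hasse-type diagram is a complete and faithful description of the space, proving the proposition.
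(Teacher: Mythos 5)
Your Steps 1 and 2 and the framework of Step 3 are fine, and they agree with the paper's use of Proposition \ref{ordinary and modular fiber} to reduce everything to the cross-fiber specializations out of $(0)$. But the conclusion of Step 3 is wrong, and this is the crux of the proposition. You conclude ``$(0)$ specializes only to $\mm_0$ \dots\ this gives exactly one edge from $(0)$ up to $\mm_0$, as drawn.'' In fact the picture has \emph{two} gray wavy edges out of $(0)$, one to $\mm_0$ and one to $\mm_1$, and the paper's proof verifies both specializations. You have misread the diagram and then built an incorrect argument to match your misreading.

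The easier of the two specializations is actually the one you miss. The localization $\K(C_p,\mathbb Z)\to\mathbf{D}_{\mathrm{perf}}(\mathbb Z C_p)$ induces an open immersion whose image (the ``cohomological open'') is the complement of $\{\mm_0\}$, and via the comparison map this open is $\Spec^h(H^\ast(C_p,\mathbb Z))\cong\Spec^h(\mathbb Z[t]/(pt))$. In this graded ring the generic point $(t)$ corresponds to $(0)$, and $(t)\subset(p,t)$, where $(p,t)$ is the closed point $\mm_1$; so $(0)\rightsquigarrow\mm_1$ already inside the cohomological open, hence in $\Spc(\K(C_p,\mathbb Z))$. Your heuristic that $\mm_1$ ``involves inverting $p$-torsion classes and is hence killed away from $p$'' is exactly backwards here: $(0)\rightsquigarrow\mm_1$ is the specialization that is \emph{visible} in ordinary integral cohomology, with no need for twists.

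You also misidentify $\mm_0$. In Recollection \ref{spc for cp over fields} the filtration is $N_n=0<\cdots<N_0=G$, so for $n=1$ one has $\mm_0=\check\Psi^{N_0}(0)=\check\Psi^{G}(0)$, coming from the \emph{full}-group fixed points, not from $\check\psi^{C_p,1}$. This matters: the paper obtains $(0)\rightsquigarrow\mm_0$ by observing that the triangular $G$-fixed points map $\psi^{G}\colon\Spec(\mathbb Z)\to\Spc(\K(C_p,\mathbb Z))$ is continuous and injective, sends $(0)\mapsto(0)$ and $(p)\mapsto\mm_0$, and $(0)\rightsquigarrow(p)$ in $\Spec(\mathbb Z)$. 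With your misidentification this second argument cannot even be set up. Finally, while a twisted-cohomology computation in the spirit of Section \ref{sec:dirac} could also be made to work (and is sketched in the remark following the paper's proof), you would again find \emph{two} affine charts, one containing $\{\pp_0,\mm_0\}$ and one containing $\{\pp_0,\mm_1\}$, each contributing a specialization from $(0)$ to the respective closed point; so that route, executed correctly, also yields both edges, not one.
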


\begin{proof}
By Proposition \ref{ordinary and modular fiber}, it is enough to verify that $(0)$  specializes to the two closed points $\mm_0$ and $\mm_1$ in the modular fiber. First, recall the localization functor 
\[
\K(G,\mathbb Z) \to \mathbf{D}_{\textrm{perf}}(\mathbb Z G) 
\]  
induces an open immersion on Balmer spectra, and we refer to the image as the cohomological open part of $\Spc(\K(G,\mathbb Z))$. The complement of the cohomological open is precisely $\mm_0$. In fact, the cohomological open is 
\[
\Spc(\mathbf{D}_{\textrm{perf}}(\mathbb Z G))\cong \Spec^h(H^\ast(G,\mathbb Z))\cong \Spec(\mathbb{Z}[t]/(p\cdot t)).
\]
By direct inspection, we observe that $(0)$ specializes to $\mm_1$ in the cohomological open and hence in the whole $\Spc(\K(G,\mathbb Z))$.  It remains to verify that $(0)$ specializes to $\mm_0$. To this end, consider the triangular $G$-fixed points map 
\[
\Spec(\mathbb Z) \xrightarrow{\psi^G} \Spc(\K(G,\mathbb Z)).
\] 
It is not too hard to verify that this map is injective in general. Moreover, note that  $ \psi^G(p)=\mm_0$ and $\psi^G(0)=(0)$. Since injections preserve specializations and $(0)$ specializes to $(p)$ in $\Spec(\mathbb Z)$ we deduce the result. 
\end{proof}

\begin{remark}
We could have used twisted cohomology in the previous argument to describe the topology of $\K(C_p,\mathbb Z)$. Indeed, by Example \ref{Ex: twisted cohomology} we have that the reduced part of the twisted cohomology $H^{\bullet,\bullet}(C_p,\mathbb Z)$ is given by $\mathbb{Z}[a_N,b_N]/(p\cdot a_N,p\cdot b_N)$ where $N$ denotes the trivial subgroup of $C_p$. In this case, the cover of $\Spc(C_p,\mathbb Z)$ given in Lemma \ref{lemma:open cover} has two opens, namely $U(N)$ which is the open locus where $b_N$ is invertible and $U(C_p)$ that corresponds to the open locus where $a_N$ is invertible. Moreover, one can verify that $\mathcal{O}^\bullet(N)\cong \mathbb Z[a_N]/(p\cdot a_N)$ and $\mathcal{O}^\bullet(C_p)\cong \mathbb Z[b_N]/(p\cdot b_N)$. In other words, the opens $U(N)$ and $U(C_p)$ are both homeomorphic to $\Spec^h(H^\ast(C_p,\mathbb Z))$. Moreover, one of them identifies with the ordinary fiber of $\Spc(\K(C_p,\mathbb Z))$ and the points  $\pp_1$ and $\mm_0$ while the other open identifies with the ordinary fiber of $\Spc(\K(C_p,\mathbb Z))$ and the points  $\pp_1$ and $\mm_1$ recovering the description given above.  
\end{remark}

\begin{proposition}
Let $G=C_{p^n}$ be the cyclic group of order $p^n$. Then the Balmer spectrum of $\K(G,\mathbb Z)$ is homeomorphic to the space 
\begin{equation*}
\kern2em\vcenter{\xymatrix@C=.0em@R=.4em{
{\color{Brown}\overset{(2)}{\bullet}}    \ar@{-}@[Brown][rrrrrrrrrrrrrrrrdddd]
&& {\color{Brown}\overset{(3)}{\bullet}}  \ar@{-}@[Brown][rrrrrrrrrrrrrrdddd]
&& \cdots
&& {\color{OliveGreen}\overset{\mm_0}{\bullet}} \ar@{-}@[OliveGreen][rdd] \ar@{~}@[Gray][rrrrrrrrrrdddd]
&& {\color{OliveGreen}\overset{\mm_1}{\bullet}} \ar@{-}@[OliveGreen][ldd] \ar@{-}@[OliveGreen][rdd] \ar@{~}@[Gray][rrrrrrrrdddd]
&& 
&& {\color{OliveGreen}\overset{\mm_{n-1}}{\bullet}} \ar@{-}@[OliveGreen][ldd] \ar@{-}@[OliveGreen][rdd] \ar@{~}@[Gray][rrrrdddd] 
&& {\color{OliveGreen}\overset{\mm_n}{\bullet}} \ar@{-}@[OliveGreen][ldd] \ar@{~}@[Gray][rrdddd] 
&& {\color{OliveGreen}\cdots}
&& {\color{Brown}\overset{(q)}{\bullet}}  \ar@{-}@[Brown][lldddd]
&& \cdots 
\\ \\ 
&&  
&& 
&&
&  {\color{OliveGreen}\underset{\pp_1}{\bullet}} & 
&& \cdots
&& 
& {\color{OliveGreen}\underset{\pp_n}{\bullet}}&  
&&  
&&  
\\ \\ 
&&
&& 
&& 
&& 
&& 
&& 
&& 
&& {\color{Brown}{\bullet_{(0)}}}   
&& 
}}\kern-1.11em
\end{equation*}
where the green part corresponds to the modular fiber and the brown part to the ordinary fiber. 
\end{proposition}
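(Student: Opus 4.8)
The plan is to prove the statement by applying the colimit theorem (Theorem~\ref{colimit theorem}) with $G = C_{p^n}$, bootstrapping from the computation of $\Spc(\K(C_p,\Z))$ carried out in the previous proposition and from the trivial identification $\Spc(\K(1,\Z)) = \Spec(\Z)$. First I would work out the category $\mathcal{E}(C_{p^n})$ of elementary abelian $p$-sections. Writing $1 = N_n < N_{n-1} < \dots < N_0 = C_{p^n}$ for the chain of subgroups, every subquotient of $C_{p^n}$ is cyclic, so the objects of $\mathcal{E}(C_{p^n})$ are the pairs $(N_j,N_j)$ (quotient trivial) and $(N_{i-1},N_i)$ (quotient $\cong C_p$); a short check on the subconjugacy condition shows that the only non-identity morphisms are $(N_i,N_i)\to(N_{i-1},N_i)$ and $(N_i,N_i)\to(N_i,N_{i+1})$. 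Hence the diagram $(H,K)\mapsto \Spc(\K(H/K,\Z))$ is the zig-zag
\[
\Spec(\Z) \longrightarrow \Spc(\K(C_p,\Z)) \longleftarrow \Spec(\Z) \longrightarrow \cdots \longrightarrow \Spc(\K(C_p,\Z)) \longleftarrow \Spec(\Z),
\]
with $n$ copies of $\Spc(\K(C_p,\Z))$ and $n+1$ copies of $\Spec(\Z)$, and Theorem~\ref{colimit theorem} identifies $\Spc(\K(C_{p^n},\Z))$ with its colimit.

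The second step is to identify the two transition maps $\Spec(\Z)\to\Spc(\K(C_p,\Z))$ emanating from each copy of $\Spec(\Z)$. Unwinding Definition~\ref{triangular fixed points} and the construction of $\Xi_G$, the map attached to $(N_i,N_i)\to(N_i,N_{i+1})$ is the triangular $C_p$-fixed points map $\psi^{C_p}$, while the one attached to $(N_i,N_i)\to(N_{i-1},N_i)$ is $\Spc(\Res^{C_p}_1)$, the map induced by restriction to the trivial subgroup. Using the explicit description of $\Spc(\K(C_p,\Z))$ and compatibility with base change along $\Z\to\F_p$ and $\Z\to\Z[1/p]$, both maps send the ordinary part $\Spec(\Z[1/p])$ identically onto the ordinary fiber, so $(0)\mapsto(0)$ and $(q)\mapsto (q)$ for primes $q\neq p$; on the closed point $(p)$ one has $\psi^{C_p}((p)) = \mm_0$ (as recorded in the proof of the previous proposition) whereas $\Spc(\Res^{C_p}_1)((p)) = \mm_1$ — for the latter I would pass to the cohomological open and observe that restriction to the trivial subgroup corresponds to the ring map $H^\bullet(C_p,\Z)\to H^\bullet(1,\Z)=\Z$ killing the polynomial generator. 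Feeding this into the zig-zag colimit glues the $i$-th and $(i{+}1)$-st copies of $\Spc(\K(C_p,\Z))$ along $\Spec(\Z)$ by identifying their respective $(0)$'s and $(q)$'s, and identifying the $\mm_1$ of the $i$-th copy with the $\mm_0$ of the $(i{+}1)$-st; the two extreme copies of $\Spec(\Z)$ contribute nothing new since $\psi^{C_p}$ and $\Spc(\Res^{C_p}_1)$ are injective. Relabelling the glued closed modular points as $\mm_0,\dots,\mm_n$ and the generic modular points as $\pp_1,\dots,\pp_n$, the underlying set of the colimit is exactly $\{(0)\}\cup\{(q)\mid q\neq p\}\cup\{\mm_0,\dots,\mm_n\}\cup\{\pp_1,\dots,\pp_n\}$, as in the picture.

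Finally I would read off the topology. Since the colimit is a Noetherian spectral space by Corollary~\ref{colim is noetherian spectral}, it suffices to determine the specialization poset. The quotient map is continuous, so every specialization holding inside a single copy persists: from the $i$-th copy of $\Spc(\K(C_p,\Z))$ one gets $(0)\rightsquigarrow \mm_{i-1}$, $(0)\rightsquigarrow \mm_i$ and $\pp_i\rightsquigarrow \mm_{i-1}$, $\pp_i\rightsquigarrow\mm_i$, together with $(0)\rightsquigarrow (q)$; in particular $(0)$ specializes to \emph{all} of $\mm_0,\dots,\mm_n$. Conversely I would check that $\{(0)\}\cup\{(q)\mid q\neq p\}\cup\{\mm_0,\dots,\mm_n\}$ and each $\{\pp_i,\mm_{i-1},\mm_i\}$ are closed, by verifying that their preimages in every copy of $\Spc(\K(C_p,\Z))$ and of $\Spec(\Z)$ are among the known closed subsets there; this shows these sets are exactly the closures of $(0)$ and of $\pp_i$, and that the $\mm_j$ and $(q)$ are closed points, which pins down precisely the pictured space.

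The main obstacle I anticipate is the bookkeeping in the second step: correctly matching the two $\mathcal{E}(C_{p^n})$-morphisms with the maps $\psi^{C_p}$ and $\Spc(\Res^{C_p}_1)$, and in particular verifying that these two maps carry $(p)$ to the \emph{two different} closed points $\mm_0$ and $\mm_1$ of the modular fiber of $\Spc(\K(C_p,\Z))$ — this is exactly what produces the staircase gluing $\mm_1^{(i)}=\mm_0^{(i+1)}$ rather than collapsing the modular fiber to three points. A secondary technical point is ruling out spurious specializations in the colimit topology, which is handled by the Noetherian spectral property together with the explicit closure computations above.
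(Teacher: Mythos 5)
Your proposal is correct and follows essentially the same route as the paper: both apply Theorem~\ref{colimit theorem} to the zig-zag poset of $p$-sections of $C_{p^n}$ (with $n$ copies of $\Spc(\K(C_p,\Z))$ glued along $n+1$ copies of $\Spec(\Z)$), and both obtain the picture by tracking where the transition maps send $(p)$. The only difference is one of exposition: you derive explicitly that the two transition maps out of each $\Spec(\Z)$ are $\psi^{C_p}$ and $\Spc(\Res^{C_p}_1)$ and check that they hit $\mm_0$ and $\mm_1$ respectively, whereas the paper cites \cite{BG23b}[Example 18.7] for the identification of the poset and for the gluing behavior, stating directly that the closed modular point outside the cohomological open of one copy is identified with the closed modular point in the cohomological open of the previous copy.
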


\begin{proof}
This follows the same patter as \cite[Remark 1.5, Example 18.7]{BG23b}. In particular, we can identify the category of $p$-sections $\mathcal{E}_p(G)$ with a category    $\overline{\mathcal{E}}_p(G)$ without changing the colimit in Theorem  \ref{colimit theorem}. Note that the category of $p$-sections only depends on the group, and not in the ground ring.  Hence, we can use the description of   $\overline{\mathcal{E}}_p(G)$ given in \cite[Example 18.7]{BG23b}. Namely, the category   $\overline{\mathcal{E}}_p(G)$ is simply a poset
\begin{equation*}
\vcenter{\xymatrix@C=.0em@R=.4em{
 & (H_0,H_1)  &  &  \cdots & & (H_{n-1}, H_n) & \\
(H_0,H_0) \ar[ru] & & (H_1,H_1)\ar[lu] \ar[ru] &   & (H_{n-1},H_{n-1}) \ar[lu] \ar[ru] & & (H_n,H_n) \ar[lu]
}}\kern-1.11em
\end{equation*}
were $0=H_n<H_{n-1}<\ldots < H_1<H_0=G$ is a maximal normal serie of $G$.  By Theorem \ref{colimit theorem} we obtain 
\begin{equation*}
\Spc(\K(C_{p^n},\mathbb Z))=\qquad
\kern2em\vcenter{\xymatrix@R=1em@C=.7em{
& X_0  &  &  \cdots & & X_n & \\
Y \ar[ru] & & Y\ar[lu] \ar[ru] &   & Y \ar[lu] \ar[ru] & & Y \ar[lu]}}\kern-1.11em
\end{equation*} 
where $X_i=\Spc(\K(C_p,\mathbb Z))$ for all $i=0,\ldots,n$ (see Equation \ref{eq:Spc Cn}) and $Y=\Spec(\mathbb Z)$. The maps identify $\Spec(\mathbb Z[1/p])$ with the ordinary fiber of each $X_i$, and  they identify the closed point in the modular fiber that is not in the cohomological open of $X_i$ with the closed point in the modular fiber of $X_{i-1}$. This concludes the description of $\Spc(\K(G,\mathbb Z)$.   
\end{proof}

\begin{remark}
As we already mentioned, \cite[Lemma 6.1]{Gom25} allows to understand the spectrum of $\K(G,\mathbb Z)$ for any finite group $G$ via a colimit decomposition over the orbit category of $G$ with isotropies in the subgroups of $G$ of prime power order for some prime. In particular, we can use the previous examples to describe the spectrum of $\K(G,\mathbb Z)$ for any cyclic group $G$. Let us describe the case $G=C_{p_1}\times C_{p_2}$ for primes $p_1\not= p_2$. 
\end{remark}

\begin{proposition}
$G=C_{p_1}\times C_{p_2}$ for primes $p_1\not= p_2$.  Then the Balmer spectrum of $\K(G,\mathbb Z)$ is homeomorphic to the space 
\begin{equation*}%
\kern2em\vcenter{\xymatrix@C=.0em@R=.4em{
{\color{Brown}\overset{(2)}{\bullet}}    \ar@{-}@[Brown][rrrrrrrrrrrrrrrrrrrrdddd]
&& {\color{Brown}\overset{(3)}{\bullet}}  \ar@{-}@[Brown][rrrrrrrrrrrrrrrrrrdddd]
&& \cdots
&& {\color{OliveGreen}\overset{\mm_0}{\bullet}} \ar@{-}@[OliveGreen][rdd] \ar@{~}@[Gray][rrrrrrrrrrrrrrdddd]
&& {\color{OliveGreen}\overset{\mm_1}{\bullet}} \ar@{-}@[OliveGreen][ldd]  \ar@{~}@[Gray][rrrrrrrrrrrrdddd]
&& \cdots
&& {\color{Brown}\overset{(q)}{\bullet}} \ar@{-}@[Brown][rrrrrrrrdddd]
&& \cdots
&& {\color{Blue}\overset{\mm_{0}'}{\bullet}}  \ar@{-}@[Blue][rdd] \ar@{~}@[Gray][rrrrdddd] 
&& {\color{Blue}\overset{\mm_1'}{\bullet}} \ar@{-}@[Blue][ldd] \ar@{~}@[Gray][rrdddd] 
&& {\color{Blue}\cdots}
&& {\color{Brown}\overset{(q')}{\bullet}}  \ar@{-}@[Brown][lldddd]
&& \cdots 
\\ \\ 
&&  
&& 
&&
&  {\color{OliveGreen}\underset{\pp_1}{\bullet}} & 
&& \cdots
&& 
&&
&&
& {\color{Blue}\underset{\pp_1'}{\bullet}}&  
&&  
&&  
\\ \\ 
&&
&& 
&& 
&& 
&&
&&
&& 
&& 
&& 
&& {\color{Brown}{\bullet_{(0)}}}   
&& 
}}\kern-1.11em
\end{equation*}
where the green part corresponds to the fiber over $p_1$, the blue part corresponds to the fiber over $p_2$ and the brown part to the ordinary fiber. 
\end{proposition}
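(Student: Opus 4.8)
The plan is to realise $\Spc(\K(G,\Z))$ as a pushout of the spectra already computed for the two cyclic factors, mirroring the treatment of $C_{p^n}$. By \cite[Lemma 6.1]{Gom25} — or rather the variant discussed in the remark preceding this proposition — $\Spc(\K(G,\Z))$ is the colimit, in topological spaces, of $\Spc(\K(S,\Z))$ as $S$ ranges over the subgroups of $G$ of prime‑power order, indexed by the orbit category of $G$ with isotropy in those subgroups; that this colimit really is a homeomorphism is argued exactly as in the proof of Theorem \ref{colimit theorem}, using that the individual spectra are Noetherian spectral spaces (Lemma \ref{spectral criteria}) and that specialisations lift (Lemma \ref{criteria for open maps}). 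For $G=C_{p_1}\times C_{p_2}$ the prime‑power subgroups are $1$, $C_{p_1}$ and $C_{p_2}$, and the relevant part of the orbit category is the span $[G/C_{p_1}]\leftarrow[G/1]\rightarrow[G/C_{p_2}]$, all automorphisms acting trivially on the relevant spectra since $G$ is abelian. Hence the colimit is the pushout
\[
\Spc(\K(G,\Z))\;\cong\;\Spc(\K(C_{p_1},\Z))\;\amalg_{\,\Spec(\Z)}\;\Spc(\K(C_{p_2},\Z)),
\]
where the two structure maps $\Spec(\Z)=\Spc(\K(1,\Z))\to\Spc(\K(C_{p_i},\Z))$ are the triangular $C_{p_i}$‑fixed points maps $\psi^{C_{p_i}}$ of Definition \ref{triangular fixed points}.

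Next I would read off the two gluing maps from the previous proposition. Each $\psi^{C_{p_i}}$ is injective, commutes with the comparison maps to $\Spec(\Z)$ by naturality, and — as recorded in the $C_p$‑case above — sends $(0)$ to $(0)$, a prime $(q)$ with $q\neq p_i$ to the corresponding ordinary point, and $(p_i)$ to the closed point $\mm_0^{(i)}$ of the modular fibre of $\Spc(\K(C_{p_i},\Z))$, i.e. to the unique point of that fibre outside the cohomological open (equivalently, in $\mathrm{supp}\langle\widetilde\kos_{C_{p_i}}(1,\Z)\rangle$, cf. Lemma \ref{lem:koszul_conjugate_subgroups_ker}). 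Thus in the pushout the ordinary fibres of the two factors are identified along all of $\Spec(\Z)$, the ordinary point over $(p_2)$ in $\Spc(\K(C_{p_1},\Z))$ is glued to $\mm_0^{(2)}$, and symmetrically the ordinary point over $(p_1)$ in $\Spc(\K(C_{p_2},\Z))$ is glued to $\mm_0^{(1)}$. Writing $(1)\leftrightarrow$ unprimed and $(2)\leftrightarrow$ primed, counting points gives that $\Spc(\K(G,\Z))$ consists of the generic point $(0)$, one closed point $(q)$ for each prime $q\neq p_1,p_2$, the $V$‑shaped fibre $\{\mm_0,\pp_1,\mm_1\}$ over $p_1$ and the $V$‑shaped fibre $\{\mm_0',\pp_1',\mm_1'\}$ over $p_2$. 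Independently, this set‑level statement also follows from Proposition \ref{Spc as a set}, Proposition \ref{Spc is a point in ordinary char} (as $|G|=p_1p_2$), and the Balmer–Gallauer computation of $\Spc(\K(C_{p_i},\F_{p_i}))$ in Recollection \ref{spc for cp over fields}.

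Finally I would determine the topology by transporting specialisations through the pushout. The modular fibres over $p_1$ and over $p_2$ are closed, being preimages of closed points of $\Spec(\Z)$ under the continuous comparison map, so inside each of them the topology is the inherited $V$‑shape and no point of the ordinary locus other than $(0)$ can specialise into them. Within each factor $\Spc(\K(C_{p_i},\Z))$ one has $(0)\rightsquigarrow\mm_0^{(i)}$, $(0)\rightsquigarrow\mm_1^{(i)}$ and $(0)\rightsquigarrow(q)$ for $q\neq p_i$, together with $\pp_1^{(i)}\rightsquigarrow\mm_0^{(i)},\mm_1^{(i)}$ and, crucially, $(0)\not\rightsquigarrow\pp_1^{(i)}$; the last non‑specialisation is visible in the cohomological open, where the comparison map is a homeomorphism onto $\Spec^h(H^{\ast}(C_{p_i},\Z))\cong\Spec^h(\Z[t]/(p_it))$ and $(0)$ corresponds to the homogeneous prime $(t)$, which is not contained in $(p_i)=\pp_1^{(i)}$. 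After gluing, the identification of the ordinary point over $(p_2)$ in the first factor with $\mm_0^{(2)}$ only promotes $(0)\rightsquigarrow\mm_0^{(2)}$ (already known) and creates no new relations, and symmetrically for $p_1$; no specialisation ever joins $\pp_1^{(1)}$ to the $p_2$‑fibre or conversely. This yields exactly the depicted space: $(0)$ specialising to every $(q)$ and to $\mm_0,\mm_1,\mm_0',\mm_1'$, together with $\pp_1\rightsquigarrow\mm_0,\mm_1$ and $\pp_1'\rightsquigarrow\mm_0',\mm_1'$, the green and blue parts being the fibres over $p_1$ and $p_2$ and the brown part the ordinary locus.

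The main obstacle is the first step: making precise — and citing correctly — the orbit‑category colimit presentation for a group that is not a $p$‑group, and checking that it computes $\Spc(\K(G,\Z))$ as a topological space and not merely as a set. This is where one must redo, in the orbit‑category setting, the spectral‑space bookkeeping of Sections \ref{sec: Kosz}–\ref{sec: comparing} and identify the transition maps with triangular fixed‑point maps; once that presentation is in hand, everything else is a finite diagram chase using the already‑established description of $\Spc(\K(C_p,\Z))$.
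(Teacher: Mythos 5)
Your overall strategy — pass to the orbit‑category colimit from \cite[Lemma 6.1]{Gom25}, reduce to a pushout of the two spectra $\Spc(\K(C_{p_i},\Z))$ along $\Spec(\Z)$, and then track specialisations — is exactly the one used in the paper. There is, however, a genuine gap in your identification of the two structure maps of that pushout. The orbit‑category colimit glues along the maps $\Spc(\Res^{C_{p_i}}_1)\colon\Spec(\Z)\to\Spc(\K(C_{p_i},\Z))$ induced by the restriction functors (the paper says explicitly that the transition maps are ``induced by restriction along the maps in the orbit category''), not along the triangular fixed‑point maps $\psi^{C_{p_i},C_{p_i}}$. These two maps disagree on the modular fibre: over $\mathbb F_{p_i}$ one has $\Spc(\Res^{C_{p_i}}_1)(\ast)=\ker(\Res^{C_{p_i}}_1)=\K_{\mathrm{ac}}(C_{p_i},\mathbb F_{p_i})=\mm_1$ (equivalently, the image of $\Spc(\Res^{C_{p_i}}_1)$ is $\mathrm{supp}(R(C_{p_i}))=\{\mm_1^{(i)}\}$, the closed point \emph{inside} the cohomological open), whereas $\psi^{C_{p_i},C_{p_i}}((p_i))=\mm_0^{(i)}$, the closed point \emph{outside} it; so by base‑change compatibility $\Spc(\Res^{C_{p_i}}_1)((p_i))=\mm_1^{(i)}\neq\mm_0^{(i)}=\psi^{C_{p_i},C_{p_i}}((p_i))$.

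The consequence is that the pushout diagram you wrote down does not map compatibly to $\Spc(\K(G,\Z))$: the two composites $\rho^G_{C_{p_1}}\circ\psi^{C_{p_1},C_{p_1}}$ and $\rho^G_{C_{p_2}}\circ\psi^{C_{p_2},C_{p_2}}$ from $\Spec(\Z)$ to $\Spc(\K(G,\Z))$ send $(p_1)$ to $\mm_0$ and to $\mm_1$ respectively (in the modular fibre over $p_1$), so they do not agree, and your span lacks the cocone to $\Spc(\K(G,\Z))$ that the universal property of the colimit requires. You are thus computing the pushout of a different span. As it happens the two closed points $\mm_0^{(i)}$ and $\mm_1^{(i)}$ are exchanged by an automorphism of $\Spc(\K(C_{p_i},\Z))$ (both have exactly $\{\pp_1^{(i)},(0)\}$ as strict generalisations), so your pushout is abstractly homeomorphic to the correct one and the final picture is right — but that is a coincidence of the example, not a consequence of the argument as written. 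The rest of your reasoning (the point count via Proposition \ref{Spc as a set}, the specialisation analysis in each factor, and the observation that $(0)\not\rightsquigarrow\pp_1^{(i)}$ seen in the cohomological open $\Spec^h(\Z[t]/(p_it))$) is sound and is in fact spelled out more carefully than in the paper's own, rather terse, proof; replacing $\psi^{C_{p_i}}$ by $\Spc(\Res^{C_{p_i}}_1)$ throughout (and hence $\mm_0^{(i)}$ by $\mm_1^{(i)}$ in the gluing step) repairs the argument.
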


\begin{proof}
Consider the orbit category $\mathcal{O}_\mathcal{F}(G)$ where $\mathcal{F}$ is the family of subgroups $\{G/C_{p_1}, G/C_{p_2},0\}$. By by \cite[Lemma 6.1]{Gom25} and \cite{Bal16}, we deduce that the spectrum of $\K(G,\mathbb Z)$ is homeomorphic to the colimit 
\[
\underset{G/H\in \mathcal{O}_\mathcal{F}(G)^{\textrm{op}}}{\colim} \Spc(\K(H,\mathbb Z)) \cong \Spc(\K(G,\mathbb Z))
\]
where the maps are those induced by restriction along the maps in the orbit category. Now, the maps $G/0\to  G/C_{p_i}$ identify those primes that are not in the fiber above $(p_1)$ and $(p_2)$ giving us the brown part in the picture above. Since there are no non-trivial maps from $G/H\to G/H'$ for $H\not = H'$,  there is no more gluing and we only need to very what the action of the Weyl group of $C_{p_i}$ does to the modular fiber of $\Spc(\K(C_{p_i},\mathbb Z))$. But we already know what a conjugation does to a prime in $\Spc(\K(C_{p_i},\mathbb Z)) $ (see Remark \ref{primes under homeo induced by conjugation}). Then  we obtain the desired description of the spectrum of $\K(G,\mathbb Z)$. 
\end{proof}

\begin{remark}
For the examples in this section, we did not need to use the full strength of the results obtained in Section~\ref{sec:dirac} involving the twisted cohomology ring. However, a more detailed study of the twisted cohomology ring appears to play an important role in understanding other examples, such as the Klein four group and the quaternion group. A more detailed treatment of these cases is left for future work.
\end{remark}

%--------------------------------------------------------------------------------------%

\bibliographystyle{alpha}
\bibliography{mybibfile} 
%--------------------------------------------------------------------------------------%

\end{document}